\begin{document}

\newtheorem{theorem}{Theorem}[section]
\newtheorem{cor}[theorem]{Corollary}
\newtheorem{pro}[theorem]{Property}
\newtheorem{rmk}[theorem]{Remark}
\newtheorem{lemma}[theorem]{Lemma}
\newtheorem{gen}{Generalization}
\newtheorem{prop}[theorem]{Proposition}
\newtheorem{claim}[theorem]{Claim}
\newtheorem{observation}[theorem]{Observation}
\newtheorem{notation}[theorem]{Notation}
\newtheorem{conjecture}[theorem]{Conjecture}
\newtheorem{defin}[theorem]{Definition}
\newtheorem{defins}[theorem]{Definitions}

\newcommand{\Sssp}{\mbox{$\Sigma_{t_+}$}}
\newcommand{\Sssm}{\mbox{$\Sigma_{t_-}$}}
\newcommand{\Ggg}{\mbox{$\Gamma$}}
\newcommand{\map}{\mbox{$\rightarrow$}}
\newcommand{\bbb}{\mbox{$\beta$}}
\newcommand{\la}{\mbox{$\lambda$}}
\newcommand{\aaa}{\mbox{$\alpha$}}
\newcommand{\eee}{\mbox{$\epsilon$}}
\newcommand{\Rrr}{\mbox{$\mathcal{R}$}}
\newcommand{\lpd}{\mbox{$L^{\mathcal{V}(P,D^*)}$}}
\newcommand{\fpd}{\mbox{$\mathcal{V}(P,D^*)$}}
\newcommand{\bdd}{\mbox{$\partial$}}
\newcommand{\Sss}{\mbox{$\Sigma$}}
\newcommand{\Li}{\mbox{$L_+^{in}$}}
\newcommand{\Lo}{\mbox{$L_+^{out}$}}

\title{Width is not additive}
\author{Ryan Blair}
\author{Maggy Tomova}
\thanks{Research partially supported by an NSF grant.}

\begin{abstract}
We develop the construction suggested by Scharlemann and Thompson in \cite{SchTh} to obtain an infinite family of pairs of knots $K_{\alpha}$ and $K'_{\alpha}$ so that $w(K_{\alpha} \# K'_{\alpha})=max\{w(K_{\alpha}), w(K'_{\alpha})\}$. This is the first known example of a pair of knots such that $w(K\#K')<w(K)+w(K')-2$ and it establishes that the lower bound $w(K\#K')\geq max\{w(K),w(K')\}$ obtained in \cite{SchSch} is best possible. Furthermore, the knots $K_{\alpha}$ provide an example of knots where the number of critical points for the knot in thin position is greater than the number of critical points for the knot in bridge position.
\end{abstract}
\maketitle

\section{Introduction}

Thin position for knots was first defined by Gabai in his proof of property $R$, \cite{Ga}. The idea of width has had important applications in 3-manifold topology. In particular, width played an integral role in three celebrated results: the solution to the knot complement problem \cite{GL}, the recognition problem for $S^3$ \cite{AT}, and the leveling of unknotting tunnels \cite{GST}. However, surprisingly little is known about its intrinsic properties. Most strikingly, the behavior of knot width under connected sums has remained one of the most interesting and difficult problems to elucidate. In an attempt to shed light on this question, width has been compared to bridge number -- the least number of maxima over all projections of the knot. Just like bridge number, width depends on the number of critical points of a projection, but it also takes into account their relative heights. The behavior of bridge number under connected sum was first established by Schubert, \cite{Sch}. Later, Schultens gave a considerably more elegant proof of the result, \cite{Schu2}.  Stacking the two knots vertically and connecting a minimum of the top one to a maximum of the bottom one shows that \begin{equation}\label{eq:bridgeineq}b(K\#K')\leq b(K)+b(K')-1.\end{equation} Schubert's result affirms that inequality \ref{eq:bridgeineq} is in fact an equality.

This construction also gives an easy inequality for the width of a connected sum, namely \begin{equation}\label{eq:widthadditiveineq}w(K\#K')\leq w(K)+w(K')-2.\end{equation} Based on Schubert's result, it was conjectured that inequality \ref{eq:widthadditiveineq} is also an equality. However, proving this remained an open problem. Partial results and special cases have been solved. Most notably, Scharlemann and Schultens showed in \cite{SchSch} that \begin{equation}\label{eq:schschineq}w(K\#K')\geq max\{w(K),w(K')\}\end{equation} and Rieck and Sedgwick showed in \cite{RS} that the equality $w(K\#K')= w(K)+w(K')-2$ holds for meridionally small knots. The main result in this paper is that inequality \ref{eq:widthadditiveineq} is strict for some knots and that inequality \ref{eq:schschineq} is best possible if no restrictions are placed on the knots.

\begin{theorem}\label{thm:counterwidth}
There exists an infinite family of knots $K_{\alpha}$ and $K'_{\alpha}$ so that $w(K_{\alpha} \# K'_{\alpha})=max\{w(K_{\alpha}), w(K'_{\alpha})\}$.
\end{theorem}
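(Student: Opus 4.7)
The plan is to reduce the theorem to two pieces: the Scharlemann--Schultens lower bound \eqref{eq:schschineq} gives $w(K_\alpha\#K'_\alpha)\geq\max\{w(K_\alpha),w(K'_\alpha)\}$ for free, so the whole statement reduces to exhibiting, for each $\alpha$, an explicit embedding of $K_\alpha\#K'_\alpha$ whose width is at most $\max\{w(K_\alpha),w(K'_\alpha)\}$, together with computations (or lower bounds) for $w(K_\alpha)$ and $w(K'_\alpha)$ that make the equality meaningful and strictly smaller than $w(K_\alpha)+w(K'_\alpha)-2$.

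First, I would define the family following the Scharlemann--Thompson construction. Each $K_\alpha$ should be built so that its thin position contains a prominent \emph{thin neck}: a level sphere $\Sigma$ meeting the knot in very few points (two, ideally), with complicated tangles above and below whose complexity is controlled by the parameter $\alpha$. The integer $\alpha$ would be chosen to drive $w(K_\alpha)\to\infty$ and, crucially, to force the tangles on either side of $\Sigma$ to have sufficiently high distance that any minimal-width embedding must contain a level sphere isotopic to $\Sigma$. I would take $K'_\alpha$ to be a suitable partner from the same family (perhaps a reflected copy) so that its own thin neck matches that of $K_\alpha$. Next, I would perform the connect sum at $\Sigma$: stack $K'_\alpha$ so that its thin neck aligns with that of $K_\alpha$, connect-sum across the matching level sphere, and observe by computing level intersections that the resulting embedding has width exactly $\max\{w(K_\alpha),w(K'_\alpha)\}$, since the join occurs at a level contributing minimally to width and the two knots share the same maximum. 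Combined with \eqref{eq:schschineq}, this yields the claimed equality. The observation in the abstract --- that thin position of $K_\alpha$ uses more critical points than bridge position --- would be a direct consequence of the presence of this thin neck.

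The main obstacle is the width lower bound $w(K_\alpha)\geq W$ matching the exhibited thin-position width. Upper bounds for width are always easy (draw a picture); lower bounds are the difficult half, because one must rule out every alternative presentation. I would handle this by showing that for sufficiently complicated tangles (i.e.\ sufficiently large $\alpha$), any thin presentation of $K_\alpha$ must contain an essential level sphere isotopic to $\Sigma$, then reducing to a width computation on the two summands of the tangle decomposition. This kind of rigidity argument, which propagates through the connect-sum in the proof of the inequality $w(K_\alpha\#K'_\alpha)\leq \max\{w(K_\alpha),w(K'_\alpha)\}$ as well, is where the technical heart of the paper will live and is likely to occupy most of the subsequent sections.
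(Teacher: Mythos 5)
Your high-level outline (SchSch for the lower bound, an explicit stacked projection for the upper bound, a hard rigidity argument for $w(K_\alpha)\geq W$) matches the paper's strategy, and you correctly identify the width lower bound as the technical heart. However, two of your specific choices would derail the argument.

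First, the choice of $K'_\alpha$. You propose taking $K'_\alpha$ to be ``a suitable partner from the same family (perhaps a reflected copy).'' The paper instead takes $K'_\alpha$ to be an arbitrary two-bridge knot, and this asymmetry is essential, not incidental. The upper bound $w(K_\alpha\#K'_\alpha)\leq\max\{w(K_\alpha),w(K'_\alpha)\}$ works because $K_\alpha$ is built to have genuine \emph{slack} in its width profile: in thin position it has thick spheres of width $10$ separated by thin spheres of width $4$, whereas its bridge number is at least $9$ (so a bridge presentation has width $\geq 162$); thin position ``wastes'' levels, and a small summand can be slid into one of those wasted regions without enlarging any thick level. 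That absorption argument only has room for a summand with two maxima. If $K'_\alpha$ were a reflected copy of $K_\alpha$, with the same width $134$ and the same profile, there is no room to hide it: stacking gives width $w(K_\alpha)+w(K'_\alpha)-2=266$, and ``aligning the necks and sharing a maximum'' does not produce a legal embedding of the connect sum of width $134$ --- you would be claiming $w(K\#K)=w(K)$ for a complicated $K$, which is a far stronger (and, as far as this paper shows, unjustified) assertion. The theorem's interest comes precisely from the asymmetric pair: $w(K'_\alpha)=8$ makes $\max\{w(K_\alpha),w(K'_\alpha)\}=134$ strictly less than $w(K_\alpha)+w(K'_\alpha)-2=140$.

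Second, your ``thin neck meeting the knot in two points'' cannot occur: a $2$-punctured level sphere in a thin position would be a decomposing sphere, but $K_\alpha$ is constructed to be prime (Corollary \ref{cor:prime}). The actual thin spheres in the minimal-width embedding are $4$-punctured (the spheres $S_1$, $S_2$, $S_3$), and the rigidity statement proved is that any minimal-width embedding must realize the width profile $(10,4,10,4,10)$ of Figure \ref{fig:counterexample}. Your proposed mechanism of forcing a particular essential level sphere to appear is the right idea, but it is carried out for these $4$-punctured spheres and also for the $6$-punctured spheres obtained by tubing them, via an extensive case analysis in Sections \ref{sec:meress}--\ref{sec:134} using curve-complex distance bounds.
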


Moreover, the construction also yields examples of another interesting phenomenon.

\begin{theorem}\label{thm:counterbridge}
There exists an infinite family of knots $K_{\alpha}$ so that the minimal bridge position for $K_{\alpha}$ has fewer critical points than the thin position of $K_{\alpha}$.
\end{theorem}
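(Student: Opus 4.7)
The proof plan is to read off the critical-point counts directly from the construction used to prove Theorem~\ref{thm:counterwidth}. First I would extract the explicit Morse presentation of $K_{\alpha}$ that realizes $w(K_{\alpha})$; the Scharlemann--Thompson style insertion used there exhibits $K_{\alpha}$ with a specific arrangement of critical points whose total $c_{\alpha}$ can be counted directly. In parallel I would compute $b(K_{\alpha})$ from the structural data of the construction, most likely via essential meridional planar surfaces and a Schubert-style bridge-number argument. By direct comparison I expect $c_{\alpha} > 2b(K_{\alpha})$, since the insertion introduces extra critical points beyond those of the minimal bridge presentation.

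The substantive step is to verify that the exhibited presentation is actually the \emph{thin} position, in the sense that $w(K_{\alpha})$ cannot be achieved by any Morse presentation of $K_{\alpha}$ with only $2b(K_{\alpha})$ critical points. A useful universal lower bound, obtained by minimising the sum of intersection numbers over level-set sequences $0, 2, 4, 2, 4, \ldots, 4, 2, 0$ constrained to stay at least $2$ strictly between the lowest and highest critical values, is that every connected Morse presentation of a knot with $m$ maxima has width at least $6m - 4$. This yields only $w(K) \ge 6b(K) - 4$, which holds universally, so by itself it is not sharp enough to rule out a thin presentation with $2b(K_{\alpha})$ critical points; a knot-specific obstruction is needed.

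I expect this obstruction to come from the same essential-surface technology used to bound $w(K_{\alpha} \# K'_{\alpha})$ from below in the proof of Theorem~\ref{thm:counterwidth}: the essential meridional planar surfaces or thin spheres associated to $K_{\alpha}$ should be incompatible with any low-width Morse arrangement of $K_{\alpha}$ having only $2b(K_{\alpha})$ critical points, forcing additional critical points in every thin presentation. The main obstacle is precisely this last step, since non-bridge Morse arrangements with only $b(K_{\alpha})$ maxima can in principle have width strictly smaller than the $2b(K_{\alpha})^{2}$ of bridge position, so simply comparing $w(K_{\alpha})$ with the bridge width will not suffice. Overcoming the obstacle will require adapting the compressing-disk and thin-sphere arguments used for Theorem~\ref{thm:counterwidth} to the setting of presentations constrained to have the minimum number of maxima.
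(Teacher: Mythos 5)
Your overall structure is right: compare critical-point counts in thin position versus in bridge position. But the proposal has two problems. First, you overcomplicate the bridge-number side. For Theorem~\ref{thm:counterbridge} you only need an \emph{upper} bound on $b(K_\alpha)$, and the paper gets $b(K_\alpha)\le 10$ simply by exhibiting a ten-bridge projection (Figure~\ref{fig:lowbridge}); no Schubert-style or essential-surface computation of $b(K_\alpha)$ is involved. (Those tools appear in the paper only to bound the widths of bridge spheres of sub-tangles from below, which feeds into the \emph{width} lower bound, not the bridge number.)

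Second, the substantive step you correctly flag --- showing that no minimum-width presentation of $K_\alpha$ has as few as $2b(K_\alpha)$ critical points --- is left without a workable plan. The universal bound $w(K)\ge 6b(K)-4$ is rightly discarded, but your replacement (``adapting the compressing-disk and thin-sphere arguments \ldots\ to presentations constrained to have the minimum number of maxima'') is not what the paper does and does not by itself supply the decisive step. The paper does not restrict to minimal-maximum presentations at all; rather, Theorems~\ref{thm:thinnestccompressible} and~\ref{thm:incomp} establish that $w(K_\alpha)=134$ \emph{and} that every minimum-width position is isotopic to the one in Figure~\ref{fig:counterexample}, which has exactly 11 maxima. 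With that uniqueness in hand, Theorem~\ref{thm:counterbridge} is a one-line comparison: $22$ critical points in thin position versus at most $2\cdot 10=20$ in the minimal bridge position. The concrete missing idea is this classification of all thin positions; without it, nothing in your sketch rules out a minimum-width arrangement with only ten maxima.
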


Our construction is based on ideas proposed by Scharlemann and Thompson in \cite{SchTh}. In their paper, the authors give a large family of pairs of knots $K_\alpha$ and $K'_\alpha$ for which it appears that $w(K_\alpha\#K'_\alpha)= max\{w(K_\alpha),w(K'_\alpha)\}$. This equality relies on the assumption that the projections of $K_\alpha$ and $K'_\alpha$ considered by Scharlemann and Thompson have minimal width amongst all possible projections of the knots, i.e., that $K_\alpha$ and $K'_\alpha$ are in thin position. One of the knots in each pair is quite simple and its width is easily established. However, the authors could not verify the width of the second knot in any of their pairs. In \cite{BT}, the current authors establish that in fact for most of these pairs the second knot is not in thin position. Therefore, most of these pairs do not provide the desired counterexample to the conjectured equality $w(K\#K')= w(K)+w(K')-2$.

In this paper, we construct a family of pairs of knots $K_\alpha$ and $K'_\alpha$ that satisfy the properties required for the pairs presented in \cite{SchTh} and we establish that both $K_\alpha$ and $K'_\alpha$ are in thin position. For such a pair, it follows that $w(K_\alpha\#K'_\alpha)= max\{w(K_\alpha),w(K'_\alpha)\}$. Figure \ref{fig:counter} depicts such a pair where one knot is the trefoil. The figure demonstrates a projection of $K\#trefoil$ that has the same width as the given projection of $K$. We will show that this projection of $K$ is of minimal width.

\begin{figure}
\begin{center} \includegraphics[scale=.3]{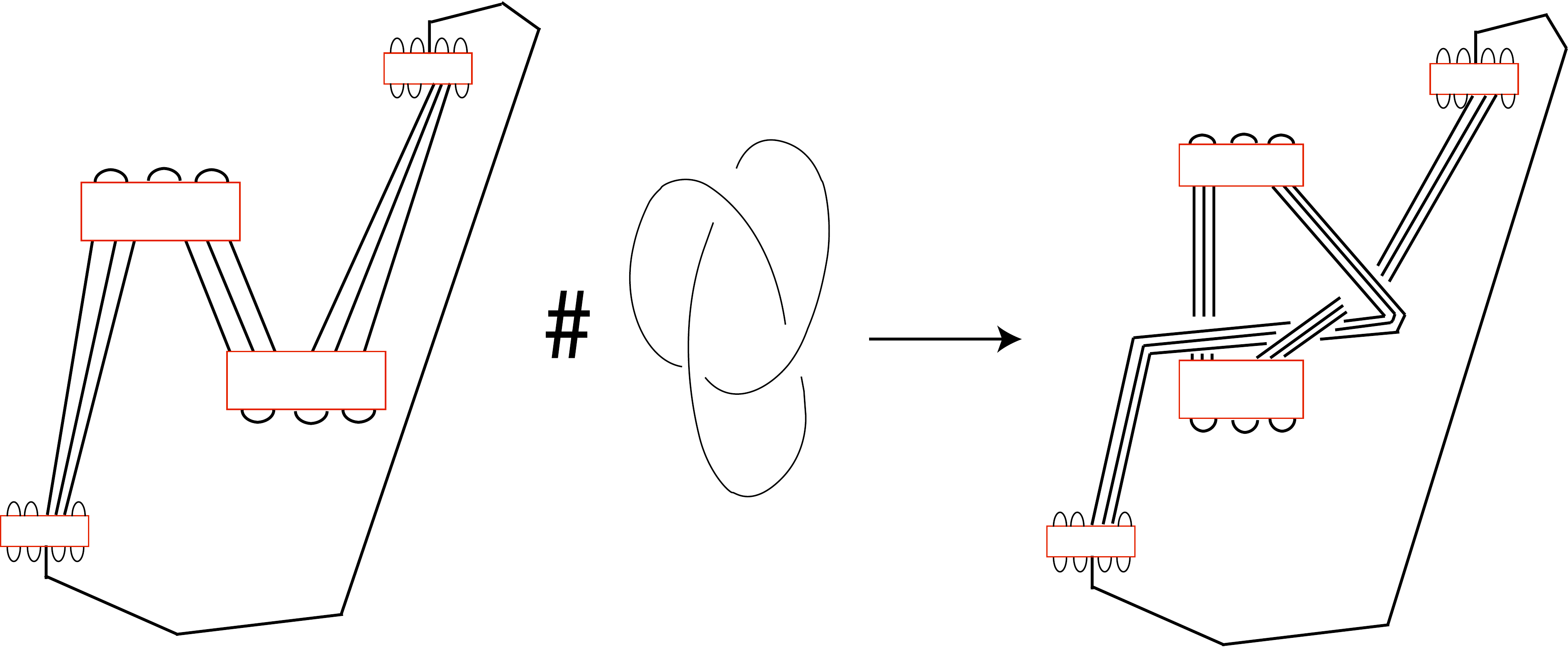}
\end{center}
\caption{} \label{fig:counter} \end{figure}

The paper is organized as follows. After some preliminary definitions in Section \ref{Vertical cut-disks}, we show that if a knot is in thin position and $P$ is a thin level sphere, then cut-disks for $P$ that do not intersect any other thin spheres can be isotoped to be vertical. This allows us to use the results about vertical cut-disks developed in \cite{T2}. In Section \ref{sec:bridgesurfaces}, we review results found in \cite{JT} about the behavior of a second bridge surface for a tangle that has a high distance bridge sphere. In Section \ref{sec:knotting}, we use a theorem of Schubert to construct tangles with high distance properties so that the numerator closures of certain subtangles yield non-trivial knots. In Section \ref{sec:esssurface}, we construct a three strand tangle for which we can classify all essential meridional surfaces of Euler characteristic greater than $-12$. In Section \ref{sec:construction}, we construct the candidate knots $K_\alpha$. Each of these knots has the general schematic introduced in \cite{SchTh}, see Figure \ref{fig:counter}. In Section \ref{sec:properties}, we establish some of the properties of the knots. The proofs of these properties depend on the results in the previous sections. In Section \ref{sec:meress}, we classify all essential meridional spheres in the complement of $K_\alpha$ that have fewer than 14 punctures. In Section \ref{sec:bridgenumber}, we determine that bridge position and thin position for $K_\alpha$ do not coincide. In Section \ref{sec:lemmas}, we introduce some additional lemmas. Finally, in Section \ref{sec:134}, we show that the projection given in Figure \ref{fig:counter} of each of the knots $K_\alpha$ is thin by checking the widths of a relatively small number of possible thin positions.

\section{Definitions and Preliminaries}

Let $K$ be a knot embedded in $S^3$. We will denote a regular neighborhood of $K$ by $\eta(K)$. An {\em essential meridional surface} in the knot complement is a surface with meridional boundary that does not have any compressing disks in the knot complement and that is not boundary parallel in $S^3 -\eta(K)$.  Let $F$ be a meridional surface embedded in the complement of $K$. A {\em cut-disk} for $F$ is a disk $D^c \subset S^3$ such that $D^c \cap F =\bdd D^c$, $|D^c \cap K|=1$ and the annulus $D^c-\eta(K)$ is not parallel  in the knot complement to a subset of $F-\eta(K)$. In particular, if $K$ is prime, the curve $\bdd D^c$ is not parallel to a boundary component of $F-\eta(K)$. We use the term {\em c-disk} to refer to either a compressing or a cut-disk.

Consider the standard height function $h: S^3 \to \mathbb{R} \cup \{- \infty, + \infty\}$ and suppose $K$ is in general position with respect to $h$. If $t$ is a regular
value of $h|_K$, $h^{-1}(t)$ is called a {\em level sphere} with width
$w(h^{-1}(t))=|K\cap h^{-1}(t)|$. If $c_{0}<c_{1}<...<c_{n}$ are all the
critical values of $h|_K$, choose regular values
$r_{1},r_{2},...,r_{n}$ such that $c_{i-1}<r_{i}<c_{i}$. Then the
{\em width of $K$ with respect to $h$} is defined by $w(K,h)=\sum
w(h^{-1}(r_{i}))$. The {\em width} of $K$, $w(K)$, is the minimum of $w(K',h)$
over all knots $K'$ isotopic to $K$. We say that $K$ is in {\em thin position} if $w(K,h)=w(K)$.  Note that by removing a neighborhood of the north and south pole, we can assume $K \subset S^2 \times I$ and define width there. We will switch between these two ambient spaces freely during this discussion. More details about thin position and basic results can be found in \cite{Schar1}.

A level sphere $h^{-1}(t)$ is called {\em thin}
if the highest critical point for $K$ below it is a maximum and the
lowest critical point above it is a minimum. If the highest critical point for $K$ below $h^{-1}(t)$ is a minimum and the
lowest critical point above it is a maximum, the level sphere is called {\em thick}. As the lowest critical point of $K$ is a minimum and the highest is a maximum, a thick level sphere can always be found. It is possible that the knot does not have any thin spheres with respect to some height function. When this occurs the unique thick sphere is called a {\em bridge sphere} and the knot is said to be in {\em bridge position}.

We will use the following result found in \cite{SchSch} to simplify our computations.

\begin{lemma}\cite[Lemma 6.2]{SchSch}
Let $K$ be an embedding of a knot in $S^3$ and let $h:S^3 \rightarrow \mathbb{R} \cup \{- \infty, + \infty\}$ be the standard height function on $S^3$. If $\{a_i\}$, $i=0,...n$ and $\{b_j\}$, $j=0,...n+1$ are the widths of all thin and all thick spheres respectively, then $$w(K)=\frac{\Sigma_{j=0}^{n+1} b_j^2-\Sigma_{i=0}^n a_i^2}{2}.$$
\end{lemma}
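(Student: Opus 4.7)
The plan is to evaluate $w(K,h)=\sum_i w(h^{-1}(r_i))$ directly, grouping the regular level-sphere widths according to the local extrema of the width function. Between consecutive critical values the level-sphere width is constant, and across each critical value it jumps by $+2$ at a minimum and $-2$ at a maximum. Directly from the definitions, a level sphere is thin iff it realizes a strict local minimum of this piecewise-constant width function, and thick iff it realizes a strict local maximum. Since the width equals $0$ above and below $K$ and changes by $\pm 2$ at each critical value, its local extrema strictly alternate and must begin and end with local maxima; reading them from bottom to top therefore yields exactly $b_0,a_0,b_1,\ldots,a_n,b_{n+1}$. I would also introduce virtual boundary terms $a_{-1}=a_{n+1}=0$ so that the indexing is uniform at the top and bottom.

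Next I would compute the contribution to $w(K,h)$ coming from each monotonic arc of the width function between consecutive extrema. On an up-arc from width $a$ to width $b$ (with $a<b$) the intermediate regular level spheres have widths $a+2,a+4,\ldots,b$, which sum to
\[
\frac{(a+b+2)(b-a)}{4}=\frac{b^2-a^2}{4}+\frac{b-a}{2}.
\]
A down-arc from $b$ to $a$ symmetrically traverses widths $b-2,b-4,\ldots,a$ and contributes $\frac{b^2-a^2}{4}-\frac{b-a}{2}$. Under this convention the $n+2$ up-arcs $a_{i-1}\to b_i$ together with the $n+2$ down-arcs $b_j\to a_j$ partition the regular level spheres, so the total of all arc contributions equals $w(K,h)$.

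Finally I would add the arc contributions and watch the formula collapse. The linear $\pm\tfrac{b-a}{2}$ parts telescope to $\tfrac{1}{2}(a_{n+1}-a_{-1})=0$ thanks to the virtual boundary convention. The quadratic parts collect so that each $b_j$ appears with total coefficient $+\tfrac{1}{2}$ (once from each adjacent arc) and each $a_i$ with total coefficient $-\tfrac{1}{2}$, producing
\[
w(K,h)=\frac{1}{2}\left(\sum_{j=0}^{n+1}b_j^2-\sum_{i=0}^{n}a_i^2\right),
\]
as required. The main obstacle is purely bookkeeping: confirming that thin and thick spheres alternate correctly, that the sequence begins and ends with a thick sphere, and that the virtual zero-width boundary terms are handled consistently so that no arc is miscounted. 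Once the arc decomposition is set up the remaining algebra is routine.
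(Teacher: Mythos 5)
Your proof is correct; however, the paper does not prove this lemma, it simply cites it from Scharlemann--Schultens \cite[Lemma~6.2]{SchSch}, so there is no ``paper proof'' to compare against. Your approach --- decomposing the width sum $w(K,h)=\sum_i w(h^{-1}(r_i))$ into monotone arcs between consecutive local extrema of the level-sphere width, introducing virtual boundary terms $a_{-1}=a_{n+1}=0$, and then observing that the linear contributions telescope while the quadratic ones collect into $\tfrac12\bigl(\sum b_j^2-\sum a_i^2\bigr)$ --- is the standard argument and is carried out accurately; I verified the arithmetic of both arc-sum formulas and the final cancellation, and the claim that thin (resp.\ thick) level spheres are precisely the strict local minima (resp.\ maxima) of the piecewise-constant width function, alternating and bounded by maxima, follows directly from the definitions and the fact that the width changes by exactly $\pm2$ at each critical value. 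Two minor cosmetic remarks, neither of which affects correctness: first, the lemma's $w(K)$ really means $w(K,h)$ for the given embedding, which is what you compute; second, your topmost down-arc from $b_{n+1}$ to $a_{n+1}=0$ nominally includes a width-$0$ ``level sphere'' that does not correspond to any $r_i$ in the definition of $w(K,h)$, but since its contribution is zero the final sum is unaffected --- you might note this explicitly to make the claim that the arcs ``partition the regular level spheres'' literally true up to a vacuous term.
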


Unless otherwise stated, we will always consider all level spheres to lie in the knot complement, i.e., they are always meridional surfaces. A key ingredient to our proofs is the behavior of c-disks for the thin level spheres. We review some already known results here and then develop some new results in the next section.

\begin{theorem} \cite{Wu} \label{thm:thinincomp}
Suppose $K$ is a prime knot in thin position and let $P$ be the thin sphere of lowest width. Then $P$ is incompressible.
\end{theorem}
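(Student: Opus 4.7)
The plan is to argue by contradiction. Assume $K$ is in thin position, $P$ is the thin level sphere of lowest width, and $P$ is compressible in $S^3\setminus\eta(K)$ via a disk $D$. The goal is to use $D$ to build an ambient isotopy of $K$ whose resulting level structure has strictly smaller width, contradicting the assumption that $K$ was already in thin position.

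First I would prepare the compressing disk. After a small perturbation, put $D$ in general position with respect to the height function $h$, and among all compressing disks for $P$ choose one that minimizes, in lexicographic order, the number of intersections with the other thin spheres of $K$ and then the number of critical points of $h|_D$. An innermost-disk argument on the intersection curves $D\cap Q$, combined with the fact that adjacent thick and thin regions carry no essential disks disjoint from $K$ of the relevant kind (or else a simpler contradiction is produced), reduces to the case where the interior of $D$ lies in a single region bounded by $P$ and one of the thick spheres adjacent to it; without loss of generality the thick sphere $S$ directly above $P$. Then $D$ encodes a way to slide an arc of $K$ across $P$.

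Next I would execute the width reduction. The disk $D$ can be used to perform an ambient isotopy supported in a neighborhood of $D$ together with a vertical strip, realizing one of two moves: cancel a maximum-minimum pair of $h|_K$ lying between $P$ and $S$, or push a maximum of $K$ down past $P$ (equivalently, pull a minimum up past $P$). In the new position the width of the thick sphere $S$ strictly decreases, while the widths of all other thick spheres are unchanged; any newly appearing thin sphere has width at most $w(P)$, which by hypothesis is the minimum thin width. Substituting into the formula $w(K)=\tfrac{1}{2}\bigl(\sum b_j^2-\sum a_i^2\bigr)$, the net change in $w(K)$ is negative, contradicting thin position.

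The main obstacle I expect is making the topological compression \emph{honest}: translating the surgery of $P$ along $D$ into a genuine ambient isotopy of $K$ that reorganizes critical points of $h|_K$, rather than merely producing a disconnected surgered surface unrelated to the thin/thick decomposition. Handling this rigorously requires a careful case analysis of how the arcs of $K$ between $P$ and $S$ sit relative to $D$, and the primeness of $K$ is essential to rule out configurations where the compression would effectively split $K$ as a non-trivial connected sum whose factor widths cannot be controlled through the width formula.
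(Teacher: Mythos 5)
First, note that the paper does not prove this statement; it is cited directly from \cite{Wu}, and Section~\ref{Vertical cut-disks} of the paper is the authors' extension of Wu's compressing-disk arguments to cut-disks. So your proposal should be measured against Wu's original argument rather than a proof appearing here.

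Your high-level strategy (contradiction via a compressing disk for the thinnest thin sphere) is the standard one, but the proposed width-reducing move fails for a basic reason: by definition of a thin sphere, \emph{every} critical point of $h|_K$ between $P$ and the adjacent thick sphere $S$ above it is a minimum (and every one between $P$ and the adjacent thick sphere below is a maximum). There is therefore no ``maximum-minimum pair lying between $P$ and $S$'' to cancel and no maximum of $K$ to push down past $P$ from above, so neither of your two proposed moves is available. The preparation step is also too weak: minimizing critical points of $h|_D$ does not by itself eliminate saddles, and even a saddle-free (vertical) compressing disk typically spans several thick levels, so the claim that $D$ ends up confined to a single region between $P$ and an adjacent thick sphere is unjustified. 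What the argument actually requires is a chain of Schultens-style pop-out isotopies to make $D$ vertical (this is the content of Wu's own lemma, generalized in Lemmas~\ref{inesssaddle}--\ref{cinesssaddle} of this paper), followed by the braid-box disjointness argument (Theorem~\ref{thm:disjoint}, Corollary~\ref{cor:altthin}): with $D$ vertical and $P'$ the thin sphere adjacent to $P$ on the side of $D$, all critical points of $K$ between $P$ and $P'$ can be pushed to one side of $D$, and compressing $P$ along $D$ then yields a sphere parallel to $P'$ with strictly fewer punctures, giving $w(P')<w(P)$ and contradicting the minimality of $w(P)$. This is exactly what Theorem~\ref{thm:not compressible on one side} (from \cite{T2}) packages in general form. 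Primeness enters in those braid-box and irreducibility arguments, not in ``controlling factor widths'' as you suggest. In short, the skeleton of your plan is right, but the concrete isotopy you describe cannot be performed, and the saddle-removal and braid-box lemmas that actually drive the proof are absent from your sketch.
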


\begin{theorem} \cite[Theorem 8.1]{T1} \label{thm:sixincomp}
Suppose $K$ is a prime knot in thin position and $P$ is a minimal width thin sphere. If $P'$ is a thin sphere so that $w(P')=w(P)+2$, then $P'$ is incompressible.
\end{theorem}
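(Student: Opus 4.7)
The plan is to argue by contradiction: suppose $P'$ admits an essential compressing disk $D$ in the complement of $K$, and construct from $D$ a positioning of $K$ with strictly smaller width, contradicting thin position.

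First, simplify $D$. Since $P$ is incompressible by Theorem~\ref{thm:thinincomp}, innermost-loop surgery on $D \cap P$ replaces $D$ by a compressing disk disjoint from $P$. Thus $D$ lies in the component of $S^3 \setminus (P \cup K)$ containing $P'$, and after possibly inverting $h$ we may assume $D$ lies above $P'$.

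Next, untelescope $P'$ along $D$. Surgering $P'$ along $D$ produces two spheres $P'_1, P'_2$ meeting $K$ in $w_1, w_2 \geq 2$ points with $w_1 + w_2 = w(P') = w(P)+2$. I would then realize this surgery as an isotopy of $K$: by making $D$ vertical within a thin slab above $P'$, the new height function has $P'_1, P'_2$ as thin spheres separated by a new thick sphere $Q^\ast$ of width at most $\max\{w_1, w_2\} + 2 \leq w(P')$, and this replaces the single thin sphere $P'$ together with the thick sphere $Q$ directly above it (whose width satisfies $w(Q) \geq w(P')+2$, since a minimum of $h|_K$ sits just above $P'$ and a maximum sits just above $Q$). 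Applying the width formula recalled in Section~2, the resulting change in width is
\begin{equation*}
\Delta w \;=\; \tfrac{1}{2}\bigl[\, w_1^2 + w_2^2 + w(Q^\ast)^2 - w(P')^2 - w(Q)^2 \,\bigr],
\end{equation*}
which is strictly negative because $w_1^2 + w_2^2 - w(P')^2 = -2 w_1 w_2 \leq -8$ and $w(Q^\ast)^2 \leq w(Q)^2$.

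The main obstacle is showing that the surgery can be realized as an isotopy of $K$ with the claimed thin/thick structure. The disk $D$ must be made vertical by a sequence of standard innermost-curve and outermost-arc simplifications against the thin and thick spheres above $P'$, and one must then verify that the new height function satisfies the thin/thick alternation conditions. The hypothesis $w(P') = w(P)+2$ enters by bounding $w_i \leq w(P)$ for each $i$, which both controls the arithmetic of the width formula (keeping the thick-sphere correction small relative to the quadratic thin-sphere gain) and prevents the untelescoping from producing new spheres of widths that could interfere with the analysis of the simplification of $D$ against the other thin spheres.
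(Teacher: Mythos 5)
This theorem is quoted from \cite{T1}; the present paper gives no proof of it, so there is no internal proof to compare against. Evaluating the argument on its own, the central width computation has a sign error that reverses your conclusion. The Scharlemann--Schultens formula recalled in Section~2 reads $w(K)=\tfrac12\bigl(\sum_j b_j^2-\sum_i a_i^2\bigr)$, with thick widths $b_j$ entering positively and thin widths $a_i$ negatively. If a rearrangement replaces the thin sphere $P'$ (width $a=w(P')$) by two thin spheres of widths $w_1,w_2$ and the thick sphere $Q$ by one of width $w(Q^*)$, the change in width is $\Delta w=\tfrac12\bigl[\bigl(w(Q^*)^2-w(Q)^2\bigr)-\bigl(w_1^2+w_2^2-a^2\bigr)\bigr]$, not the expression you wrote. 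Because $w_1^2+w_2^2-a^2=-2w_1w_2$, the thin-sphere terms contribute $+w_1w_2\geq 4$ to $\Delta w$, not $-w_1w_2$: splitting a thin sphere into two \emph{raises}, not lowers, the $-\sum a_i^2$ part of the formula, so the untelescoping can decrease width only if the thick-sphere terms decrease by enough to overcome this. The quadratic gain you invoke therefore points in the wrong direction, and the rest of the argument does not recover from this.

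Several geometric claims feeding into the computation are also false or unjustified. The arcs of $K$ cut off by $D$ all have both endpoints on $P'$, hence each has a maximum, and since there are no maxima between $P'$ and the thick sphere $Q$ directly above it, those maxima lie above $Q$; so $D$ cannot be confined to a ``thin slab'' above $P'$, and the surgered spheres $P'_1,P'_2$ do not become the thin spheres of the rearranged position, nor do they carry the widths $w_1,w_2$. After disjointifying the braid boxes on the two sides of $D$ (Theorem~\ref{thm:disjoint} and Corollary~\ref{cor:altthin}) one instead finds a single new thin level of width $w_2$ while $Q$ is split, not removed. The claimed bound $w(Q^*)\le\max\{w_1,w_2\}+2$ is unjustified and false in general, as $w(Q^*)$ depends on how many minima lie inside $D$. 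Finally, the correct bookkeeping gives $\Delta w\le 0$ with equality possible exactly when all minima between $P'$ and the next thin sphere $P''$ lie inside $D$, and eliminating that borderline case is where the hypothesis $w(P')=w(P)+2$ and primeness are actually needed (via Theorem~\ref{thm:not compressible on one side} they force $w(P'')=w(P)$ and hence $w_1=2$, so the single arc inside $D$ is unknotted and can be straightened directly). Your use of the hypothesis is too vague to reach this case.
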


\begin{defin}A {\em tangle}, $\mathcal{R}$, is a tuple, $(B_R,R)$, where $B_R$ is a 3-ball or $S^2\times I$ and $R$ is a collection of mutually disjoint properly embedded knots and arcs in $B_R$. $\mathcal{R}$ is an {\em $n$-strand tangle} if $R$ contains no knots and exactly $n$ arcs.
\end{defin}

Given any tangle $\mathcal{T}$ in a ball or in $S^2 \times I$ and a height function $h$, the critical points of the tangle can be organized into braid boxes as follows: suppose $t_1$ and $t_2$ are adjacent thin levels. Then a {\em braid box} $B_{[t_1,t_2]}\subset h^{-1}[t_1,t_2]$ is a ball containing  $T\cap h^{-1}[t_1,t_2]$. In this ball the tangle has some number of minima of $T$ followed by some number of maxima of $T$. Given a tangle $\mathcal{S}$ in $S^2 \times I$ and a c-disk $D^c$ for $S^2 \times \{0\}$, then $D^c$ naturally decomposes $\mathcal{S}$ into two subtangles $\mathcal{S}_{\alpha}$ and $\mathcal{S}_{\beta}$. These subtangles can be decomposed into braid boxes $\{B_{[a_i^-,a_i^+]}\}$ and $\{B_{[b_j^-,b_j^+]}\}$. For more details see \cite{T2}.

\begin{theorem}\cite[Lemma 9.1]{T2} \label{thm:disjoint}
Let $L$ be a prime tangle embedded in $S^2\times I$, let $P$ be
a level sphere for $L$ and let $D^*$ be a c-disk for $P$ that does not have any saddles with respect to the usual height function on $S^2\times I$. Then there exists
a horizontal isotopy $\nu$ which keeps $D^*$ fixed such that if $\{B_{[a_i^-,a_i^+]}\}$ and $\{B_{[b_j^-,b_j^+]}\}$ are the collections of braid boxes for the proper tangles $\alpha$ and $\beta$ respectively, for any $i$ and $j$ the intervals $[a_i^-,a_i^+]$ and $[b_j^-,b_j^+]$ are disjoint.\end{theorem}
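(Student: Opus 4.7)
My first observation is that the no-saddle hypothesis on $D^*$ is very restrictive. Applying Morse theory to $D^*$ (a disk with boundary on the regular level sphere $P$), one gets the relation $\#\min - \#\text{saddles} + \#\max = \chi(D^*) = 1$ for the interior critical points of the height function on $D^*$. With no saddles this forces exactly one interior extremum. Moreover, since the boundary of $D^*$ lies at the single height $h_P$ of $P$, both a global maximum and a global minimum of height would have to occur in the interior if $D^*$ straddled $P$; so $D^*$ lies entirely on one side of $P$, and without loss of generality is a simple dome above $P$ with one maximum at some height $c_1$.

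Having reduced to this ``dome'' picture, the subdisk $D_P\subset P$ bounded by $\partial D^*$ together with $D^*$ cobounds a $3$-ball $B\subset S^2\times[h_P,c_1]$. The subtangle $\alpha$ is $L\cap B$ and $\beta$ is the rest of $L$. The plan is then to use horizontal isotopy, fixing $D^*$, to push $\alpha$'s critical points into a thin strip just above $h_P$ and $\beta$'s critical points entirely outside the slab $[h_P,c_1]$. The tangle $\alpha$ lives in a ball and can be isotoped rel $\partial B$ so that all its minima and maxima lie in a narrow strip $(h_P,h_P+\epsilon)$; this is a standard bridge-position manipulation. For $\beta$, the below-$P$ critical points already lie in $[0,h_P]$, and the above-$P$ critical points can be slid above the apex $c_1$, since the complementary region $(S^2\times I)\setminus B$ is unobstructed above that apex.

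After the isotopy, $\alpha$'s critical points lie in $(h_P,h_P+\epsilon)\subset(h_P,c_1)$ while $\beta$'s critical points lie in $[0,h_P)\cup(c_1,1]$. The heights $h_P$ and $c_1$ are then thin levels for $\beta$, since by construction the nearest $\beta$-critical points are maxima below and minima above. Consequently $\beta$'s non-empty braid boxes have intervals contained in $[0,h_P]$ and $[c_1,1]$, while $\alpha$'s single braid box has interval $[h_P,c_1]$. These intervals are pairwise disjoint on their interiors, yielding the required disjointness of the $\{[a_i^-,a_i^+]\}$ from the $\{[b_j^-,b_j^+]\}$.

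The main technical obstacle is showing that $\beta$'s above-$P$ critical points can be slid above the apex $c_1$ by an isotopy that keeps $D^*$ pointwise fixed. Near the apex the dome's horizontal cross-section shrinks to a point, so the complementary region is a pinched neighborhood of $c_1$ and the local sliding must be carried out carefully to avoid crossing $D^*$. The no-saddle hypothesis, via its forcing of a single clean apex, is precisely what makes this pinching a tame crossing; a saddle would create an additional critical region through which $\beta$-strands could not be uniformly slid. Once this local push is constructed, it is combined with the interior push of $\alpha$ and the trivial push below $P$ to produce the required global horizontal isotopy $\nu$.
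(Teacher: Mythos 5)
The paper does not prove this statement; it cites it as Lemma~9.1 of~\cite{T2}. Evaluating your proposal on its own terms, there is a genuine gap centered on what a ``horizontal isotopy'' is allowed to do. The Remark that immediately follows the theorem in the paper pins this down: a horizontal isotopy ``does not change the total number or the heights of the thin spheres for $L$.'' Your proposed isotopy does not satisfy this constraint. Crushing all of $\alpha$'s critical points into a narrow strip $(h_P,h_P+\epsilon)$ and lifting all of $\beta$'s above-$P$ critical points past the apex $c_1$ will in general destroy any thin levels of $L$ lying in the slab $(h_P,c_1)$ and can create new ones; it can also change the widths of level spheres and hence the width of $L$ itself. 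That is precisely the kind of rearrangement that the hypothesis of a horizontal isotopy rules out, and the corollary drawn from this theorem (Corollary~\ref{cor:altthin}) relies on the thin structure of $L$ being preserved under $\nu$. So even granting the geometry you describe, you would be proving a different, weaker statement than the one needed.

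Beyond this, the step you single out as the ``main technical obstacle''--- sliding $\beta$'s critical points past $c_1$ while keeping $D^*$ pointwise fixed --- is asserted rather than established. The strands of $\beta$ in the slab $[h_P,c_1]$ can be linked with one another, with $\alpha$, and with $D^*$, and a critical point of such a strand cannot simply be slid upward without crossing other strands; this is an isotopy of a tangle, not of a single unknotted arc. The remark that the absence of saddles makes the apex a ``tame crossing'' does not address the tangle-theoretic obstruction, only a local smooth-topology one. Finally, you also implicitly assume $\alpha$ has a single braid box and that its interval is $[h_P,c_1]$; neither is automatic from your construction. The intended proof has to work slab-by-slab between consecutive thin levels of $L$, rearranging the heights of the $\alpha$- and $\beta$-critical points within each slab so that, inside that slab, the two collections occupy disjoint height sub-intervals, without ever crossing a thin level of $L$. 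Your global ``evacuate the slab'' approach does not reduce to that, and cannot be a horizontal isotopy.
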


\begin{rmk} As $\nu$ is a horizontal isotopy it does not change the total number or the heights of the thin spheres for $L$. \end{rmk}

\begin{cor}\label{cor:altthin}
Let $L$ be a prime knot embedded in $S^2\times I$, let $P$ be
a thin sphere for $L$, let $D^*$ be a vertical c-disk for $P$ above it and let $P'$ be the thin sphere directly above $D^*$. Then either there are some thin spheres between $P$ and $P'$ or all critical points for $L$ between $P$ and $P'$ are on the same side of $D^*$. In particular, in the latter case c-compressing $P$ along $D^*$ results in a component parallel to $P'$.
\end{cor}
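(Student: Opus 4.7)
The plan is to invoke Theorem~\ref{thm:disjoint} applied to $L$ and the vertical c-disk $D^*$. This provides a horizontal isotopy $\nu$ after which the braid box intervals $\{[a_i^-, a_i^+]\}$ of the subtangle $\alpha$ and $\{[b_j^-, b_j^+]\}$ of the subtangle $\beta$ are pairwise disjoint. By the remark following Theorem~\ref{thm:disjoint}, $\nu$ preserves both the number and the heights of the thin spheres of $L$, so $P$ and $P'$ remain thin spheres at their original heights and $D^*$ is unchanged.

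Next I would suppose that the first alternative fails: no thin sphere of $L$ lies strictly between $P$ and $P'$. My goal is to show every critical point of $L$ at heights strictly between those of $P$ and $P'$ sits on a single side of $D^*$. Assume instead that both sides contain a critical point in that range. Then among the critical heights of $L$ in that range I can pick consecutive values $t_\alpha < t_\beta$ of different subtangle types. Theorem~\ref{thm:disjoint} applied to their braid boxes $[a^-, a^+]$ and $[b^-, b^+]$, combined with $t_\alpha < t_\beta$, forces $a^+ < b^-$; the consecutive choice further prevents any critical point of $L$ from lying in $(a^+, b^-)$. Because the lower, resp.\ upper, boundary of a braid box is a thin level of its subtangle, unwinding that definition shows that the topmost critical point in $[a^-, a^+]$ is a maximum and the bottommost critical point in $[b^-, b^+]$ is a minimum. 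Hence for any $t \in (a^+, b^-)$ the highest critical point of $L$ below $t$ is a maximum and the lowest above $t$ is a minimum, so $h^{-1}(t)$ is a thin sphere of $L$ lying strictly between $P$ and $P'$, contradicting the hypothesis.

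For the ``in particular'' assertion, assume without loss of generality that all critical points of $L$ between $P$ and $P'$ lie on the $\beta$ side of $D^*$. C-compression of $P$ along $D^*$ produces two spheres, and I would single out the component $S_\alpha$ consisting of the $\alpha$-side disk of $P$ cut off by $\partial D^*$, capped by a push-off of $D^*$ into the $\alpha$ region. Because the $\alpha$ side of $D^*$ in the slab between $P$ and $P'$ contains no critical points of $L$, the portion of $K$ sitting there is monotone with respect to $h$, and a standard product argument identifies the region between $S_\alpha$ and $P'$ with a product $S^2 \times I$ through which $K$ runs vertically. This exhibits $S_\alpha$ as parallel to $P'$ in the knot complement.

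I expect the main obstacle to be the middle step, namely justifying carefully that between two adjacent oppositely-typed braid boxes the heights strictly between them yield a thin level sphere of $L$. This relies on deducing from the definition of a subtangle thin level at the boundary of each braid box that the box must contain at least one minimum at its bottom and one maximum at its top, so that the extrema required to witness the thin-sphere condition for $L$ are genuinely present.
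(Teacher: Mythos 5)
Your proof follows essentially the same route as the paper's: invoke Theorem~\ref{thm:disjoint} to arrange disjoint braid-box intervals, then derive a contradiction by producing a thin level sphere of $L$ strictly between $P$ and $P'$. The paper's own proof is terse — it simply asserts that a level sphere disjoint from all braid boxes is necessarily thin — whereas you unpack this by choosing consecutive critical heights $t_\alpha < t_\beta$ of opposite subtangle types and reading the max/min pattern off the braid-box definition. That expansion is reasonable and is the right thing to try to make explicit.

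One point to tighten: you assert that the topmost critical point of a braid box $[a^-,a^+]$ is a maximum because its upper boundary is a subtangle thin level. This is correct for an interior braid box (one sandwiched between two genuine thin levels of the subtangle must contain both a minimum and a maximum), but the lowest braid box sits just above $P$ and the highest just below $P'$, neither of which need be a thin level of the subtangle $\alpha$ or $\beta$; a priori one of those extremal boxes could contain only minima or only maxima. Your choice of $t_\alpha,t_\beta$ as \emph{consecutive} critical heights of $L$ handles this issue in most configurations, but you should check the boundary cases rather than appeal uniformly to ``the boundary of the braid box is a subtangle thin level.'' Your sketch of the ``in particular'' assertion is plausible and in the right spirit (the side of $D^*$ containing no critical points cobounds a vertical product with $P'$), though you should be careful that the component you single out really is the one whose region with $P'$ is the product — this depends on which side of $D^*$ is the ball and which is the $S^2\times I$ region, a point worth a sentence of justification.
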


\begin{proof}
Suppose $P$ and $P'$ are adjacent thin spheres and suppose $L$ has critical points on both sides of $D^*$. By Theorem \ref{thm:disjoint}, we may assume that the braid boxes for the two sides are disjoint. However, a level sphere that is disjoint from all braid boxes is necessarily thin and therefore $P$ and $P'$ are not adjacent.
\end{proof}

\section{Vertical cut-disks}\label{Vertical cut-disks}

Let $K$ be a knot in $S^3$ and let $h:S^3 \rightarrow \mathbb{R} \cup \{- \infty, + \infty\}$ be the standard height function on $S^3$. Suppose that $K$ is in thin position with respect to $h$. Let $P=h^{-1}(r)$ be a level sphere and suppose $P$ has a c-disk, $C$, that lies above it.

We first introduce some notation and definitions. Figure \ref{fig:labels1.eps} illustrates all of the terminology outlined below. Let $\digamma_{C}$ be the singular foliation on the
c-disk $C$ induced by $h|_{C}$. A {\em saddle} is any leaf of this foliation homeomorphic to the wedge of two circles. By standard position, we can assume that all saddles of $\digamma_{C}$ are disjoint from $K$.

Given a saddle $\sigma = s_{1}^{\sigma} \vee s_{2}^{\sigma}$ in a level sphere $S_{\sigma}=(h^{-1}\circ h)(\sigma)$, let $D_{1}^{\sigma}$ be the closure of the component of $S_{\sigma}-s_{1}^{\sigma}$ that is disjoint from $s_{2}^{\sigma}$ and $D_{2}^{\sigma}$ be the closure of the component of $S_{\sigma}-s_{2}^{\sigma}$ that is disjoint from $s_{1}^{\sigma}$.

A subdisk $D$ in $\digamma_{C}$ is monotone if its boundary is entirely contained in a leaf of $\digamma_{C}$ and the interior of $D$ is disjoint from every saddle in $\digamma_{C}$. In practice, we will use the term subdisk in a slightly broader sense, allowing $\partial D$ to be immersed in $C$, where if $\partial D$ is immersed, then $\partial D$ is a saddle. We say a monotone disk is {\em outermost} if its boundary is $s_{i}^{\sigma}$ for some saddle $\sigma$ and label the disk $D_{\sigma}$. Similarly, if some $s_{i}^{\sigma}$ bounds an outermost disk $D_{\sigma}$, we say $\sigma$ is an outermost saddle. It will usually be the case that only one of $s_{1}^{\sigma}$ and $s_{2}^{\sigma}$ is the boundary of an outermost disk, so, our convention is to relabel so that $\partial D_{\sigma} = s_{1}^{\sigma}$.

Suppose $\sigma$ is an outermost saddle. The level sphere $S_{\sigma}$ cuts $S^{3}$ into two 3-balls. The ball that contains $D_{\sigma}$ is again cut by $D_{\sigma}$ into two 3-balls $B_{\sigma}$ and $B'_{\sigma}$. We choose the labeling of $B_{\sigma}$ and $B'_{\sigma}$ so that $\partial B_{\sigma} =D_{1}^{\sigma} \cup D_{\sigma}$.

We say $\sigma$ is an {\em inessential saddle} if $\sigma$ is an outermost saddle and $D_{\sigma}$ is disjoint from $K$. An {\em n-punctured disk} denotes a disk embedded in $S^{3}$
that meets $K$ transversely in exactly $n$ points. An embedded simple closed
curve in a c-disk $C$ is {\em c-inessential} if it bounds
a 1-punctured disk in $C$. Similarly, $\sigma$ is a {\em c-inessential saddle} if $\sigma$ is an outermost saddle and $D_{\sigma}$ meets $K$ exactly once. We say $\sigma$ is a {\em removable saddle} if
$\sigma$ is an outermost saddle where $D_{\sigma}$ has a unique maximum (minimum) and $h|_{K \cap B_{\sigma}}$ has a
local end-point maximum (minimum) at every point of $K \cap D_{\sigma}$. See Figure \ref{fig:removeable.eps}.

\begin{figure}[h]
\centering \scalebox{1}{\includegraphics{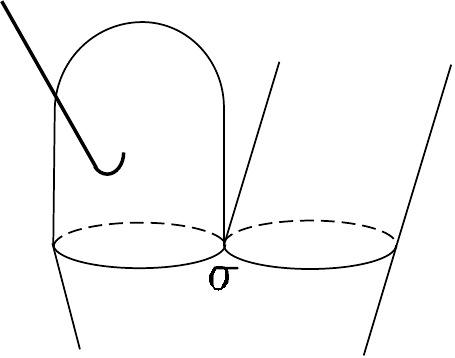}}
\caption{}\label{fig:removeable.eps}
\end{figure}

We say a saddle $\sigma$ in $\digamma_{C}$ is {\em standard} if there is a monotone disk $E_{\sigma}$ such that $\partial(E_{\sigma}) = \sigma$. If $\sigma$ is a standard saddle, $A_{\sigma}$ is the 3-ball with boundary $E_{\sigma} \cup D_{1}^{\sigma} \cup D_{2}^{\sigma}$ such that $A_{\sigma}\cap S_{\sigma} = D_{1}^{\sigma} \cup D_{2}^{\sigma}$.

By general position arguments, we can assume every saddle $\sigma$
in $\digamma_{C}$ has a bicollared neighborhood in $C$ that is
disjoint from $K$ and all other singular leaves of $\digamma_{C}$.  The boundary of this bicollared neighborhood
consists of three circles $c_{1}^{\sigma}$, $c_{2}^{\sigma}$, and $c_{3}^{\sigma}$ where
$c_{1}^{\sigma}$ and $c_{2}^{\sigma}$ are parallel to $s_{1}^{\sigma}$ and $s_{2}^{\sigma}$
respectively. We can assume $c_{1}^{\sigma}$, $c_{2}^{\sigma}$, and $c_{3}^{\sigma}$ are level
with respect to $h$ and that $c_{1}^{\sigma}$ and $c_{2}^{\sigma}$ lie in the same
level surface. The terminology for this section is summarized in Figure \ref{fig:labels1.eps}.

\begin{figure}[h]
\centering \scalebox{1.2}{\includegraphics{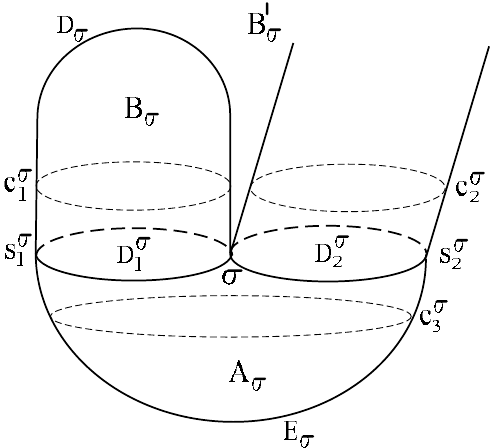}}
\caption{}\label{fig:labels1.eps}
\end{figure}

\begin{defin} A c-disk for a level sphere is {\em vertical} if it does not have any saddles with respect to $h$.

\end{defin}

In \cite{Wu}, it is shown that any compressing disk is isotopic to a vertical compressing disk by an isotopy that does not change the width of the knot. This is generally not true of cut-disks, but we will now show that under certain conditions cut-disks can also be isotoped to be vertical. Many of the arguments in this section are extensions of techniques developed by Schultens in \cite{Schu2} and extended by Blair in \cite{Bl}. We will need the following definitions.

 \begin{defin} A c-disk, $C$, for a level surface $P$ is taut with respect to $h$ if the number of saddles in $\digamma_{C}$ is minimal subject to the condition that $K$ is a minimal width embedding and $P$ is a level surface.\end{defin}

\begin{defin}
Following \cite{Kob}, a sphere $P$ in $S^3$ is called {\em bowl-like} with respect to a height function $h$ if it can be decomposed into two disks, $E_1$ and $E_2$, glued along their boundary such that $E_1$ is contained in a level surface for $h$ and $E_2$ is a monotone disk disjoint from $K$.
\end{defin}

\begin{lemma}\label{inesssaddle} Assume $P$ and $P'$ are adjacent thin spheres with $P'$ above $P$ and $C$ is a cut-disk for $P$ above it but disjoint from $P'$. We allow the special case where $P$ is the highest thin sphere and $P'$ is a level sphere above it disjoint from $K$. If $\digamma_{C}$ contains an inessential saddle, then $C$ is not taut.\end{lemma}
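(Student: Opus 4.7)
The plan is to start from any inessential saddle $\sigma$ of $\digamma_{C}$ and exhibit an ambient isotopy of $(S^{3},K)$, supported in a neighborhood of $B_{\sigma}$, that keeps $P$ level and $K$ at minimal width but produces a c-disk $C'$ for $P$ with $|\digamma_{C'}|<|\digamma_{C}|$; this will contradict the tautness of $C$. Replacing $h$ by $-h$ if necessary, I may assume the unique interior critical point of the outermost monotone disk $D_{\sigma}$ is a maximum, so that $B_{\sigma}$ lies above the level disk $D_{1}^{\sigma}$.

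The first step handles the simpler case $B_{\sigma}\cap K=\emptyset$. Then $B_{\sigma}\subset S^{3}\setminus K$, and an isotopy of $C$ inside $B_{\sigma}$ pushes $D_{\sigma}$ across onto $D_{1}^{\sigma}$ and then off by a small perturbation. This isotopy cancels the saddle $\sigma$ together with the interior maximum of $D_{\sigma}$, producing a c-disk $C'$ for $P$ with strictly fewer saddles. Since $K$ and $P$ are untouched, $K$ remains in thin position and $P$ remains level, so $C$ is not taut.

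The second step reduces the general situation to the first by an auxiliary isotopy of $K$. Each component of $B_{\sigma}\cap K$ is a properly embedded arc in the ball $B_{\sigma}$ with both endpoints on $D_{1}^{\sigma}$, because $D_{\sigma}$ is disjoint from $K$; in particular, every such arc contains at least one local maximum in the interior of $B_{\sigma}$. I would use the bowl-like sphere $D_{\sigma}\cup D_{1}^{\sigma}$, which bounds $B_{\sigma}$ and meets $K$ only along $D_{1}^{\sigma}$, as a template to isotope each arc of $K\cap B_{\sigma}$ across $D_{1}^{\sigma}$ into a thin horizontal collar just below $S_{\sigma}$. This is the bowl-move used by Schultens in \cite{Schu2} and adapted to cut-disks by Blair in \cite{Bl}: each interior maximum of an arc inside $B_{\sigma}$ is exchanged for a minimum of the resulting arc immediately beneath $S_{\sigma}$.

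The main obstacle, and the step that uses the hypothesis on $P'$, is verifying that this bowl-move does not increase the width of $K$. Because $S_{\sigma}$ lies strictly between the adjacent thin spheres $P$ and $P'$ and $C$ is disjoint from $P'$, every maximum being exchanged sits in the slab between $P$ and $P'$, and the new minima produced by the move remain in this same slab; in the special case where $P$ is the highest thin sphere, the slab is instead the region above $P$ containing $P'$, and the same accounting applies. A level-by-level comparison using the Scharlemann--Schultens width formula \cite[Lemma 6.2]{SchSch} then yields $w(K',h)\le w(K,h)$, and since $K$ was in thin position equality must hold. With $B_{\sigma}$ now disjoint from the new embedding, the first step removes $\sigma$ and produces a c-disk for $P$ with strictly fewer saddles, contradicting the tautness of $C$.
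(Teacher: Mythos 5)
Your proposal follows the same broad strategy as the paper --- eliminate the inessential saddle $\sigma$ by pushing $D_\sigma$ (and the arcs of $K$ obstructing it) across the level disk $D_1^\sigma$, and argue the isotopy cannot increase width because it is confined to the thick slab between $P$ and $P'$. However, there are two genuine gaps.

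First, you never address the possibility that $B_\sigma$ contains $\pm\infty$. After normalizing so that $D_\sigma$ has a maximum, $B_\sigma$ is indeed above $D_1^\sigma$, but nothing in the hypotheses prevents $B_\sigma$ from being the ``large'' side of the sphere $D_\sigma\cup D_1^\sigma$, hence containing $+\infty$, and therefore also containing $P'$ and the entire portion of $K$ above it. In that situation the arcs of $K\cap B_\sigma$ are not confined to the slab between $P$ and $P'$: they carry minima above $P'$, the isotopy is no longer supported between the thin spheres, and the width accounting you propose has no purchase. The paper devotes a separate, considerably longer argument to exactly this configuration (Case 1b, and the symmetric step of making $P'$ bowl-like in Case 2), and some version of that argument cannot be avoided.

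Second, the central claim of your Step 2 --- that the bowl-move ``exchanges each interior maximum of an arc inside $B_\sigma$ for a minimum immediately beneath $S_\sigma$'' --- is not what the isotopy actually produces. The endpoints of each arc $\alpha\subset K\cap B_\sigma$ sit on $D_1^\sigma$, where $K$ crosses the level transversely with the far side of $K$ descending. If $\alpha$ is pushed below $D_1^\sigma$ into a $\cup$--shape, the two gluing points become new local maxima of $h|_K$: the move converts one maximum into one minimum \emph{plus two new maxima}, increasing the number of critical points. One can cancel a max--min pair afterward, but then the net effect is that the maximum has merely been lowered --- which is precisely what the Pop-out Lemma achieves directly, without the intermediate increase in critical points. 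As written, the statement ``$w(K',h)\leq w(K,h)$ by a level-by-level comparison'' is asserted but not justified: the Scharlemann--Schultens formula compares thin/thick widths, and if the move genuinely raises the number of maxima in the slab, that inequality needs an explicit argument rather than an appeal to the formula. The paper avoids this entirely by invoking a move that provably creates no new critical points of $h|_K$, so that the conclusion follows from the standard observation that rearranging critical heights inside a thick region (where all maxima sit above all minima) cannot increase width.
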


\begin{proof}
Suppose $\sigma$ is an inessential saddle in $\digamma_{C}$.

\medskip

\noindent \textbf{Case 1:} Suppose $D_{\sigma}$ contains a unique minimum. Call this point $a$.

\noindent{\em Case 1a:} Additionally, suppose $B_{\sigma}$ does not contain $-\infty$. Use the ``Pop out Lemma" \cite[Lemma 2]{Schu2} to eliminate $\sigma$, see Figure \ref{fig:popout.eps}. This isotopy fixes all of $S^3$ below $D_{\sigma}$ and above $P'$. However, $D_{\sigma}$ is contained strictly above the level sphere $P$. Hence, this isotopy eliminates $\sigma$ while fixing $K$ below $P$ and above $P'$ and not creating any new critical points for $h|_K$. Since all maxima of $h|_K$ are above all minima of $h|_K$ in the region between $P$ and $P'$, altering the relative heights of the critical points without creating any new critical points can only decrease the width of $K$. This is a contradiction to $C$ being taut.

\begin{figure}[h]
\centering \scalebox{0.6}{\includegraphics{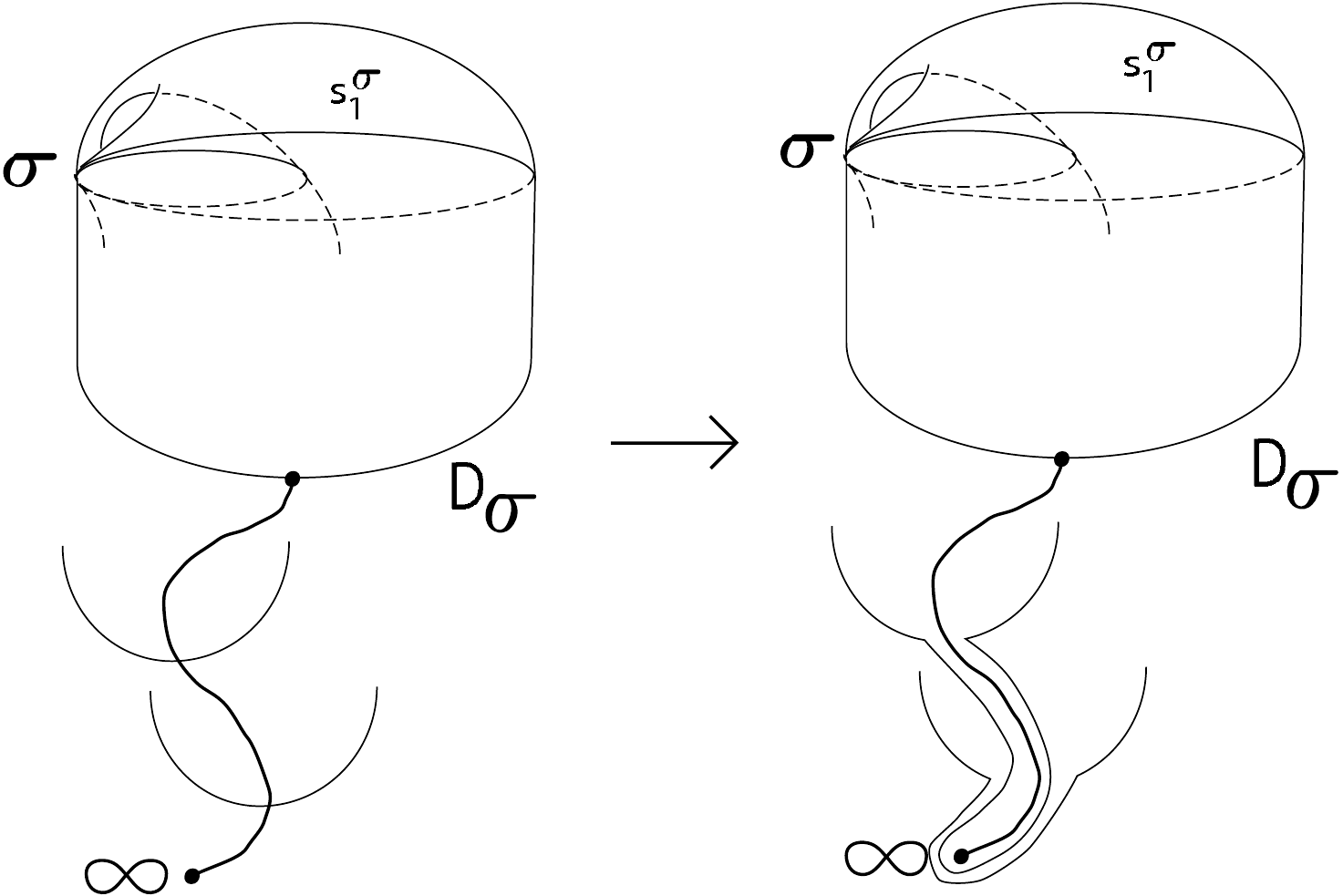}}
\caption{}\label{fig:popout.eps}
\end{figure}

\noindent{\em Case 1b:} Additionally, suppose $B_{\sigma}$ contains $-\infty$. We will describe a sequence of isotopies that allows us to decrease the number of saddles, see Figure \ref{fig:c-disk2}.

Let $\alpha$ be a monotone
arc with endpoints $a$ and $-\infty$ that misses $K$ and intersects
$C$ only at local minima.  Label the points of $\alpha \cap C$ in
order of decreasing height with $a, a_{1},..., a_{n}$. Since $C$ lies above $P$, $\alpha$ meets $P$ in a single point $b$ where $h(a_n) > h(b)$. See Figure \ref{fig:c-disk2}. Again by general position, we can assume none of the $a_{i}'s$ or $b$ lie on $K$. The following isotopy is a modification of the isotopy presented in \cite[Lemma 1]{Schu2}.

Let $S_{+}$ be a level sphere contained in a small neighborhood of
$- \infty$ such that $S_{+}$ does not meet $K$ or $P$. Let
$\alpha_{b}$ be a subarc of $\alpha$ with endpoints $b$ and $-\infty$.  Enlarge $\alpha_b$ slightly to be a vertical solid
cylinder $V$ such that $\partial V$ consists of a small neighborhood of $b$ in $P$, a small disk in $S_{+}$ and a vertical annulus, $A$. Replacing $P$ with the isotopic surface $(C-V) \cup A \cup (S_{+}-V)$ represents an isotopy of
$P$ in $S^{3}-K$ that fixes $K$ and results in $P$ being bowl-like, but not level.

Let
$\alpha_{n}$ be a subarc of $\alpha$ with endpoints $a_{n}$ and $-\infty$.  Enlarge $\alpha_{n}$ slightly to be a vertical solid
cylinder $V$ such that $\partial V$ consists of a small neighborhood of $a_{n}$ in $C$, a small disk in $S_{+}$ and a vertical annulus, $A$. Replacing $C$ with the
cut-disk $(C-V) \cup A \cup (S_{+}-V)$ represents an isotopy of
$C$ in $S^{3}-K$ that fixes $K$, does not change the number of saddles of $\digamma_C$ and preserves $P$ as bowl-like.

By induction on $n$, we can assume $\alpha$ is disjoint from $C$ and $P$
except at the point $a$.  By isotopying $D_{\sigma}$ to a new disk
$D^{*}_{\sigma}$ in the manner described above, we have enlarged
$B'_{\sigma}$ to contain $- \infty$ and shrunk
$B_{\sigma}$ so that it is disjoint from $- \infty$.  After
a small tilt so that $h$ again restricts to a Morse function on
$D^{*}_{\sigma}$, $\digamma_{D^{*}_{\sigma}}$ is a collection of
circles and one maximum. The resulting cut-disk $C^{*}$ is
isotopic to $C$ via an isotopy that leaves $\sigma$ and $K$ fixed and does not change the number of saddles of $\digamma_{C}$.

By the ``Pop out Lemma" \cite[Lemma 2]{Schu2}, we can eliminate $\sigma$ without introducing any new maxima to $h_{K}$ or new saddles to $\digamma_C$ and while preserving $P$ as bowl-like.

Since $P$ is now bowl-like, it can be decomposed into two disks $E_1$ and $E_2$ as in the definition of bowl-like. Let $a$ be the unique minimum on $E_2$. Again choose a monotone arc $\alpha$ with endpoints $a$ and $-\infty$ that misses $K$ and intersects
$C$ only at local minima. The arc $\alpha$ is disjoint from $P$ except at $a$. Label the points of $\alpha \cap C$ in
order of decreasing height with $a_{1},..., a_{n}$. Again by general position, we can assume none of the $a_{i}'s$ lie in $K$. Repeat the above argument to produce an isotopic copy of $C$ with the same number of saddles that is disjoint from $\alpha$. Horizontally shrink and vertically lower $P$ until it is strictly below all of $C$. Please see the last isotopy in Figure \ref{fig:c-disk2}. Let $S_-$ be 2-sphere boundary of a regular neighborhood of $-\infty$ so that $S_-$ is disjoint from $K$, $P$ and $C$. After lowering $P$ into the neighborhood of $-\infty$ and expanding $P$ to fill the neighborhood, we have isotoped $P$ to $S_-$ while preserving the width of $K$ below $P$ and above $P'$. Since we have produced an isotopy that decreases the number of saddles of $\digamma_C$ while not introducing any new maxima to $h|_K$ and fixing $K$ below $P$ and above $P'$, then $C$ is not taut.

\medskip

\noindent\textbf{Case 2:} Suppose $D_{\sigma}$ contains a unique maximum. The argument is symmetric to the one in case 1 above. If necessary, isotope $P'$ to be bowl-like to guarantee that $B_{\sigma}$ does not contain $\infty$, then apply the ``Pop out Lemma" to reduce the number of saddles for $C$. Finally, restore $P'$ to be level. As in Case 1, these isotopies do not affect the width of $K$ below $P$ and above $P'$ and do not introduce new critical points for $K$, so they do not increase the width of the knot.

\end{proof}

\begin{figure}[h]
\centering \scalebox{0.4}{\includegraphics{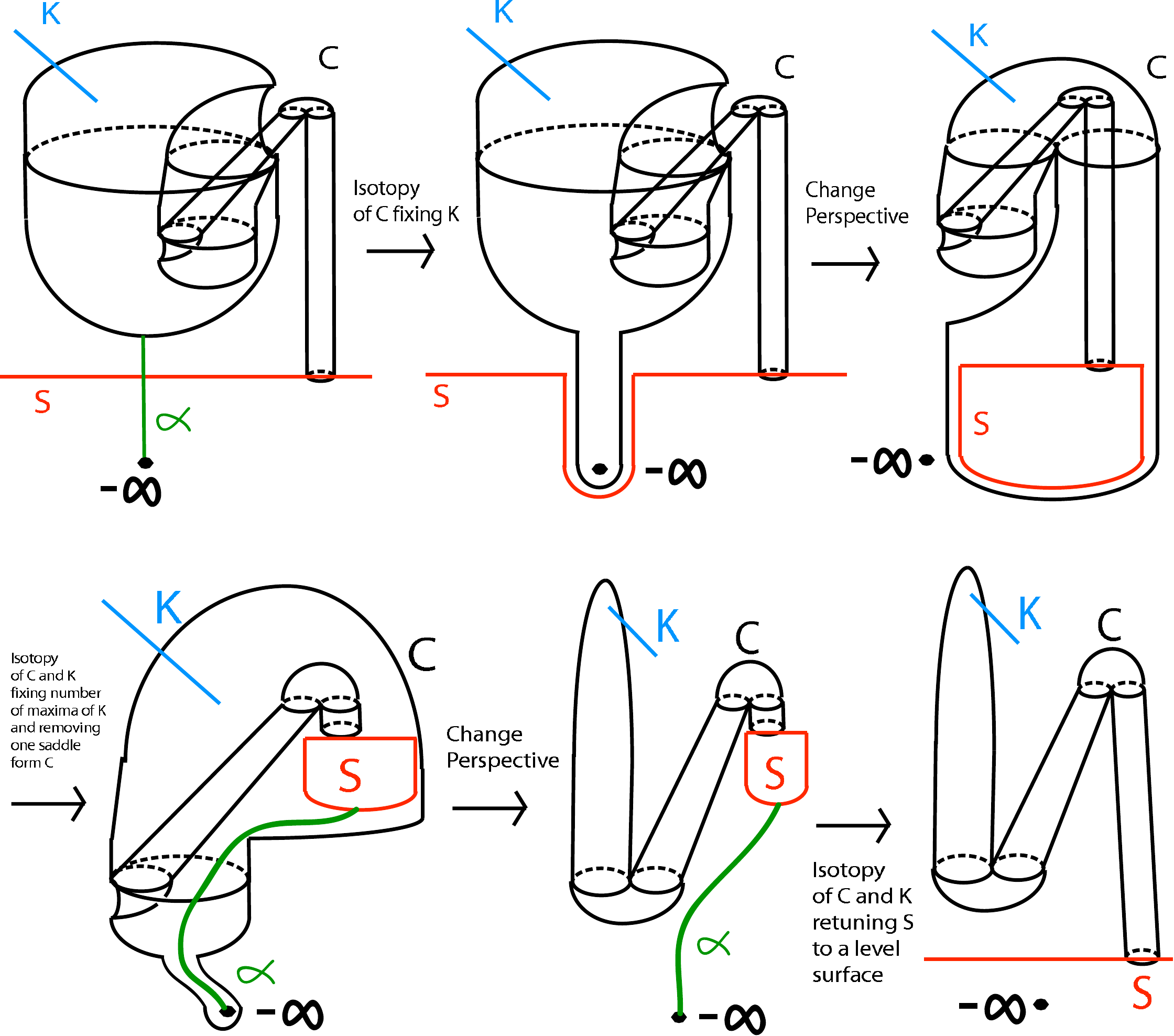}}
\caption{}\label{fig:c-disk2}
\end{figure}

\begin{cor}\label{non-standard} Assume $P$ and $P'$ are adjacent thin spheres with $P'$ above $P$ and $C$ is a cut-disk for $P$ above it but disjoint from $P'$. If $\digamma_C$ contains a non-standard saddle $\sigma$, then $C$ is not taut.
\end{cor}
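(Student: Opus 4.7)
The plan is to reduce Corollary \ref{non-standard} to Lemma \ref{inesssaddle} by producing, from any non-standard saddle $\sigma$ of $\digamma_{C}$, an inessential saddle of $\digamma_{C}$. The key resource is that $C$ is a cut-disk, so $|C\cap K|=1$ and $C$ carries exactly one puncture $p$; this scarcity of punctures will let us locate an outermost disk that avoids $K$.

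First I would use non-standardness to isolate a lobe of $\sigma$ with internal structure. Since no monotone subdisk of $C$ has boundary equal to the figure-eight $\sigma$, at least one of $s_{1}^{\sigma}$ and $s_{2}^{\sigma}$ fails to bound a monotone subdisk of $C$; after relabeling I may assume the subdisk $C_{1}\subset C$ bounded by $s_{1}^{\sigma}$ contains another saddle of $\digamma_{C}$. Because $\digamma_{C}$ has finitely many saddles, an innermost argument inside $C_{1}$ produces an outermost saddle $\sigma'$ with outermost disk $D_{\sigma'}\subset C_{1}$. Since $\mathrm{int}(D_{\sigma'})\subset\mathrm{int}(C_{1})$ is disjoint from $\sigma$ and from every other saddle of $\digamma_{C}$, the disk $D_{\sigma'}$ is genuinely outermost in $C$ and not merely in $C_{1}$.

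Next I would run a short case analysis on the location of $p$. If $p\notin C_{1}$, then $D_{\sigma'}\cap K=\emptyset$, so $\sigma'$ is inessential. Otherwise $p\in C_{1}$, and I look at the opposite lobe $C_{2}\subset C$ bounded by $s_{2}^{\sigma}$, which is now disjoint from $K$. If $C_{2}$ is itself monotone, then $\sigma$ is outermost with $D_{\sigma}=C_{2}$ disjoint from $K$, so $\sigma$ itself is inessential; if instead $C_{2}$ contains a saddle, the same innermost argument applied inside $C_{2}$ yields an outermost saddle $\sigma''$ with $D_{\sigma''}\subset C_{2}$ and $D_{\sigma''}\cap K=\emptyset$. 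In every case we exhibit an inessential saddle of $\digamma_{C}$, so Lemma \ref{inesssaddle} concludes that $C$ is not taut.

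The main obstacle I anticipate is bookkeeping: verifying that ``outermost inside a lobe'' coincides with ``outermost in $C$'' and that the case analysis really is exhaustive. Both points come down to the fact that $C\setminus\sigma$ is partitioned into the two lobes $C_{1}$ and $C_{2}$ and the complementary outer region, together with the general-position assumption (stated just before the definition of $\digamma_{C}$) that $\sigma\cap K=\emptyset$, so the single puncture $p$ lies in the interior of exactly one of these three pieces. Once these points are confirmed the argument reduces cleanly to Lemma \ref{inesssaddle}, whose isotopy does not disturb the hypotheses on $P$ and $P'$.
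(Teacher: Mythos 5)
Your overall strategy is the same as the paper's: use the non-standardness of $\sigma$ together with the single puncture of $C$ to produce two disjoint outermost disks, observe that at least one of them must avoid $K$, and invoke Lemma~\ref{inesssaddle}. The difficulty is that your two candidate disks are not always disjoint, and this is precisely where the paper is more careful.

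You take $C_{1}$ and $C_{2}$ to be the disks in $C$ bounded by $s_{1}^{\sigma}$ and $s_{2}^{\sigma}$ (on the side away from $\partial C$), and in your final paragraph you assert that ``$C\setminus\sigma$ is partitioned into the two lobes $C_{1}$ and $C_{2}$ and the complementary outer region.'' That describes only the \emph{un-nested} configuration, in which $\partial C$ lies in the component of $C\setminus\sigma$ whose frontier is all of $\sigma$. It is also possible that $\partial C$ lies in a component whose frontier is a single loop, say $s_{1}^{\sigma}$; then $s_{2}^{\sigma}$ sits inside $C_{1}$, so $C_{2}\subset C_{1}$, and $C_{1}$ is not a component of $C\setminus\sigma$ at all (it is the union of the middle region, the inner lobe, and an arc of $\sigma$). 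In this nested case your Case 2 breaks: $p\in C_{1}$ does not force $C_{2}$ to be disjoint from $K$, since $p$ may lie in $C_{2}\subset C_{1}$, and then the outermost disk you extract from $C_{2}$ can contain the puncture. The paper avoids this by working with the three boundary circles $c^{\sigma}_{1},c^{\sigma}_{2},c^{\sigma}_{3}$ of a pair-of-pants neighbourhood of $\sigma$: exactly two of them bound disks in $C$ disjoint from the pair of pants, and which two depends on the nesting (hence the closing sentence of the paper's proof listing the three possibilities $\{c_1,c_3\}$, $\{c_1,c_2\}$, $\{c_2,c_3\}$). Those two disks \emph{are} always disjoint, so the ``one puncture, two outermost disks'' count goes through.

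The repair is small: replace $C_{1},C_{2}$ by the closures of the two components of $C\setminus\sigma$ that do not contain $\partial C$. These are always two disjoint disks (in the nested case one of them is the middle region bounded by the whole figure-eight $\sigma$, not by a single $s_{i}^{\sigma}$), and non-standardness of $\sigma$ says exactly that at least one of them contains a saddle. With that substitution your case analysis on the location of $p$ is exhaustive and the argument closes as you intend. As written, though, the nested configuration is a genuine hole.
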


\begin{proof}
Suppose $\digamma_C$ contains a non-standard saddle $\sigma$. By definition of non-standard, $c^{\sigma}_{3}$ does not bound a monotone disk in $C$. Since $c^{\sigma}_1$, $c^{\sigma}_2$ and $c^{\sigma}_3$ are the boundary components of an embedded pair of pants in $C$, then two of these curves, assume $c^{\sigma}_1$ and $c^{\sigma}_3$, bound, possibly punctured, disks in $C$ denoted $E^*_1$ and $E^*_3$ respectively. Both $E^*_1$ and $E^*_3$ are disjoint from $c^{\sigma}_1 \cup c^{\sigma}_2 \cup c^{\sigma}_3$. Either $\sigma$ is outermost or a saddle in $\digamma_{E^*_1}$ is outermost. By hypothesis, $E^*_3$ contains a saddle. Hence, $E^*_3$ contains an outermost saddle. Since $\digamma_C$ contains two outermost saddles and $C$ meets $K$ exactly once, one of these outermost saddles is inessential. By Lemma \ref{inesssaddle}, $C$ is not taut. The argument follows similarly if $c^{\sigma}_1$ and $c^{\sigma}_2$ or $c^{\sigma}_2$ and $c^{\sigma}_3$ bound disks in $C$.
\end{proof}

The cut-disk $C$ decomposes the $3$-ball above $P$ into two $3$-balls $B_{1}$ and $B_{2}$. Let $\sigma$ be a saddle in $\digamma_{C}$ and
$Q$ be the level sphere either just above or just below $\sigma$ that contains $c_{1}^{\sigma}$ and $c_{2}^{\sigma}$.
The surface $Q-(c_{1}^{\sigma} \cup c_{2}^{\sigma})$ is composed of two disks and an annulus $A$.
If a collar of $\partial A$ in $A$ is contained in $B_{1}$, then we
say $\sigma$ is {\em unnested} with respect to $B_{1}$. If not, we say
$\sigma$ is nested with respect to $B_{1}$. We define nested and
unnested with respect to $B_{2}$ similarly.  Note that nested with
respect to $B_{1}$ is the same as unnested with respect to $B_{2}$
and nested with respect to $B_{2}$ is unnested with respect to
$B_{1}$.

 Two saddles $\sigma = s_{1}^{\sigma}
\vee s_{2}^{\sigma}$ and $\tau =s_{1}^{\tau} \vee s_{2}^{\tau}$ in $\digamma_{C}$ are
{\em adjacent} if, up to subscript labels, $s_{1}^{\sigma}$ and $s_{1}^{\tau}$
cobound an annulus in $C$ that is disjoint from $s_{2}^{\sigma}$, $s_{2}^{\tau}$, all other saddles,
and $K$. Recall that, if $\sigma$ is a standard saddle, $E_{\sigma}$ is the monotone disk in $C$ with boundary $\sigma$.

\begin{lemma}\label{nested} Assume $P$ and $P'$ are adjacent thin spheres with $P'$ above $P$ and $C$ is a cut-disk for $P$ above it but disjoint from $P'$. If $\sigma$ and $\tau$ are adjacent saddles in $\digamma_C$
such that $\sigma$ and $\tau$ are nested with respect to
different 3-balls, then $C$ is not taut.\end{lemma}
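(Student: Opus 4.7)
By Corollary~\ref{non-standard} we may assume every saddle in $\digamma_{C}$ is standard, so in particular $\sigma$ and $\tau$ bound monotone subdisks $E_\sigma,E_\tau\subset C$ with $\partial E_\sigma=\sigma$ and $\partial E_\tau=\tau$. The plan is to combine the adjacency annulus with the opposite-nesting hypothesis to build an embedded $3$-ball on one side of $C$, and then use that ball as a product region to swap the heights of $\sigma$ and $\tau$, strictly reducing the saddle count while leaving $K$ fixed and both $P$ and $P'$ level.

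First I set up the geometry. Adjacency provides, after relabeling, an embedded annulus $A_0\subset C$ cobounded by $s_1^\sigma$ and $s_1^\tau$ whose interior is disjoint from $K$ and from all other saddles; its foliation is by circles, so $A_0$ is isotopic rel $\partial$ to a horizontal annulus. The opposite-nesting hypothesis says that the annular component of $Q_\sigma\setminus(c_1^\sigma\cup c_2^\sigma)$ has a collar in, say, $B_1$, while the annular component of $Q_\tau\setminus(c_1^\tau\cup c_2^\tau)$ has a collar in $B_2$. Because traveling along $A_0$ from $\sigma$ to $\tau$ keeps us on a fixed side of $C$, the $B_1$-annulus at the height of $\sigma$ and the $B_2$-annulus at the height of $\tau$ are on the \emph{same} local side of $C$. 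These two annular pieces, together with $E_\sigma$, $E_\tau$, and $A_0$, fit together into an embedded $2$-sphere bounding a ball $W$ on that side.

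Second, $W$ is foliated as an $I$-bundle by the level surfaces of $h$, so I can isotope $C$ horizontally across $W$ rel $K$. Up to a small perturbation this interchanges the heights of $\sigma$ and $\tau$; since the two saddles approach the swap from opposite sides of $C$, their monotone caps coalesce after the swap and produce either (i)~a non-generic leaf that perturbs to a single saddle, in which case the saddle count drops by two, or (ii)~an outermost saddle whose outermost disk is disjoint from $K$, in which case Lemma~\ref{inesssaddle} applies and removes that saddle as well. Because the entire isotopy is supported in the horizontal slab between the heights of $\sigma$ and $\tau$ — which lies strictly between $P$ and $P'$ — and is disjoint from $K$, it neither changes the critical points of $h|_K$ nor the widths of any thin spheres, so the width of $K$ is unchanged. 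This contradicts the tautness of $C$.

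The main obstacle is rigorously verifying that $\partial W$ is embedded, not self-intersecting. This is exactly where the opposite-nesting assumption is essential: if $\sigma$ and $\tau$ were nested with respect to the same $3$-ball, the two annular pieces on $Q_\sigma$ and $Q_\tau$ would be forced to lie on opposite sides of $A_0$, obstructing any embedded $W$ and blocking the swap. A short case analysis, according to whether each of $E_\sigma$ and $E_\tau$ contains a maximum or a minimum and according to whether the puncture of $C$ lies inside $A_0$ or outside it, will handle all subcases uniformly and complete the argument.
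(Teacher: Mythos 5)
Your proposal shares the paper's broad strategy — use Corollary \ref{non-standard} to get monotone caps $E_\sigma,E_\tau$, use adjacency to get the disjoint annulus, and then argue that the opposite-nesting hypothesis lets you build a ball across which an isotopy cancels saddles — but the execution departs from the paper and has genuine gaps.

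First, you never address the case where the single puncture of the cut-disk $C$ lies in $E_\sigma$ or $E_\tau$. Adjacency guarantees that $A_0$ is disjoint from $K$, but it says nothing about the caps. Your plan to ``isotope $C$ horizontally across $W$ rel $K$'' tacitly requires $W$ (and in particular $E_\sigma$ and $E_\tau$) to be disjoint from $K$, and this is not automatic. The paper handles this explicitly: if $K$ meets $A\cup E_\sigma\cup E_\tau$, then the disk on the other side of $s_2^\sigma$ or $s_2^\tau$ is disjoint from $K$, so its outermost saddle is inessential and Lemma \ref{inesssaddle} finishes the job. That branch is missing from your argument.

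Second, the ball $W$ you describe is not set up correctly. Its claimed boundary combines $E_\sigma$, $E_\tau$, $A_0$, and the two \emph{annular} components of $Q_\sigma-(c_1^\sigma\cup c_2^\sigma)$ and $Q_\tau-(c_1^\tau\cup c_2^\tau)$. After gluing, $E_\sigma\cup A_0\cup E_\tau$ is an annulus with boundary $s_2^\sigma\cup s_2^\tau$; to close this to a sphere you would need to cap with two \emph{disks}, not annuli, and in any case the annular level-sphere pieces have boundary in $c_i^\sigma,c_i^\tau$, not $s_2^\sigma,s_2^\tau$, so the pieces do not even abut. You acknowledge the embeddedness issue yourself, but the more basic problem is that the proposed surface does not visibly close up. The paper's ball $B$ is bounded by $D_1^\tau\cup A\cup E_\sigma\cup D_2^\sigma$ (one monotone cap plus the annulus plus two \emph{disk} pieces of level spheres), a genuinely different and cleaner decomposition.

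Third, the ``swap'' mechanism is not justified. Merely interchanging the heights of $\sigma$ and $\tau$ does not make their caps coalesce or reduce the saddle count; you still have two saddles after the swap. And the claim that $W$ is ``foliated as an $I$-bundle by the level surfaces of $h$'' is unsupported — such a ball bounded by monotone caps and annuli is not generally a vertical product region. The paper sidesteps all of this by invoking Schultens's isotopy \cite[Lemma 3]{Schu2}, which is precisely engineered for a pair of adjacent, oppositely nested saddles: it \emph{eliminates} $\tau$ (rather than swapping it with $\sigma$) via an isotopy supported in a neighborhood of $B$, fixing everything below $P$ and above $P'$ and creating no new maxima of $h|_K$ or saddles of $\digamma_C$. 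You would either need to cite that lemma or reconstruct its isotopy carefully; the ``swap-and-coalesce'' sketch does not substitute for it.
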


\begin{proof}
Assume $\sigma$ and $\tau$ are adjacent saddles in $\digamma_C$
such that $\sigma$ and $\tau$ are nested with respect to
different 3-balls. By Corollary \ref{non-standard}, we can assume both $c_{3}^{\sigma}$ and $c_{3}^{\tau}$ bound monotone disks $E_{\sigma}$ and $E_{\tau}$ respectively.

Let $A$ be the monotone annulus in $C$ with boundary $s_{1}^{\sigma} \cup s_{1}^{\tau}$. If $K$ meets $A \cup E_{\sigma} \cup E_{\tau}$ (the annulus in $C$ with boundary $s_{2}^{\sigma} \cup s_{2}^{\tau}$), then one of $s^{2}_{\sigma}$ or $s^{2}_{\tau}$ bounds a disk $E^*$ in $C$ that is disjoint from $K$ and an outermost saddle of $\digamma_{E^*}$ is inessential. By Lemma \ref{inesssaddle}, $C$ is not taut. Hence, we can assume $K$ is disjoint from $A \cup E_{\sigma} \cup E_{\tau}$.

Without loss of generality, suppose $\sigma$ lies above $\tau$. Let $B$ be the $3$-ball in $S^3$ with boundary $D_{1}^{\tau} \cup A \cup E_{\sigma} \cup D_{2}^{\sigma}$. Use the isotopy constructed in \cite[Lemma 3]{Schu2} to eliminate $\tau$ without introducing any new saddles to $\digamma_C$ and without introducing any new maxima to $h|_K$. See Figure \ref{fig:nested}. This isotopy is supported in a neighborhood of $B$, hence, everything below $P$ and everything above $P'$ is fixed. Thus, $C$ is not taut.

\end{proof}

\begin{figure}[h]
\centering \scalebox{.5}{\includegraphics{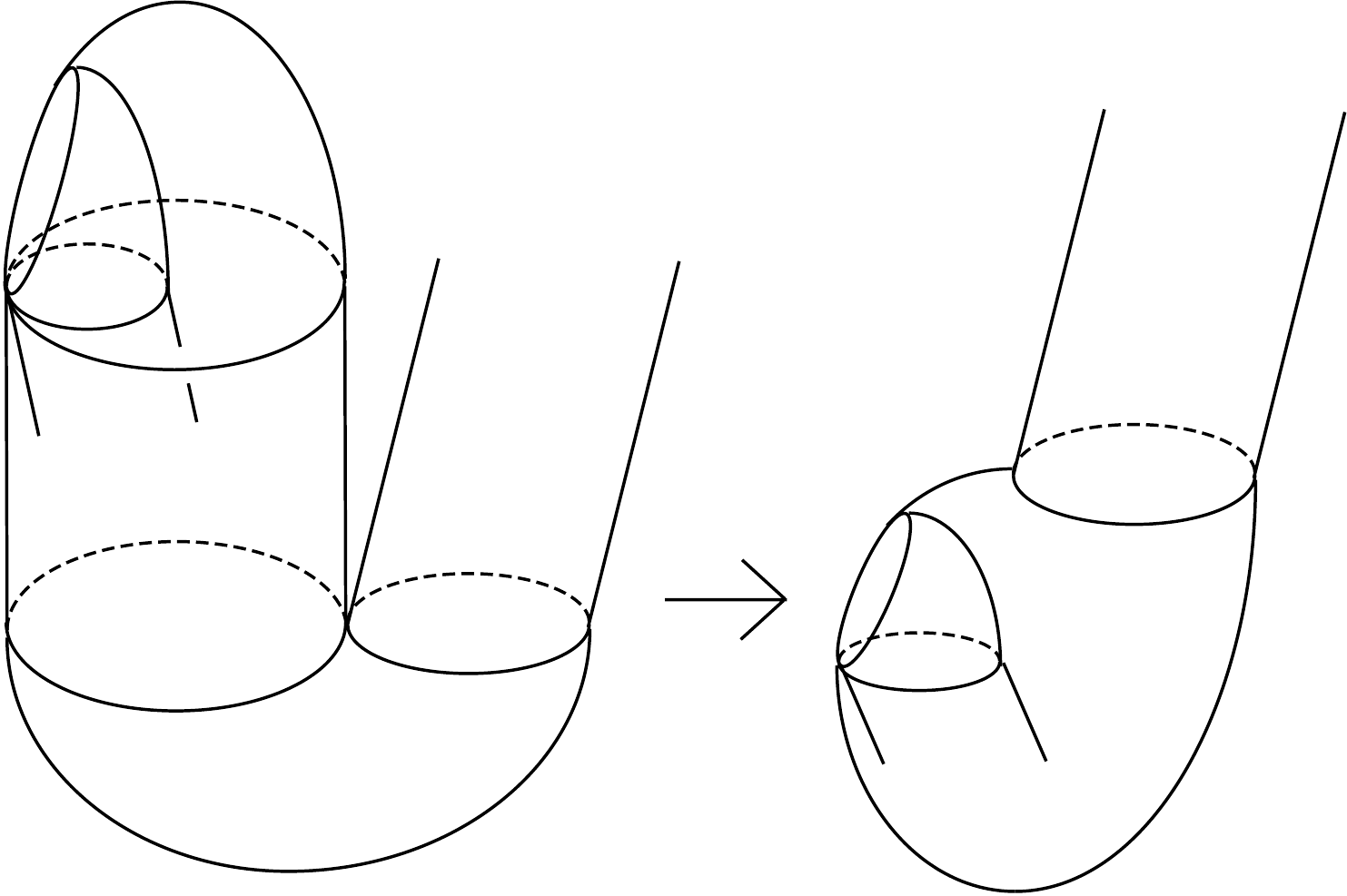}}
\caption{}\label{fig:nested}
\end{figure}

\begin{lemma}\label{C-out} Assume $P$ and $P'$ are adjacent thin spheres with $P'$ above $P$ and $C$ is a cut-disk for $P$ above it but disjoint from $P'$. If $\sigma$ is an outermost saddle in $\digamma_{C}$ such that $B_{\sigma}\cap C \neq \emptyset$, then $C$ is not taut.\end{lemma}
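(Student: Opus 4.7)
The plan is to argue by contradiction. Assume $C$ is taut and yet $\mathrm{int}(B_\sigma) \cap C \neq \emptyset$, and exhibit an inessential saddle in $\digamma_C$ in order to contradict Lemma \ref{inesssaddle}. By general position, $C \cap S_\sigma$ consists of the figure-eight $\sigma$ together with a finite (possibly empty) collection of disjoint simple closed curves, and the assumption $\mathrm{int}(B_\sigma) \cap C \neq \emptyset$ forces at least one such curve to lie in $\mathrm{int}(D_1^\sigma)$. I pick one innermost in $D_1^\sigma$: it bounds a disk $F \subset \mathrm{int}(D_1^\sigma)$ with $\mathrm{int}(F) \cap C = \emptyset$, and it bounds a subdisk $E$ of $C$ that lies entirely in $B_\sigma$ and is disjoint from $D_\sigma$. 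The sphere $E \cup F$ then bounds a 3-ball $\hat B \subset B_\sigma$ with $\hat B \cap C = E$.

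Because $|C \cap K| = 1$, there are two possibilities. If the unique point of $C \cap K$ does not lie in $D_\sigma$, then $D_\sigma \cap K = \emptyset$, so $\sigma$ is itself inessential and Lemma \ref{inesssaddle} yields the contradiction directly. Otherwise the unique point of $C \cap K$ lies in $D_\sigma$, so in particular $E \cap K = \emptyset$. In this case, if $\digamma_E$ contains any saddle, let $\tau$ be one outermost within $E$. Its outermost disk $D_\tau \subset E$ contains no saddle of $\digamma_E$, hence no saddle of $\digamma_C$, so $\tau$ is outermost in $\digamma_C$. Since $D_\tau \subset E$ is disjoint from $K$, $\tau$ is inessential and Lemma \ref{inesssaddle} again contradicts tautness.

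The main obstacle is the remaining subcase where $\digamma_E$ has no saddles, i.e., $E$ is a monotone disk with a single extremum in $\hat B$ and $\hat B$ contains no saddle of $\digamma_C$ at all, so there is no saddle immediately available to invoke Lemma \ref{inesssaddle}. Here I plan a Schultens-style pop-out isotopy in the spirit of \cite[Lemma 2]{Schu2} and the proof of Lemma \ref{inesssaddle}: push $E$ across $\hat B$ through $F$, thereby eliminating the extremum of $h|_E$ and the innermost intersection curve in $C \cap \mathrm{int}(D_1^\sigma)$. This isotopy is supported in a neighborhood of $\hat B$, leaves $K$, $P$, and $P'$ fixed, introduces no new saddles of $\digamma_C$ and no new critical points of $h|_K$; the delicate point is ensuring $\hat B \cap K = \emptyset$ so that no new intersections of $C$ with $K$ are created, which one secures by refining the choice of innermost curve relative to $K \cap D_1^\sigma$ using the vertical solid cylinder reduction from Case 1b of Lemma \ref{inesssaddle}. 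After the isotopy, the number of components of $\mathrm{int}(B_\sigma) \cap C$ strictly decreases, so by induction on this count we are eventually forced into the saddle case above, producing the desired inessential saddle and contradicting tautness.
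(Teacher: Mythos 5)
Your strategy is genuinely different from the paper's. The paper's proof of Lemma~\ref{C-out} assumes tautness, appeals to Corollary~\ref{non-standard} and Lemma~\ref{inesssaddle} to see that all saddles are standard and non-inessential, and then uses Lemma~\ref{nested} inductively along a chain of adjacent saddles to force them all to be nested with respect to the same ball $B_1$; the decomposition of $B_2$ into $B_\sigma$ together with vertical cylinders and elbows then shows $\mathrm{int}(B_\sigma)$ must be disjoint from $C$. You instead attempt an innermost-circle argument on $C\cap D_1^\sigma$. That is a legitimate alternative line of attack, but as written it has two genuine gaps.

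First, the claim that the subdisk $E\subset C$ bounded by $c$ ``lies entirely in $B_\sigma$ and is disjoint from $D_\sigma$'' is not established. Choosing $c$ innermost in $D_1^\sigma$ constrains only the disk $F\subset S_\sigma$, not the side $E$ of $c$ in $C$. Since $E$ is simply the side of $c$ not containing $\partial C$, and both $E$ and $D_\sigma$ are subdisks of the disk $C$ not containing $\partial C$, the only a priori trichotomy is $E\cap D_\sigma=\emptyset$, $E\subset D_\sigma$ (ruled out since $c\not\subset D_\sigma$), or $D_\sigma\subset E$. You give no argument excluding $D_\sigma\subset E$, and in that case $E$ contains the puncture, so the step ``$E\cap K=\emptyset$'' fails and with it both the inessential-saddle conclusion and the basis for the isotopy. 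Likewise $E$ is free to leave $B_\sigma$ through the annulus $D_1^\sigma\setminus F$ or through $D_2^\sigma$, so the ball $\hat B$ and the equation $\hat B\cap C=E$ are not justified. Second, in the remaining subcase your proposed pop-out isotopy does not do what you assert. Before the move, $C$ crosses $S_\sigma$ transversely along $c$: $E$ lies on the $B_\sigma$ side and the rest of $C$ near $c$ on the other side. After pushing $E$ through $F$, both sides of $c$ lie on the same side of $S_\sigma$, so $c$ becomes a degenerate circle of tangency of $h|_C$; putting $C$ back in general position introduces a new extremum--saddle pair of $\digamma_C$. Thus ``introduces no new saddles of $\digamma_C$'' is false, and since tautness is exactly minimality of the number of saddles, the induction on $|C\cap\mathrm{int}(D_1^\sigma)|$ does not carry the hypothesis of tautness through the moves; finding an inessential saddle in an isotoped copy of $C$ with strictly more saddles says nothing about whether the original $C$ was taut. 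A cleaner way to dispose of the monotone subcase is not an isotopy at all: enlarge $E$ to the maximal monotone disk containing its extremum and use that disk to locate an outermost saddle whose outermost disk misses the puncture; but this requires carefully ruling out the competing monotone disk $D_\sigma$ and the possibility that the enlargement is bounded by the whole figure-eight rather than by a single lobe, neither of which your draft addresses.
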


\begin{proof}Assume $\sigma$ is nested with respect to $B_{1}$. Since all saddles in $\digamma_C$ are standard, by Corollary \ref{non-standard}, and no saddles in $\digamma_C$ are inessential, by Lemma \ref{inesssaddle}, then there is a labeling of the saddles of $\digamma_C$ as $\sigma = \sigma_1$, ..., $\sigma_n$ such that $\sigma$ is an outermost c-inessential saddle and $\sigma_i$ is adjacent to $\sigma_{i+1}$ for each $i\in \{1,...,n-1\}$. Inductively, by Lemma \ref{nested}, $\sigma$ being nested with respect to $B_1$ implies all saddles in $\digamma_C$ are nested with respect to $B_1$. Hence, $B_2$ can be decomposed into $B_{\sigma}$ together with a collection of vertical solid cylinders and solid elbows. Thus, $B_{\sigma}$ is disjoint from $C$, contradicting the hypothesis.
\end{proof}

\begin{lemma}\label{cinesssaddle} Assume $P$ and $P'$ are adjacent thin spheres with $P'$ above $P$ and $C$ is a cut-disk for $P$ above it and disjoint from $P'$. If $\digamma_{C}$ contains a c-inessential saddle, then $C$ is not taut.\end{lemma}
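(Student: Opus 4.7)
The plan is to argue by contradiction, paralleling the proof of Lemma~\ref{inesssaddle}. Suppose $C$ is taut and $\sigma$ is a c-inessential saddle in $\digamma_C$. By Corollary~\ref{non-standard}, Lemma~\ref{inesssaddle}, Lemma~\ref{nested}, and Lemma~\ref{C-out}, we may further assume that every saddle of $\digamma_C$ is standard, none is inessential, all saddles are nested with respect to the same $3$-ball, and $B_\tau \cap C = \emptyset$ for every outermost saddle $\tau$. Without loss of generality, assume $D_\sigma$ has a unique maximum (the unique-minimum case is symmetric). Let $p$ be the single point of $K \cap D_\sigma$ and let $\gamma$ denote the component of $K \cap B_\sigma$ containing $p$, with other endpoint $q \in D_1^\sigma$.

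The goal is to produce an isotopy that eliminates $\sigma$ from $\digamma_C$ without increasing $w(K)$, which would contradict tautness. The Pop Out Lemma used in Lemma~\ref{inesssaddle} does not apply verbatim here because $D_\sigma$ meets $K$ at $p$, but the fact that this intersection is a single point allows a localized modification. First, as in Case~1b of Lemma~\ref{inesssaddle}, if $B_\sigma$ happens to contain $+\infty$ (or $-\infty$), perform a preliminary bowl-like isotopy of $P'$ (respectively $P$) to move that pole outside $B_\sigma$; this rearrangement preserves both the width and the heights of the thin spheres. Second, using $\gamma$ as a guide, execute a finger-move isotopy supported inside $B_\sigma$ (which, by Lemma~\ref{C-out}, is disjoint from $C \setminus D_\sigma$) that slides $D_\sigma$ downward through $\sigma$ and simultaneously drags a small neighborhood of $\gamma$ in $K$ along, migrating the intersection point $p$ along $\gamma$ until it exits $D_\sigma$ near $q$. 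After this finger-move, the modified $D_\sigma$ is disjoint from $K$ and sits as a near-parallel push-off of $D_1^\sigma$, so the saddle $\sigma$ has been eliminated; the original Pop Out Lemma can then be applied if any residual local structure remains, yielding a cut-disk with strictly fewer saddles than $C$.

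To verify that $w(K)$ does not increase, one checks that the finger-move along $\gamma$ is an ambient isotopy supported in an arbitrarily small neighborhood of $\gamma$ inside $B_\sigma$ and introduces no new critical points of $h|_K$; the subsequent Pop Out step likewise creates no critical points. The heights of all critical points of $K$ outside the support of these isotopies are unchanged, so the width is preserved, and the contradiction with tautness completes the argument.

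The main obstacle is to perform the finger-move when $B_\sigma$ contains other components of $K \cap B_\sigma$, namely arcs with both endpoints on $D_1^\sigma$, since such arcs could obstruct a direct vertical slide along $\gamma$. To resolve this, I would first invoke Theorem~\ref{thm:disjoint} applied to the subtangle inside $B_\sigma$ to horizontally separate the braid boxes of the other arcs from a braid-box neighborhood of $\gamma$; the slide along $\gamma$ can then be carried out in a region of $B_\sigma$ disjoint from the rest of $K$, so that the only interaction of the isotopy with $K$ is the intended migration of $p$ toward $q$.
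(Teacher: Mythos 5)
Your proposal has a genuine gap, and it departs from the paper's method in a way that does not close. The paper's proof hinges on a preliminary Claim (borrowed from the proof technique in \cite{Schu2}) that isotopes the \emph{knot}, not the disk: if $D_{\sigma}$ has a unique maximum (resp.\ minimum), one can assume the strand of $K \cap B_{\sigma}$ through $p_{\sigma} = K \cap D_{\sigma}$ has a local end-point maximum (resp.\ minimum) there, so that $\sigma$ becomes a \emph{removable} saddle in the sense of the paper's definitions. Only after that preparation can one apply the saddle-elimination isotopy of \cite[Lemma 3]{Bl}, which moves $D_{\sigma}$ past $\sigma$ while carrying the puncture along on $D_{\sigma}$. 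Your proposal skips this Claim entirely, and nothing in your argument rules out the bad configuration where $\gamma$ exits $D_{\sigma}$ in the wrong direction; in that case the vertical slide you describe would force $\gamma$ to develop new critical points, and you have no control over the width.

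More fundamentally, the stated outcome of your finger-move is inconsistent. You assert that after the move ``the modified $D_{\sigma}$ is disjoint from $K$,'' while the isotopy is supported inside $B_{\sigma}$, which you have arranged (via Lemma~\ref{C-out}) to be disjoint from $C \setminus D_{\sigma}$. Since $C$ is a cut-disk, it meets $K$ exactly once, and that puncture lies in $D_{\sigma}$ (that is what c-inessential means). If the rest of $C$ is untouched and $D_{\sigma}$ becomes disjoint from $K$, then the resulting disk is disjoint from $K$ altogether and is no longer a cut-disk; the puncture cannot simply vanish. The correct picture, as in the paper, keeps the puncture on $D_{\sigma}$ throughout the Blair isotopy; the point never ``exits'' the disk.

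Finally, the appeal to Theorem~\ref{thm:disjoint} to clear away the other arcs of $K \cap B_{\sigma}$ is a misapplication: that result concerns braid boxes of a prime tangle in $S^2\times I$ split by a c-disk for a \emph{level sphere}, and the hypotheses are not met by the subtangle inside the arbitrary ball $B_{\sigma}$. The paper instead deals with the other strands by the explicit $\mu_n$-replacement argument inside a vertical disk $E^*$ in $B_{\sigma}$, an approach you would need to reproduce rather than cite Theorem~\ref{thm:disjoint}.
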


\begin{proof}
Suppose $\sigma$ is a c-inessential saddle in $\digamma_{C}$.

\medskip
The isotopy utilized in the following claim was originally described in \cite[page 5]{Schu2}.

\textbf{Claim:} If $D_{\sigma}$ has a minimum and is punctured by $K$, we may assume that $h|_{K \cap B_{\sigma}}$ also has a local minimum at $K \cap D_{\sigma}$. Symmetrically, if $D_{\sigma}$ has a maximum and is punctured by $K$, we may assume that $h|_{K \cap B_{\sigma}}$ also has a local maximum at $K \cap D_{\sigma}$.

{\em Proof of claim:} Suppose $D_{\sigma}$ has a minimum and $h|_{K \cap B_{\sigma}}$ has a local maximum at $p_{\sigma} = K \cap D_{\sigma}$. Let $x$ be the minimum of $K$ that is nearest $p_{\sigma}$ and inside $B_{\sigma}$. Let $\alpha$ be the monotone subarc of $K$ inside $B_{\sigma}$ with boundary points $p_{\sigma}$ and $x$. Let $\beta$ be a monotone arc in $D_{\sigma}$ with endpoints $p_{\sigma}$ and $y$ such that $h(y)=h(x)$. Let $\delta$ be a level arc contained in $B_{\sigma}$ connecting $x$ to $y$. Let $E^*$ be the vertical disk with boundary $\alpha \cup \beta \cup \delta$ that is embedded in $B_{\sigma}$. We can assume the interior of $E^*$ meets $K$ transversely in a collection of points $k_1, ..., k_n$ where $h(k_1)>h(k_2)> ... >h(k_n)$. It is important to note that if $C$ meets the interior of $E^*$ then $C$ is not taut, by Lemma \ref{nested}. Hence, we can assume $C$ is disjoint from $E^*$. Let $\mu_i$ be the arc corresponding to a small neighborhood of $k_i$ in $K \cap B_{\sigma}$ for each $i$.

Replace $\mu_n$ with a monotone arc which starts at an end point of $\mu_n$ , runs
parallel to $E^*$ until it nearly reaches $D_{\sigma}$, travels along $D_{\sigma}$ until it returns
to the other side of $E^*$, travels parallel to $E^*$ (now on the opposite side)
and connects to the other end point of $\mu_n$. The result is isotopic to $K$, does not change the number of maxima of $h|_K$ and reduces $n$. By
induction on $n$, we may assume that $K \cap E^* = \emptyset$. Isotope $\alpha$ along $E^*$
until it lies just outside of $D_{\sigma}$ except where it intersects $D_{\sigma}$ exactly at
the point $y$. After a small tilt of $K$, $h|_{K\cap B_{\sigma}}$ now has a local minimum at $p_{\sigma}$.
$\square$
\medskip

After applying the isotopy given by the claim, we can repeat the arguments in Lemma \ref{inesssaddle} to remove this saddle. We give a very brief summary here.

\medskip

\noindent\textbf{Case 1:} Suppose $D_{\sigma}$ contains a unique maximum. If necessary, isotope $C$, $P'$ and all level spheres above $P'$ so that $B_{\sigma}$ is disjoint from $+\infty$. This isotopy replaces $P'$ and all level spheres above it with bowl-like spheres. Next, use the isotopy in \cite[Lemma 3]{Bl} to eliminate $\sigma$, see Figure \ref{fig:removeableiso}. Finally isotope $C$, $P'$ and all the other spheres that used to be level, to be level again. This can be done without introducing any new saddles or critical points for $K$. This isotopy fixes all of $S^3$ below $h(\sigma)$ and above $P'$. Additionally, this isotopy removes at least one saddle of $C$ and does not create any new critical points for $h|_K$ in the thick region between $P$ and $P'$. Since all maxima of $h|_K$ are above all minima of $h|_K$ in the region between $P$ and $P'$, altering the relative heights of the critical points without creating any new critical points can only decrease the width of $K$. Hence, this is a contradiction to $C$ being taut.

\begin{figure}[h]
\centering \scalebox{.7}{\includegraphics{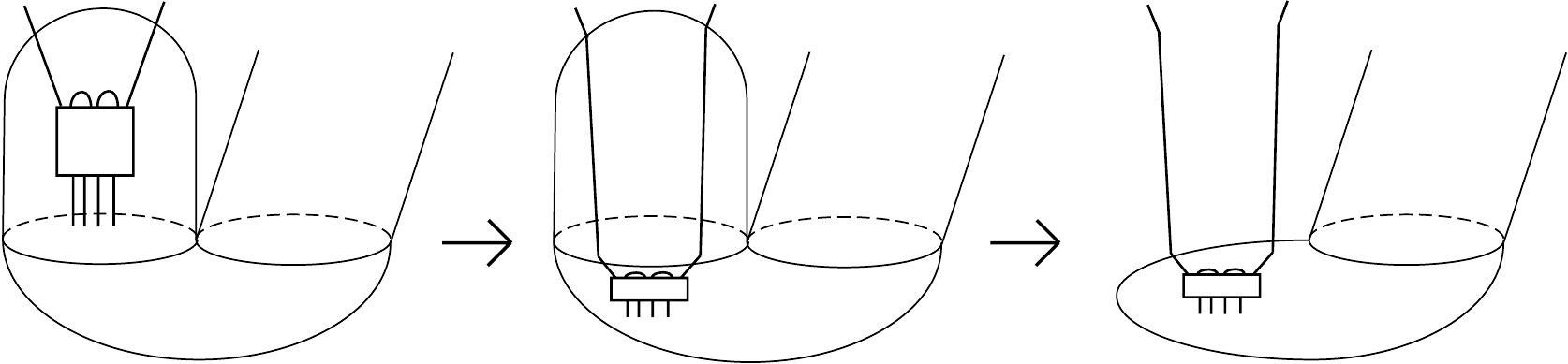}}
\caption{}\label{fig:removeableiso}
\end{figure}

\medskip

\noindent\textbf{Case 2:} Suppose $D_{\sigma}$ contains a unique minimum.

Note that if $B_{\sigma}$ contains $-\infty$, then $B_{\sigma}$ contains $P$ and $B_{\sigma}
\cap C \neq \emptyset$. By Lemma \ref{C-out}, $C$ is not taut. Therefore, we may assume that $B_{\sigma}$ does not contain $-\infty$. By the Claim, we may also assume that $h|_{K \cap B_{\sigma}}$ has a local minimum at
$p_{\sigma} = K \cap D_{\sigma}$. We can then use the isotopy from \cite[Lemma 3]{Bl} to eliminate $\sigma$. As in the previous case this isotopy can only decrease the width of $K$ and decreases the number of saddles. Hence, this is a contradiction to $C$ being taut.

 \end{proof}

\begin{theorem} \label{thm:diskisvertical}
 Let $K$ be a knot in $S^3$ in thin position and let $P$ and $P'$ be two adjacent thin spheres or let $P$ be the highest thin sphere for $K$. Suppose $P$ is cut-compressible with the cut-disk $D^c$ such that $D^c \cap P'=\emptyset$. Then there is an isotopy of $K$ supported between $P$ and $P'$ that does not change the width of $K$ after which $D^c$ is vertical.
\end{theorem}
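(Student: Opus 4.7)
The plan is to iteratively reduce the number of saddles in the foliation $\digamma_{D^c}$ by invoking the saddle-elimination isotopies supplied by the preceding lemmas of this section, until no saddles remain and $D^c$ is vertical.

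As long as $\digamma_{D^c}$ contains at least one saddle, a standard innermost argument yields an outermost saddle $\sigma$: starting at any saddle $\tau$, the subdisk of $D^c$ bounded by $s_{1}^{\tau}$ contains strictly fewer saddles than $D^c$ does, so iteratively descending into such nested subdisks must terminate in an outermost saddle. Because $D^c$ is a cut-disk, $|D^c \cap K| = 1$, and consequently the outermost disk $D_{\sigma}$ meets $K$ in at most one point. If $D_{\sigma} \cap K = \emptyset$, then $\sigma$ is an inessential saddle, and Lemma \ref{inesssaddle} supplies an isotopy of $K$ supported strictly between $P$ and $P'$ that preserves the width of $K$ and strictly reduces the number of saddles of $\digamma_{D^c}$. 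If instead $|D_{\sigma} \cap K| = 1$, then $\sigma$ is c-inessential, and Lemma \ref{cinesssaddle} provides a similar width-preserving reduction.

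Since the number of saddles is a nonnegative integer, this process terminates after finitely many iterations, leaving a cut-disk for $P$ with no saddles at all; that is, $D^c$ is vertical. The isotopy required by the theorem is the concatenation of the finitely many isotopies produced along the way, each supported between $P$ and $P'$ and preserving the width of $K$, so the composition inherits these properties.

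The only real subtlety is confirming that outermost saddles exist whenever there is any saddle at all, which is immediate from the finiteness of the saddle set on the compact disk $D^c$. Otherwise the argument is a direct application of the preceding saddle-elimination machinery, where the essential technical content has already been carried out; the present theorem is essentially a packaging of those lemmas into a clean two-case dichotomy driven by the fact that a cut-disk meets $K$ exactly once.
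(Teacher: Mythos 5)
Your proposal is correct and takes essentially the same approach as the paper, which simply assumes $D^c$ has been isotoped to be taut (minimal saddle count) and then observes that by Lemmas \ref{inesssaddle} and \ref{cinesssaddle} a taut cut-disk has no inessential or c-inessential saddles, while the fact that $|D^c \cap K| = 1$ forces every outermost saddle to be one of those two types, so there are no saddles at all. Your iterative framing and the paper's ``take it taut'' framing are logically interchangeable; the one small wrinkle is that the lemmas as stated conclude only that $C$ ``is not taut'' rather than explicitly ``supplying an isotopy,'' so to run your iteration you must unwind the definition of taut (or appeal to the lemma proofs) to extract a width-preserving, $P$-level-preserving, saddle-reducing isotopy at each step -- which is exactly what the tautness assumption packages away in one stroke.
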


\begin{proof} We can assume that we have isotoped $C$ to be taut. By Lemma \ref{inesssaddle} and Lemma \ref{cinesssaddle}, $\digamma_C$ has no inessential and no c-inessential saddles. Hence, $C$ is vertical.
\end{proof}

The above theorem allows us use the following previously known result.

\begin{theorem}\label{thm:not compressible on one side}\cite{T2}
Let $K$ be a prime knot in thin position and suppose $P$ is a thin sphere. Let $D^*$ be a compressing disk or a vertical cut-disk for $P$, say above it. Then there is a thin sphere above $D^*$ and if $S_0$ is the lowest such thin level sphere, then $w(S_0)<w(P)$.

\end{theorem}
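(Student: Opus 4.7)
My plan is to exploit the verticality of $D^*$ via c-compression and then apply Corollary \ref{cor:altthin} to extract the width inequality. The argument splits naturally into establishing existence of the thin sphere $S_0$ and then bounding its width.

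For existence, I would argue by contradiction. Suppose no thin level sphere lies above the top of $D^*$, which I place at height $h_1$. Then every critical point of $K$ above $h_1$ must be a maximum: a local minimum above $h_1$ would pair with the global maximum of $K$ lying further above to produce an intermediate thin level. Combining this rigid structure with the bounded vertical disk $D^*$, I would construct an explicit isotopy in the spirit of the ``pop out'' move used in Lemma \ref{inesssaddle}, pushing $\partial D^*$ upward through $D^*$ to absorb the maxima above $h_1$. This would strictly reduce the width of $K$, contradicting the hypothesis that $K$ is in thin position.

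For the width inequality, let $S_0$ be the lowest thin sphere above $D^*$ and c-compress $P$ along $D^*$ to obtain two meridional spheres $Q_+$ (containing a parallel copy of $D^*$) and $Q_-$ (essentially parallel to $P$). A puncture count gives $w(Q_+) + w(Q_-) = w(P)$ in the compressing-disk case and $w(Q_+) + w(Q_-) = w(P)+2$ in the cut-disk case. Essentiality of $D^*$ together with primeness of $K$ forces $w(Q_\pm) \geq 2$ in the first case and $w(Q_\pm) \geq 4$ in the second (since each $w(Q_\pm)$ must be even and the unused value $w_i = 1$ would make $D^*$ parallel to a subdisk of $P$). In either case, $w(Q_+) \leq w(P) - 2$. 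Now apply Corollary \ref{cor:altthin} with $P' = S_0$: either $Q_+$ is parallel to $S_0$ — giving $w(S_0) = w(Q_+) \leq w(P)-2 < w(P)$ directly — or additional thin spheres exist strictly between $P$ and $S_0$, to which one reduces by an analogous c-disk argument applied at a lower thin sphere.

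The main obstacle I anticipate is the case when intermediate thin spheres appear between $P$ and $S_0$. Handling this cleanly will likely require induction on the number of such intermediate thin spheres, combined with Theorems \ref{thm:thinincomp} and \ref{thm:sixincomp} to ensure each intermediate thin sphere is accessible to a suitable c-disk argument, and the width formula \cite[Lemma~6.2]{SchSch} to propagate the strict width bound from one thin sphere to the next.
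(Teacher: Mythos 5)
This theorem is cited from \cite{T2} and is not proved in the present paper, so there is no in-paper argument to compare against; I will assess your proposal on its own terms. There are two genuine gaps.

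The existence step is incorrect as reasoned. You claim that if no thin level lies above $h_1$ then every critical point above $h_1$ is a maximum, because ``a local minimum above $h_1$ would pair with the global maximum of $K$ lying further above to produce an intermediate thin level.'' By the paper's definition, a level is thin when the highest critical point \emph{below} it is a maximum and the lowest critical point \emph{above} it is a minimum; a minimum lying \emph{below} a maximum produces a thick level, not a thin one. What the absence of thin levels above $h_1$ actually gives is that, above $h_1$, all minima precede all maxima --- there may certainly be minima up there, so the rigid ``all maxima'' structure your pop-out isotopy relies on need not hold, and the isotopy itself is only sketched.

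The second gap is the one you yourself flag: intermediate thin spheres between $P$ and $S_0$. Corollary~\ref{cor:altthin} only delivers a component parallel to $S_0$ in the case where no such intermediate spheres exist, so the entire content of the theorem in the remaining case is left to an induction you do not carry out. That induction is not routine: one must show that the subdisk of $D^*$ lying above an intermediate thin sphere $S$ is itself a c-disk for $S$, which for a cut-disk requires verifying that the resulting annulus is not parallel into $S - \eta(K)$, and for a compressing disk requires that the circle $D^* \cap S$ is essential in $S - \eta(K)$. Neither is automatic, and handling it cleanly is the crux of the theorem. By contrast, your puncture-count bounds $w(Q_\pm)\geq 2$ (compressing case) and $w(Q_\pm)\geq 4$ (cut-disk case, via parity plus primeness) are correct, and since both components then have width at most $w(P)-2$, the conclusion is insensitive to which one Corollary~\ref{cor:altthin} produces as parallel to $S_0$; that part of the proposal is sound.
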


Combining Theorem \ref{thm:diskisvertical} and Theorem \ref{thm:not compressible on one side} we obtain the following corollary.

\begin{cor}\label{cor:thinabove}

Let $K$ be a prime knot in thin position and suppose $P$ is a thin sphere. Let $D^*$ be a c-disk for $P$, say above it. Then there is a thin sphere above $P$ and if $P'$ is the lowest such thin sphere, then either $D^* \cap P' \neq \emptyset$ or $w(P')<w(P)$.

\end{cor}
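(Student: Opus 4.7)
The plan is to combine the two main theorems of this section: Theorem \ref{thm:diskisvertical}, which makes a cut-disk vertical provided it is disjoint from a suitable upper sphere, and Theorem \ref{thm:not compressible on one side}, which converts a vertical c-disk above a thin sphere $P$ into a thin sphere of strictly smaller width above. A case-split on whether $D^*$ already meets $P'$ is the only additional ingredient, together with Wu's result to handle the compressing-disk case.

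First I would establish existence of a thin sphere above $P$ by contradiction. If $P$ were the highest thin sphere, I would invoke the ``special case'' of Theorem \ref{thm:diskisvertical} with $P'$ taken to be any level sphere lying above both $K$ and $D^*$, so $D^* \cap P' = \emptyset$ holds automatically. This produces an isotopy preserving the width of $K$, after which $D^*$ is vertical, and Theorem \ref{thm:not compressible on one side} then yields a thin sphere above $D^*$, hence above $P$, contradicting maximality. So a thin sphere above $P$ exists. Let $P'$ denote the lowest such; if $D^* \cap P' \neq \emptyset$, the first alternative of the conclusion holds and we are done.

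Otherwise $D^* \cap P' = \emptyset$. Since $P$ and $P'$ are adjacent thin spheres, apply Wu's result (compressing case) or Theorem \ref{thm:diskisvertical} (cut-disk case) to obtain an isotopy supported between $P$ and $P'$ that preserves the width and makes $D^*$ vertical. This isotopy leaves $P$ and $P'$ as adjacent thin spheres: the width is unchanged, no new critical points of $h|_K$ appear, and any new thin sphere in the interior would strictly decrease the width, contradicting thin position. Applying Theorem \ref{thm:not compressible on one side} to the now-vertical $D^*$ produces the lowest thin sphere $S_0$ above $D^*$ with $w(S_0) < w(P)$. A sandwich argument finishes the proof: since $D^*$ is vertical with lower boundary on $P$ and $P' \cap D^* = \emptyset$ with $P'$ above $P$, the sphere $P'$ lies entirely above all of $D^*$, so $P'$ is a thin sphere above $D^*$ and $S_0 \leq P'$ by minimality of $S_0$; conversely $S_0$ is a thin sphere above $P$, so $P' \leq S_0$ by minimality of $P'$. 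Hence $P' = S_0$ and $w(P') < w(P)$. The genuine work lives in Theorem \ref{thm:diskisvertical}; the only real obstacle here is the bookkeeping to identify $P'$ with $S_0$.
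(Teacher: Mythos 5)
Your proof is correct and follows the paper's intended route: the paper gives no explicit argument, simply noting that the corollary follows by combining Theorem \ref{thm:diskisvertical} and Theorem \ref{thm:not compressible on one side}, which is exactly the combination you carry out, with the additional bookkeeping (existence of a thin sphere above $P$ via the special case of Theorem \ref{thm:diskisvertical}, the observation that the width-preserving isotopy keeps $P$ and $P'$ as adjacent thin spheres, and the sandwich identification $P'=S_0$) filling in what the paper leaves implicit.
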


\section{Bridge surfaces}\label{sec:bridgesurfaces}

In the previous section, we focused on results pertaining to thin position for knots. Here we review results about a tangle in bridge position.   We begin with a brief review of the definition of a bridge surface and its distance. For more details see \cite{T3}.

Suppose $M$ is a $3$--manifold homeomorphic either to $S^3$, to a ball, or to $S^2 \times I$ and containing a properly embedded collection of knots and arcs, $T$. A sphere $\Sss$ is a {\em bridge sphere} for $M$ if $M-\Sss$ has two components each of which is either a ball or is homeomorphic to $S^2\times I$ and each component of $T-\Sss$ is boundary parallel in $M-\Sss$ to $\Sss$ or it is a vertical arc in $S^2 \times I$. Let $H^+$ and $H^-$ be the components of $M-\Sss$ and let $\tau^{\pm}=T \cap H^{\pm}$. We say that $(\Sss,(H^-, \tau^-), (H^+, \tau^+))$ is a {\em bridge splitting} for $(M,T)$.

If $B$ is a ball in $(M,T)$, under certain conditions $B \cap \Sss$ induces a bridge sphere for $(B, B\cap T)$ as described in the following lemma.

\begin{lemma}\label{lem:newbridge}
Suppose $P$ and $P'$ are two adjacent thin spheres with $P'$ above $P$. Suppose $D^*$ is a vertical c-disk for $P$ lying above it and disjoint from $P'$. Let $B$ be the ball cobounded by $D^*$ and $P$ disjoint from $P'$ and let $T=K\cap B$. Then there exists a thick sphere between $P$ and $P'$ such that the disk $\Delta=B \cap \Sigma$ together with the possibly once punctured disk that $\bdd \Delta$ bounds in $D^*$ is a bridge sphere for $(B, T)$.
\end{lemma}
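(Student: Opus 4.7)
The plan is to locate a thick level sphere for $K$ between $P$ and $P'$ whose intersection with $B$ is a single disk, and then to verify that capping this disk off inside $D^*$ yields the desired bridge sphere for $(B,T)$. The first step is to invoke Corollary~\ref{cor:altthin}: since $P$ and $P'$ are adjacent thin spheres and $D^*$ is a vertical c-disk above $P$, every critical point of $K$ strictly between $P$ and $P'$ lies on a single side of $D^*$. The substantive case is when all such critical points lie in the interior of the dome-shaped ball $B$; the opposite case forces $T$ to be either empty or a single monotone arc and is handled by a similar, simpler argument.

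Next, the verticality of $D^*$ is exploited. Because $D^*$ has no saddles, $\partial D^* \subset P$, and $\mathrm{int}(D^*)$ lies strictly above $P$, the function $h|_{D^*}$ attains its minimum on $\partial D^*$ and has a unique maximum at some height $h_{\max}$; the hypothesis $D^*\cap P'=\emptyset$ then forces $h_{\max}<h(P')$. Hence for every $t \in (h(P), h_{\max})$ the level sphere $h^{-1}(t)$ meets $D^*$ transversely in a single circle and meets $B$ in a single disk. Since all critical points of $K$ inside $B$ have heights in $(h(P), h_{\max})$, the entire thick region between $P$ and $P'$ sits in this height range, so one may choose any thick sphere $\Sigma$ there. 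Set $\gamma = \Sigma \cap D^*$ and $\Delta = \Sigma \cap B$, and let $\Delta^*$ be the sub-disk of $D^*$ bounded by $\gamma$ that does not contain $\partial D^*$; this is the possibly once-punctured disk of the statement, punctured precisely when $D^*$ is a cut-disk whose puncture lies above $\gamma$.

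Finally, the claim is that $\Sigma^* := \Delta \cup \Delta^*$ is a bridge sphere for $(B,T)$. The sphere $\Sigma^*$ splits $B$ into a ball $B_+$ with $\partial B_+ = \Sigma^*$ and a shell $B_- \cong S^2 \times I$ whose other boundary is $\Delta_P$ together with the annular component of $D^* \setminus \gamma$. Because $\Sigma$ is thick, all maxima of $K$ in $B$ lie above $\Sigma$ and all minima below, so every arc of $T$ in $B_+$ is either a $\cap$-arc around a maximum (both endpoints on $\Delta$) or, if the puncture of $D^*$ lies in $\Delta^*$, a monotone arc joining that puncture to $\Delta$; each is unknotted in $B_+$ and hence boundary-parallel to $\Sigma^*$. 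Every arc of $T$ in $B_-$ is either a $\cup$-arc around a minimum (both endpoints on $\Delta$, again boundary-parallel to $\Sigma^*$) or a monotone arc from $\Delta_P$ to $\Delta$ which straightens to a vertical arc in the $S^2 \times I$ factor. The main obstacle is the careful arc-type bookkeeping through the puncture of $D^*$ and justifying the vertical-arc description in $B_-$; both should follow from the verticality of $D^*$, which endows $B_-$ with a product structure compatible with $h$.
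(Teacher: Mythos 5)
There is a genuine gap: you never arrange for the thick sphere $\Sigma$ to lie below the puncture $p = K \cap D^*$, and without this the arc-type bookkeeping in $B_-$ does not close up. Corollary~\ref{cor:altthin} places all critical points of $K$ between $P$ and $P'$ on one side of $D^*$, but it says nothing about $h(p)$ relative to the thick region. If $h(p)$ lies below the highest minimum of $K$ between $P$ and $P'$, then every thick sphere sits above $p$, so $p$ lands in the annulus $A^* = D^* \setminus \Delta^*$ rather than in $\Delta^*$. The arc of $T$ with endpoint $p$ may then run monotonically from $p \in A^* \subset \partial B$ down to the disk $B \cap P \subset \partial B$ without ever meeting $\Sigma^*$; such an arc has both endpoints on $\partial B$ and no interior critical point, so it is neither boundary parallel to $\Sigma^*$ nor a vertical spanning arc of the shell, and the proposed bridge splitting fails. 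The paper removes this obstruction up front: it isotopes $K$ (fixing $D^*$, which uses its verticality) so that every minimum of $K$ between $P$ and $P'$ lies strictly below $K \cap D^*$, and only then chooses $\Sigma$ between the new highest minimum and $K \cap D^*$. This guarantees the puncture always lands in $\Delta^*$, so the arc through $p$ has an endpoint on $\Sigma^*$; the paper then certifies the bridge splitting with an outermost-arc argument that pushes bridge disks off $D^*$.

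Two smaller imprecisions are worth flagging. As you have described it, $B_-$ is a $3$-ball (its boundary sphere is $\Delta$ together with $A^*$ and the disk $\partial D^*$ bounds in $P$), not an $S^2 \times I$ shell; one must first isotope $\Delta^*$ off $\partial B$ into the interior of $B$ before $B \setminus \Sigma^*$ acquires the ball-and-shell structure that the bridge sphere definition requires. Also, the arc of $T \cap B_+$ meeting the puncture need not be monotone (it may carry a maximum), though this is harmless since any such arc is still boundary parallel in the ball $B_+$.
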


\begin{proof}
Since $D^*$ is vertical we can lower all of the minima of $K$ in the region bounded by $P$ and $P'$ until each of the minima is below $K\cap D^*$. This isotopy fixes $D^*$ and alters $K$ only in a small neighborhood of its minima. After this isotopy, any level sphere between the highest minimum of $K$ in the region bounded by $P$ and $P'$ and $K\cap D^*$ is a thick sphere. Denote one such thick sphere by $\Sigma$. Since $D^*$ is vertical, $\beta=D^* \cap \Sigma$ is a single essential simple closed curve. Note that $\beta$ bounds a disk in $D^*$ that meets $K$ in at most one point, and an annulus that is disjoint from $K$.

Let $E$ be a bridge disk for some bridge contained in $T$. By redefining $E$, we may assume that $E \cap D^*$ consists only of arcs and these arcs necessarily have both of their endpoints in $\beta$. Let $\alpha$ be an outermost arc of intersection so that the disk $F$ that $\alpha$ cobounds with a segment of $\beta$ in $D^*$ is disjoint from $E$ and does not contain the puncture of $D^*$. We can redefine $E$ by replacing the subdisk $\alpha$ cuts off in $E$ with the disk $F$. This reduces $D^* \cap E$. Therefore, we may assume that all bridge disks for bridges inside $B$ are contained inside $B$. It follows that the sphere $R$ obtained by compressing or cut-compressing $\Sigma$ along the c-disk that $\beta$ bounds in $D^*$ is a bridge sphere for $T$.
\end{proof}

The {\em curve complex}, $\mathcal{C}(\Sigma,T)$, is
a graph with vertices corresponding to isotopy classes of
essential simple closed curves in $\Sigma-\eta(T)$. Two vertices are adjacent
in $\mathcal{C}(\Sigma,T)$ if their corresponding classes of curves
have disjoint representatives.

 Let $\mathcal{V}^+$ (respectively $\mathcal{V}^-$) be the set of all essential
  simple closed curves in $\Sigma-\eta(T)$ that bound disks in
  $H^+ -\eta(T)$ (respectively $H^- -\eta(T)$). Then the distance of the bridge splitting, $d(\Sigma,T)$, is defined to be the minimum distance between a vertex in $\mathcal{V}^+$ and a vertex in $\mathcal{V}^-$ measured in
  $\mathcal{C}(\Sigma,T)$ with the path metric.

We will need the following special case of a result proven in \cite{JT}.

 \begin{theorem}\cite[Theorem 4.4]{JT} \label{thm:boundbridge}
Suppose $N$ is a 3-sphere, a 3-ball or $S^2\times I$ containing a properly embedded collection of knots and arcs, $K$. Let $M$ be a submanifold homeomorphic to a 3-ball or to $S^2 \times I$ such that $T=K \cap M$ is a properly embedded tangle. Let $\Sigma$ be a bridge sphere of $(M,T)$ and let $\Sigma'$ be a bridge sphere of $(N,K)$. Then one of the following holds:

\begin{itemize}
\item  There is an isotopy of $\Sss'$ followed by some number of compressions and cut-compressions of $\Sigma'\cap M$ in $M$ giving a compressed surface $\Sigma''$ such that $\Sigma'' \cap M$ is parallel to $\Sigma$,
\item $d(\Sigma,T) \leq 2-\chi(\Sigma')$,
\item $\chi(\Sigma)\geq -3$.
\end{itemize}
\end{theorem}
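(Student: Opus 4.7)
The plan is to put $\Sss'$ in a controlled position with respect to $\Sss$, compress $\Sss' \cap M$ as much as possible inside $M$, and then read off one of the three conclusions depending on whether the resulting surface has a component parallel to $\Sss$, an essential component, or only trivial components.

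First I would isotope $\Sss'$ to be transverse to both $\Sss$ and $\bdd M$ and minimize $|\Sss' \cap \bdd M|$ by innermost-disk and outermost-arc surgeries performed in $N$; at the minimum, every curve of $\Sss' \cap \bdd M$ is essential in $\bdd M \setminus \eta(K)$. Set $F_0 = \Sss' \cap M$; this is a properly embedded meridional surface in $(M, T)$ whose Euler characteristic is controlled in terms of $\chi(\Sss')$, since the complementary piece $\Sss' \setminus M$ is incompressible in $N \setminus M$ after the minimization. I would then perform a maximal sequence of compressions and cut-compressions of $F_0$ inside $M$ using c-disks supported in $H^\pm$, producing a surface $F$.

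Next I would case-split on the components of $F$. If some component of $F$ is parallel to $\Sss$ in $M$, the c-disks used to pass from $F_0$ to that component all lie in $M$ and can be carried out on $\Sss'$ itself after the initial isotopy, producing the surface $\Sss''$ of conclusion (1). Otherwise, after discarding trivial pieces (spheres bounding balls disjoint from $T$ or components parallel to $\bdd M$ in $(M,T)$), there is a component $F^*$ of $F$ that is essential meridional in $(M, T)$ and not parallel to $\Sss$. I would isotope so that $F^* \cap \Sss$ is a nonempty collection of curves essential in $\Sss \setminus \eta(T)$. Because $\Sss$ is a bridge sphere, each region of $F^* \setminus \Sss$ lying in $H^\pm$ admits a c-disk in $H^\pm \setminus \eta(T)$, whose boundary is a vertex of $\mathcal{V}^\pm \subset \mathcal{C}(\Sss, T)$. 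Walking along $F^*$ from a region producing a $\mathcal{V}^+$-vertex to one producing a $\mathcal{V}^-$-vertex records a path in $\mathcal{C}(\Sss, T)$, and a standard dual-graph count on $F^*$ bounds its length by $2 - \chi(F^*)$. Combining with the Euler-characteristic control from the minimization gives $d(\Sss, T) \leq 2 - \chi(\Sss')$, which is conclusion (2). Conclusion (3), $\chi(\Sss) \geq -3$, absorbs the low-complexity cases where $\Sss$ has too few punctures for this argument to apply, in particular when $\mathcal{C}(\Sss,T)$ does not carry the usual path-metric structure.

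The hard part will be the cut-compression bookkeeping: a cut-disk meets $T$ in one point and changes the puncture pattern of $F$, so identifying essential components of $F$ and assigning $\mathcal{V}^\pm$ labels to the regions of $F^* \setminus \Sss$ requires extra care, and one needs to check that the dual-graph length bound still holds when the walk crosses curves introduced by cut-compressions. A secondary delicate point is the initial minimization: when $M \cong S^2 \times I$ both boundary spheres of $M$ must be treated simultaneously, and surgeries aimed at reducing $\Sss' \cap \bdd_0 M$ can introduce new intersections with $\bdd_1 M$, so the minimization must be performed globally over all of $\bdd M$.
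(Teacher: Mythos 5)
The paper does not prove this statement: it is imported verbatim from Johnson--Tomova \cite{JT} (their Theorem~4.4), so there is no in-paper proof to compare your attempt against. The paper's remark immediately following Theorem~\ref{thm:boundess} makes the reliance on \cite{JT} explicit.

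On the merits of the sketch itself, there is a genuine gap in the trichotomy. After maximally compressing and cut-compressing $F_0 = \Sigma' \cap M$ inside $M$, the resulting surface $F$ may consist entirely of trivial pieces --- components boundary-parallel in $M - \eta(T)$, or spheres bounding balls meeting $T$ in at most an unknotted arc --- with no component parallel to $\Sigma$ and no essential component. You assert that discarding the trivial pieces leaves an essential $F^*$, but nothing in the sketch rules out this degenerate case, and showing that it forces $\chi(\Sigma) \geq -3$ (or cannot occur) is a substantial part of the actual proof. Likewise, the claim that every region of $F^* - \Sigma$ lying in a compression body $H^{\pm}$ automatically yields a c-disk in $H^{\pm} - \eta(T)$ is not free: a component of $F^* \cap H^{\pm}$ can be a vertical annulus or a $\Sigma$-parallel piece contributing no vertex of $\mathcal{V}^{\pm}$. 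Producing the path in $\mathcal{C}(\Sigma, T)$ of length at most $2 - \chi(F^*)$ requires the full Bachman--Schleimer/Tomova-style labeling argument, and the third conclusion has to emerge as an explicit low-complexity output of that argument rather than as a catch-all invoked at the end. A smaller inaccuracy: minimizing $|\Sigma' \cap \partial M|$ does not make $\Sigma' - M$ incompressible in $N - M$; it only prevents trivial disk and sphere components, which is the weaker fact you actually need to conclude $\chi(\Sigma' \cap M) \geq \chi(\Sigma')$.
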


The following result can be easily obtained by a simplified version of the proof of Theorem \ref{thm:boundbridge} given in \cite{JT} so we will not prove it here.

\begin{theorem}\label{thm:boundess}
Suppose $N$ is a manifold containing a containing a properly embedded collection of knots and arcs, $K$. Let $M$ be a submanifold homeomorphic to a ball or to $S^2 \times I$  such that $T=K \cap M$ is a collection of knots and arcs properly embedded in $M$. Let $\Sigma$ be a bridge surface for $(M,T)$ and $F$ be an essential separating surface in $N$. Then one of the following holds:

\begin{itemize}

\item $d(\Sigma,T) \leq 2-\chi(F)$,
\item $\chi(\Sigma)\geq -3$,
\item Each component of $F\cap (M-\eta(T))$ is boundary parallel in $M-\eta(T)$.
\end{itemize}
\end{theorem}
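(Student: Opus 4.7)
The plan is to follow the proof of Theorem \ref{thm:boundbridge} in \cite{JT} with the essential surface $F$ playing the role of the second bridge sphere $\Sigma'$. The fact that $F$ is already essential in $N$ bypasses the compression/cut-compression step required in that argument, and the first conclusion of Theorem \ref{thm:boundbridge} is replaced here by the third conclusion that each component of $F \cap (M - \eta(T))$ is boundary parallel in $M - \eta(T)$, since essentiality prevents any component of $F \cap M$ from already being isotopic to a bridge surface for a subtangle.

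First, I would put $F$ in general position with respect to $\partial M$ and $\Sigma$, and use innermost-disk and outermost-arc isotopies together with the essentiality of $F$ in $N$ to remove all inessential curves of $F \cap \partial M$ from $\partial M - \eta(T)$. The resulting surface $F \cap (M - \eta(T))$ then consists of pieces that are incompressible and boundary-incompressible in $M - \eta(T)$. If every such piece is boundary parallel in $M - \eta(T)$, the third conclusion of the theorem holds. Otherwise there is some component $F_0 \subset F \cap (M - \eta(T))$ that is essential in $M - \eta(T)$; assume also that $\chi(\Sigma) \leq -4$, for otherwise the second conclusion holds.

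Next, minimize $|F_0 \cap \Sigma|$ by ambient isotopy rel $\partial F_0$, which forces every curve of $F_0 \cap \Sigma$ to be essential in both surfaces. Write $F_0^{\pm} = F_0 \cap H^{\pm}$. An innermost disk component of $F_0^+$ (respectively $F_0^-$) has boundary an essential simple closed curve in $\Sigma - \eta(T)$ bounding a disk in $H^+$ (resp.\ $H^-$), and so represents a vertex of $\mathcal{V}^+$ (resp.\ $\mathcal{V}^-$). If disk components exist on both sides, the dual graph of $F_0 \cap \Sigma$ on $F_0$, weighted by the Euler characteristics of the non-disk pieces of $F_0^{\pm}$, yields a path in $\mathcal{C}(\Sigma, T)$ between $\mathcal{V}^+$ and $\mathcal{V}^-$ of length at most $2 - \chi(F_0) \leq 2 - \chi(F)$, giving the first conclusion of the theorem.

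If one of $F_0^{\pm}$ contains no disk component, then boundary-compressing its non-disk pieces along bridge disks for $T$ (which exist because $\Sigma$ is a bridge surface) produces disk components while consuming a bounded amount of $-\chi(F_0)$, reducing to the previous case. The main obstacle I anticipate is this final bookkeeping step: showing that each edge added to the curve-complex path as one crosses a non-disk piece of $F_0^{\pm}$ is absorbed by a corresponding loss of Euler characteristic, so that the bound $d(\Sigma, T) \leq 2 - \chi(F)$ survives uniformly across all boundary-compressions and path extensions, especially in the setting where $F_0$ has many boundary components from punctures by $T$.
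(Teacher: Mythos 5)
The paper itself offers no proof of this statement: it simply remarks that the result follows from ``a simplified version of the proof of Theorem \ref{thm:boundbridge} given in \cite{JT}.'' Your proposal is to do exactly that, so at the level of strategy you are on the path the authors intended. The high-level skeleton --- reduce to a component $F_0$ of $F \cap (M-\eta(T))$, minimize its intersection with $\Sigma$, read off vertices of $\mathcal{V}^\pm$ from innermost disk pieces of $F_0^\pm$, and use the Euler characteristic of $F_0$ to bound the length of the resulting path in $\mathcal{C}(\Sigma,T)$ --- is the correct framework, and you rightly flag the bookkeeping in the boundary-compression step as the technically delicate part.

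There is, however, one step that does not go through as stated. You claim that once inessential curves of $F \cap \partial M$ have been removed by innermost-disk/outermost-arc moves, the pieces of $F\cap(M-\eta(T))$ are automatically incompressible and boundary-incompressible in $M-\eta(T)$. This does not follow from the essentiality of $F$ in $N$. A compressing disk $D$ for a piece $F_0$ of $F \cap M$ in $M-\eta(T)$ is also a disk in $N-\eta(K)$ meeting $F$ only in $\partial D$; incompressibility of $F$ in $N-\eta(K)$ then says $\partial D$ bounds a disk $D'$ \emph{in $F$}, but $D'$ need not lie inside $M$, so $D$ can be a genuine compressing disk for $F_0$ in $M-\eta(T)$ while being trivial globally. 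The theorem statement does not assume $\partial M$ is incompressible, so you cannot use an incompressibility-of-$\partial M$ argument to push $D'$ into $M$. The standard fix is to maximally c-compress and $\partial$-compress the pieces of $F\cap(M-\eta(T))$ inside $M$ before running the sweep-out argument. This only increases Euler characteristic, which strengthens the distance bound in conclusion (1); but one must then separately handle the case where the compressed pieces end up boundary parallel, since boundary-parallelism of the compressed surface does not immediately yield conclusion (3), which is about the original $F\cap(M-\eta(T))$. That additional case analysis is a real ingredient that your sketch omits, and it is precisely where the word ``simplified'' in the authors' remark is doing its work.
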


\section{Knotting}\label{sec:knotting}

The goal of this section is to produce 3-tangles so that the strands of the tangles are knotted in some sense while also controlling the distance between certain curves in the curve complex of the 6-punctured sphere. We will later show how these tangles can be inserted in the boxes in Figure \ref{fig:counter} to construct a knot that is in thin position. We begin by reviewing some definitions.

\begin{defin}
 An $n$-strand tangle,
$\mathcal{R}$, is {\em rational} if $R$ does not have any closed components and all arcs of $R$ can be simultaneously isotoped into $\partial(B_R)$. A tangle $\mathcal{T}$ is an {\em induced sub-tangle} of a tangle $\mathcal{R}$ if $B_T = B_R$ and $T \subseteq R$.
\end{defin}

\begin{defin}\label{def:BuildingTangles}Given a rational tangle $\mathcal{R}$ and a simple closed curve $\epsilon$ in $\partial(B_R)$, $(\mathcal{R},\epsilon)$ is an {\em equatorial pair} if $\epsilon$ is disjoint from $R$ and no arc in $R$ has both of its endpoints on the same component of $\partial(B_R)-\epsilon$. Given an equatorial pair $(\mathcal{R},\epsilon)$, an {\em equatorial sub-pair} is an equatorial pair $(\mathcal{T},\epsilon)$ where $\mathcal{T}$ is an induced sub-tangle of $\mathcal{R}$.\end{defin}

\begin{defin}\label{projection} Given an $n$-strand tangle, $\mathcal{R}$, and an equatorial pair, $(\mathcal{R},\epsilon)$, embed $B_R$ as the unit ball in $R^3$ such that $\epsilon$ is mapped to the unit circle in the xy-plane and all points of $R \cap \partial(B_R)$ lie on the unit circle in the xz-plane. A {\em projection} of $(\mathcal{R},\epsilon)$ is a projection of such an embedding into the xz-plane. If $\mathcal{R}$ is a 2-strand tangle, the {\em numerator closure} of $(\mathcal{R},\epsilon)$ is the knot obtained by connecting the endpoints of $R$ via two arcs in the unit circle in the xz-plane so that each of the arcs is disjoint from the unit circle in the xy-plane.\end{defin}

In the remainder of the section we will heavily rely on standard results about rational tangles. In particular, recall that each 2-strand rational tangle can be represented by a fraction $\frac{p}{q}$ where $(p,q)=1$. See \cite{Kauffman}, or \cite{Conway} for a detailed treatment of the subject.

\begin{theorem}\label{Conway} \cite{Conway} Two rational tangles are properly isotopic if and only if they have the same fraction.\end{theorem}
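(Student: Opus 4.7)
The plan is to prove Conway's classification via the $2$-fold branched cover construction. For a rational tangle $\mathcal R$, the $2$-fold cover $\widetilde{B}_R$ of $(B_R,R)$ branched over $R$ is a solid torus: this is clear for the trivial $0$-tangle, and any rational tangle is obtained from the $0$-tangle by horizontal and vertical twists, each of which lifts to a Dehn twist on the boundary torus, leaving the cover a solid torus. The covering involution fixes four points on $\partial\widetilde{B}_R$, projecting to the four tangle endpoints. The meridian disk of $\widetilde{B}_R$ has boundary representing a primitive unoriented class in $H_1(\partial\widetilde{B}_R;\mathbb Z)\cong\mathbb Z^2$.

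Fix a basis $\{\mu,\lambda\}$ of $H_1(\partial\widetilde{B}_R;\mathbb Z)$ by declaring $\mu$ to be the meridian of the $2$-fold branched cover of the $0$-tangle and $\lambda$ the meridian of the $\infty$-tangle. For an arbitrary rational tangle, write its meridian as $p\mu+q\lambda$ with $\gcd(p,q)=1$, and define the fraction to be $p/q\in\mathbb Q\cup\{\infty\}$; one checks by a direct computation on elementary twists that this agrees with the continued-fraction definition. The ``only if'' direction is then immediate: a proper isotopy of $(B_R,R)$ lifts to an equivariant isotopy of $(\widetilde{B}_R,\text{branch set})$ fixing $\partial\widetilde{B}_R$ pointwise, and such an isotopy preserves the meridian disk up to isotopy, hence fixes $p/q$.

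For the ``if'' direction, suppose $\mathcal R_1$ and $\mathcal R_2$ have equal fractions. The identity on the common boundary torus sends the meridian of $\widetilde{B}_{R_1}$ to the meridian of $\widetilde{B}_{R_2}$; since a self-homeomorphism of a torus extends over a solid torus precisely when it respects the meridian, this yields a homeomorphism $\Phi:\widetilde{B}_{R_1}\to\widetilde{B}_{R_2}$ restricting to the identity on the boundary. The main obstacle is promoting $\Phi$ to an equivariant map with respect to the two covering involutions, whose fixed point sets are pairs of properly embedded unknotted arcs. I would handle this step by invoking the uniqueness up to conjugacy of orientation-preserving involutions on the solid torus having such a fixed set (Waldhausen's classification of involutions of solid tori, or a direct handle-by-handle argument, suffices). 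Once $\Phi$ is equivariant, it descends to a boundary-fixing homeomorphism $(B_{R_1},R_1)\to(B_{R_2},R_2)$, and the Alexander trick on the $3$-ball converts this homeomorphism into the required proper isotopy of tangles.
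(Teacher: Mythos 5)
The paper states this result as a citation to Conway's paper and gives no proof of its own, so there is no argument in the text to compare against; I will evaluate your proof on its own terms. Your branched-cover argument is the classical topological proof (essentially Conway's own idea, later fleshed out by Burde--Zieschang and others), and the outline is correct. The ``only if'' direction is clean: a proper isotopy of the tangle lifts, taking the lift that starts at the identity, to an isotopy of solid tori rel boundary, and such an isotopy preserves the meridian slope. For the ``if'' direction you have correctly isolated the hard step --- promoting the boundary-fixing homeomorphism $\Phi$ of solid tori to an equivariant one --- but two points there want sharpening. First, ``Waldhausen's classification of involutions of solid tori'' is not the right attribution; the tool you want is the equivariant loop theorem/Dehn's lemma (Meeks--Yau) to produce an $\iota$-invariant meridian disk, after which the equivariant Alexander trick on the resulting ball standardizes the involution. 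Second, abstract conjugacy of the two involutions is not enough: you need conjugacy by a homeomorphism that restricts to the identity on the boundary torus, since otherwise the descended tangle homeomorphism will not fix $\partial B^3$ pointwise and your final Alexander-trick step collapses. The invariant-meridian-disk argument does yield this stronger rel-boundary statement, but it should be stated explicitly rather than folded into a generic appeal to a classification theorem. You also gloss over the verification that the branched-cover meridian slope agrees with the continued-fraction invariant on the twist generators; that check is routine but is a real part of the proof. For contrast, the combinatorial proof of Kauffman and Lambropoulou (cited elsewhere in this paper) works entirely with tangle diagrams and the flype calculus and avoids covering spaces altogether; your route is shorter modulo the equivariant 3-manifold input, theirs is more elementary.
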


\begin{theorem}\cite{Schubert1956}\label{schubert} Consider two rational tangles with fractions $\frac{p}{q}$ and $\frac{p'}{q'}$. If $K(\frac{p}{q})$ and $K(\frac{p'}{q'})$ denote the corresponding rational knots obtained by taking the numerator closures of these tangles, then $K(\frac{p}{q})$ and $K(\frac{p'}{q'})$ are isotopic if and only if

\begin{itemize}

\item $p=p'$ and

\item either $q \equiv q'$ $mod$ $p$ or $qq' \equiv 1$ $mod$ $p$.
\end{itemize}
\end{theorem}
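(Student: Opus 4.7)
The plan is to reduce the classification to that of lens spaces via the two-fold branched cover. First I would establish the key correspondence: the two-fold cover of $S^3$ branched over the rational knot $K(p/q)$ is homeomorphic to the lens space $L(p,q)$. To see this, cut $S^3$ along the bridge sphere of $K(p/q)$ into two $3$-balls, each meeting $K(p/q)$ in a rational tangle. The two-fold branched cover of a $3$-ball along a rational tangle of slope $p/q$ is a solid torus whose meridian on the boundary torus (the double cover of the $4$-punctured sphere) is the lift of the essential curve determined by $p/q$. Regluing the two solid tori via the induced map on boundary tori produces a genus-one Heegaard splitting whose slope data realize $L(p,q)$, and the construction is functorial in isotopies of the branch locus.

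Once this is in hand, the forward direction follows immediately from the classical classification of lens spaces: $L(p,q) \cong L(p',q')$ if and only if $p = p'$ and $q' \equiv \pm q^{\pm 1} \pmod{p}$. The four sign-inversion choices collapse to the two congruences $q \equiv q' \pmod{p}$ and $qq' \equiv 1 \pmod{p}$ stated in the theorem, with the remaining sign ambiguity absorbed by the fact that a rational knot is isotopic to the knot obtained by reversing the sign of $q \pmod p$ (this corresponds to reflecting the numerator closure across a horizontal plane, which is visibly an ambient isotopy).

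The main obstacle is the converse direction: upgrading a homeomorphism of branched covers to an ambient isotopy of knots downstairs. My approach is to show that the covering involution on $L(p,q)$ arising from a rational-knot branched cover, viewed as an orientation-preserving $\mathbb{Z}/2$-action with nonempty fixed-point set, is unique up to equivariant homeomorphism. I would handle this in two stages. Stage one: invoke the uniqueness of genus-one Heegaard splittings of $L(p,q)$ (Bonahon--Otal) to arrange that any such involution preserves a Heegaard torus up to isotopy. Stage two: classify the finite-order orientation-preserving self-homeomorphisms of a solid torus with nonempty one-dimensional fixed set compatible with the Heegaard decomposition; these are all conjugate to the standard strong inversion. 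Combining both stages, the given homeomorphism of lens spaces can be made equivariant and hence descends to a homeomorphism of pairs $(S^3,K(p/q)) \to (S^3,K(p'/q'))$, which by Alexander's theorem is ambient isotopic to the identity on $S^3$.

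Finally, I would verify the two surviving congruences correspond to the two natural operations on bridge splittings: $q \equiv q' \pmod{p}$ arises from keeping the two rational tangles in place and adjusting by a homeomorphism of the bridge sphere fixing the tangle endpoints, while $qq' \equiv 1 \pmod{p}$ arises from swapping the roles of the two rational tangles (equivalently, rotating the bridge sphere by a quarter turn so that the numerator closure of one side realizes the denominator closure of the other). This matches the two cases of the theorem and completes the classification.
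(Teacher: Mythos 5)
The paper does not prove this theorem; it is cited directly from Schubert's 1956 paper, where the argument is a careful combinatorial analysis of two-bridge presentations. Your proposal of classifying rational knots via the two-fold branched cover to $L(p,q)$ and the lens space classification is a well-known modern alternative (in the spirit of Hodgson--Rubinstein), so there is nothing wrong with the high-level strategy, and it has the virtue of outsourcing the hard rigidity to the lens space theory rather than to a direct normal form argument as in Schubert's original.

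However, there is a genuine error in your orientation bookkeeping. You write that the four sign choices in $q' \equiv \pm q^{\pm 1} \pmod p$ collapse to the two stated congruences because ``a rational knot is isotopic to the knot obtained by reversing the sign of $q$, $\ldots$ which is visibly an ambient isotopy.'' This is false: reversing the sign of $q$ produces the mirror image $K(p/(p-q))$, and a reflection across a plane is \emph{not} an ambient isotopy of $S^3$. Most rational knots are chiral; for instance $K(3/1)$ and $K(3/2)$ are the left and right trefoils and, as your own statement of the theorem confirms ($2\cdot 1 \not\equiv 1 \pmod 3$), they are not isotopic. The correct fix is simpler than your patch: an ambient isotopy of $S^3$ preserves orientation, hence induces an \emph{orientation-preserving} homeomorphism of branched covers, so you should invoke the \emph{oriented} lens space classification $L(p,q) \cong^{+} L(p,q')$ iff $q' \equiv q^{\pm 1} \pmod p$, which is already precisely the two conditions in Schubert's theorem with no extra sign ambiguity to explain away. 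With that replacement your forward direction is clean, and the same orientation care must be carried through the converse: the descended homeomorphism of pairs $(S^3, K(p/q)) \to (S^3, K(p'/q'))$ must be arranged to be orientation-preserving on $S^3$ before you can conclude (by Cerf, rather than Alexander, whose theorem concerns embeddings of spheres) that it is isotopic to the identity. Finally, you should note that the covering-involution uniqueness step needs the hypothesis $p \geq 2$ (so the downstairs branch locus is actually a nontrivial two-bridge knot/link) and that the $p=2$ case, where the cover is $\mathbb{RP}^3$, should be checked separately from the Bonahon--Otal input.
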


By Theorem \ref{schubert}, if $\mathcal{R}$ is a 2-strand tangle, $(\mathcal{R},\epsilon)$ has numerator closure the unknot if and only if $(\mathcal{R},\epsilon)$ has a projection as a $\frac{1}{q}$ rational tangle. We will call such an equatorial pair the {\em unpair}.

\begin{theorem}\label{inducedknotting}
If $\mathcal{R}$ is a rational 3-strand tangle and $(\mathcal{R},\epsilon)$ is an equatorial pair, then there exists an equatorial pair $(\mathcal{R},\delta)$ such that no 2-strand equatorial sub-pair is the unpair and $d(\epsilon,\delta)\leq 4$ (where $d$ is the distance function for the curve complex of the 6-punctured sphere).
\end{theorem}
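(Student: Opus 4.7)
Proof plan:

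My strategy is to construct $\delta$ from $\epsilon$ via at most two elementary modifications, each realizing a distance-$\leq 2$ step in the curve complex of the $6$-punctured sphere $\partial B_R - (R \cap \partial B_R)$. Since $\mathcal{R}$ is rational, I first isotope $R$ into $\partial B_R$ (keeping $\epsilon$ disjoint from $R$), so that $(\mathcal{R},\epsilon)$ is represented by three disjoint arcs together with an essential $(3,3)$-curve $\epsilon$ satisfying the equatorial partition condition. In this picture, each $2$-strand sub-pair $(\mathcal{T}_i,\epsilon)$ has a well-defined fraction $p_i/q_i$ by Theorem \ref{Conway}, and by Theorem \ref{schubert} the sub-pair is the unpair if and only if $p_i = \pm 1$.

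If none of the three sub-pairs is an unpair, take $\delta = \epsilon$ and we are done with $d(\epsilon,\delta)=0$. Otherwise, suppose without loss of generality that the sub-pair $(\mathcal{T}_1,\epsilon)$ obtained by deleting strand $C$ is the unpair with fraction $1/q$. The elementary modification I use is the following: choose a $(2,4)$-curve $\gamma$ on $\partial B_R$ disjoint from $R$ and $\epsilon$, enclosing exactly the two endpoints of strands $A$ and $B$ lying on one hemisphere of $\epsilon$, and then replace $\epsilon$ with a suitable equatorial $(3,3)$-curve $\delta'$ also disjoint from $\gamma$. Since $\gamma$ is disjoint from both $\epsilon$ and $\delta'$, the triangle inequality in the curve complex gives $d(\epsilon,\delta')\leq 2$. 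By choosing $\delta'$ to realize the appropriate modular action on the sub-tangle, the fraction of $(\mathcal{T}_1,\delta')$ changes from $1/q$ to $q/1$, making it non-unpair whenever $q\neq \pm 1$; the borderline cases $q = \pm 1$ are handled by a variant of the construction using a slightly different witness curve $\gamma$, which is available because of the rationality hypothesis on $\mathcal{R}$.

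If the single modification already renders all three sub-pairs of $(\mathcal{R},\delta')$ non-unpair, set $\delta = \delta'$ and finish with $d(\epsilon,\delta)\leq 2$. Otherwise, apply the same type of elementary modification a second time to correct the remaining problematic sub-pair, producing $\delta$ with $d(\epsilon,\delta)\leq 4$. The main difficulty, which I expect to be the technical heart of the argument, is verifying that two modifications always suffice — that is, that the second modification does not re-create an unpair among the sub-pairs that were corrected by the first. This requires tracking the joint transformation of all three sub-pair fractions under the composition of two elementary moves, and ultimately reduces to a finite case analysis based on the possible combinatorial types of the three-arc configuration on the $6$-punctured sphere together with the $PSL_2(\mathbb{Z})$-action on rational tangle fractions.
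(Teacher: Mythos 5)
Your framework (at most two elementary modifications, each realizing a distance-$\leq 2$ step via a common curve disjoint from both the old and new equators) is the same skeleton the paper uses. However, there is a genuine gap at what you yourself call ``the technical heart of the argument'': you do not actually show that two modifications suffice, i.e., that the second modification does not re-create an unpair among the sub-pairs corrected by the first, nor even that the first modification leaves the other two sub-pairs under control. You flag this as requiring a ``finite case analysis,'' but that analysis is exactly what must be supplied, and your proposed modification ($1/q \mapsto q/1$) has unresolved borderline cases $q = \pm 1$ (where $q/1$ is still the unknot's numerator closure) that you defer to an unspecified ``variant.''

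The paper closes this gap with a clean arithmetic normalization that your proposal is missing. Before doing anything, it uses the latitude in Definition \ref{projection} (twisting pairs of endpoints in the southern hemisphere) to multiply every sub-pair fraction $p_i/q_i$ by $\frac{1}{m}$ for large $m$, so one may assume all three fractions lie strictly between $-1$ and $1$. The elementary modification is then a Conway sum with a $\frac{2}{1}$-tangle (Figure \ref{fig:equaaug2.eps}), which fixes one sub-pair entirely (so it cannot become an unpair), turns the offending sub-pair's numerator closure into a twisted Whitehead double of the unknot (hence knotted), and changes the third sub-pair's fraction from $\frac{p_3}{q_3}$ to $\frac{p_3 + 2q_3}{q_3}$ --- which, by the normalization, lies strictly between $2$ and $3$ and so can never be of the form $\frac{1}{r}$. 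This arithmetic fact is what guarantees the second move does not spoil what the first accomplished, and it is precisely the content your ``$PSL_2(\mathbb{Z})$ case analysis'' would need to establish. Without it, the proof is incomplete.
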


\begin{proof}
Suppose $R$ consists of three arcs $\alpha$, $\beta$ and $\gamma$. The equatorial pair $(\mathcal{R},\epsilon)$ has three 2-strand equatorial sub-pairs $(\mathcal{R}_1,\epsilon)$, $(\mathcal{R}_2,\epsilon)$ and $(\mathcal{R}_3,\epsilon)$ where $R_1$ contains $\alpha$ and $\beta$, $R_2$ contains $\beta$ and $\gamma$, and $R_3$ contains $\gamma$ and $\alpha$. Let $\frac{p_1}{q_1}$, $\frac{p_2}{q_2}$ and $\frac{p_3}{q_3}$ be the fractions corresponding to the projections of $(\mathcal{R}_1,\epsilon)$, $(\mathcal{R}_2,\epsilon)$ and $(\mathcal{R}_3,\epsilon)$ respectively. In Definition \ref{projection}, we are free to choose how the points of $R \cap \partial(B_R)$ are mapped to the unit circle in the xz-plane. In particular, we are free to twist pairs of points of $\partial R$ that get mapped to the southern hemisphere of the unit sphere. In Conway's notation, this twisting corresponds multiplying each fraction $\frac{p_1}{q_1}$, $\frac{p_2}{q_2}$ and $\frac{p_3}{q_3}$ by $\frac{1}{m}$ where $m$ corresponds to the number of twists. Since $m$ can be chosen to be arbitrarily large, we can assume that each of the rational numbers $\frac{p_1}{q_1}$, $\frac{p_2}{q_2}$ and $\frac{p_3}{q_3}$ lie strictly between $-1$ and $1$.

If none of $(\mathcal{R}_1,\epsilon)$, $(\mathcal{R}_2,\epsilon)$ or $(\mathcal{R}_3,\epsilon)$ are the unpair, then we are done.

Suppose $(\mathcal{R}_1,\epsilon)$ is the unpair. By Theorem \ref{schubert}, $(\mathcal{R}_1,\epsilon)$ is a $\frac{1}{n}$ rational tangle. Isotope $\mathcal{R}$ as in Figure \ref{fig:equaaug2.eps}. This isotopy alters $\alpha$ but fixes $\beta$ and $\gamma$. Note that, by choosing $m$ sufficiently large, we can assume $n$ is positive. After this isotopy, $(\mathcal{R},\delta_1)$ is an equatorial pair such that the numerator closure of $(\mathcal{R}_1,\delta_1)$ is the twisted Whitehead double of the unknot.

Since $\epsilon$ is isotopic to $\delta_1$ in $\partial(B_R)-\partial(R_2)$, then $(\mathcal{R}_2,\epsilon) = (\mathcal{R}_2,\delta_1)$. However, $(\mathcal{R}_3,\epsilon) \neq (\mathcal{R}_3,\delta_1)$. Since the projection of $(\mathcal{R}_3, \epsilon)$ is a $\frac{p_3}{q_3}$ tangle and the isotopy in Figure \ref{fig:equaaug2.eps} corresponds to Conway sum of $(\mathcal{R}_3,\epsilon)$ and a $\frac{2}{1}$-tangle, then $(\mathcal{R}_3,\delta_1)$ is a $\frac{p_3+2q_2}{q_2}$ tangle. However, $\frac{p_3}{q_3}$ was assumed to be strictly between $-1$ and $1$. Hence, $\frac{p_3+2q_2}{q_2}$ is strictly between $2$ and $3$ and cannot be of the form $\frac{1}{r}$ for any integer $r$. Thus, by Theorem \ref{schubert}, $(\mathcal{R}_3,\delta_1)$ is not the unpair.

As illustrated in Figure \ref{fig:gammaarc2.eps}, there is an essential simple closed curve in $\partial(B_R)-\partial(R)$ that is disjoint from both $\epsilon$ and $\delta_1$. Thus, $d(\epsilon, \delta_1) \leq 2$. If $(\mathcal{R}_2,\delta_1)$ remains an unpair, repeat this process once more to construct a equatorial pair $(\mathcal{R},\delta)$ such that no 2-strand equatorial sub-pair is the unpair and $d(\epsilon,\delta)\leq 4$.
\end{proof}

\begin{figure}[h]
\centering \scalebox{.6}{\includegraphics{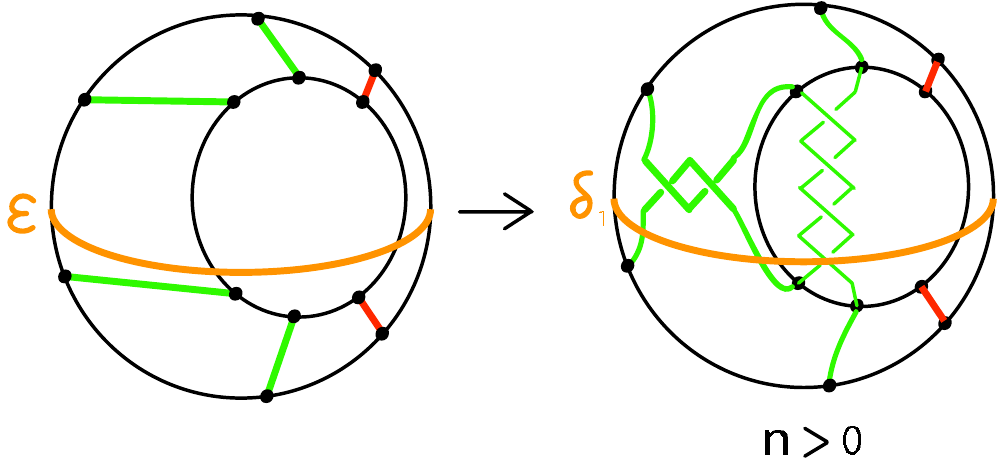}}
\caption{}\label{fig:equaaug2.eps}
\end{figure}

\begin{figure}[h]
\centering \scalebox{.6}{\includegraphics{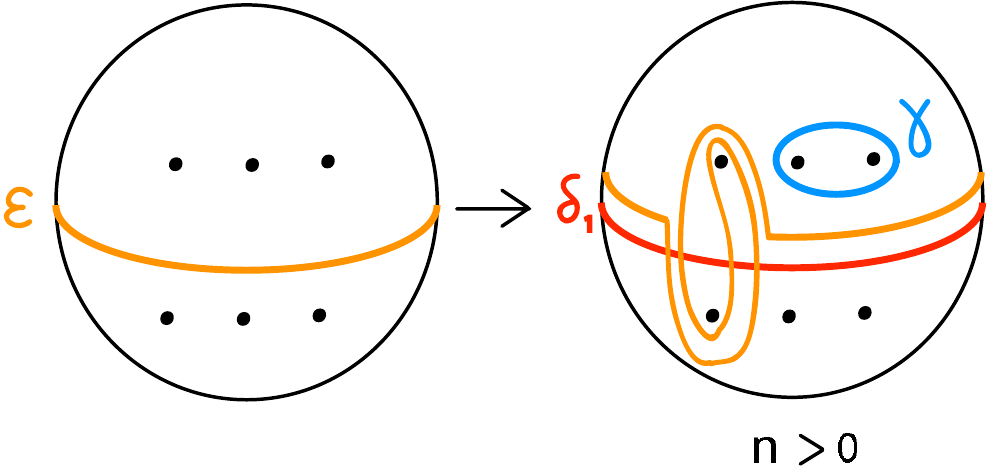}}
\caption{}\label{fig:gammaarc2.eps}
\end{figure}

\section{Essential surfaces in high distance tangles}\label{sec:esssurface}

In this section, we lay the foundations that will eventually allow us to construct a tangle with only one essential surface with large Euler characteristic. We begin with some definitions which build on the definitions introduced in the last section.

\begin{rmk}\label{tangleT}
Let $\mathcal{R}$ and $\mathcal{Q}$ be rational $n$-strand tangles. Let $\mathcal{C}_{n}$ be the curve complex for the $2n$-punctured sphere. Let $\mathcal{V}_R$ be the set of all isotopy classes of essential simple closed curves in $\partial(B_R)-R$ that bound disks in $B_R-R$. Define $\mathcal{V}_Q$ analogously.

Let $\gamma_{R}$ (respectively $\gamma_{Q}$) be an essential curve on $\partial(B_R)$ (respectively $\partial(B_Q)$) such that $\gamma_{R}$ (respectively $\gamma_{Q}$) bounds a $k$-punctured disks $D_{R}$ (respectively $D_{Q}$) in $\partial(B_R)$ (respectively $\partial(B_Q)$). Create a new $(2n-k)$-strand tangle $\mathcal{T}$ by identifying $D_{R}$ and $D_{Q}$ in the disjoint union of $B_R$ and $B_Q$ via a homeomorphism $\psi$ of the $k$-punctured disk. The resulting tangle $\mathcal{T}$ depends on $D_{R}$, $D_{Q}$ and $\psi$. Let $D$ be the image of $D_{R}$ and $D_{Q}$ in $B_T$.
\end{rmk}

\begin{defin}
Let $F$ be a properly embedded surface or a sub-manifold in a 3-manifold $M$ and let $K$ be a collection of properly embedded arcs and knots in $M$. If $\eta(K)$ is a regular open neighborhood of $K$ in $M$, define $F_{K}$ to be $F-\eta(K)$
\end{defin}

\begin{defin}
Given a rational tangle $\mathcal{R}$, let $B$ be a small ball in the interior of $B_R$ disjoint from $R$. Then $B_R-B$ is homeomorphic to $S^2 \times I$. We may isotope $R$ so each strand of $R$ has exactly one critical point with respect to the induced height function. Connect each of these critical points to $\bdd B$ with an unknotted arc $\tau_i$. Let $\Gamma_R=B \cup (\cup_1^k\tau_i)$. Then $\Gamma_R$ is the {\em spine} of $\mathcal{R}$. Note that $(B_R, R)-\Gamma_R$ is homeomorphic to $(S^2 \times I, R^-)$ where $R^-$ is a collection of properly embedded vertical arcs.
\end{defin}

\begin{lemma}\cite[Lemma 2.9]{T3}\label{essinrational}
If $F_{K}$ is a connected incompressible surface in a rational tangle $\mathcal{R}$, then one of the following holds

(1) $F_{K}$ is a sphere bounding a ball,

(2) $F_{K}$ is a twice punctured sphere bounding a ball containing an unknotted arc, or

(3) $F_{K} \cap \partial(B_R) \neq \emptyset$
\end{lemma}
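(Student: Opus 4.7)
The plan is to exploit the product structure of rational tangles and reduce the problem to standard facts about incompressible surfaces in handlebodies and in products. Since $\mathcal{R}$ is rational, the pair $(B_R,R)$ is homeomorphic to the trivial tangle $(D^2\times I,\{p_1,\ldots,p_n\}\times I)$, so that $H:=B_R-\eta(R)$ is homeomorphic to $N\times I$, where $N$ is the $n$-holed disk; each meridional annulus $A_i:=\partial\eta(\alpha_i)\cap H$ becomes a vertical annulus $\partial\eta(p_i)\times I$.

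First I would assume case (3) fails, so $F_K\cap\partial B_R=\emptyset$; then the underlying surface $F$ is closed and sits in the interior of $B_R$, meeting $R$ transversely in a finite set of points. I would put $F$ in general position with respect to the product projection $\pi:B_R\to I$, so that $\pi|_F$ is Morse. At each regular level $D^2\times\{t\}$, the intersection $F\cap(D^2\times\{t\})$ is a disjoint collection of simple closed curves in a disk; each innermost such curve bounds a subdisk $\Delta\subset D^2\times\{t\}$. If $\Delta$ is disjoint from the punctures, then $\Delta$ is a disk in $H$ with boundary on $F_K$, and incompressibility forces $\partial\Delta$ to bound a subdisk of $F_K$; an innermost/outermost isotopy then reduces $|F\cap(D^2\times\{t\})|$. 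Iterating this simplification, I reduce to the case in which every innermost level curve encloses at least one puncture.

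Next I would split into two cases. If $F\cap R=\emptyset$ then $F=F_K$ is a closed incompressible surface in the handlebody $H$; since handlebodies contain no closed incompressible surfaces of positive genus and are irreducible, $F$ must be a sphere bounding a ball, giving case (1). If $F\cap R\neq\emptyset$, the level-set structure from the simplification, together with the fact that each boundary component of $F_K$ is a meridian of some $A_i$, forces $F$ to be a sphere meeting a single arc of $R$ transversely in exactly two points. The subarc inside the ball bounded by $F$ is a subarc of an unknotted arc contained in a ball, so it is unknotted, giving case (2).

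The hard part will be the last stages of the simplification. Because the hypothesis provides incompressibility but not cut-incompressibility, innermost level subdisks enclosing a single puncture correspond to cut-disks and cannot be eliminated by an appeal to incompressibility alone. To deal with these, the plan is to work inside the product $N\times I$ and use the classification of incompressible surfaces in a trivial $I$-bundle to show that such cut-disks force $F$ into one of the standard configurations listed in (1) or (2). In particular, ruling out the possibility that $F$ is a sphere meeting $R$ in four or more points is the subtlest point: one shows that if $F$ enclosed two or more subarcs of $R$, then the tube-like piece of $F_K$ running along one such subarc would furnish a compressing disk for $F_K$ in $H$, contradicting the hypothesis.
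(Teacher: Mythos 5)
Note first that the paper does not supply its own proof of this lemma: it is quoted verbatim from \cite[Lemma~2.9]{T3}, so there is nothing in the source to compare your argument against line by line. The closest in-house analogue is Theorem~\ref{disjointfromspine}, whose proof isotopes the surface off a \emph{spine} of the rational tangle and then works in the resulting $S^2\times I$; your proposal instead exploits the product structure $(B_R,R)\cong(D^2\times I,\{p_i\}\times I)$ directly and runs a Morse/level-curve argument. That is a legitimate alternative route, and your Case~A (where $F\cap R=\varnothing$, so $F$ is closed and incompressible in the genus-$n$ handlebody $H=B_R-\eta(R)$ and must therefore be a sphere bounding a ball) is correct and complete.

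The gap is exactly where you flag it, and it is more than a matter of polish. In Case~B you assert that \emph{``the level-set structure \dots forces $F$ to be a sphere meeting a single arc of $R$ transversely in exactly two points,''} but you never rule out $F$ having positive genus, and the Morse simplification you describe -- isotope away innermost level circles whose disk misses the punctures -- does not by itself control the genus of $F$ or the number of saddles of $\pi|_F$. Your sketch of the $\geq 4$-puncture case is also not quite right as written: \emph{``the tube-like piece of $F_K$ running along one such subarc''} is an annulus, not a compressing disk, and a meridian circle of a strand bounds only a once-punctured disk, not a compression. The argument that actually works is that the subarcs $R\cap V$ (where $V$ is the ball bounded by $F$) are parallel \emph{vertical} segments in the product structure, hence unlinked in $V$, so there is a disk $D\subset V$ with $D\cap K=\varnothing$ and $\partial D\subset F_K$ separating two punctures from the rest; this is the compressing disk. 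You should state that argument explicitly, and likewise justify the last sentence of your sketch: a subarc of an unknotted arc is not automatically unknotted in an arbitrary sub-ball, and you need the verticality of $R$ (or a cut-and-paste with the boundary-parallelism disk for $\alpha$) to see that $R\cap V$ is boundary-parallel in $V$. Finally, to kill positive genus you should either appeal to the classification of incompressible surfaces in $N\times I$ (with $\partial F_K$ consisting of horizontal circles on the vertical annuli $A_i$), or carry the Morse argument all the way through to show $\pi|_F$ has a single minimum, a single maximum, and no saddles; as it stands the reduction to cases~(1) and~(2) is asserted rather than proved.
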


\begin{defin}
A tangle $\mathcal{T}$ is {\em prime} if every embedded 2-punctured sphere in $B_T$ bounds a 3-ball containing an unknotted arc.
\end{defin}

\begin{theorem}\label{disjointfromspine} Let $\mathcal{T}$ be an irreducible, prime $3$-strand tangle as described in Remark \ref{tangleT}. Suppose $F_{K}$ is a properly embedded connected c-incompressible surface in $\mathcal{T}$ with $\partial F$ a possibly empty collection of curves isotopic in $\partial(B_R)-R$ to $\partial D$. If $F_{K}$ can be isotoped to be disjoint from a spine of $\mathcal{R}$ and a spine of $\mathcal{Q}$, then $F_{K}$ is one of the following:

(1) $F_{K}$ is a sphere bounding a ball,

(2) $F_{K}$ is a twice punctured sphere bounding a ball containing a unknotted arc,

(3) $F_{K}$ is isotopic to $\partial(B_R)_{K}-int(D)$,

(4) $F_{K}$ is isotopic to $\partial(B_Q)_{K}-int(D)$,

(5) $F_{K}$ is isotopic to $\partial(B_T)_{K}$,

(6) $F_{K}$ is isotopic to $D_{K}$,

(7) $F_{K}$ is a $\partial(B_T)_{K}$-parallel annulus.
\end{theorem}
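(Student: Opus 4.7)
The plan is to exploit the trivial product structure that $F_K$ inherits from being disjoint from both spines, and to reduce to the classification Lemma \ref{essinrational} for pieces lying in each rational tangle. By definition of a spine, $(B_R,R)-\Gamma_R\cong(S^2\times I, R^-)$ and $(B_Q,Q)-\Gamma_Q\cong(S^2\times I, Q^-)$ with $R^-$ and $Q^-$ being collections of vertical arcs. Since $F_K$ is disjoint from both spines, $F\cap B_R$ sits in $(S^2\times I, R^-)$ and $F\cap B_Q$ sits in $(S^2\times I, Q^-)$, with the two pieces meeting along the glued disk $D$.

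First I would isotope $F$ so that $|F\cap D|$ is minimized subject to $F$ remaining disjoint from $\Gamma_R$ and $\Gamma_Q$. Next I would show that after this minimization every component of $F\cap B_R$ (resp.\ $F\cap B_Q$) is c-incompressible in $\mathcal{R}$ (resp.\ $\mathcal{Q}$), by the standard argument: a hypothetical c-disk whose boundary lies in $F\cap B_R$ could be used, together with an innermost/outermost disk argument on its intersections with $D$ and the c-incompressibility of $F$ in $\mathcal{T}$, to produce a c-disk for $F$ in $\mathcal{T}$, contradicting hypothesis; any closed curve of $F\cap D$ bounding an unpunctured or once-punctured disk on one side but not on $F$ would similarly contradict c-incompressibility of $F$ and so can be removed by a swap.

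With each piece c-incompressible in its rational tangle, I would apply Lemma \ref{essinrational}: each component of $F\cap B_R$ is either a sphere bounding a ball, a twice-punctured sphere bounding a ball containing an unknotted arc, or meets $\partial B_R$. Using the product structure of $B_R-\Gamma_R$ and the fact that vertical arcs in $S^2\times I$ leave no interesting c-incompressible surfaces, a component meeting $\partial B_R$ must in fact be parallel rel boundary into $\partial(B_R)_K$, i.e.\ isotopic to a subsurface of $\partial(B_R)_K$ (and similarly for $\mathcal{Q}$). Closed components would give outcomes (1) or (2) directly (using primeness of $\mathcal{T}$ to ensure closed spheres bound balls in $B_T$).

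Finally I would assemble the pieces. If $F\cap D=\emptyset$, $F_K$ is contained in one of $\mathcal{R}$ or $\mathcal{Q}$ and is one of (1), (2), (3), or (4), depending on whether the boundary curve bounds $D$ or its complement. If $F\cap D\neq\emptyset$, the pieces on either side of $D$ are subsurfaces of $\partial(B_R)_K$ and $\partial(B_Q)_K$ glued along arcs or circles in $D_K$; connectedness of $F$ and the requirement $\partial F$ be isotopic to $\partial D$ then force $F_K$ to be isotopic to $D_K$ (outcome (6)), to all of $\partial(B_T)_K$ (outcome (5)), or to a $\partial(B_T)_K$-parallel annulus (outcome (7)), the latter arising when the pieces on each side form an annular strip avoiding the rest of $\partial(B_T)_K$. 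The main technical obstacle is the last step: carefully controlling how the possibly many subsurface pieces on each side of $D$ can glue together while keeping $F$ connected and respecting the boundary condition, and verifying that no additional types arise.
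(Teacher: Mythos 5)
Your proposal takes a genuinely different route from the paper, but it has a real gap at its central step. You reduce to the claim that each component of $F\cap B_R$ meeting $\partial B_R$ is \emph{parallel rel boundary} into $\partial(B_R)_K$, citing ``the product structure of $B_R-\Gamma_R$''; but Lemma \ref{essinrational} gives you only that such a component meets $\partial B_R$, nothing more, and the $I$-bundle classification you are implicitly invoking requires the surface to be both incompressible \emph{and} boundary-incompressible in $(B_R-\eta(\Gamma_R))-\eta(R)\cong \Sigma\times I$. From $c$-incompressibility of $F$ in $B_T-\eta(K)$ you can indeed extract incompressibility of $F\cap B_R$, but not $\partial$-incompressibility there, and in general these pieces \emph{are} $\partial$-compressible in $B_R$. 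This matters: a $\partial$-compression of a piece across $D$ is exactly how conclusions (2), (5), (6) and (7) are teased apart, and how the case that produces a contradiction (the paper's Case 1, endpoints $a_1^-$ and $a_1^+$) is ruled out. Without the boundary-parallelism claim, your assembly step has nothing to assemble.

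The paper handles this by \emph{not} trying to classify $F\cap B_R$ as an abstract surface in $M_R$. Instead it builds a vertical rectangle $E$ inside $M_R$ containing $x_1,x_2,x_3$, pushes $F$ into level position outside of $N_R=M_R-\eta(E)$, joins $N_R$ and $N_Q$ into a $D^2\times I$ region $N$, constructs a single vertical disk $H\subset N$ carrying all three strands, and then runs a careful outermost-arc analysis of $F\cap H$ (six cases, by endpoint labels). Each case either yields one of the seven conclusions or is excluded, and the $\partial$-compressions that your argument would need to control are performed implicitly by the outermost-arc moves. Your idea of minimizing $|F\cap D|$ and classifying pieces is a reasonable first instinct, but the structure that makes the classification tractable is the rectangle $E$ and disk $H$, not $D$ itself. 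Finally, you flag the gluing step as ``the main technical obstacle''; that is an accurate self-diagnosis — it is precisely where the paper's case analysis does its work, and where a genuinely new argument would be required to carry out your plan.
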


\begin{proof}
Let $F^{R}=F \cap B_R$ and $F^{Q}=F \cap B_Q$ be $F \cap B_R$, and let $\Gamma_R$ be the spine of the rational tangle $\mathcal{R}$ such that $F \cap \Gamma_R = \emptyset$. Let $M_R$ be the complement of an open neighborhood of $\Gamma_R$ in $B_R$. Then $M_R$ is homeomorphic to $(S^{2} \times I)$ with $\partial(M_R)= \partial_{+}(M_R) \sqcup \partial_{-}(M_R)$ so that $\partial_{+}(M_R)$ is the boundary of a regular neighborhood of $\Gamma_R$ and $\partial_{-}(M_R)=\partial(B_R)$. Hence, $F^{R}$ is properly embedded in $M_R$, is disjoint from $\partial_{+}(M_R)$ and meets $\partial_{-}(M_R)$ only in $D$. It will be convenient to refer to the height function, $h_R$, on $M_R$ obtained from the natural projection of $S^2 \times I$ onto its $I$ factor.

As $\mathcal{R}$ is a rational tangle, $K \cap M_R$ is isotopic to a collection of arcs, $\{x_1, ... , x_6\}$ that are monotone with respect to $h_R$ and where $x_4$, $x_5$ and $x_6$ are the unique arcs of $K \cap M_R$ that meet $D$. Let $E$ be an embedded vertical rectangle in $M_R$ that is disjoint from $D$ and contains $x_{1}$, $x_{2}$ and $x_{3}$ such that $\partial E$ is the end point union of $x_{1}$, $\gamma$, $x_{3}$ and $\gamma'$ where $\gamma$ is an arc in $\partial_{-}(M_R)$ and $\gamma'$ is the image of $\gamma$ under the natural projection from $M_R$ onto $\partial_{+}(M_R)$. By assumption, any arc of $E \cap F$ is disjoint from both $\gamma$ and $\gamma'$. Suppose $\alpha \in F \cap E$ is any arc or simple closed curve.

\medskip

\noindent \textbf{Claim 1:} If $\alpha$ meets $x_i$ more than once, then we have conclusion (2).

{\em Proof:} If $\alpha$ meets some $x_i$ more than once, then for some $x_j$ there is a bigon $H'$ in $E$ cobounded by a subarc of $x_j$ and a subarc of $\alpha$ such that the interior of $H'$ is disjoint from $\alpha$ and from $x_1$, $x_2$ and $x_3$. $F$ meets the boundary of a closed regular neighborhood of $H'$, $\partial(\eta(H'))$, in a single simple closed curve. Since $H'$ is a disk, $\partial(\eta(H'))$ is a 2-sphere. The closure of each component of $\partial(\eta(H'))-F$ is a disk. One of these disks, together with a 2-punctured subdisk of $F$ cobound a 3-ball containing an unknotted subarc of $K$. As $F_K$ is incompressible we must have conclusion (2).$\square$

\medskip

Suppose that $\alpha$ is a simple closed curve. If $\alpha$ is disjoint from $x_{2}$, it can be eliminated by using an innermost disk argument that appeals to the incompressibility of $F_{K}$ and the irreducibility of $M_R-K$. If $\alpha$ intersects $x_2$, then it must intersect it at least twice and, therefore, , by Claim 1 we have conclusion (2). Hence, we can assume $F\cap E$ contains no simple closed curves.

Suppose that $\alpha \in F\cap E$ is an arc. By Claim 1, one endpoint of $\alpha$ must be in $x_1$ and the other in $x_3$. Furthermore, by Claim 1, we can assume $E \cap F$ consists of arcs that meet each of the strands $x_1$, $x_2$ and $x_3$ in exactly one point each. After an isotopy, we can assume that each curve in $E \cap F$ is level with respect to $h_R$. If $\eta(E)$ is a regular open neighborhood of $E$, then let $N_R=M_R-\eta(E)$. Hence, we can assume that $F^R$ meets $M_R$ outside of $N_R$ in a, possibly empty, collection of level disks each meeting $K$ in three points. By repeating the same argument, we can assume that $F^Q$ meets $M_Q$ outside of $N_Q$ in a, possibly empty, collection of level disks each meeting $K$ in three points.

\begin{figure}[h]
\centering \scalebox{.5}{\includegraphics{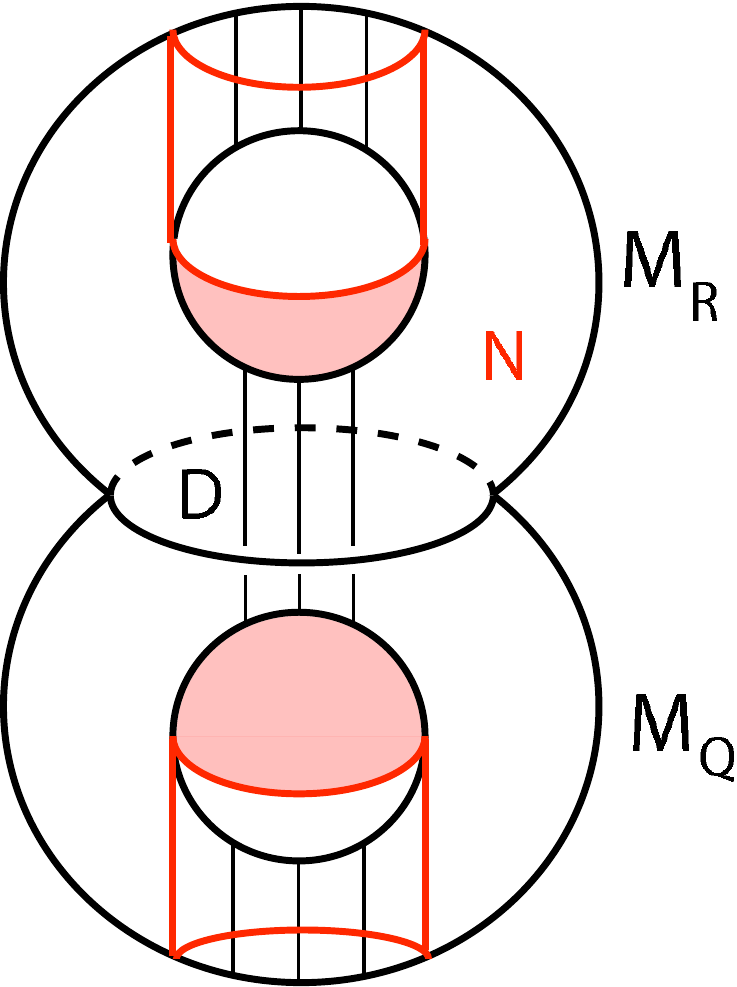}}
\caption{}\label{fig:MRandMQ}
\end{figure}

Let $N$ be the union of $N_R$ and $N_Q$ in $B_T$, see Figure \ref{fig:MRandMQ}. Outside of $N$, $F$ is a collection of $3$-punctured disks that are level with respect to $h_R$ or $h_Q$. Call the boundary curves of these two collections $\mathcal{D}_R$ and $\mathcal{D}_Q$ respectively. Since $F$ meets $\partial(B_T)$ in a collection of curves, $\mathcal{D}$, parallel to $\partial D$, then $F \cap \partial N = \mathcal{D} \cup \mathcal{D}_R \cup \mathcal{D}_Q$. Additionally, any two curves in $\mathcal{D} \cup \mathcal{D}_R \cup \mathcal{D}_Q$ are isotopic in $\partial(N)_K$. $N$ is homeomorphic to $D^2\times I$ where $D^2 \times \{1\} = \partial_{+}(M_R)-\eta(E)$ and $D^2 \times \{0\} = \partial_{+}(M_Q)-\eta(E)$. Hence, $N$ has a natural height function, $h_N$, induced by projection onto the $I$ factor. In particular, $h_N$ can be chosen so that every curve in $\mathcal{D} \cup \mathcal{D}_R \cup \mathcal{D}_Q$ is level and each arc of $K\cap N$ is monotone. Let $H$ be a properly embedded vertical disk in $N$ that contains all three strands of $N \cap K$. Note that $\partial H$ meets every curve in $\mathcal{D} \cup \mathcal{D}_R \cup \mathcal{D}_Q$ in exactly two points and each component of $N-H$ is a $3$-ball disjoint from $K$. See Figure \ref{fig:NandH.eps}.

Suppose $H \cap F = \emptyset$. Since every curve in $F \cap \partial N$ meets $H$, then $F \cap \partial N = \emptyset$ and $F$ is contained in $N-H$. Hence, we have conclusion (1).

Suppose $H \cap F \neq \emptyset$. If any component of $F \cap H$ is a closed curve disjoint from $K$, then, by the incompressibility of $F_K$ and the irreducibility of $N-\eta(K)$, we can remove it via an isotopy of $F_K$ supported in $N_K$. If any component of $F \cap H$ is a closed curve not disjoint from $K$, then, by appealing to the argument in Claim 1, we have conclusion (2). Hence, every component of $F \cap H$ is an arc. Label the endpoints of an outermost such arc as in Figure \ref{fig:NandH.eps}, where the $a^{\pm}_{i}$'s lie on $\mathcal{D}_R$, the $b^{\pm}_{i}$'s lie on $\mathcal{D}_Q$, and the $c^{\pm}_{i}$'s lie on $\mathcal{D}$. There is an outermost arc in $F \cap H$ with one of the following endpoint labels:

\begin{figure}[h]
\centering \scalebox{.5}{\includegraphics{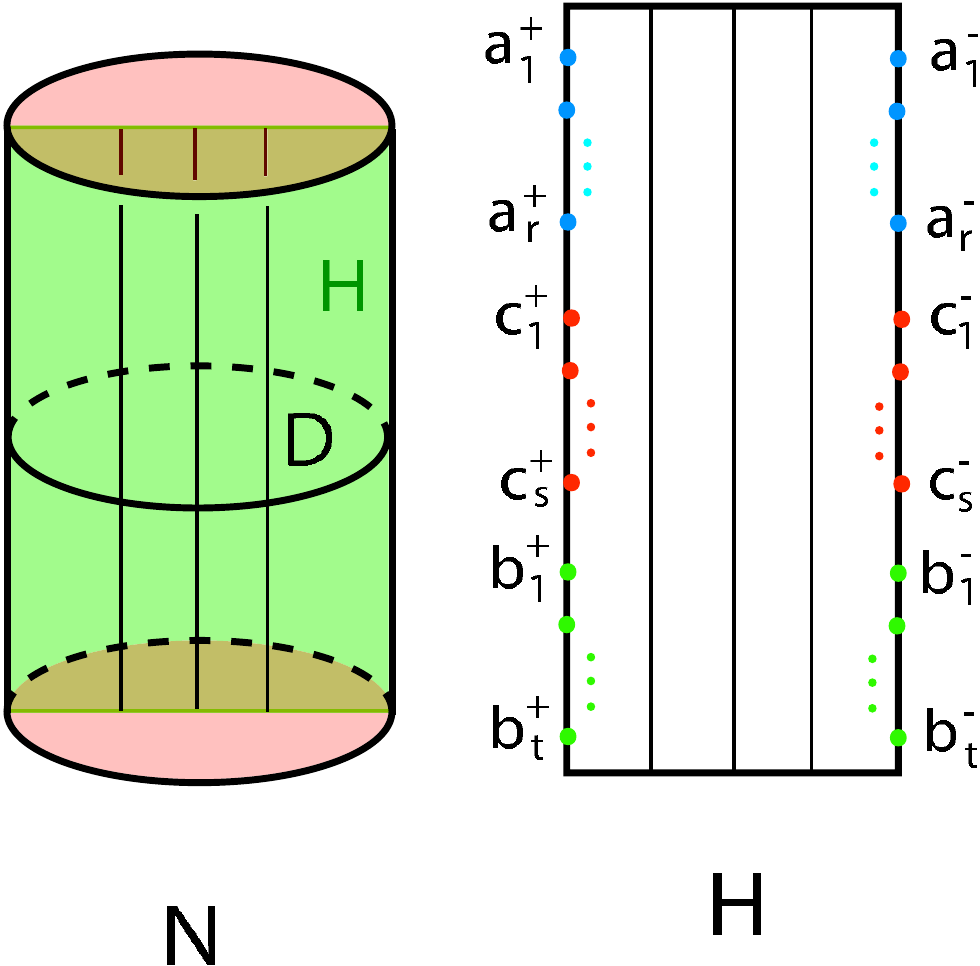}}
\caption{}\label{fig:NandH.eps}
\end{figure}

(1) $a^{-}_{1}$ and $a^{+}_{1}$ (similarly, $b^{-}_{t}$ and $b^{+}_{t}$)

(2) $a^{-}_{i}$ and $a^{-}_{i+1}$ (similarly, $b^{-}_{i}$ and $b^{-}_{i+1}$, $a^{+}_{i}$ and $a^{+}_{i+1}$, or $b^{+}_{i}$ and $b^{+}_{i+1}$)

(3) $c^{-}_{i}$ and $c^{-}_{i+1}$ (similarly, $c^{+}_{i}$ and $c^{+}_{i+1}$)

(4) $a^{+}_{r}$ and $c^{+}_{1}$ (similarly, $a^{-}_{r}$ and $c^{-}_{1}$, $b^{+}_{1}$ and $c^{+}_{s}$, or $b^{-}_{1}$ and $c^{-}_{s}$)

(5) $c^{-}_{1}$ and $c^{+}_{1}$ (similarly, $c^{-}_{s}$ and $c^{+}_{s}$)

(6) $a^{+}_{r}$ and $b^{+}_{1}$ (similarly, $a^{-}_{r}$ and $b^{-}_{1}$)

Let $y_1$, $y_2$ and $y_3$ be the three strands of $K$ in $N$. Let $\alpha$ be an outermost arc of $F \cap H$ in $H$. If $\alpha$ meets one of $y_1$, $y_2$ and $y_3$ in more than one point, then as in Claim 1, we have conclusion (2). Hence, we can assume that $\alpha$ meets each of $y_1$, $y_2$ and $y_3$ in at most one point.

\medskip

\noindent{\bf Case 1:} Suppose $\alpha$ is an outer most arc of $F \cap H$ with endpoints $a^{-}_{1}$ and $a^{+}_{1}$. Since $\alpha$ meets each of $y_1$, $y_2$ and $y_3$ in at most one point, $\alpha \cap K$ consists of exactly three points. Let $L$ be the disk in $M_R$ that contains $a^{-}_{1}$ and $a^{+}_{1}$. The disk $L$ together with a neighborhood of $\alpha$ in $F_R$ is a $6$-punctured annulus in $M_R$. Both boundary components of this annulus are contained in the interior of $M_R$ and bound disks in $M_R$ disjoint from both $F$ and $K$. See Figure \ref{fig:Spine1.eps}. By incompressibility of $F_{K}$ and irreducibility of $M_R-K$, both boundary components of this annulus bound disks in $F_{K}$. Hence, $F_R$ is a $6$-punctured sphere in $M_R$ isotopic to $\partial(B_R)_{K}$. This is a contradiction to $F_{K}$ being incompressible, so such an outermost arc must not exist.

\begin{figure}[h]
\centering \scalebox{.5}{\includegraphics{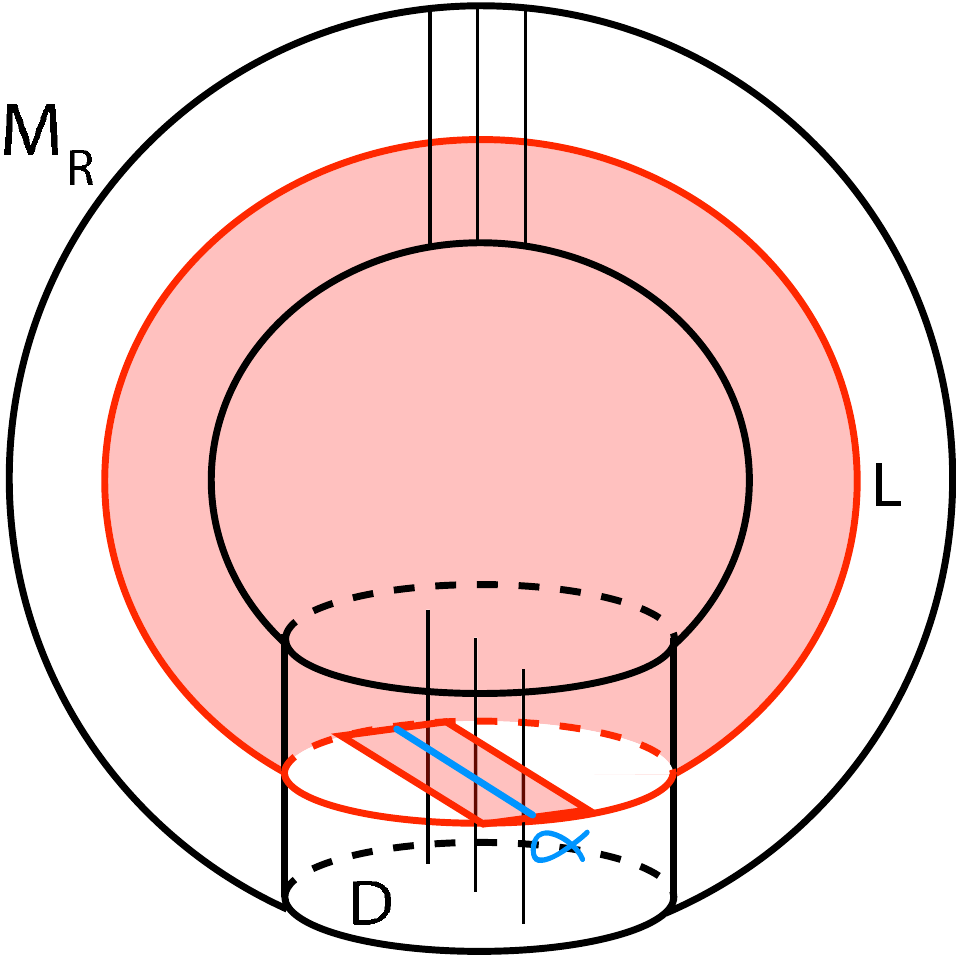}}
\caption{}\label{fig:Spine1.eps}
\end{figure}

\medskip

\noindent{\bf Case 2:} Suppose $\alpha$ is an outermost arc of $F \cap H$ with endpoints $a^{-}_{i}$ and $a^{-}_{i+1}$. Since $\alpha$ meets each of $y_1$, $y_2$ and $y_3$ in at most one point, $\alpha \cap K = \emptyset$. Let $L_{1}$ and $L_{2}$ be the two $3$-punctured disks in $\mathcal{D}_R$ that contain $a^{-}_{i}$ and $a^{-}_{i+1}$ in there respective boundaries. Since $\alpha$ is outermost, we can isotope it to be monotone with respect $h_N$. Let $x$ be one of the strands of $K$ in $M_R-N$. As in Figure \ref{fig:Spine2.eps}, there is a disk $G$ in $M_R-R$ that is vertical with respect to $h_R$ and illustrates a parallelism between a sub arc of $x_i$, $i \in \{1,2,3\}$, and an arc in $F$. As in the proof of Claim 1, we have conclusion (2).

\begin{figure}[h]
\centering \scalebox{.5}{\includegraphics{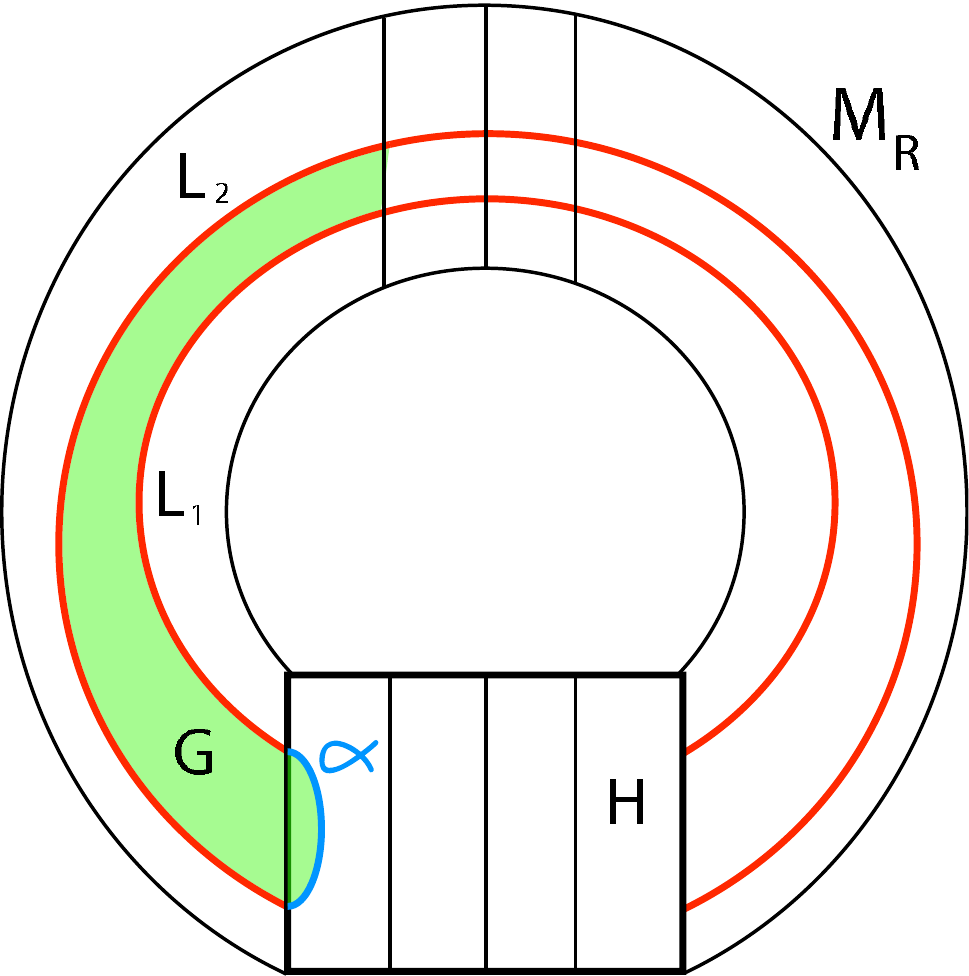}}
\caption{}\label{fig:Spine2.eps}
\end{figure}

\medskip

\noindent{\bf Case 3:} Suppose $\alpha$ is an outermost arc of $F \cap H$ with endpoints $c^{-}_{i}$ and $c^{-}_{i+1}$. Since $\alpha$ meets each of $y_1$, $y_2$ and $y_3$ in at most one point, $\alpha \cap K = \emptyset$. Let $\gamma_{1}$ and $\gamma_{2}$ be the curves in $\mathcal{D}$ that contain $c^{-}_{i}$ and $c^{-}_{i+1}$ respectively. Let $C_{1}$ and $C_{2}$ be small closed neighborhoods of $\gamma_{1}$ and $\gamma_{2}$ in $F$ respectively. A neighborhood of $\alpha$ in $F_{K}$ together with $C_{1}$ and $C_{2}$ form a connected subsurface of $F_{K}$ with three boundary components $\gamma_{1}$, $\gamma_{2}$ and $\beta$. The arc $\beta$ bounds a disk in $N$ disjoint from $F$ and $K$. By incompressibility of $F_{K}$ and irreducibility of $N-K$, $\beta$ bounds a disk in $F_{K}$. Hence, we have conclusion (7).

\medskip

\noindent{\bf Case 4:} Suppose $\alpha$ is an outermost arc of $F \cap H$ with endpoints $a^{+}_{r}$ and $c^{+}_{1}$. Since $\alpha$ meets each of $y_1$, $y_2$ and $y_3$ in at most one point, $\alpha \cap K = \emptyset$. Let $L$ be the disk in $F$ with boundary in $\mathcal{D}_R$ that contains $a^{+}_{r}$. Let $\gamma$ be the curve in $\mathcal{D}$ that contains $c^{-}_{i}$ and let $C_{1}$ be a small closed neighborhood of $\gamma$ in $F$. A neighborhood of $\alpha$ in $F$ together with $E$ and $C_{1}$ forms a $3$-punctured annulus subsurface of $F_{K}$ with boundary components $\gamma$ and $\beta$. The curve $\beta$ bounds a disk in $N$ disjoint from $F$ and $K$. By incompressibility of $F_{K}$ and irreducibility of $N-K$, $\beta$ bounds a disk in $F_{K}$. Hence, we have conclusion (3). By a similar argument, if $\alpha$ has endpoints $b^{+}_{1}$ and $c^{+}_{s}$ or $b^{-}_{1}$ and $c^{-}_{s}$ we have conclusion (4).

\medskip

\noindent{\bf Case 5:} This case can only occur if there are no $a^{\pm}_{i}$'s (i.e., $\mathcal{D}_R$ is empty). Suppose $\alpha$ is an outermost arc of $F \cap H$ with endpoints $c^{-}_{1}$ and $c^{+}_{1}$. Since $\alpha$ meets each of $y_1$, $y_2$ and $y_3$ in at most one point, $\alpha \cap K$ consists of exactly three points. Let $\gamma$ be the curve in $\mathcal{D}$ that contains both $c^{-}_{1}$ and $c^{+}_{1}$. Let $C$ be a small closed neighborhood of $\gamma$ in $F$. $C$ together with a regular neighborhood of $\alpha$ in $F$ is a $3$-punctured subsurface of $F_{K}$ with three boundary components $\gamma$, $\beta_{1}$ and $\beta_{2}$. However, $\beta_{1}$ and $\beta_{2}$ each bound disks in $N-K$ disjoint from $F$ and $K$. By incompressibility of $F_{K}$ and irreducibility of $N-K$, both $\beta_{1}$ and $\beta_{2}$ bound disks in $F_{K}$. Hence, we have conclusion (6).

\medskip

\noindent{\bf Case 6:} This case can only occur if there are no $c^{\pm}_{i}$'s (i.e., $\mathcal{D}$ is empty). Suppose $\alpha$ is an outermost arc of $F \cap H$ with endpoints $a^{+}_{r}$ and $b^{+}_{1}$. Since $\alpha$ meets each of $y_1$, $y_2$ and $y_3$ in at most one point, $\alpha \cap K = \emptyset$. Let $L_R$ be the disk in $F$ with boundary in $\mathcal{D}_R$ such that $a^{+}_{r} \in \partial(L_R)$ and let $L_Q$ be the disk in $F$ with boundary in $\mathcal{D}_Q$ such that $b^{+}_{1}\in \partial(L_Q)$. A neighborhood of $\alpha$ in $F$ together with $L_R$ and $L_Q$ is a $6$-punctured disk subsurface of $F_{K}$. The boundary of this disk bounds a disk in $N-K$ disjoint from $F$ and $K$. By incompressibility of $F_{K}$ and irreducibility of $N-K$, the boundary of this $6$-punctured disk bounds a disk in $F_{K}$. Hence, we have conclusion (5).
\end{proof}

\begin{lemma}\label{compressibleinR}
If $D_{K}$ is compressible in $B_R-R$ then $d(\partial D,\mathcal{V}_R) \leq 1$. If $D_{K}$ is compressible in $B_Q-Q$ then $d(\partial D,\mathcal{V}_Q) \leq 1$
\end{lemma}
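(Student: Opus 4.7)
The plan is to take any compressing disk $E$ for $D_K$ in $B_R - R$ and show that $\partial E$ represents a vertex of $\mathcal{V}_R$ in the curve complex $\mathcal{C}_n$ of $\partial B_R - R$. Since $\partial E \subset \mathrm{int}(D)$ is automatically disjoint from $\partial D$, this immediately yields $d(\partial D, \partial E) \leq 1$, and therefore $d(\partial D, \mathcal{V}_R) \leq 1$. The second statement follows by the identical argument carried out inside $B_Q$.

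Given such a compressing disk $E$, let $D_1, D_2 \subset \partial B_R$ be the two disks cobounded by $\partial E$, chosen so that $D_1 \subset D$ and $\partial D \subset D_2$, and set $j = |D_1 \cap R|$, so $D_2$ contains the remaining $2n - j$ endpoints of arcs of $R$. Essentiality of $\partial E$ in $D_K$ immediately gives $j \geq 1$, since otherwise $\partial E$ would bound an unpunctured subdisk of $D_K$. To conclude $\partial E \in \mathcal{V}_R$ it remains to verify that $\partial E$ is also essential in $\partial B_R - R$, i.e., that $j \geq 2$ and $2n - j \geq 2$.

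The crux of the proof, and the only delicate step, is a parity argument that rules out the once-punctured cases. The sphere $S = D_1 \cup E$ sits in $B_R$ and bounds a ball $B_1 \subset B_R$ with $\partial B_1 \cap \partial B_R = D_1$. Because $E \cap R = \emptyset$ and $\mathcal{R}$ is rational (so $R$ has no closed components, only properly embedded arcs), every arc of $R$ lies entirely in $B_1$ or entirely in its complement, and therefore contributes an even number of endpoints to $D_1$. Hence $j$ is even, so $j \geq 2$. The same reasoning applied to the sphere $D_2 \cup E$ shows that $2n - j$ is even; if $2n - j = 0$, then $k = 2n$, so $\partial D$ itself bounds the unpunctured disk $\partial B_R - D$ and is not a vertex of $\mathcal{C}_n$, in which case the lemma is vacuous. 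Otherwise $2n - j \geq 2$, so $\partial E$ is essential in $\partial B_R - R$; combined with the fact that $\partial E$ bounds the disk $E \subset B_R - R$, this places $\partial E$ in $\mathcal{V}_R$ and completes the proof.
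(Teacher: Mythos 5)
Your proof is correct and follows the same route as the paper's: take the compressing disk $E$, observe $\partial E$ is disjoint from $\partial D$, and place $\partial E$ in $\mathcal{V}_R$. The one thing you add beyond the paper's two-sentence argument is the parity check that $\partial E$ is essential in the full $2n$-punctured sphere $\partial(B_R)-R$ (not merely in $D_K$); the paper treats this as immediate, while you supply the reason ($j$ even, hence $j\geq 2$, and $2n-j\geq 2n-k\geq 2$ since $\partial D$ is essential), which is a reasonable detail to make explicit.
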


\begin{proof}
Let $E \subseteq B_R-R$ be a compressing disk for $D_{R}$. Since $\partial E$ is essential in $D_{R}$ and disjoint from $\partial(D_R)$, then $\partial E \epsilon \mathcal{V}_R$ and $d(\partial D, \mathcal{V}_R) \leq 1$.
\end{proof}

\begin{lemma}\label{compressible}

If $D_K$ is compressible in $B_T-T$ then $d(\partial D,\mathcal{V}_R) \leq 1$ or $d(\partial D,\mathcal{V}_Q) \leq 1$.

\end{lemma}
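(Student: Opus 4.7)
My plan is to reduce the claim to Lemma \ref{compressibleinR} by observing that any compressing disk for $D_K$ in $B_T - T$ must lie entirely on one side of $D_K$. The key structural fact is that the disk $D$ separates $B_T$ into $B_R$ and $B_Q$, and since $T$ meets $D$ only at its three puncture points, the punctured disk $D_K = D - \eta(T)$ separates $B_T - T$ into the two pieces $B_R - R$ and $B_Q - Q$, glued along $D_K$.

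Let $E$ be a compressing disk for $D_K$ in $B_T - T$. By the definition of a compressing disk, $E \cap D_K = \partial E$, so the interior of $E$ lies in $(B_T - T) - D_K$, a space with exactly two connected components, namely the interiors of $B_R - R$ and $B_Q - Q$. Because $E$ is connected, its interior sits inside a single component; hence $E$ itself is contained in $B_R - R$ or in $B_Q - Q$, and it remains a compressing disk for $D_K$ in whichever submanifold contains it. Applying Lemma \ref{compressibleinR} to this one-sided compressing disk yields $d(\partial D,\mathcal{V}_R) \leq 1$ in the first case and $d(\partial D,\mathcal{V}_Q) \leq 1$ in the second, which is exactly the desired conclusion.

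There is really no significant obstacle here: the whole argument rests on the separating property of $D$ and on the connectedness of a disk, so no innermost-disk surgery, no irreducibility argument, and no appeal to the structure of rational tangles is required. The only minor point to notice is that the two technical notions agree, i.e., that a disk in $B_R - R$ whose boundary is essential in $D_K$ is automatically a compressing disk for $D_K$ in the submanifold $B_R - R$, which is built into the definitions.
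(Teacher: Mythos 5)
Your proof is correct and follows exactly the paper's approach: observe that a compressing disk for $D_K$ has interior disjoint from the separating surface $D_K$ and hence lies entirely in $B_R - R$ or $B_Q - Q$, then invoke Lemma \ref{compressibleinR}. The paper simply states this one-sidedness as an immediate observation, whereas you spell out the connectedness argument behind it, but there is no difference in substance.
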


\begin{proof}
If $D_K$ is compressible in $B_T-T$, then $D_{K}$ is compressible in $B_R-R$ or $D_{K}$ is compressible in $B_Q-Q$. By Lemma \ref{compressibleinR}, $d(\partial D,\mathcal{V}_R) \leq 1$ or $d(\partial D,\mathcal{V}_Q) \leq 1$.
\end{proof}

\begin{lemma}\cite[Proposition 4.1]{T3}\label{ccompressibleinR}
If $D_{R}$ is cut-compressible in $B_R-R$ then $d(\partial D,\mathcal{V}_R) \leq 2$. Similarly, if $D_{Q}$ is cut-compressible in $B_Q-Q$ then $d(\partial D,\mathcal{V}_Q) \leq 2$\end{lemma}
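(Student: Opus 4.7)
The plan is to mimic the proof of Lemma \ref{compressibleinR} with a small detour. Given a cut-disk $E$ for $D_R$ with $E \cap R = \{r\} \subset \alpha$ for some arc $\alpha$ of $R$, I would produce a vertex $\mu_\beta \in \mathcal{V}_R$ disjoint from $\partial E$ on $\partial B_R - R$. Then the bound $d(\partial D, \mathcal{V}_R) \leq 2$ follows from the triangle inequality $d(\partial D, \partial E) + d(\partial E, \mu_\beta) \leq 1 + 1$, where $d(\partial D, \partial E) \leq 1$ is immediate since $\partial E$ lies in the interior of $D_R$. The first step is to check that $\partial E$ really is a vertex of $\mathcal{C}_n$; since the cut-disk condition forces $\partial E$ essential on $D_R - R$, the disk of $\partial E$ inside $D_R$ has at least two punctures, while the complementary disk in $\partial B_R$ contains all of $\partial B_R - D_R$ as well as any remaining punctures of $D_R$, so it too has at least two punctures, making $\partial E$ essential on $\partial B_R - R$.

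Next I would pick an arc $\beta$ of $R$ with $\beta \neq \alpha$ and observe that both endpoints of $\beta$ lie on the same side of $\partial E$ in $\partial B_R$: since $E$ is properly embedded it separates $B_R$ into two balls, and $\beta$ (being disjoint from $E$ by hypothesis on $E \cap R$) lies entirely in one of them. The cases $n \leq 2$ are vacuous because a $k$-punctured disk admits no essential simple closed curve when $k \leq 3$, while $D_R$ has at most $2n - 1 \leq 3$ punctures in that range. So I may assume $n \geq 3$ and pick such a $\beta$.

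The heart of the argument is to exhibit a compressing disk for $\partial B_R - \eta(R)$ in $B_R - \eta(R)$ bounded by $\mu_\beta$, the essential simple closed curve in $\partial B_R - R$ encircling the two endpoints of $\beta$. Because $\mathcal{R}$ is rational, $B_R - \eta(R)$ is a genus-$n$ handlebody with free fundamental group generated by meridians of the $n$ arcs of $R$. The class of $\mu_\beta$ in this free group is a product of small meridional loops around the two endpoints of $\beta$, and these are a meridian of $\beta$ and its inverse, so their product is trivial. Dehn's Lemma then yields an embedded compressing disk bounded by $\mu_\beta$, so $\mu_\beta \in \mathcal{V}_R$ (essentiality on the $2n$-punctured sphere is clear since the complementary region carries $2n - 2 \geq 4$ punctures). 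Because both endpoints of $\beta$ lie in one disk component of $\partial B_R - \partial E$, I can isotope $\mu_\beta$ to lie inside that component and so be disjoint from $\partial E$, giving $d(\partial E, \mu_\beta) \leq 1$ as required. The proof for $\mathcal{Q}$ is identical.

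The main obstacle will be the $\pi_1$ calculation showing $\mu_\beta$ is null-homotopic in $B_R - \eta(R)$; this crucially relies on the rationality hypothesis, both for the handlebody structure and for the cancellation of the two meridional loops at the ends of the trivial arc $\beta$.
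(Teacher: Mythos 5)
Your approach is correct and reaches the same bound, but it takes a genuinely different route than the paper. The paper works directly with the arc $\alpha$ that the cut-disk $C$ punctures: it intersects the boundary-parallelism disk $B$ for $\alpha$ with $C$, isotopes so $B \cap C$ is a single arc, and shows that the boundary of a regular neighborhood of $C \cup B_1$ (where $B_1$ is one of the two pieces of $B$) contains a compressing disk whose boundary $\gamma$ misses $\partial C$. You instead pick a \emph{different} arc $\beta$ of $R$, use that $\beta$ (being disjoint from the cut-disk $E$) sits entirely on one side of $E$, so the separating curve $\mu_\beta$ encircling $\partial\beta$ can be chosen inside the disk component of $\partial B_R - \partial E$ containing $\partial\beta$. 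What the paper's approach buys is that it never needs a second arc to exist and avoids all $\pi_1$ considerations. What your approach buys is that you never have to intersect and simplify a bridge disk against the cut-disk: disjointness of $\mu_\beta$ from $\partial E$ is automatic once $\beta$ is on one side.

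One step you should tighten: the claim that ``the class of $\mu_\beta$ \ldots is a product of small meridional loops around the two endpoints of $\beta$, and these are a meridian of $\beta$ and its inverse, so their product is trivial.'' The two peripheral loops around the endpoints of $\beta$ are a priori only \emph{conjugate} to $m_\beta^{\pm 1}$ in $\pi_1(B_R-\eta(R))$, not literally inverse to one another, so $[\mu_\beta]$ is only visibly trivial in $H_1$, which does not suffice for the loop theorem. The cancellation in $\pi_1$ does hold, but the cleanest justification is geometric rather than algebraic, and it sidesteps Dehn's Lemma entirely: since $\mathcal{R}$ is rational, take a boundary-parallelism disk $B_\beta$ for $\beta$ disjoint from the other arcs (simultaneous $\partial$-parallelism is exactly what rationality provides); then the frontier of a regular neighborhood of $B_\beta$ in $B_R$ is an embedded compressing disk for $\partial B_R - R$ whose boundary is isotopic to $\mu_\beta$. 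With that substitution your proof is complete. Also, your remark that $D_R$ has ``at most $2n-1$ punctures'' when $n\leq 2$ should be $2n-2$ (since $\partial D_R$ must be essential in $\partial B_R - R$), which still makes those cases vacuous.
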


\begin{proof}
Let $\alpha$ be the arc of $K$ in $B_R$ that punctures the cut-compressing disk, $C$, for $D$. Let $B$ be the disk in $B_R$ illustrating the boundary parallelism of $\alpha$. After perhaps an isotopy of $B$, $B \cap C$ is a single arc $\beta$ that separates $B$ into two disks $B_{1}$ and $B_{2}$. Consider a regular neighborhood of $C \cup B_{1}$. Its boundary contains a disk that intersects $\partial(B_R)_{K}$ in an essential curve $\gamma$ and does not intersect $\partial C$. Hence,  $d(\partial D,\mathcal{V}_R) \leq d(\partial D,\partial C) +d(\partial C,\gamma) \leq 2$.
\end{proof}

\begin{lemma}\label{ccompressible}

If $D_K$ is c-compressible in $B_T-T$ then $d(\partial D ,\mathcal{V}_R) \leq 2$ or $d(\partial D,\mathcal{V}_Q) \leq 2$.

\end{lemma}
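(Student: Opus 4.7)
The plan is to mirror the proof of Lemma \ref{compressible}, splitting into the two cases according to whether the witnessing disk is a compressing disk or a genuine cut-disk, and then pushing each case to the appropriate side of $D$.

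First, suppose $D_K$ is compressible in $B_T-T$. Then Lemma \ref{compressible} already gives the stronger conclusion $d(\partial D,\mathcal{V}_R)\leq 1$ or $d(\partial D,\mathcal{V}_Q)\leq 1$, which in particular implies the desired bound of $2$. So the interesting case is when the witness is a genuine cut-disk $C$ for $D_K$ in $B_T-T$, with $C\cap D=\partial C$ and $|C\cap T|=1$.

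The key geometric observation is that $D$ is a properly embedded separating disk in $B_T$: indeed, by construction in Remark \ref{tangleT}, $D$ is the image of the identified $k$-punctured disks $D_R\subset\partial(B_R)$ and $D_Q\subset\partial(B_Q)$, and hence cuts $B_T$ into the two submanifolds coming from $B_R$ and $B_Q$. Since $C$ is connected and its interior is disjoint from $D$, it follows that the interior of $C$ lies entirely in one of these submanifolds. In the first case, $C$ is a cut-disk for $D_R$ in $B_R-R$, and Lemma \ref{ccompressibleinR} applied to $\mathcal{R}$ yields $d(\partial D,\mathcal{V}_R)\leq 2$. In the second case, the symmetric application to $\mathcal{Q}$ yields $d(\partial D,\mathcal{V}_Q)\leq 2$.

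There is no real obstacle to overcome here: the argument is essentially a one-line reduction to Lemma \ref{ccompressibleinR} via the separating property of $D$. The only subtlety worth spelling out is that since $\partial D$ is the common boundary circle of the two $k$-punctured subdisks $D_R$ and $D_Q$, the essential curve $\gamma$ produced on $\partial(B_R)_K$ (respectively $\partial(B_Q)_K$) by the proof of Lemma \ref{ccompressibleinR} legitimately lies in $\mathcal{V}_R$ (respectively $\mathcal{V}_Q$), so the distance bound transfers verbatim to the present setting.
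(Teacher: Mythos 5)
Your argument is correct and takes essentially the same approach as the paper: handle the compressible case via Lemma \ref{compressible}, observe that a cut-disk for $D_K$ in $B_T-T$ must lie entirely on one side of the separating disk $D$ and hence is a cut-disk for $D_R$ in $B_R-R$ or for $D_Q$ in $B_Q-Q$, and then invoke Lemma \ref{ccompressibleinR}. (The paper's printed proof cites ``Lemma \ref{ccompressible}'' in this step, but that is plainly a typo for Lemma \ref{ccompressibleinR}.) You merely spell out the separating-disk reasoning that the paper leaves implicit.
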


\begin{proof}
If $D_K$ is compressible in $B_T-T$ then, by Lemma \ref{compressible}, $d(\partial D,\mathcal{V}_R) \leq 1$ or $d(\partial D,\mathcal{V}_Q) \leq 1$. If $D_K$ is cut-compressible, then $D_K$ is cut-compressible in $B_R-R$ or $D_{K}$ is cut-compressible in $B_Q-Q$. By Lemma \ref{ccompressible}, $d(\partial D,\mathcal{V}_R) \leq 2$ or $d(\partial D,\mathcal{V}_Q) \leq 2$
\end{proof}

\begin{lemma}\label{reducible}
If $B_T-T$ is reducible then $d(\partial D,\mathcal{V}_R) \leq 1$ or $d(\partial D,\mathcal{V}_Q) \leq 1$.
\end{lemma}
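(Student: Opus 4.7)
The plan is to use a reducing sphere for $B_T - T$ to produce a compressing disk for the $2n$-punctured sphere $\partial B_R - R$ in $B_R - R$ (or the analogous disk on the $Q$ side) whose boundary is a simple closed curve contained in $D$. Because $\partial D$ is disjoint from any such curve, this immediately yields $d(\partial D, \mathcal{V}_R) \le 1$ or $d(\partial D, \mathcal{V}_Q) \le 1$.

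First I would let $S$ be a reducing sphere for $B_T - T$, isotoped to minimize $|S \cap D|$. Since rational tangle complements are handlebodies, $B_R - R$ and $B_Q - Q$ are irreducible, so the case $S \cap D = \emptyset$ is impossible (such an $S$ would bound a ball on one side of $D$). By an iterated innermost disk argument I then choose a curve $\alpha \in S \cap D$ that is innermost on $D$ and that also bounds a disk component $E$ of $S \setminus (S \cap D)$; assume without loss of generality that $E \subset B_R - R$. Let $X_1 \subset D$ be the sub-disk of $\partial B_R$ cut off by $\alpha$, set $X_2 = \partial B_R - X_1$, and put $p_i = |X_i \cap R|$, so that $p_1 + p_2 = 2n$.

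The key observation is a parity count: the sphere $E \cup X_1$ bounds a 3-ball $B'$ in $B_R$, and since $E \cap R = \emptyset$, an arc of $R$ with only one endpoint in $X_1$ would have nowhere to exit $B'$ across $\partial B' = E \cup X_1$; hence each arc of $R$ contributes $0$ or $2$ endpoints to $X_1$, making $p_1$ even. If $p_1 = 0$, then $E \cup X_1$ is a sphere in $B_R - R$, so by irreducibility it bounds a ball, and compressing $S$ along $X_1$ produces two spheres in $B_T - T$; at least one of them must still be a reducing sphere and meets $D$ strictly fewer times, contradicting minimality. The value $p_1 = 2n$ forces $k = 2n$ and $\mathcal{T}$ to have no strands, so we exclude it. In the remaining range $2 \le p_1 \le 2n - 2$ we have $p_2 \ge 2$ as well, so $\alpha$ is essential on $\partial B_R - R$ and $E$ realizes $\alpha$ as an element of $\mathcal{V}_R$. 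Because $\alpha \subset \mathrm{int}(D)$ is disjoint from $\partial D$, we conclude $d(\partial D, \mathcal{V}_R) \le 1$, and the symmetric case when $E \subset B_Q - Q$ gives $d(\partial D, \mathcal{V}_Q) \le 1$.

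The main obstacle is the simultaneous innermost choice of $\alpha$ in the second step: we need $\mathrm{int}(X_1)$ to contain no other curves of $S \cap D$, both so that the parity count goes through cleanly and so that the compression in the $p_1 = 0$ case genuinely lowers $|S \cap D|$ (the push-off of $X_1$ must be disjoint from $S$). Once this standard topological choice is arranged, the remainder of the argument is a routine case check using the parity bound.
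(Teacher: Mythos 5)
Your overall strategy closely parallels the paper's: minimize $|S\cap D|$, pick an innermost curve $\alpha$, and produce from it a compressing disk for $\partial(B_R)-R$ (or the $Q$ side) whose boundary lies in $\mathrm{int}(D)$. Your parity count is a tidy substitute for the paper's split into $\alpha$ essential or inessential in $D_K$; it eliminates $p_1=1$ automatically. However, there is a genuine gap in exactly the step you flagged. You assert, via an unspecified ``iterated innermost disk argument,'' that one can always choose $\alpha\in S\cap D$ that is simultaneously innermost on $D$ (so $\mathrm{int}(X_1)\cap S=\emptyset$) \emph{and} cuts off a disk component $E$ of $S\setminus D$ (so $E$ lies on one side of $D$). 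This is not a standard move and no argument is given. The two conditions are each used essentially: innermost on $S$ so that $E$ is a compressing disk in $B_R-R$ or $B_Q-Q$, and innermost on $D$ so that the compression along $X_1$ in the $p_1=0$ case genuinely lowers $|S\cap D|$. A curve that is innermost on $S$ need not be innermost on $D$ and vice versa, and you cannot simply declare both for a single $\alpha$.

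The paper sidesteps this by taking $\alpha$ innermost on $S$ only (so the inner disk $S'$ always lies on one side of $D$) and then distinguishing two cases by whether $\alpha$ is essential in $D_K$: in the essential case $S'$ is a compressing disk for $D_R$ or $D_Q$ and Lemma~\ref{compressible} (via Lemma~\ref{compressibleinR}) finishes; in the inessential case $D'\cup S'$ is a sphere in the irreducible manifold $B_R-R$ or $B_Q-Q$ and one reduces $|S\cap D|$. Your proof can be repaired in the same spirit: take $\alpha$ innermost on $S$ only. The sphere $E\cup X_1$ is still embedded and still bounds a ball in $B_R$, so the parity of $p_1$ goes through unchanged. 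In the case $p_1=0$, instead of compressing along $X_1$ (which may no longer be disjoint from $S$), pass to a curve $\beta\subset S\cap D$ innermost on $D$ with its subdisk $X_\beta$ contained in $X_1$; then $X_\beta$ is disjoint from both $K$ and $\mathrm{int}(S)$, and surgering $S$ along $X_\beta$ yields a reducing sphere meeting $D$ fewer times, contradicting minimality. With that repair your argument is a correct alternative to the paper's proof.
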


\begin{proof}
Let $S$ be a reducing sphere for $B_T-T$. Since $B_R-R$ and $B_Q-Q$ are irreducible, $S$ cannot be isotoped to be disjoint from $D$. Isotope $S$ so that $|S \cap D|$ is minimal. If an innermost curve of $D \cap S$ in $S$ is essential in $D_K$, then $D_K$ is compressible and, by Lemma \ref{compressible} $d(\partial D,\mathcal{V}_R) \leq 1$ or $d(\partial D,\mathcal{V}_Q) \leq 1$.

Suppose $\alpha$ is a curve in $D \cap S$ that is innermost in $S$ and bounds a subdisk $D'$ in $D$. Since $\alpha$ is innermost in $S$, $\alpha$ bounds a subdisk $S'$ of $S$ that is disjoint from $D$ except in its boundary. After pushing $D'$ slightly off of $D$ toward $S'$, $D' \cup S'$ is a $2$-sphere embedded in either $B_R-R$ or $B_Q-Q$. Since both $B_R-R$ and $B_Q-Q$ are irreducible, $D' \cup S'$ bounds a $3$-ball disjoint from $K$. We can use this $3$-ball to construct an isotopy of $S$ that reduces $|S \cap D|$. However, this contradicts the assumption that $|S \cap D|$ is minimal. Hence, $\alpha$ must be essential in $D_K$.
\end{proof}

\begin{lemma}\label{prime}
 If $B_T-T$ contains an essential $2$-punctured sphere then $d(\partial D,\mathcal{V}_R) \leq 2$ or $d(\partial D,\mathcal{V}_Q) \leq 2$.
\end{lemma}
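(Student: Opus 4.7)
The plan is to mimic the proof of Lemma~\ref{reducible}, with the extra wrinkle that $S$ now carries two punctures from $K$, so we need a parity argument together with occasional appeals to the primeness of the rational tangles $\mathcal{R}$ and $\mathcal{Q}$ (via Lemma~\ref{essinrational}) in place of the bare irreducibility used for reducing spheres. First I would show $S \cap D \neq \emptyset$: if $S$ were disjoint from $D$, then $S$ would sit entirely in $B_R - R$ or $B_Q - Q$, and since rational tangles are prime, Lemma~\ref{essinrational} would force $S$ to bound a ball in that tangle containing an unknotted arc, and this ball persists in $B_T$ and contradicts essentiality of $S$.

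Next, I would isotope $S$ to minimize $|S \cap D|$, take an innermost curve $\alpha$ of $S \cap D$ on $S$ bounding a subdisk $S' \subset S$ with $S' \cap D = \alpha$, and say $S' \subset B_R$. Since the two disks of $S - \alpha$ split the two punctures of $S$ between them, relabel so that $|S' \cap K| \leq 1$. The curve $\alpha$ also bounds a subdisk $D_k \subset D$ meeting $K$ in $k \in \{0,1,2,3\}$ points. The surface $D_k \cup S'$ is a $2$-sphere in the ball $B_R$; since each arc of $R$ has both endpoints on $\partial B_R$, every arc meets a closed sphere in $B_R$ an even number of times, forcing $k + |S' \cap K|$ to be even. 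This leaves only the admissible pairs $(k, |S' \cap K|) \in \{(0,0), (2,0), (1,1), (3,1)\}$.

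I would dispatch the four cases as follows. In $(0,0)$, the sphere $D_0 \cup S'$ lies in $B_R - R$, bounds a ball by irreducibility, and this ball gives an isotopy of $S'$ across $D_0$ reducing $|S \cap D|$, contradicting minimality. In $(1,1)$, the sphere $D_1 \cup S'$ is a $2$-punctured sphere in $B_R$ and, by primeness of $\mathcal{R}$ coming from Lemma~\ref{essinrational}, bounds a ball containing an unknotted arc, again enabling a reduction of $|S \cap D|$. In $(2,0)$, the disk $S'$ is a compressing disk for $D$ in $B_R - R$ whose boundary is essential in $D_K$, so Lemma~\ref{compressibleinR} yields $d(\partial D, \mathcal{V}_R) \leq 1$. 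In $(3,1)$, $\alpha$ is parallel to $\partial D$ and $S'$ has one puncture; verifying that the annulus $S' - \eta(K)$ is not parallel through $B_R - K$ to any subsurface of $D - \eta(K)$ (since $\alpha$ bounds in $D$ only the $3$-punctured subdisk and the unpunctured annulus out to $\partial D$, neither of which pairs $\alpha$ with a meridian of $K$) shows that $S'$ is a cut-disk for $D$, so Lemma~\ref{ccompressibleinR} yields $d(\partial D, \mathcal{V}_R) \leq 2$. The case $S' \subset B_Q$ is symmetric and gives the corresponding bound on $d(\partial D, \mathcal{V}_Q)$.

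The hardest part will be case $(3,1)$: carefully verifying that the $1$-punctured disk $S'$ is genuinely a cut-disk, i.e., that $S' - \eta(K)$ cannot be pushed through $B_R - K$ onto a subsurface of $D - \eta(K)$. A smaller subtlety is the ball-swap isotopies in cases $(0,0)$ and $(1,1)$, where one must ensure the innermost disk $S'$ is chosen so that the resulting ball in $B_R$ is clean with respect to the other pieces of $S \cap B_R$; this is the standard innermost argument, but it should be spelled out to confirm the isotopy really does decrease $|S \cap D|$.
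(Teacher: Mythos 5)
Your proof is correct and takes essentially the same approach as the paper: isotope $S$ to minimize $|S\cap D|$, take an innermost disk $S'$ of $S\cap D$ on $S$, and then either derive a distance bound via Lemma~\ref{compressibleinR} or Lemma~\ref{ccompressibleinR} (when $\partial S'$ is essential in $D_K$) or contradict minimality via irreducibility/primeness of the rational tangle (when $\partial S'$ is inessential). Your explicit parity bookkeeping is a cleaner organization of the same cases, and your $(3,1)$ case supplies the one point the paper's proof passes over a bit quickly, namely the subcase where $\alpha$ is parallel to $\partial D$ and bounds only a $3$-punctured disk in $D$ rather than a c-disk, which your cut-disk verification handles correctly.
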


\begin{proof}
Let $S$ be an essential $2$-punctured sphere in $B_T-T$. Since $B_R-R$ and $B_Q-Q$ are prime, $S$ cannot be isotoped to be disjoint from $D$. Isotope $S$ so that $|S \cap D|$ is minimal. If an innermost curve of $D \cap S$ in $S$ bounds a disk in $S$ and is essential in $D_K$, then $D_K$ is compressible and, by Lemma \ref{compressible}, $d(\partial D,\mathcal{V}_R) \leq 1$ or $d(\partial D ,\mathcal{V}_Q) \leq 1$.

If an innermost curve of $D \cap S$ in $S$ bounds a cut-disk in $S$ and is essential in $D_K$, then $D_K$ is cut-compressible. By Lemma \ref{ccompressible}, $d(\partial D,\mathcal{V}_R) \leq 2$ or $d(\partial D,\mathcal{V}_Q) \leq 2$.

Suppose $\alpha$ is an innermost curve of $D \cap S$ in $S$ that bounds a c-disk $S'$ in $S$ and is inessential in $D_K$. Since $\alpha$ is inessential in $D_K$, $\alpha$ bounds a c-disk $D'$ in $D$. After pushing $D'$ slightly off of $D$ toward $S'$, $D' \cup S'$ is a sphere or a $2$-punctured sphere embedded in either $B_R-R$ or $B_Q-Q$. Since both $B_R-R$ and $B_Q-Q$ are irreducible and prime, $D' \cup S'$ bounds a $3$-ball or a $3$-ball containing an unknotted arc of $K$. We can use this $3$-ball to construct an isotopy of $S_{K}$ that reduces $|S \cap D|$. However, this contradicts the assumption that $|S \cap D|$ is minimal. Hence, no such $\alpha$ exists.
\end{proof}

\begin{lemma}\label{S4insideincomp} If $\partial(B_T)-T$ is compressible in $B_T-T$, then $d(\partial D,\mathcal{V}_R) \leq 2$ or $d(\partial D,\mathcal{V}_Q) \leq 2$.\end{lemma}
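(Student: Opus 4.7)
My plan is to take a compressing disk $E$ for $\partial(B_T)-T$ in $B_T-T$ and isotope it to minimize $|E\cap D|$, then analyze the components of $E\cap D$ by the innermost-circle and outermost-arc arguments that drive Lemmas \ref{compressible}--\ref{prime}.

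If $E\cap D=\emptyset$, then $E$ lies entirely in $B_R$ or in $B_Q$; say $B_R$. The curve $\partial E\subset \partial B_R-D_R$ is disjoint from $\partial D$, and I would argue that either $\partial E$ is essential in $\partial B_R-R$, giving $\partial E\in\mathcal{V}_R$ and $d(\partial D,\mathcal{V}_R)\leq 1$, or $\partial E$ is inessential in $\partial B_R-R$ while remaining essential in $\partial B_T-T$, in which case it must bound a disk in $\partial B_R-R$ containing $D_R$ and hence is isotopic to $\partial D$ in $\partial B_R-R$, giving $\partial D\in\mathcal{V}_R$.

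If $E\cap D$ contains a closed curve, I take an innermost one $\gamma$ in $E$ bounding a sub-disk $E'\subset E$ disjoint from $D$ and $T$. If $\gamma$ is essential in $D_K$, then $E'$ compresses $D_K$ on one side of $D$ and Lemma \ref{compressible} applies. If $\gamma$ is inessential in $D_K$, then $\gamma$ cobounds a sub-disk of $D_K$ with $E'$, forming a $2$-sphere in $B_R-R$ or $B_Q-Q$; by irreducibility it bounds a ball, and an isotopy of $E$ through this ball reduces $|E\cap D|$, contradicting minimality.

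The new case is when $E\cap D$ consists solely of arcs. I choose an outermost arc $\alpha$ in $E$, bounding an outermost sub-disk $E'\subset E$ with $\partial E'=\alpha\cup\beta$, $\beta\subset\partial E$, and (up to relabeling) $E'\subset B_R$; then $\alpha$ splits $D$ into disks $D_1,D_2$. If $\alpha$ is inessential in $D_K$, WLOG $D_1\cap K=\emptyset$, so $E'\cup D_1$ is an unpunctured disk in $B_R-R$ with boundary $\gamma'$ on $\partial B_R$: if $\gamma'$ is essential in $\partial B_R-R$, then $\gamma'\in\mathcal{V}_R$ and after pushing its sub-arc off $\partial D$ we obtain $d(\partial D,\mathcal{V}_R)\leq 1$; if $\gamma'$ is inessential, a ball supplied by irreducibility reduces $|E\cap D|$, again a contradiction. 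If instead $\alpha$ is essential in $D_K$, the crucial fact that $\mathcal{T}$ is a $3$-strand tangle forces $D$ to have exactly three punctures, so the partition of punctures by $\alpha$ is $(1,2)$ up to relabeling; WLOG $|D_1\cap K|=1$. Then $E'\cup D_1$ is a cut-disk for $\partial B_R-R$ with essential boundary $\gamma_0$. Applying the construction in Lemma \ref{ccompressibleinR} (using the boundary-parallelism of the strand of $R$ that punctures $E'\cup D_1$, which exists because $\mathcal{R}$ is rational), I produce a compressing disk for $\partial B_R-R$ whose boundary $\gamma\in\mathcal{V}_R$ is disjoint from $\gamma_0$. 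Since $\gamma_0$ can be isotoped off $\partial D$, the triangle inequality yields $d(\partial D,\mathcal{V}_R)\leq d(\partial D,\gamma_0)+d(\gamma_0,\gamma)\leq 2$.

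The hard part will be the final arc subcase. The numerical hypothesis that $\mathcal{T}$ has exactly three strands — so that $D$ carries exactly three punctures — is precisely what makes an essential outermost arc cut off a once-punctured sub-disk, producing a genuine cut-disk for $\partial B_R-R$ to which Lemma \ref{ccompressibleinR} applies; without it the resulting disk would have more punctures and only a weaker distance bound would follow.
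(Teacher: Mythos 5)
Your proposal reaches the conclusion (at least for the 3-strand tangle $\mathcal{T}$ actually built in the paper) but by a more circuitous route than the paper, and your closing diagnosis misidentifies what drives the argument. After reducing to the case where $E\cap D$ consists only of arcs, the paper takes the outermost subdisk $F\subset E$ cut off by an outermost arc $\alpha$ and \emph{keeps $F$ as it is} rather than gluing on a piece $D_i$ of $D$: $F$ is an unpunctured disk properly embedded in $B_R$, and $\partial F=\alpha\cup\beta$ is a simple closed curve on $\partial B_R$ meeting $\partial D$ in exactly two points (the endpoints of $\alpha$). If $\partial F$ is inessential in $\partial(B_R)-R$, then $F$ is boundary-parallel and the parallelism gives an isotopy of $E$ reducing $|E\cap D|$, a contradiction; otherwise $\partial F\in\mathcal{V}_R$, and since two essential curves in the curve complex of a ($\geq 6$)-punctured sphere that intersect in two points are distance at most $2$ apart, $d(\partial D,\mathcal{V}_R)\le 2$ follows at once. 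Note that this argument never asks whether $\alpha$ is essential in $D_K$ and is completely insensitive to the number of punctures of $D$; so your claim that the 3-strand hypothesis ``is precisely what makes the argument work'' is wrong. Your route --- attaching $D_1$ to $E'$, splitting on whether $\alpha$ is essential, and invoking the bridge-disk surgery of Lemma~\ref{ccompressibleinR} when $D_1$ is once-punctured --- does reach $\mathcal{V}_R$ at distance $\le 2$, but only because you insisted on producing a curve \emph{disjoint} from $\partial D$; the paper is content with a curve meeting $\partial D$ twice, which is both simpler and works for any $k$-punctured $D$. (One small plus for your version: you explicitly treat $E\cap D=\emptyset$, a degenerate case the paper's write-up glosses over.)
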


\begin{proof}
To form a contradiction, assume $\Delta$ is a compressing disk for $\partial(B_T)-T$ in $B_T-T$. Isotope $\Delta$ so that $|\Delta \cap D_K|$ is minimal. By Lemma \ref{compressible}, we can assume that $D_K$ is incompressible. By Lemma \ref{reducible}, we can assume that $B_T-T$ is irreducible. Since $D_K$ is incompressible and $B_T-T$ is irreducible we can assume $\Delta \cap D_K$ consists only of arcs and no simple closed curves. Let $\alpha$ be an outermost arc of $\Delta \cap D_K$ in $\Delta$ and let $F$ be the subdisk of $\Delta$ that $\alpha$ cobounds with an arc in $\partial \Delta$ such that the interior of $F$ is disjoint from $D_K$. Without loss of generality, $F$ is properly embedded in $B_R$. If $\partial F$ is inessential in $\partial(B_R)-R$, then $F$ is boundary parallel in $B_R-R$ and this boundary parallelism can be used to construct an isotopy of $\Delta$ that decreases $|\Delta \cap D_K|$, a contradiction. Hence, $F$ is a compressing disk for $B_R-R$ that intersects $\partial D$ in exactly two points. It follows that $d(\partial D,\mathcal{V}_R) \leq 2$.
\end{proof}

\begin{lemma}\label{essinboth}
If $F_K$ is a c-incompressible surface in $B_T-T$ with $\partial F$ consisting of a (possibly empty) collection of simple closed curves isotopic to $\partial D$ in $\partial(B_T)-T$, $d(\partial D,\mathcal{V}_R) \geq 3$ and $d(\partial D,\mathcal{V}_Q) \geq 3$, then, after an isotopy of $F_K$, $F \cap D$ is a (possibly empty) collection of simple closed curves all of which are c-essential in both $F_K$ and $D_K$.
\end{lemma}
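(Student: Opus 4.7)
First, I would use the fact that each boundary circle of $F$ is isotopic to $\partial D$ in $\partial(B_T)-T$ to push $F$ slightly along these annular parallelisms so that $\partial F\cap\partial D=\emptyset$. With $F$ and $D$ then properly embedded and having disjoint boundaries, $\partial(F\cap D)=\emptyset$, so $F\cap D$ contains no arcs and consists entirely of simple closed curves. I would then isotope $F$ to minimize $|F\cap D|$ subject to the condition $\partial F\cap\partial D=\emptyset$. The distance hypothesis $d(\partial D,\mathcal{V}_R)\geq 3$ and $d(\partial D,\mathcal{V}_Q)\geq 3$ combined with Lemmas~\ref{compressible} and~\ref{ccompressible} forces $D_K$ to be c-incompressible in $B_T-T$, while Lemmas~\ref{reducible} and~\ref{prime} force $B_T-T$ to be irreducible and $\mathcal{T}$ to be prime.

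Suppose, for contradiction, that a component $\alpha$ of $F\cap D$ is c-inessential in $D_K$. I would choose such an $\alpha$ innermost in $D$, so $\alpha$ bounds a sub-disk $D^*\subset D$ meeting $K$ in at most one point. Any component of $F\cap D$ in the interior of $D^*$ would bound a sub-disk of $D^*$ with at most one puncture and would itself be c-inessential in $D_K$, so innermost choice forces $D^*\cap F=\alpha$. If $\alpha$ were c-essential in $F_K$, then $D^*$ would be a c-disk for $F_K$, contradicting c-incompressibility of $F_K$. Otherwise $\alpha$ also bounds a c-disk $F^*$ in $F_K$, and the $2$-sphere $S=D^*\cup F^*$ meets $K$ in at most two points; by the parity of the algebraic intersection of an embedded $2$-sphere with $K$ in $B_T$, $|S\cap K|\in\{0,2\}$. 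When $|S\cap K|=0$, irreducibility of $B_T-T$ shows $S$ bounds a $3$-ball; when $|S\cap K|=2$, primeness of $\mathcal{T}$ shows $S$ bounds a $3$-ball containing an unknotted subarc of $K$. Either way, a standard disk-replacement isotopy of $F$ through this ball strictly reduces $|F\cap D|$, contradicting minimality.

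A symmetric argument handles c-inessentiality in $F_K$: pick $\alpha$ innermost in $F$ among components of $F\cap D$ that are c-inessential in $F_K$. Any other component inside the associated c-disk $F^*$ lies in a sub-disk of $F^*$ with at most one puncture and hence is itself c-inessential in $F_K$, contradicting the innermost choice; so $F^*\cap D=\alpha$. If $\alpha$ is c-essential in $D_K$, then $F^*$ is a c-disk for $D_K$, contradicting c-incompressibility of $D_K$; if $\alpha$ is c-inessential in $D_K$, we are in the case already eliminated. Thus every component of $F\cap D$ is c-essential in both $D_K$ and $F_K$, as required.

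The main obstacle is verifying that the disk-replacement in the first step really strictly reduces $|F\cap D|$ rather than merely trading one curve for another. This is precisely where the innermost choice of $\alpha$ in $D$ matters: since $D^*\cap F=\alpha$, replacing a small $F$-neighborhood of $F^*$ with a push-off of $D^*$ eliminates $\alpha$, together with any components of $F\cap D$ sitting inside $F^*$, while introducing no new ones. The parity restriction forcing $|S\cap K|\in\{0,2\}$ is equally essential, because only in those cases can the irreducibility and primeness supplied by the distance hypothesis be invoked to produce the bounding $3$-ball.
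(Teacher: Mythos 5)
Your proof is correct and takes essentially the same approach as the paper: isotope $\partial F$ off $\partial D$, minimize $|F\cap D|$, then run an innermost-disk argument using $D_K$'s c-incompressibility (from Lemma~\ref{ccompressible}) together with irreducibility and primeness (Lemmas~\ref{reducible} and~\ref{prime}) to excise any c-inessential intersection curve. The only difference is that you spell out the parity observation $|S\cap K|\in\{0,2\}$ and the symmetric argument for curves c-inessential in $F_K$, which the paper compresses into a single sentence.
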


\begin{proof}
Since all components of $\partial F$ are isotopic to $\partial D$ in $\partial(B_T)-T$, there is an isotopy of $F$ supported in a small neighborhood of $\partial(B_T)$ in $B_T$ resulting in $\partial F \cap \partial D = \emptyset$. Suppose the interior of $F_K$ has been isotoped to minimize $|F \cap D|$ and suppose $\alpha$ is a curve in $D \cap F$ which is c-inessential in $D_K$. By appealing to an innermost such $\alpha$, we can assume that the c-disk, $D'$, that $\alpha$ bounds in $D$ is disjoint from $F$ except in its boundary. Since $F_K$ is c-incompressible, $\alpha$ bounds a c-disk, $F'$, in $F$. By Lemma \ref{reducible} and Lemma \ref{prime}, $D' \cup F'$ cobound a 3-ball or a 3-ball containing an unknotted arc that gives rise to an isotopy which reduces the number of components of $D \cap F$. Hence, if $\alpha$ is c-inessential in $D_K$, then we contradict the minimality of $|D \cap F|$. By Lemma \ref{ccompressible}, $D_K$ is c-incompressible so a similar argument implies that $\alpha$ is also c-essential in $F_K$.
\end{proof}

\begin{theorem}\cite[Proposition 4.3]{T3}\label{tomova}
Suppose $(F,\partial F)\subset(B_R,\partial(B_R))$ is a properly embedded surface transverse to $K$ that satisfies all of the following conditions
\begin{enumerate}
\item $F_{K}$ has no disk components,

\item $F_{K}$ is c-incompressible,

\item $F_{K}$ intersects every spine $\Gamma_{R}$ of $\mathcal{B}_R$,

\item all curves of $F \cap \partial(B_R)$ are essential on $\partial(B_R)-R$

\end{enumerate}
Then there is at least one curve $f \in F \cap \partial(B_R)$ that is essential in $\partial(B_R)-R$ such that $d(\mathcal{V}_R,f) \leq 1 - \chi(F_{K})$ and every $g \in F \cap \partial(B_R)$ that is essential on $\partial(B_R)-R$ for which the inequality does not hold lies in the boundary of a $(\partial(B_R)-R)$-parallel annulus component of $F_{K}$.
\end{theorem}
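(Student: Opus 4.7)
The plan is a sweep-out argument in the curve complex $\mathcal{C}(\partial B_R, R)$ of the $2n$-punctured sphere. Since $\mathcal{R}$ is rational, removing an open regular neighborhood of a spine $\Gamma_R$ yields a product $(B_R - \eta(\Gamma_R), R - \eta(\Gamma_R)) \cong (S^2 \times I, R^-)$ with $R^-$ a collection of vertical arcs, producing a height function $h$ with $h^{-1}(0) = \partial B_R$ and $h^{-1}(1) = \partial \eta(\Gamma_R)$. Each level sphere $h^{-1}(t)$ is naturally identified with $\partial B_R - R$, so simple closed curves on level spheres give vertices of $\mathcal{C}$. I would isotope $F$ so that conditions (1)--(4) are preserved, $|F \cap \Gamma_R|$ is minimal, and $h|_{F_K}$ is Morse with distinct critical values.

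The core of the argument is to track a ``thread'' of essential level curves in $L_t := F \cap h^{-1}(t)$ as $t$ varies from $0$ to $1$. Between critical values the isotopy class of $L_t$ is constant. A local maximum or minimum of $h|_{F_K}$ corresponding to an \emph{essential} level circle would yield either a disk component of $F_K$ (ruled out by (1)) or a compressing/cut-compressing disk for $F_K$ in $B_R-R$ (ruled out by (2)); so all extrema are absorbed by inessential level circles. Saddles shift the isotopy classes of essential level curves by distance at most $1$ in $\mathcal{C}$, since pre- and post-saddle curves admit disjoint representatives in the punctured sphere. Near $t=0$ the thread contains the essential curves of $F \cap \partial B_R$ (by (4)), and near $t=1$ condition (3) forces some component of $F$ to cap off into $\eta(\Gamma_R)-R$, yielding a curve that bounds a disk in $B_R - R$, i.e.\ a vertex of $\mathcal{V}_R$ after pushing back to $\partial B_R$ via the product structure.

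Concatenating the constant-distance stretches with the saddle jumps and appending the final compression step produces a path in $\mathcal{C}$ of length at most $\#\text{saddles} + 1$. The Morse equation $\chi(F_K) = \#\text{min} - \#\text{sad} + \#\text{max}$, together with the elimination of essential-curve extrema noted above, forces $\#\text{sad} \leq -\chi(F_K)$, so some essential $f \in F \cap \partial B_R$ satisfies $d(f, \mathcal{V}_R) \leq 1 - \chi(F_K)$.

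For the exceptional clause, suppose $g \in F \cap \partial B_R$ is essential but $d(g, \mathcal{V}_R) > 1 - \chi(F_K)$. Then every thread starting at $g$ must be ``trapped'' at a fixed isotopy class across long ranges of $t$, and never reaches $\mathcal{V}_R$. Tracking the $F_K$-component $F_g$ containing $g$, this forces $h|_{F_g}$ to have no saddles along the sweep-out from $\partial B_R$ toward $\Gamma_R$, so $F_g$ is an annulus with both boundary components on $\partial B_R$; conditions (4), minimality of $|F \cap \Gamma_R|$, and an innermost-disk argument in the region cobounded by $F_g$ and a subsurface of $\partial B_R$ (using Lemma \ref{essinrational} to classify the slice behavior in a rational tangle) force $F_g$ to be a $(\partial B_R - R)$-parallel annulus. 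The main technical obstacle I anticipate is the bookkeeping in this exception analysis: rigorously showing that a ``stuck'' thread must correspond to a parallel annular component rather than to a genuine distance obstruction, which requires careful use of (1), (2), and (4) in conjunction with the rational tangle hypothesis to exclude hybrid scenarios.
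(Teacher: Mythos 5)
This theorem is not proved in the paper at hand: it is imported verbatim as \cite[Proposition 4.3]{T3}, so there is no ``paper's own proof'' to compare against here, and any evaluation must be against the proof in that reference.

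Your sweep-out framework is in the right spirit and resembles the Bachman--Schleimer/Tomova style argument: identify $B_R - \eta(\Gamma_R)$ with a product, track a thread of essential level curves, charge at most one unit of distance per critical event, and cap off near the spine. However, the key counting step does not go through as you have written it. You derive $\#\mathrm{sad} \leq -\chi(F_K)$ from the Morse equation $\chi(F_K) = \#\mathrm{min} - \#\mathrm{sad} + \#\mathrm{max}$ together with the observation that all extrema lie on \emph{inessential} level circles. But inessential extrema still contribute to the Morse count, so the equation actually gives $\#\mathrm{sad} = \#\mathrm{min} + \#\mathrm{max} - \chi(F_K) \geq -\chi(F_K)$; the inequality points the wrong way. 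To make the bound work you must either (a) first isotope $F$ to \emph{eliminate} every inessential extremum, not merely ignore it, and justify that such an isotopy exists without increasing the saddle count (this uses incompressibility of $F_K$ and irreducibility of $B_R - R$), or (b) abandon the global saddle count and instead restrict to a single component $F_g$ of $F_K$ and count boundary compressions of $F_g$ as you follow the thread, noting that each boundary compression raises $\chi$ by $1$, that $\chi(F_g) \geq \chi(F_K)$ since every component has $\chi \leq 0$, and hence the number of jumps is at most $1 - \chi(F_g) \leq 1 - \chi(F_K)$. This latter single-component accounting is essentially what the reference does, and it also makes the exceptional clause cleaner: a component whose thread never jumps and never caps off is a vertical annulus, which under hypothesis (4) and an innermost-disk argument is necessarily $(\partial(B_R)-R)$-parallel.

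A secondary gap: the step ``condition (3) forces some component of $F$ to cap off into $\eta(\Gamma_R)-R$, yielding a curve bounding a disk in $B_R - R$'' is underjustified. Note that the spine $\Gamma_R$ as defined in this paper passes through the critical point of each strand of $R$, so $\eta(\Gamma_R)$ is \emph{not} disjoint from $K$; the curves of $F \cap \partial\eta(\Gamma_R)$ bound disks in $\eta(\Gamma_R)$ that may meet $R$. You need a more careful terminal argument: either isotope near $\Gamma_R$ to realize an honest compressing or cut-compressing disk for $\partial(B_R)-R$ whose boundary lies in $\mathcal{V}_R$, or show that the first essential curve of the thread swept past the top level produces such a disk. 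As written, the claim that the thread terminates at $\mathcal{V}_R$ is asserted rather than established.
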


\begin{theorem}\label{thm:Tess}
Let $\mathcal{T}$ be a tangle as described in Remark \ref{tangleT}. In addition, choose $D_{R}$ and $D_{Q}$ such that $d(\partial(D_{R}),\mathcal{V}_R) \geq 3$ and $d(\partial(D_{Q}),\mathcal{V}_Q) \geq 3$. If $F_{K}$ is a properly embedded connected c-incompressible surface in $B_T-T$ with $\partial F$ a (possibly empty) collection of curves isotopic to $\partial D$ in $\partial(B_T)-K$, then one of the following holds

\begin{enumerate}
\item $F_{K}$ is a sphere bounding a ball,

\item $F_{K}$ is a twice punctured sphere bounding a ball containing a unknotted arc,

\item $F_{K}$ is isotopic to $\partial(B_R)_{K}-int(D)$,

\item $F_{K}$ is isotopic to $\partial(B_Q)_{K}-int(D)$,

\item $F_{K}$ is isotopic to $\partial(B_T)_{K}$,

\item $F_{K}$ is isotopic to $D_{K}$,

\item $F_{K}$ is a $\partial(B_T)_{K}$-parallel annulus,

\item $d(\partial D,\mathcal{V}_R) \leq 2 - \chi(F_{K})$,

\item $d(\partial D,\mathcal{V}_Q) \leq 2 - \chi(F_{K})$.
\end{enumerate}
\end{theorem}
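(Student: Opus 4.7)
The plan is to first apply the preparatory lemmas and then split on whether $F$ can be isotoped off the spines of both rational tangles. The lemmas \ref{compressible}, \ref{ccompressible}, \ref{reducible}, \ref{prime}, and \ref{S4insideincomp} each give conclusion $(8)$ or $(9)$ whenever $D_K$ is c-compressible, $B_T-T$ is reducible or contains an essential two-punctured sphere, or $\partial(B_T)-T$ is compressible in $B_T-T$. So I may assume none of those hypotheses holds. I then isotope $F$ so that $|F\cap D|$ is minimal, and invoke Lemma \ref{essinboth} to conclude that every component of $F\cap D$ is c-essential in both $F_K$ and $D_K$; each such curve therefore bounds a two-punctured subdisk of $D_K$, and is an essential curve on $\partial(B_R)-R$ that is disjoint from but not isotopic to $\partial D$.

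Next I split on the position of $F$ relative to the spines. If $F$ can be simultaneously isotoped off some spine of $\mathcal{R}$ and some spine of $\mathcal{Q}$, Theorem \ref{disjointfromspine} applies directly and yields one of conclusions $(1)$--$(7)$. Otherwise, without loss of generality assume $F$ meets every spine of $\mathcal{R}$, and let $F_R^0$ be a component of $F\cap B_R$ that does likewise. I claim $F_R^0$ satisfies the hypotheses of Theorem \ref{tomova}: its boundary curves are essential on $\partial(B_R)-R$ by the preceding paragraph; it meets every spine by choice; it is c-incompressible in $B_R-R$ because any c-disk would c-compress $F_K$ in $B_T-T$; and it has no disk components, for a disk component would be either a c-disk for $D_K$ or a c-disk for $\partial(B_T)-T$, both excluded by the reductions already made.

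Applying Theorem \ref{tomova} to $F_R^0$ produces a curve $f\subset F_R^0\cap\partial(B_R)$ with $d(f,\mathcal{V}_R)\le 1-\chi(F_R^0)$. If $f\subset\partial F$, then $f$ is isotopic to $\partial D$ in $\partial(B_R)-R$ and $d(\partial D,\mathcal{V}_R)\le 1-\chi(F_R^0)$; otherwise $f\subset F\cap D$ is disjoint from and non-isotopic to $\partial D$ in $\partial(B_R)-R$, so $d(\partial D,f)\le 1$ and thus $d(\partial D,\mathcal{V}_R)\le 2-\chi(F_R^0)$. An Euler-characteristic accounting, using $\chi(F_K)=\chi(F\cap B_R)+\chi(F\cap B_Q)$ together with the connectedness of $F$ and the fact that no disk or sphere components of $F\cap B_R$ or $F\cap B_Q$ survive the preparatory reductions, gives $\chi(F_R^0)\ge\chi(F_K)$. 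This yields $d(\partial D,\mathcal{V}_R)\le 2-\chi(F_K)$, which is conclusion $(8)$. The symmetric scenario where $F$ meets every spine of $\mathcal{Q}$ yields conclusion $(9)$ by the same argument with the roles of $\mathcal{R}$ and $\mathcal{Q}$ interchanged.

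The main obstacle is the Euler-characteristic bookkeeping together with verifying that the chosen component $F_R^0$ genuinely satisfies every hypothesis of Theorem \ref{tomova}; in particular, carefully ruling out non-negative-Euler-characteristic components of $F\cap B_R$ and $F\cap B_Q$ requires tracing through the preparatory lemmas to check that no c-compressing disks, reducing spheres, or essential two-punctured spheres can occur without already putting us in conclusion $(8)$ or $(9)$.
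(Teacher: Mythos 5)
Your proposal follows the same overall blueprint as the paper's proof: reduce via the preparatory Lemmas \ref{compressible}--\ref{S4insideincomp} so that $D_K$ is c-incompressible and $B_T-T$ is irreducible and prime; use Lemma \ref{essinboth} to arrange $F\cap D$ to be c-essential in both surfaces; split on whether $F$ can be isotoped off spines of both $\mathcal{R}$ and $\mathcal{Q}$ (handled by Theorem \ref{disjointfromspine}); otherwise feed the $B_R$ piece into Theorem \ref{tomova} and push the distance bound from $\chi(F\cap B_R)$ to $\chi(F_K)$ by noting that $F\cap B_Q$ has non-positive Euler characteristic.

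There is, however, one unjustified step. You write ``let $F_R^0$ be a component of $F\cap B_R$ that does likewise'' --- that is, a single component that intersects \emph{every} spine of $\mathcal{R}$. It does not follow from ``$F\cap B_R$ meets every spine'' that some one component does: different components could be responsible for hitting different spines, so that every component is individually disposable while the union is not. The paper sidesteps this by applying Theorem \ref{tomova} directly to the whole, possibly disconnected, $F_K^R=F_K\cap B_R$ --- the cited Proposition 4.3 of \cite{T3} does not require connectedness (indeed its conclusion explicitly contemplates annulus \emph{components}). Your Euler-characteristic accounting goes through unchanged if you simply replace $F_R^0$ with $F_K^R$: once disk components are ruled out, every component of $F_K^R$ and $F_K^Q$ has $\chi\le 0$, so $\chi(F_K^R)\ge\chi(F_K)$ and the bound $d(\partial D,\mathcal{V}_R)\le 2-\chi(F_K)$ follows as before. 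Apart from this detour the argument is essentially the paper's; one minor further imprecision is the claim that the preparatory lemmas ``give conclusion (8) or (9)'' --- they in fact produce bounds like $d\le 2$ that directly contradict the standing hypothesis $d\ge 3$, which is a cleaner way to phrase why those cases cannot arise.
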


\begin{proof}
\noindent{\bf Case 1:} Suppose $F_{K}$ can be isotoped to be disjoint from $D$. Then $F$ is properly embedded in one of $B_R$ or $B_Q$ with boundary (if non-empty) isotopic to parallel copies of $\partial D$. Without loss of generality, assume $F$ is contained in $B_R$. Note that since every component of $\partial F$ is essential in $B_R-R$, no component of $F_K$ is a boundary parallel disk in $B_R-R$. If $F_{K}$ is a disk, then $d(\partial D,\mathcal{V}_R) = 0$ contradicting the hypothesis of the theorem. If $\partial F$ is empty, then conclusions (1) or (2) hold, by Lemma \ref{essinrational}. Hence, we can assume that $F_{K}$ is a c-incompressible properly embedded surface with no disk components and non-trivial boundary. If, in addition, $F_{K}$ intersects every spine $\Gamma_R$, then the hypotheses of Theorem \ref{tomova} are satisfied and there is at least one curve $f \in F_{K} \cap (\partial(B_R)-R)$ that is essential in $\partial(B_R)-R$ such that $d(\mathcal{V}_R,f) \leq 1 - \chi(F_{K})$. Since all curves in $F_{K} \cap \partial(B_R)-R$ are parallel to $\partial D$, then conclusion (9) holds. If $F_{K}$ is disjoint from some spine $\Gamma_R$, then the hypotheses of Theorem \ref{disjointfromspine} are satisfied and one of conclusions (1) to (7) holds.

\medskip

\noindent{\bf Case 2:} Suppose $F_{K}$ cannot be isotoped to be disjoint from $D$.
If $F_{K}$ can be isotoped to be disjoint from some spine $\Gamma_R$ and some spine of $\Gamma_Q$ then, by Theorem \ref{disjointfromspine}, one of conclusions (1) to (7) holds.

Suppose $F_{K}$ cannot be isotoped to be disjoint from any spine of $\mathcal{R}$. (The case where $F_{K}$ cannot be isotoped to be disjoint from any spine of $\mathcal{Q}$ is proven analogously.) In particular, $F^{R}_K= F_K \cap B_R$ can not be isotoped to be disjoint from any spine of $\mathcal{R}$. To apply Theorem \ref{tomova} to $F^{R}_K$ we need to verify the remaining three hypotheses.

(1) By Lemma \ref{ccompressible}, $d(\partial(D_{R}),\mathcal{V}_R) \geq 3$ and $d(\partial(D_{Q}),\mathcal{V}_Q) \geq 3$ imply that $D_K$ is c-incompressible. By Lemma \ref{reducible}, $d(\partial(D_{R}),\mathcal{V}_R) \geq 3$ and $d(\partial(D_{Q}),\mathcal{V}_Q) \geq 3$ imply that $B_T-T$ is irreducible. By c-incompressibility of $D$ and irreducibility of $B_T-T$ we can assume that $F_K-D$ contains no disk components. Hence, $F^{R}_{K}$ contains no disk components.

(2) $F_K$ is assumed to be c-incompressible in $B_T-T$. Hence, $F^{R}_K$ is c-incompressible in $B_R-R$.

(3) We have assumed that $F_K$ intersects every spine of $\mathcal{R}$.

(4) By Lemma \ref{essinboth}, $D_K$ and $F_K$ can be isotoped to intersect in a non-empty collection of closed curves that are essential in each surface. In particular, every component of $F^{R}_{K} \cap D_K$ is essential in $D_K$.

Since the hypotheses for Theorem \ref{tomova} are satisfied, there exists a curve $f \in F^{R}_{K} \cap D_K$ that is essential on $\partial(B_R)-R$ and such that $d(\mathcal{V}_R,f) \leq 1 - \chi(F^{R}_K)$. Since both $F^{R}_K$ and $F^{Q}_K$ are planar surfaces containing no disk or sphere components, $\chi(F^{R}_K) \leq 0$ and $\chi(F^{Q}_K) \leq 0$. As $\chi(F_{K})=\chi(F^{R}_{K})+\chi(F^{Q}_{K})$, $\chi(F^{R}_K) \leq 0$ and $\chi(F^{Q}_K) \leq 0$, it follows that $1-\chi(F_{K}) \geq 1-\chi(F^{R}_K)$ and $d(\mathcal{V}_R,f) \leq 1 - \chi(F_{K})$. Since $d(f,\partial D) \leq 1$, we conclude that $d(\partial D,\mathcal{V}_R) \leq 2 - \chi(F_{K})$.

\end{proof}

\section{Constructing the example}\label{sec:construction}

\begin{figure}
\begin{center} \includegraphics[scale=.4]{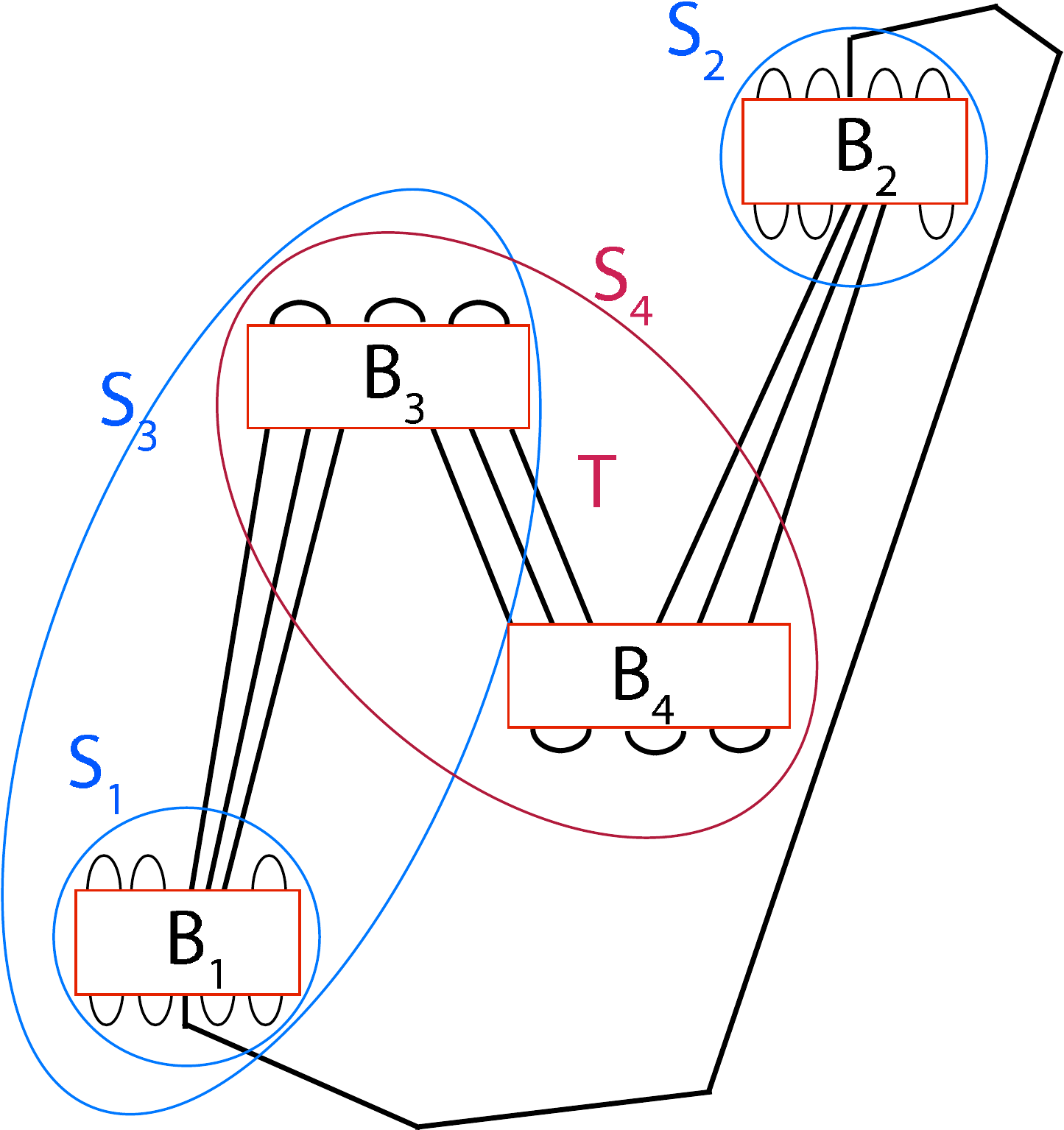}
\end{center}
\caption{} \label{fig:counterexample} \end{figure}

We will now construct a knot $K$ as in Figure \ref{fig:counterexample}. We begin with the schematic in that figure and we substitute each of the balls $B_1,...,B_4$ with tangles satisfying particular properties. In the schematic $S_1,..,S_4$ are spheres, $B_1$ and $B_2$ are the disjoint balls bounded by $S_1$ and $S_2$ containing tangles $\mathcal{T}_1=(B_1,T_1)$ and $\mathcal{T}_2=(B_2,T_2)$. The unique strand of $K$ that connects $S_1$ and $S_2$ but is disjoint from $S_4$ will be labeled $\epsilon$. The sphere $S_4$ bounds a ball $B_T$ disjoint from $B_1$ and $B_2$ containing a 3-strand tangle $\mathcal{T}=(B_T, T)$. Thus the knot $K$ is naturally decomposed into three tangles - the two 2-strand tangles $\mathcal{T}_1$ and $\mathcal{T}_2$ and the 3-strand tangle $\mathcal{T}$.

	We will use the basic construction in Definition \ref{def:BuildingTangles} to construct each of $\mathcal{T}_1$, $\mathcal{T}_2$ and $\mathcal{T}$. To construct $(B_1,T_1)$, begin with two rational tangles $\mathcal{R}$ and $\mathcal{Q}$ with $5$ and $6$ strands respectively. Choose $D_R$ to be any 9-punctured disk in $\partial(B_R)$ and $D_Q$ to be a 9-punctured disk in $\partial(B_Q)$ so that exactly three strands of $Q$ have both of their endpoints in $D_Q$. The resulting 2-strand tangle has a natural bridge surface $\Sigma_1$ which is parallel to $\bdd B_R$. By choosing $\psi$ appropriately, we may simultaneously assume that $d(\Sigma_1, T_1)$ is arbitrarily high and $\mathcal{T}_1$ does not have any closed components. In our construction, we will require that $d(\Sigma_1, T_1)\geq 25$. The tangle $(B_2, T_2)$ will be identical to the tangle $(B_1, T_1)$

The tangle $\mathcal{T}$ can be similarly constructed taking each of $\mathcal{R}$ and $\mathcal{Q}$ to be a three-strand rational tangle. Let $\mathcal{V}_R$ (respectively $\mathcal{V}_Q$) be the set of all essential simple closed curves in $\partial(B_R)$ (respectively $\partial(B_Q)$) that bound disks in $B_R-K$ (respectively $B_Q-K$). We require that the equatorial pairs $(\mathcal{R},\partial D)$ and $(\mathcal{Q},\partial D)$ have the property that $\partial D$ is far from $\mathcal{V}_R$ in the curve complex of the 6-punctured sphere. Similarly, we require that $\partial D$ is far from $\mathcal{V}_Q$ in the curve complex of the 6-punctured sphere. Specifically, we will require that $d(\partial D,\mathcal{V}_R)\geq 25$ and $d(\partial D,\mathcal{V}_Q)\geq 25$. By Theorem \ref{inducedknotting}, we may simultaneously require that no 2-strand equatorial sub-pair of $(\mathcal{R},\partial D)$ or $(\mathcal{Q},\partial D)$ is the unpair. Furthermore, we may assume that $\mathcal{T}$ is symmetric with respect to the disk $D$.

It is important to note that there is sufficient latitude in the choice of the tangles $\mathcal{T}_1$ and $\mathcal{T}_2$ so that we can assume $K$ is a knot.

\section{Properties of $K$}\label{sec:properties}

In this section, we establish some of the properties of $K$. The first property is based on an easy computation for the schematic in Figure \ref{fig:counterexample}.

\begin{pro}\label{pro:width}
$w(K)\leq 134$.
\end{pro}

The next property follows directly from the construction.

\begin{pro}\label{pro:symmetry}
$K$ is symmetric with respect to $S_3$.
\end{pro}

\begin{pro}\label{pro:different}
The spheres $S_1$, $S_2$ and $S_3$ are all distinct. \end{pro}

\begin{proof}
In search of contradiction, suppose that $S_1$ is isotopic to $S_3$. By symmetry, $S_2$ is also isotopic to $S_3$ and, consequently, $S_1$ is isotopic to $S_2$. Hence, it suffices to draw a contradiction to the assumption that $S_1$ is isotopic to $S_2$. Let $N$ be the three manifold homeomorphic to $S^2 \times I$ cobounded by $S_1$ and $S_2$. Then $N \cap K$ is a collection of vertical strands and $S_4 \subset N$ and so $S_4$ is visibly compressible into $B_T$. However, this is not possible, by Lemma \ref{S4insideincomp}, since we have assumed $d(\partial D,\mathcal{V}_R) \geq 25$ and $d(\partial D,\mathcal{V}_R) \geq 25$.
\end{proof}

\begin{pro}\label{pro:distance}
Any bridge surface for the tangle $(B_1, T_1)$ meets $T_1$ in at least 10 points, and similarly for $(B_2, T_2)$.
\end{pro}

\begin{proof}
By construction, $(B_1, T_1)$ has a bridge surface $\Sigma_1$ of distance 25. Suppose $\Sigma'$ is another bridge surface for $(B_1, T_1)$. By Theorem \ref{thm:boundbridge}, with $B_1$ playing the roles of both $M$ and $N$, it follows that either $\chi(\Sigma')\leq -23$ or, after some c-compressions, $\Sigma'$ is parallel to $\Sigma_1$. In either case, $T_1$ intersects $\Sigma'$ at least as many times as it intersects $\Sigma_1$, i.e., at least 10 times.
\end{proof}

\begin{pro}\label{S1S2cessential}Both $S_1$ and $S_2$ are c-incompressible in $S^3-K$.
\end{pro}

\begin{proof} In search of contradiction, suppose that one of $S_1$ or $S_2$ is c-compressible with c-disk $\Delta'$. By taking $\Delta$ to be the disk bounded by an innermost curve of $\Delta' \cap (S_1\cup S_2)$ in $\Delta'$, we can assume that $\Delta$ is a c-disk for $S_1$ and the interior of $\Delta$ is disjoint from both $S_1$ and $S_2$. The curve $\partial \Delta$ separates $S_1$ into two disks $D_1$ and $D_2$.

If $\Delta$ is contained in $B_1$, then, up to relabeling $D_1$ and $D_2$, $\Delta \cup D_1$ is a 2-punctured sphere in $B_1$. If $\Delta \cup D_1$ is an inessential 2-punctured sphere, then $\Delta$ is boundary parallel, contradicting the fact that $\Delta$ is a c-disk, or $(B_1, T_1)$ is a rational tangle, a contradiction to Property \ref{pro:distance}. Hence, we can assume that $\Delta \cup D_1$ is an essential 2-punctured sphere. Maximally compress and cut-compress $\Delta\cup D_1$ in $B_1$ and let $F$ be one of the resulting components. Note that $F$ is an essential 2-punctured sphere. By Theorem \ref{thm:boundess}, either $F$ can be isotoped to be disjoint from $\Sigma_1$, $\Sigma_1$ has four punctures or $25\leq d(\Sigma_{1})\leq 2-\chi(F) = 2-\chi(\Delta\cup D_1)$. It is easy to show that there are no essential surfaces in the complement of a tangle that are disjoint from its bridge surface. By the construction of $B_1$, $\Sss_1$ has 10 punctures and therefore we conclude that $\chi(\Delta\cup D_1)=\chi(F) \leq -23$. This contradicts the fact that $F$ is an essential 2-punctured sphere.

Suppose $\Delta$ in not contained in $B_1$ or $B_2$. Let $\epsilon$ be the arc in $K-(S_1\cup S_2)$ that is disjoint from $B_T$ and connects $S_1$ to $S_2$.
As $S_4$ is isotopic the boundary of the neighborhood of $S_1\cup S_2 \cup \epsilon$, it follows that $\partial D$ is isotopic to a meridional curve for $\epsilon$. Additionally, if $\Delta$ is contained strictly between $S_1$ and $S_2$ and is disjoint from $\epsilon$, then $\Delta$ is a c-disk for $S_4$ with boundary disjoint from $\partial D$. By Lemma \ref{ccompressible}, $D$ is c-incompressible, so we can assume that $\Delta$ is disjoint from $D$. Since $\Delta$ is disjoint from $D$ then $\Delta$ is a c-disk for $\partial(B_R)-R$. If $\Delta$ is a compressing disk, then $d(\partial D, \mathcal{V}_R)\leq 1$, a contradiction to the construction of the tangle $\mathcal{T}$. If $\Delta$ is a cut-disk and $E$ is the bridge disk for the strand of $R$ that meets $\Delta$, then, possibly after an isotopy of $E$, we can assume that $\Delta$ meets $E$ in a single arc. The boundary of a regular neighborhood of $E\cup \Delta$ contains a compressing disk, $\Delta''$, for $\partial(B_R)-R$ that is disjoint from $\Delta$. Since $\partial \Delta $ is disjoint from $\partial D$ and $\partial(\Delta'')$ is disjoint from $\partial \Delta$ then $d(\partial D, \mathcal{V}_R)\leq 2$. This is a contradiction to the construction of $\mathcal{T}$.

If $\Delta$ is contained strictly between $S_1$ and $S_2$ and meets $\epsilon$, then we can assume that $D_1$ meets $K$ in one point and $D_2$ meets $K$ in three points. If $\epsilon$ has an endpoint in $D_1$, then $D_1 \cup \Delta$ bounds a 3-ball containing a unknotted arc, since $\epsilon$ is unknotted. Hence, $\Delta$ is boundary parallel, a contradiction. Therefore, we can assume that $D_1$ is disjoint from $\epsilon$. The twice punctured sphere $\Delta\cup D_1$ is disjoint from $S_1\cup S_2$ and meets $\epsilon$ in a single point. Therefore, after isotopying $S_4$ to be the boundary of a regular neighborhood of $S_1 \cup S_2 \cup \epsilon$, $(\Delta\cup D_1)\cap B_T$ is a cut-disk for $\partial(B_T)-T$ with boundary parallel to a meridional curve of $\epsilon$. As noted before, such a meridional curve is isotopic to $\partial D$. As argued above, this implies $d(\partial D, \mathcal{V}_R)\leq 2$, a contradiction to the construction of $\mathcal{T}$.
\end{proof}

\begin{pro}\label{pro:esssurfaces}
Let $F_K$ be a connected, meridional, non-boundary parallel, c-incompressible surface in $S^3-\eta(K)$. Then one of the following holds:
\begin{enumerate}

\item $F_K$ can be isotoped to be disjoint from $B_T$.

\item $F_K$ is isotopic to $S_3$.

\item $F$ has at least 14 punctures.
\end{enumerate}
\end{pro}

\begin{proof}
Isotope $F_K$ so that $F_K \cap (S_1 \cup S_2)$ is minimal. Suppose $F \cap S_1$ is non-empty. Since $S_1$ is c-incompressible, by Lemma \ref{S1S2cessential}, and $K$ is a knot, then minimality of $F_K \cap (S_1 \cup S_2)$ implies $F-S_1$ contains no disk or 1-punctured disk components. Let $F^{1}_{K}$ be a component of $B_1 \cap F_K$. By minimality of $|F_K \cap (S_1 \cup S_2)|$, $F^{1}_{K}$ is non-boundary parallel. Since $F_K$ is c-incompressible, so is $F^{1}_K$. So, $F^{1}_K$ can not be isotoped to be disjoint from $\Sigma_1$ and it is not boundary parallel in $B_1-\eta(T_1)$. By Theorem \ref{thm:boundess}, $F^1$, and thus $F$, has at least 14 punctures. Hence, we can assume that $F$ is disjoint from $S_1$ and $S_2$.

Let $M$ be the $S^2 \times I$ region in $S^3$ with boundary $S_1$ and $S_2$. Since $F$ is disjoint from $S_1$ and $S_2$, $F$ is contained in the interior of $M$. Recall that $\epsilon$ is the strand of $K \cap M$ that is disjoint from $B_T$. Let $\eta$ be a small open neighborhood of $S_1 \cup S_2 \cup \epsilon$ in $M$. By transversality, $F$ meets $\eta$ in a possibly empty collection of parallel, disjoint, 1-punctured disks. Since $B_T$ is isotopic to $M-\eta$ then $F_K$ can be isotoped to meet $\partial(B_T)$ in a collection of curves parallel to $\partial D$. Since $F$ is planar and $F \cap \eta(\epsilon)$ is a collection of once-punctured disks, $F^T_{K}= F_K \cap B_T$ is connected. If $F^T_K$ is a disk, then $F_K$ is a 1-punctured sphere in $S^3$, a contradiction. Since $F_K$ is c-incompressible, so is $F^T_K$. By Theorem \ref{thm:Tess}, $F^T_K$ is isotopic to one of ten surfaces. Conclusion (1) and (2) cannot occur since $F_K$ was assumed to be essential. If conclusion (3), (4), (5) or (7) holds, then we can isotope $F_K$ to be disjoint from $B_T$. If conclusion (6) holds, $F_K$ is the boundary union of $D_K$ and a 1-punctured disk that meets $\epsilon$. In this case, $F_K$ is isotopic to $S_3$. If conclusion (8) or (9) holds then, since $d(\partial D,\mathcal{V}_R)\geq 25$ and $d(\partial D,\mathcal{V}_Q)\geq 25$, we can conclude that $F_K$ has at least 14 punctures.
\end{proof}

We will be particularly interested in surfaces obtained by tubing two spheres with a tube that runs along an arc of the knot connecting these spheres. The following definition describes this construction.

\begin{defin}
Let $F$ and $G$ be disjoint embedded spheres in $S^3$ with the property that $F \cap K \neq \emptyset$ and $G \cap K \neq \emptyset$ and let $\alpha$ be a component of $K-(F\cup G)$ with an endpoint in each of $F$ and $G$. Then the boundary of a regular neighborhood of $F \cup \alpha \cup G$ has three components. Let $F\sharp_{\alpha}G$ be the component that is not parallel to $F$ or to $G$.

Equivalently, $F\sharp_{\alpha}G$ is the embedded connected sum of $F$ and $G$ obtained by replacing a neighborhood of $\partial(\alpha)$ in $F$ and $G$ with an annulus that runs parallel to $\alpha$.
\end{defin}

\begin{pro}\label{pro:6punctured}  Let $B_{i,j}$ be the ball bounded by $S_i\sharp_{\alpha} S_j$ with $i,j \in \{1,2,3\}$ and $i \neq j$ that is disjoint from $B_1$ and $B_2$ and let $T_{i,j}=K \cap B_{i,j}$. If $S_i\sharp_{\alpha} S_j$ is incompressible, then any bridge sphere $\Sigma_{i,j}$ for $(B_{i,j}, T_{i,j})$ has at least 10 punctures.

In the special case where $i=1$, $j=2$ and $S_1$ is tubed to $S_2$ along a strand $\alpha$ that intersects $S_4$, then the bridge sphere $\Sigma_{1,2}$ for $(B_{1,2}, T_{1,2})$ has at least 14 punctures.
\end{pro}

\begin{proof}

\noindent{\bf Case 1:} Suppose $S=S_1\sharp_{\epsilon}S_2$. In this case, $S$ is isotopic to $S_4$ and the tangle under consideration is the tangle $\mathcal{T}$. This tangle contains three arcs $\alpha$, $\beta$ and $\gamma$. By construction, $\partial D$ is a simple closed curve in $S_4$ and each arc of $T$ has an endpoint in each of the two components of $S_4-\partial D$. For any two arcs in $T$, say $\alpha$ and $\beta$ define $K_{\alpha,\beta}$ to be the knot obtained by connecting the endpoints of $\alpha$ and $\beta$ via two arcs in $S_4$ so that each of these arcs is disjoint from $\partial D$. Under such restrictions, the knot type of $K_{\alpha,\beta}$ is well defined. As illustrated in Figure \ref{fig:s4.eps}, $K_{\alpha,\beta}$ can also be constructed by taking the connected sum of some numerator closure of an equatorial sub-pair of $(\mathcal{R},\partial D)$  with some numerator closure of an equatorial sub-pair of $(\mathcal{Q},\partial D)$ both of which are knotted by construction. Since $K_{\alpha,\beta}$ is the connected sum of two knots, then the bridge number of $K_{\alpha,\beta}$ is at least 3, by \cite{Sch}. Hence, one of $\alpha$ or $\beta$ meets the bridge sphere of $T$ in at least 4 points, the other meets the sphere in at least 2 points. By examining $K_{\alpha,\gamma}$ and $K_{\beta,\gamma}$ we conclude that one of $\alpha$ or $\gamma$ meets the bridge sphere in at least 4 points and one of $\beta$ or $\gamma$ meets the bridge sphere in at least 4 points. Hence, two of the arcs $\alpha$, $\beta$, or $\gamma$ meet the bridge sphere in at least 4 points and the third meets it in at least 2 points. Thus, the bridge sphere for $\mathcal{T}$ has at least 10 punctures.

\begin{figure}[h]
\centering \scalebox{.5}{\includegraphics{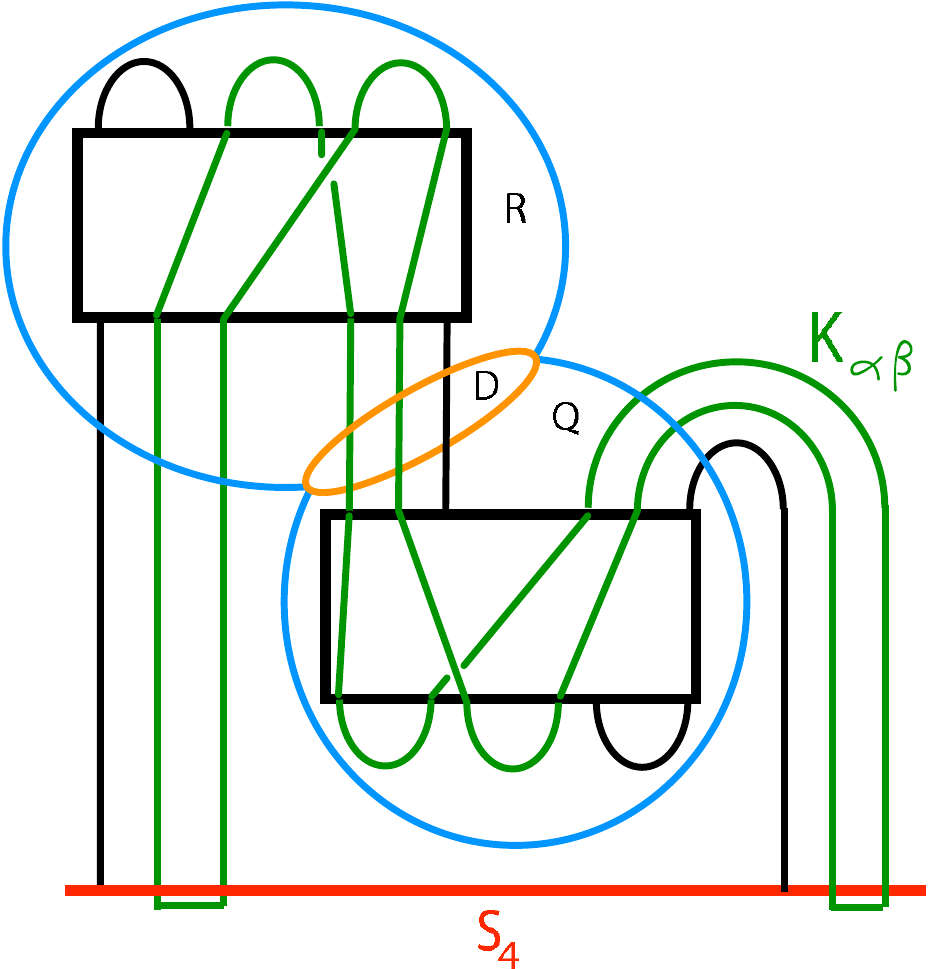}}
\caption{}\label{fig:s4.eps}
\end{figure}

\medskip

\noindent{\bf Case 2:} Suppose $S$ is isotopic to $S_1\sharp_{\alpha}S_2$ where $\alpha$ is one of the three strands connecting $S_1$ to $S_2$ that intersects $S_4$ and therefore passes through both $B_R$ and $B_Q$. Let $\gamma$, $\delta$ be the other two strands of $T$ and, thus, $\epsilon$, $\gamma$ and $\delta$ are the three strands of the tangle $T_{1,2}$ under consideration. By connecting the endpoints of $\gamma$ with an arc contained in $S$, we create a knot, $K_{\gamma}$. The knot-type of $K_{\gamma}$ is well-defined independent of how we connect the points in $\partial \gamma$. We define $K_{\delta}$ and $K_{\epsilon}$ similarly. As illustrated in Figure \ref{fig:s1ts2.eps}, $K_{\gamma}$ can also be constructed by taking the connected sum of some numerator closure of an equatorial sub-pair of $(\mathcal{R},\partial D)$ with some numerator closure of an equatorial sub-pair of $(\mathcal{Q},\partial D)$. As before, both of these are knotted by construction. Since $K_{\gamma}$ is the connected sum of two knots, then the bridge number of $K_{\gamma}$ is at least 3, by \cite{Sch}. Hence $\gamma$ meets the bridge sphere in at least 6 points. A similar argument reveals that $\delta$ meets the bridge sphere in at least 6 points. Additionally, $\epsilon$ must meet this bridge sphere in at least 2 points. Hence, the bridge sphere for $T_{1,2}$ must have at least 14 punctures.

\begin{figure}[h]
\centering \scalebox{.5}{\includegraphics{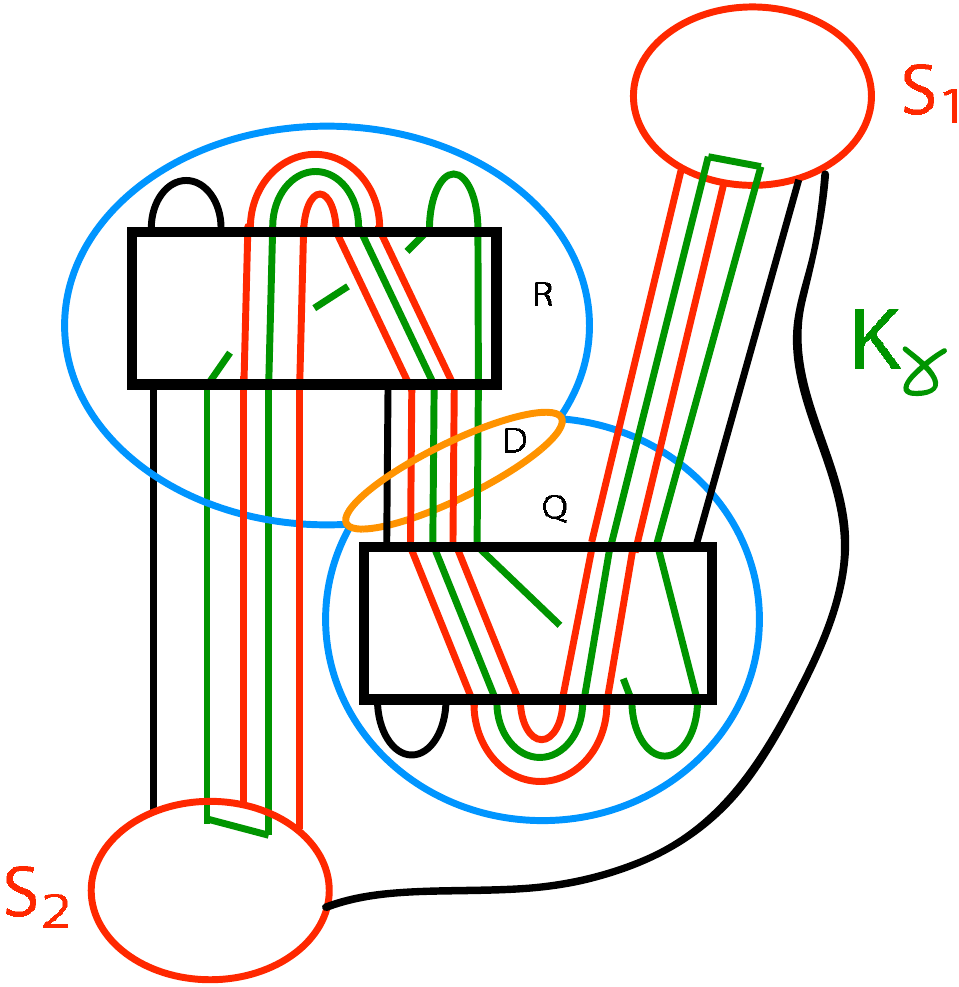}}
\caption{}\label{fig:s1ts2.eps}
\end{figure}

\medskip

\noindent{\bf Case 3:} Suppose $S$ is isotopic to $S_1\sharp_{\alpha}S_3$ or $S_2\sharp_{\alpha}S_3$. Without loss of generality, we may assume that $S$ is isotopic to $S_1\sharp_{\alpha}S_3$. There are four components of $K-(S_1\cup S_3)$ that have a boundary point on each of $S_1$ and $S_3$. One of these components is $\epsilon$. Call the other three components $\beta$, $\gamma$, $\delta$. If $\alpha = \epsilon$, then $S_1\sharp_{\alpha}S_3$ is isotopic to $\partial(B_R)$, and therefore compressible. Hence, we can assume that $\alpha$ passes through $B_R$. Without loss of generality, assume $\alpha = \beta$. In this case, the strands $\gamma$, $\delta$ and $\epsilon$ become the three strands of the tangle $T_{1,3}$. By connecting the endpoints of $\gamma$ with an arc contained in $S$, we create a knot, $K_{\gamma}$. The knot-type of $K_{\gamma}$ is well-defined independent of how we connect the points in $\partial \gamma$. We define $K_{\delta}$ and $K_{\epsilon}$ similarly. As illustrated in Figure \ref{fig:s1ts3.eps}, $K_{\gamma}$ can also be constructed by taking a numerator closure of an equatorial sub-pair of $(\mathcal{R},\partial D)$, which is knotted by construction. Since $K_{\gamma}$ is knotted, the bridge number of $K_{\gamma}$ is at least 2. Hence, $\gamma$ meets any bridge sphere in at least 4 points. A similar argument reveals that $\delta$ meets any bridge sphere in at least 4 points. Additionally, $\epsilon$ must meet this thick sphere in at least 2 points. Hence, the bridge sphere for $T_{1,3}$ must have at least 10 punctures.

\end{proof}

\begin{figure}[h]
\centering \scalebox{.5}{\includegraphics{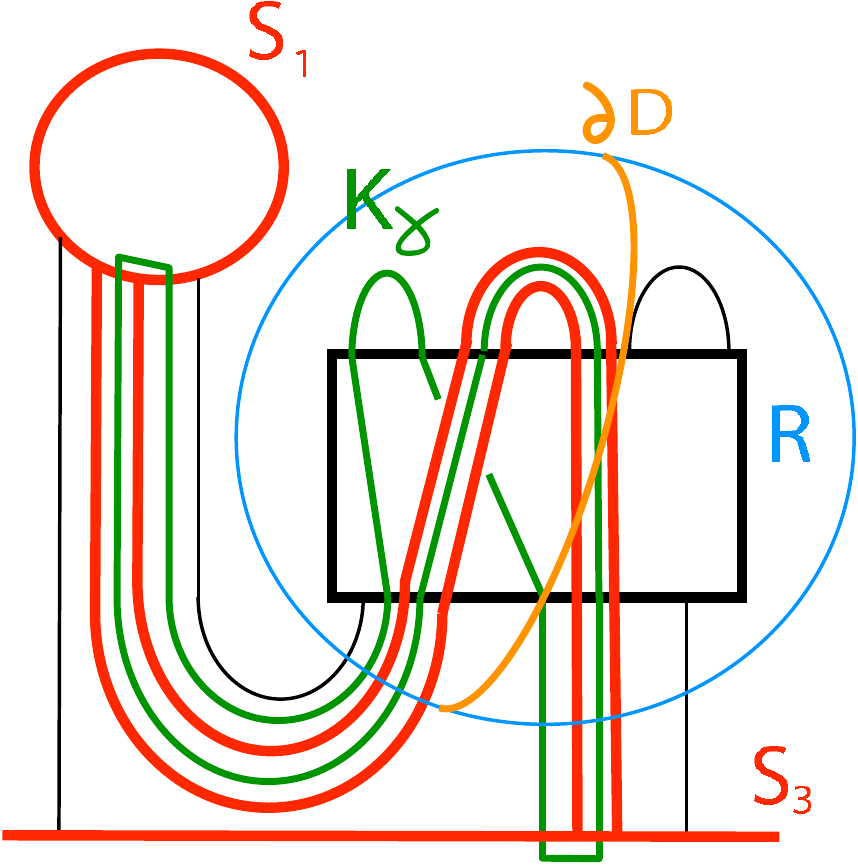}}
\caption{}\label{fig:s1ts3.eps}
\end{figure}

\begin{pro}\label{pro:10puncturedbridge} Let $\mathcal{T}$ be the tangle $[S^3-(B_1 \cup B_2) ]\cap K$ contained in a manifold homeomorphic to $S^2 \times I$ where $S^2\times \{0\}=S_1$ and $S^2\times \{1\}=S_2$. Then any bridge sphere for $\mathcal{T}$ must have at least 10 punctures.
\end{pro}

\begin{proof}
Let $M$ be the embedded copy of $S^2\times I$ in $S^3$ with boundary $S_1 \cup S_2$. The knot $K$ meets $M$ in four arcs $\alpha$, $\beta$, $\gamma$ and $\epsilon$, where $\epsilon$ is the unique arc not in $T$. For any two arcs in $M$, say $\alpha$ and $\beta$, define $K_{\alpha,\beta}$ to be the knot obtained by connecting the endpoints of $\alpha$ and $\beta$ via an arc in $S_1$ and an arc in $S_2$. The knot type of $K_{\alpha,\beta}$ is well defined under this construction. As illustrated in Figure \ref{fig:s1s2thin.eps}, $K_{\alpha,\beta}$ can also be constructed by taking the connected sum of some numerator closure of an equatorial sub-pair of $(\mathcal{R},\partial D)$ with some numerator closure of an equatorial sub-pair of $(\mathcal{Q},\partial D)$, both of which are knotted by construction. Since $K_{\alpha,\beta}$ is the connected sum of two knots, then the bridge number of $K_{\alpha,\beta}$ is at least 3, by \cite{Sch}. Similarly, each of $K_{\alpha,\gamma}$ and $K_{\beta,\gamma}$ has bridge number at least 3. Since $K_{\alpha,\beta} \cup K_{\alpha,\gamma} \cup K_{\beta,\gamma}$ meets any bridge sphere in at least 18 points, then any bridge sphere meets the union of $\alpha$, $\beta$ and $\gamma$ in at least 9 points. Additionally, $\epsilon$ must meet the thick level in at least one point. Thus, any bridge sphere for $\mathcal{T}$ must have at least 10 punctures.
\end{proof}

\begin{figure}[h]
\centering \scalebox{.5}{\includegraphics{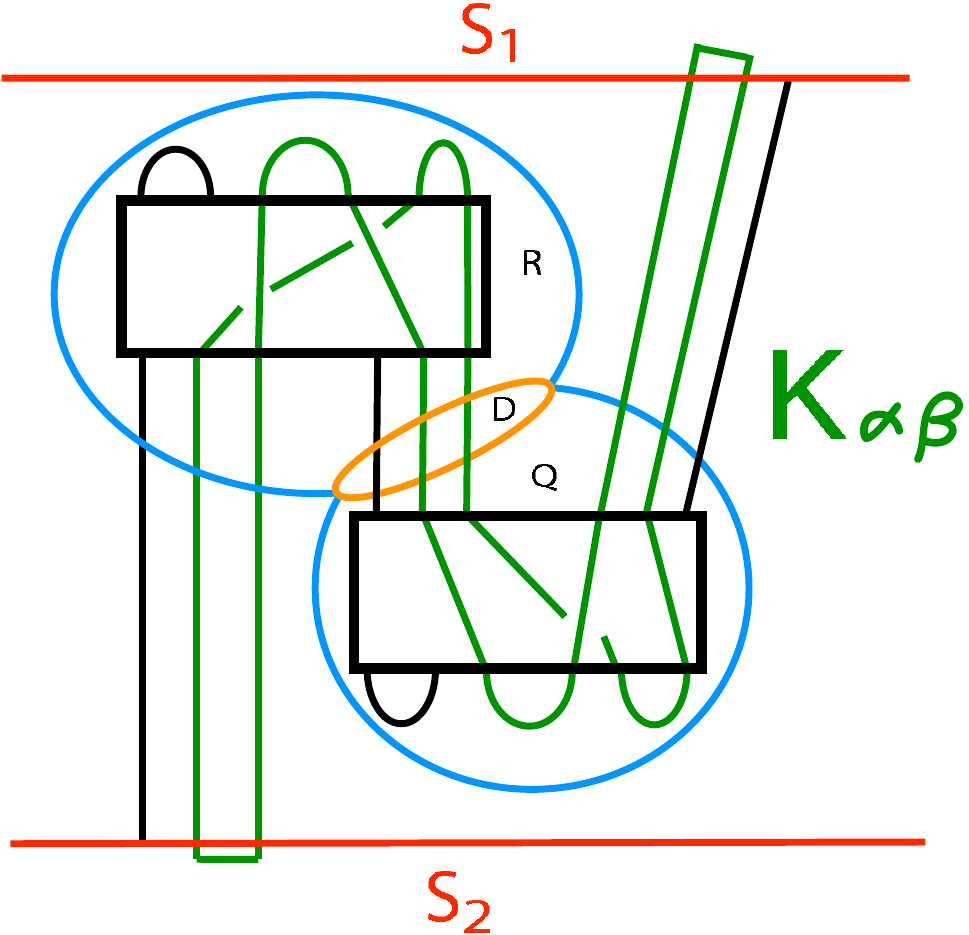}}
\caption{}\label{fig:s1s2thin.eps}
\end{figure}

\begin{pro}\label{pro:12puncturedbridge} Let $B'$ and $B''$ be the two balls bounded by $S_3$ and let $T'=K\cap B'$ and $T''=K\cap B''$. Then any bridge surface for each of the tangles $(B', T')$ and $(B'', T'')$ has at least 12 punctures.
\end{pro}

\begin{proof}
By Property \ref{pro:symmetry}, $(B', T')$ and $(B'', T'')$ are identical, so it suffices to show the result for $(B', T')$. Let $\Sigma'$ be a bridge surface for the tangle $(B', T')$. Without loss of generality, suppose $B_1$ is contained in $B'$.  By Theorem \ref{thm:boundbridge}, either $d(\Sigma_{1}) \leq 2-\chi(\Sigma')$, or $\Sigma' \cap B_1$ is a sphere parallel to $\Sigma_{1}$ with tubes attached. In the first case, $\Sigma'$ has at least 26 punctures. In the second case, if no tubes are attached, then $\Sigma'$ is isotopic to $\Sigma_1$ and so, by Property \ref{S1S2cessential}, $S_1$ is an essential surface contained in $B'$ but disjoint from $\Sss'$ which is not possible since $S_1$ is not isotopic to $S_3$. Therefore, to recover $\Sigma'$ from $\Sigma_1$ at least one tube must be attached. Since each additional tube adds at least two punctures, $\Sigma'$ has at least 12 punctures.

\end{proof}

\section{Meridional essential spheres in the complement of $K$}\label{sec:meress}

In this section, we will classify all essential spheres in the complement of $K$ with fewer than 14 punctures.

\begin{prop}\label{prop:nonein}
Any meridional essential sphere that is embedded in one of $B_1$,  $B_2$ or $B_T$ has at least $14$ punctures.
\end{prop}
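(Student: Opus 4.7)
The plan is to split the argument into the two essential cases --- $F \subset B_1$ (with $F \subset B_2$ handled symmetrically) and $F \subset B_T$ --- and in each case apply a classification theorem from the earlier sections, extracting the numerical bound from the high distance hypotheses built into the construction.

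First I would handle the case $F \subset B_1$. The tangle $(B_1, T_1)$ was built to carry a bridge sphere $\Sigma_1$ with $|\Sigma_1 \cap T_1| = 10$ and $d(\Sigma_1, T_1) \geq 25$. I would invoke Theorem \ref{thm:boundess} with $M = N = B_1$ and $\Sigma = \Sigma_1$, using $F$ as the essential separating surface. The three alternatives of that theorem are: (a) $d(\Sigma_1, T_1) \leq 2 - \chi(F)$; (b) $\chi(\Sigma_1) \geq -3$; or (c) every component of $F \cap (B_1 - \eta(T_1))$ is boundary parallel in $B_1 - \eta(T_1)$. Option (b) is false since $\chi(\Sigma_1) = 2 - 10 = -8$. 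Option (c) would force $F$ itself to be boundary parallel in $B_1$, i.e.\ isotopic to $\partial B_1 = S_1$, which is excluded by essentiality (read as "essential and not parallel to the boundary"). Thus (a) must hold, and combined with $d(\Sigma_1, T_1) \geq 25$ it gives $\chi(F) \leq -23$, so $|F \cap K| \geq 25 \geq 14$. The case $F \subset B_2$ is identical by Property \ref{pro:symmetry}.

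For the case $F \subset B_T$, the key tool is Theorem \ref{thm:Tess} applied to the three-strand tangle $\mathcal{T}$, with $F$ viewed as a closed surface so that $\partial F$ is the empty collection of curves (vacuously isotopic to $\partial D$). I would work through the nine conclusions in turn. Conclusions (1) and (2) describe spheres that bound balls (possibly with an unknotted arc), hence are not essential. Conclusions (3), (4), (6), and (7) produce surfaces with non-empty boundary, and so cannot describe a closed sphere in the interior of $B_T$. Conclusion (5) would make $F$ isotopic to $\partial B_T = S_4$, which is excluded because such an $F$ is boundary parallel in $B_T - \eta(T)$. The only surviving possibilities are (8) and (9), giving
\[
\min\bigl\{d(\partial D, \mathcal{V}_R),\, d(\partial D, \mathcal{V}_Q)\bigr\} \;\leq\; 2 - \chi(F),
\]
and combined with the standing hypotheses $d(\partial D, \mathcal{V}_R), d(\partial D, \mathcal{V}_Q) \geq 25$ this yields $\chi(F) \leq -23$, so $|F \cap K| \geq 25$.

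The main obstacle here is not technical but interpretive: both Theorem \ref{thm:boundess} and Theorem \ref{thm:Tess} leave open the alternative that $F$ is boundary-parallel inside the relevant ball (conclusion (c) and conclusion (5), respectively), and such an $F$ is isotopic in $S^3-\eta(K)$ to $S_1$ or $S_4$, which are themselves essential but carry only $4$ or $6$ punctures. So the proof has to read the phrase "essential sphere embedded in $B_1$, $B_2$, or $B_T$" as requiring $F$ not be parallel to the boundary of the containing ball; with that reading the argument goes through and in fact produces the much stronger bound $|F\cap K|\geq 25$.
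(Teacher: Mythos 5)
Your handling of the $B_1$ and $B_2$ cases tracks the paper's argument closely: you invoke Theorem \ref{thm:boundess} with $\Sigma_1$ and extract $\chi(F)\leq -23$ from the distance bound $d(\Sigma_1,T_1)\geq 25$. The paper does the same thing but maximally cut-compresses $F$ first; for this half that step is not strictly needed for the numerics since Theorem \ref{thm:boundess} only asks for an essential separating surface, and you correctly note and address the remaining boundary-parallel alternative.

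The $B_T$ case, however, has a genuine gap. Theorem \ref{thm:Tess} is stated for a properly embedded connected \emph{c-incompressible} surface $F_K$, but your $F$ is only known to be essential, i.e.\ incompressible and not boundary parallel. An essential sphere in $B_T$ can perfectly well admit cut-disks, and nothing in the hypotheses lets you apply Theorem \ref{thm:Tess} to such a surface. This is precisely why the paper's proof first maximally cut-compresses $F$ inside $B_T$: the resulting components are c-incompressible (and still incompressible), so Theorem \ref{thm:Tess} applies to them, and $\chi$ can only increase under cut-compression. You then also lose the easy dismissal of conclusion (5): after cut-compressing, one must rule out the case where \emph{all} components are parallel to $\partial B_T - T$, which the paper does by observing that reversing a cut-compression by tubing two parallel copies of $\partial B_T - T$ along a strand of $T$ yields a compressible surface, contradicting the incompressibility of $F$. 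That final step is missing from your argument. Your interpretive remark about spheres parallel to $S_1$ or $S_4$ is a fair observation about the proposition's phrasing and is consistent with how the paper implicitly reads "essential . . . embedded in" one of the balls; it does not affect the validity of the intended statement but does deserve to be made explicit, as you did.
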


\begin{proof}
Suppose $F$ is an essential sphere and $F \subset B_1$. Maximally cut-compress $F$ in $B_1$ and let $F'$ be one of the resulting components. Note that $F'$ is incompressible and $\chi(F')\geq \chi(F)$. By Theorem \ref{thm:boundess}, it follows that either $F'$ can be isotoped to be disjoint from $\Sss_1$, $\Sss_1$ has four punctures or $25\leq d(\Sigma_{1})\leq 2-\chi(F')\leq 2-\chi(F)$. It is easy to show that there are no essential surfaces in the complement of a tangle that are disjoint from its bridge sphere. By the construction of $B_1$, $\Sss_1$ has 10 punctures and therefore we conclude that $\chi(F)=\chi(F') \leq -23$. It follows that $F$ must have at least $25$ punctures. A symmetric argument produces the desired result if $F \subset B_2$.

Suppose $F$ is an essential sphere in $B_T$. After maximally cut-compressing $F$ in $B_T$, each component is a meridional, c-incompressible sphere in $B_T$. Let $F'_K$ be one such component. By Theorem \ref{thm:Tess}, we conclude that $d(\partial(D),\mathcal{V}_R)\leq 2-\chi(F'_K)$, or $d(\partial(D),\mathcal{V}_Q)\leq 2-\chi(F'_K)$, or $F'_K$ is isotopic to $\partial(B_T)-T$.
 In the first two case, since $d(\partial(D),\mathcal{V}_R) \geq 25$ and $d(\partial(D),\mathcal{V}_Q)\geq 25$, we conclude that $F'$, and thus $F$, has at least 14 punctures. Hence, we can assume that, after maximally cut-compressing $F$ in $B_T$, every component is isotopic to $\partial(B_T)-T$. If there is more than one such component, then reversing the cut-compression by attaching a tube between two parallel copies of $\partial(B_T)-T$ results in a compressible surface, a contradiction to the incompressibility of $F$. Hence, $F$ is isotopic to $\partial(B_T)-T$, a contradiction to $F$ being essential.
\end{proof}

We associate to the given projection of $K$ a graph $\Gamma$ embedded in $S^2$ where each vertex corresponds to one of the 4 balls $B_1,...,B_4$ and the edges correspond to strands of the knot connecting them. We may assume that $K$ lies in an arbitrarily small neighborhood of $S^2$. We will use this graph on several occasions.

\begin{lemma} \label{lem:nobigons} Suppose $F$ is an essential meridional sphere disjoint from $B_i$ for $i=1,..,4$, that is embedded so that $|F \cap S^2|$ is minimal. Then $F \cap S^2$ is a single simple closed curve $\tau$. Moreover there are no bigons in $S^2$ whose boundary is the endpoint union of a segment of an edge of $\Ggg$ and a segment of $\tau$.
\end{lemma}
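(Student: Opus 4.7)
The plan is a standard innermost-disk and bigon-reduction argument, leveraging the c-incompressibility of $F$, the irreducibility of $S^3-\eta(K)$, and the minimality of $|F\cap S^2|$ (together with any needed secondary minimization of $|F\cap\Gamma|$). I first observe that, since $F$ is disjoint from $\bigcup_i B_i$, the intersection $F\cap S^2$ is a finite disjoint union of simple closed curves in $S^2-\bigcup_i B_i$.

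For the single-curve conclusion, suppose for contradiction that $F\cap S^2$ has at least two components. An innermost-face analysis on $S^2$ with respect to both the curves of $F\cap S^2$ and the vertex disks $B_i\cap S^2$, combined with the no-bigon conclusion below, produces a component $\tau_j\subset F\cap S^2$ bounding a disk $D\subset S^2$ whose interior is disjoint from $F\cap S^2$, from $\Gamma$, and from $\bigcup_i B_i$. Then $D$ is a disk in $S^3-\eta(K)$ with $D\cap F=\partial D=\tau_j$. If $\tau_j$ were essential in $F_{K}$, then $D$ would be a compressing disk, contradicting c-incompressibility. Hence $\tau_j$ bounds a disk in $F_{K}$, and one of the two disks of $F-\tau_j$, call it $F_1$, is disjoint from $K$. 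The sphere $F_1\cup D$ then lies in $S^3-\eta(K)$ and bounds a ball by irreducibility of the knot exterior; by connectivity of $K$ and of each $B_i$, one of the two sides is disjoint from both $K$ and $\bigcup_i B_i$. Isotoping $F$ across that side removes $\tau_j$ (and possibly further curves of $F\cap S^2$ lying in $F_1$) from $F\cap S^2$, contradicting minimality.

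For the no-bigon conclusion, suppose an innermost bigon $B\subset S^2$ exists, bounded by an arc $\beta$ of $\tau=F\cap S^2$ and an arc $\gamma$ of some edge $e$ of $\Gamma$. The strand $K_e$ of $K$ projecting to $e$ lies on a definite side of $S^2$ near $B$, and I can isotope $F$ locally by pushing $\beta$ across $B$ through the side of $S^2$ opposite $K_e$. This isotopy is supported in a neighborhood of $B$, remains disjoint from both $K$ and $\bigcup_i B_i$, does not increase $|F\cap S^2|$, and strictly decreases $|F\cap\Gamma|$ (the two corners of $B$ vanish from $\tau\cap\Gamma$), contradicting the secondary minimality.

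The principal obstacle is the innermost-face step in the single-curve argument: if some component of $F\cap S^2$ is essential in the four-holed sphere $S^2-\bigcup_i B_i$, every disk it bounds in $S^2$ must meet some $B_i$, and the direct reduction fails. Ruling out this configuration uses the essentiality of $F$ together with the tangle structure established in Section~\ref{sec:properties}, in particular the high-distance hypotheses underlying Property~\ref{pro:distance} and the classification of essential meridional surfaces in Property~\ref{pro:esssurfaces}, which together force any such essential curve to yield a meridional sphere violating one of those properties.
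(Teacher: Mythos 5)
Your single-curve argument has a genuine gap that your proposed fix does not close. The innermost-face step only produces a disk $D\subset S^2$ disjoint from $\Gamma$ and $\bigcup_i B_i$ when some component of $F\cap S^2$ is inessential in the four-holed sphere $S^2-\bigcup_i B_i$. But this essentially never happens: once you discard circles bounding disks disjoint from $\Gamma$ (by incompressibility and minimality, as in the first step), every remaining circle crosses $\Gamma$ and hence is essential in the four-holed sphere. Indeed the lemma's conclusion is that the surviving curve $\tau$ is exactly such an essential curve, separating the $B_i$'s. So the "obstacle" you flag is the generic case, not an exceptional one, and Properties \ref{pro:distance} and \ref{pro:esssurfaces} don't rescue it --- an essential curve of $F\cap S^2$ in the four-holed sphere does not by itself produce a new meridional sphere to which those properties apply. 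The paper sidesteps this entirely with a product-neighborhood trick: take $S^2\times I$ thick enough that $\partial(S^2\times I)$ misses $K$ (since $K$ sits in an arbitrarily small neighborhood of $S^2$) but thin enough that $F\cap(S^2\times I)$ is vertical; if $\alpha,\beta$ are two circles of $F\cap S^2$, the pushoff $\alpha'$ of $\alpha$ into $S^2\times\{1\}$ on the side containing $\beta$ separates the punctures of $F$ near $\alpha$ from those near $\beta$, yet bounds a disk in $S^2\times\{1\}$ disjoint from $K$, which after a standard innermost-disk cleanup is a compressing disk for $F_K$. This works regardless of whether the curves are essential in $S^2-\bigcup_i B_i$.

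Your bigon argument also diverges in a problematic way. Pushing $\tau_B$ across the bigon on the side of $S^2$ opposite $K_e$ either sweeps $F$ past the strand $K_e$ --- removing two punctures and hence changing the isotopy class of $F_K$ in the knot exterior, so it is no longer an isotopy of $F$ as a meridional surface --- or, if you pin the two punctures, the move is obstructed and does not obviously reduce anything. It also appeals to a secondary minimality of $|F\cap\Gamma|$ that is not in the lemma's hypotheses. The paper instead notes that $F$ meets the boundary sphere of a regular neighborhood of the bigon in a single circle, and the disk this circle cuts off that misses the two points of $K$ entering the neighborhood is a compressing disk for $F_K$ unless $F$ is a twice-punctured sphere parallel to a subarc of $K_e$; the latter is excluded by essentiality of $F$. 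This compressing-disk formulation stays within the lemma's stated minimality assumption and avoids the ambiguity in your isotopy.
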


\begin{proof} As $F$ is disjoint from all $B_i$, it intersects the 2-sphere containing $\Ggg$ in circles disjoint from the vertices of the graph. If any such circle bounds a disk in the 2-sphere disjoint from the graph, it can be removed by an isotopy using the incompressibility of $F$, contradicting the assumption that $|F \cap S^2|$ is minimal.

Suppose $\alpha$ and $\beta$ are two simple closed curves in $F \cap S^2$. By the above both curves contain points of intersection between $F$ and $K$. Consider a product neighborhood $S^2 \times I$ of $S^2$ so that $\bdd (S^2 \times I)$ is disjoint from $K$ and $F \cap (S^2 \times I)$ is a union of vertical fibers. Then $\alpha$ is parallel to two curves in the boundary of the product, one lying in $F \cap (S^2 \times \{0\})$ and the other in $F \cap (S^2 \times \{1\})$. Since $F$ is connected, one of these curves, $\alpha'$, separates $\alpha$ and $\beta$ in $F$. Let $D$ be the disk in $\bdd (S^2 \times I)$ that $\alpha'$ bounds and note that $D$ is disjoint from $K$. As $F$ is incompressible, after possibly an isotopy of $F$ we may assume that $F \cap int(D)=\emptyset$. As $\bdd D = \alpha'$ separates the punctures of $F$ lying in $\alpha$ from the punctures lying in $\beta$, the disk $D$ is a compressing disk for $F$, contradicting the hypothesis.

  Suppose a sub-arc of $\tau = S^2 \cap F$ cobounds with some edge of $\Ggg$ a bigon. Without loss of generality, we may assume that the interior of the bigon is disjoint from $\Ggg$ and $F$. A regular neighborhood of this bigon contains a compressing disk for $F$ unless $F$ is a twice punctured sphere parallel to a segment of the edge. As $F$ is essential, no such bigons can exist.
\end{proof}

\begin{prop}\label{prop:noneout}
Any essential meridional sphere, $F$, that is disjoint from $B_i$ for $i=1,..,4$ is parallel to one of $S_1$, $S_2$, $S_3$ or $S_4$. In particular, any such surface has at most 6 punctures.
\end{prop}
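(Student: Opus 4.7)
The plan is to invoke Lemma \ref{lem:nobigons} to reduce the problem to analyzing how the simple closed curve $\tau = F\cap S^2$ partitions the four vertices of $\Gamma$, and then in each case identify the 3-ball bounded by $F$ with a regular neighborhood of a specific sub-structure of the knot.

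First, I would isotope $F$ so that $|F \cap S^2|$ is minimal. By Lemma \ref{lem:nobigons}, the intersection $F \cap S^2 = \tau$ is a single simple closed curve disjoint from the vertices of $\Gamma$ and forming no bigons with the edges of $\Gamma$. The curve $\tau$ partitions $S^2$ into two disks $E^{\pm}$, with vertex sets $V^{\pm} = \{B_i : B_i \subset E^{\pm}\}$. Both $V^+$ and $V^-$ must be non-empty, for otherwise $\tau$ bounds an innermost disk in $S^2 \setminus \Gamma$, producing a compressing disk for $F$ and contradicting the essentiality of $F$. The no-bigon condition then forces each edge of $\Gamma$ with both endpoints in the same $V^{\pm}$ to be disjoint from $\tau$, and each edge of $\Gamma$ joining $V^+$ to $V^-$ to cross $\tau$ in exactly one point. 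Hence the punctures of $F$ are in bijection with the edges of $\Gamma$ having endpoints on opposite sides of $\tau$.

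Next, I would analyze the 3-ball $B^+ \subset S^3$ bounded by $F$ on the $E^+$ side. This ball contains exactly the balls $B_i \in V^+$, the edges of $\Gamma$ internal to $V^+$, and one stub of each crossing edge running from a vertex in $V^+$ to a puncture on $\tau \subset F$. Because outside of the tangle balls $B_1,\dots,B_4$ the knot $K$ lies in an arbitrarily small neighborhood of $S^2$, the arcs of $K \cap B^+$ outside $\bigcup_{B_i \in V^+} B_i$ are planar and unknotted, and the complementary region is a product region. Consequently $F$ is isotopic to the boundary of a regular neighborhood in $S^3$ of $\bigcup_{B_i \in V^+} B_i$ together with the edges of $\Gamma$ internal to $V^+$.

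Finally, I would enumerate the partitions of the four-element vertex set into non-empty $V^+ \mid V^-$. Up to swapping $E^+$ and $E^-$ and using the symmetry of $K$ with respect to $S_3$ (Property \ref{pro:symmetry}), only a short list of distinct partitions remains; in each surviving case I would identify the resulting boundary-neighborhood sphere with one of $S_1, S_2, S_3, S_4$. Partitions that yield a compressible or boundary-parallel sphere cannot correspond to our essential $F$ and are excluded. The puncture count follows directly: $S_1$ and $S_2$ bound the 2-strand tangle balls and have $4$ punctures each, while $S_3$ and $S_4$ have at most $6$ punctures (with $S_4 = \partial B_T$ meeting the $3$-strand tangle in $6$ points). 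The main obstacle is the middle step: making rigorous the claim that no hidden knotting can occur in the $S^2$-neighborhood, so that $F$ really is parallel to a standard boundary-of-neighborhood sphere and the proof reduces to the combinatorics of $\Gamma$.
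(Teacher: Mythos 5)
Your overall strategy (reduce to the curve $\tau = F\cap S^2$ via Lemma~\ref{lem:nobigons}, then read off $F$ from how $\tau$ partitions the vertices of $\Gamma$) is close in spirit to the paper's proof, but there is a genuine gap in the middle step, and it is not the one you flag (``hidden knotting''). Your argument concludes that each crossing edge meets $\tau$ exactly once and then asserts that $F$ is isotopic to $\partial\bigl(\eta(\bigcup_{B_i\in V^+}B_i\cup\text{internal edges})\bigr)$. That identification silently assumes that $\bigcup_{B_i\in V^+}B_i$ together with the internal edges is \emph{connected}. On the $4$-cycle $C_4$ the partition $V^+=\{v_1,v_3\}$, $V^-=\{v_2,v_4\}$ (two non-adjacent vertices, realizable by a ``thickened diagonal'' $\tau$) has no internal edges at all: all four weighted edges cross $\tau$, so $F$ would have $3+3+3+1=10$ punctures, and the candidate regular neighborhood has two components, so its boundary is not a single sphere and your parallelism claim fails outright. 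Your final exclusion step (``partitions yielding a compressible sphere are excluded'') never engages with this case because the sphere you want to compare $F$ to does not exist; one must argue separately that this $F$ is compressible (a vertical disk over an arc of $E^+$ separating the two vertices does it), or rule the case out earlier.

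The paper avoids the issue entirely by a different mechanism: after reducing to the weighted cycle $C_4$, it takes an \emph{outermost arc of $\tau$ in a complementary disk of $C_4$} and builds a compressing disk for $F$ unless $|\tau\cap C_4|=2$. This single inequality caps the puncture count of $F$ and simultaneously excludes the alternating partition before any case analysis, so there is nothing to geometrically identify. By contrast, your approach front-loads the identification of $F$ with boundary-of-neighborhood spheres and therefore must separately justify the connectivity hypothesis (and also the ``exactly one crossing per edge'' claim needs the full innermost-bigon argument, not merely the statement of Lemma~\ref{lem:nobigons}, whose bigons are required to have interior disjoint from $\Gamma$). If you add the outermost-arc argument controlling $|\tau\cap C_4|$, or directly exhibit the compressing disk in the alternating case, your proof closes up; as written it does not.
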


\begin{proof}

We will continue using the terminology developed in the proof of Lemma \ref{lem:nobigons}. By that lemma, it follows that if $\tau = S^2 \cap F$ intersects any edge in a triple of parallel edges of $\Ggg$, it must intersect all three of them. Thus, we can replace these triples with a single edge and assign it a weight of 3. The projection of $K$ is then modeled by a cycle graph on 4 vertices, $C_4$, where three of the edges have weight 3 and one has weight 1. The number of punctures of $F$ lying in $\tau$ is the sum of the weights of the edges $\tau$ intersects. By the Jordan-Brower theorem, $\tau$ has an even number of intersections with the edges of $\Gamma$ and, by Lemma \ref{lem:nobigons}, consecutive edges that $\tau$ intersects must be distinct.

Let $E$ be one of the two disks $C_4$ bounds in $S^2$ and let $l$ be an outermost arc of intersection of $\tau$ and $E$. Let $p_1$ and $p_2$ be the two endpoints of $l$. Choose $p'_1$ and $p'_2$ to be points in $\tau$ that are close to $p_1$ and $p_2$ respectively but not in $l$ so that the segment of $l'=\tau-(p'_1 \cup p'_2)$ containing $l$ only intersects $C_4$ in $p_1$ and $p_2$. Let $g$ be any embedded path in $S^2-(\tau \cup C_4)$ from $p'_1$ to $p'_2$. Consider a small regular neighborhood $S^2 \times I$ of $S^2$. Then there is a disk with boundary $(l'\times\{0\}) \cup (p'_1\times I) \cup (p'_2\times I)  \cup (l'\times\{1\})$ that contains two copies of $g$. This disk would be a compressing disk for $F$ unless $p_1 \cup p_2=\tau \cap C_4$. Therefore, we conclude that $\tau$ intersects exactly two of the edges of $C_4$ and intersects each of those exactly once. The result follows by considering all possible pairs of edges.
\end{proof}

\begin{prop}\label{prop:notwicepunctured}
There are no essential twice-punctured spheres that are disjoint from all of $S_1$, $S_2$ and $S_3$.
\end{prop}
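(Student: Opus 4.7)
My plan is to show that any hypothetical essential twice-punctured sphere $F$ disjoint from $S_1 \cup S_2 \cup S_3$ can be isotoped to be disjoint from each of the four balls $B_1, B_2, B_3, B_T$, and then to invoke Proposition~\ref{prop:noneout}. Since $F$ is disjoint from $S_1 \cup S_2$, it either lies inside one of $B_1, B_2$ or is disjoint from both; containment is ruled out by Proposition~\ref{prop:nonein}, as it would force $|F \cap K| \geq 14$. Combined with $F \cap S_3 = \emptyset$, the surface $F$ therefore lies in one of the two components $B' - \mathrm{int}(B_1)$ or $B'' - \mathrm{int}(B_2)$, and by Property~\ref{pro:symmetry} we may assume $F \subset B' - \mathrm{int}(B_1)$.

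Next I would apply Proposition~\ref{pro:esssurfaces} to $F$. Since $S_3$ meets $K$ in four points while $F$ meets $K$ in two, the alternatives that $F$ be isotopic to $S_3$ or have at least $14$ punctures are ruled out by puncture count; hence $F$ can be isotoped to be disjoint from $B_T$. Tracing through the proof of that proposition, the isotopy first arranges $F \cap (S_1 \cup S_2) = \emptyset$ via Proposition~\ref{prop:nonein}, and then classifies the connected surface $F^T = F \cap B_T$ using Theorem~\ref{thm:Tess}. Of the ten cases there, only an empty $F^T$, a ball-bounding sphere, and a $\partial(B_T)$-parallel annulus are compatible with $F$ having only two punctures (the other cases either contribute too many punctures to $F$ or place $F$ entirely inside $B_T$, where Proposition~\ref{prop:nonein} forces at least $14$); each of these three possibilities admits an isotopy pushing $F$ off $B_T$. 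By the symmetry of the construction interchanging $B_T$ with $B_3$, the parallel argument makes $F$ disjoint from $B_3$ as well. After re-minimizing intersections with $S_1 \cup S_2$ if necessary, $F$ is now simultaneously disjoint from all four balls $B_1, B_2, B_3, B_T$.

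Proposition~\ref{prop:noneout} then forces $F$ to be parallel to one of $S_1, S_2, S_3, S_4$; but $|S_1 \cap K| = |S_2 \cap K| = |S_3 \cap K| = 4$ and $|S_4 \cap K| = 6$, each strictly greater than $|F \cap K| = 2$, yielding the desired contradiction. The main technical delicacy will be verifying that the two isotopies pushing $F$ off $B_T$ and off $B_3$ can be carried out compatibly, without re-introducing intersections with $S_1 \cup S_2$ or with the ball the previous step cleared; this should be handled by iterative re-minimization, relying on the c-incompressibility of $S_1$ and $S_2$ given by Property~\ref{S1S2cessential} together with the high-distance bridge surface bounds imposed in the construction.
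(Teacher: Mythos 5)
Your proposal takes a genuinely different route from the paper. The paper's proof is much shorter: after ruling out $F\subset B_1$, $F\subset B_2$ by Proposition~\ref{prop:nonein} and placing $F$ (WLOG) in $B'-\mathrm{int}(B_1)$, the paper observes that $F$ can be isotoped into $B_T$ and then appeals directly to Lemma~\ref{prime}, which forbids essential twice-punctured spheres in $B_T-T$. Your plan instead pushes $F$ \emph{off} $B_T$ and all the $B_i$, then invokes Proposition~\ref{prop:noneout} and a puncture count. This could in principle work, but two concrete problems arise.

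First, and most seriously: Proposition~\ref{pro:esssurfaces} (and Theorem~\ref{thm:Tess}, on which it relies) requires the surface $F_K$ to be \emph{c-incompressible}, not merely incompressible and non-boundary-parallel. An essential twice-punctured sphere need not be c-incompressible: a cut-disk for a twice-punctured decomposing sphere corresponds to a further prime decomposition of $K$, and you cannot yet rule this out since Corollary~\ref{cor:prime} (primeness of $K$) is deduced from the very proposition you are proving. You would need to either verify directly that $F$ can be taken c-incompressible (say, by a minimality argument on the ball $B_F$ that $F$ bounds, noting that a cut-disk inside $B_F$ splits off a strictly smaller essential twice-punctured sphere disjoint from $S_1\cup S_2\cup S_3$ --- though cut-disks lying outside $B_F$ require a separate argument), or avoid Proposition~\ref{pro:esssurfaces} altogether. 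As written, this is a gap.

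Second, the claimed ``symmetry of the construction interchanging $B_T$ with $B_3$'' does not exist: in the construction, $B_3$ and $B_4$ are the rational sub-tangle balls $B_R$ and $B_Q$ sitting \emph{inside} $B_T$, so they are not on equal footing with $B_T$. Fortunately this step is unnecessary: once $F$ is disjoint from $B_T$, it is automatically disjoint from $B_3, B_4 \subset B_T$, and, with the earlier reduction, from $B_1, B_2$ as well, so Proposition~\ref{prop:noneout} applies immediately. (Also a small slip: if $F^T$ falls into conclusion (1) of Theorem~\ref{thm:Tess}, it is a closed sphere, forcing $F^T=F$ disjoint from $K$, which contradicts $|F\cap K|=2$; so ``ball-bounding sphere'' does not in fact occur among the viable cases, though this does not affect the conclusion.) If you repair the c-incompressibility gap, the puncture-count argument against parallelism to $S_1,\dots,S_4$ is fine and the approach goes through, but the paper's direct appeal to Lemma~\ref{prime} sidesteps the issue entirely.
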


\begin{proof}
Let $F$ be an essential twice punctured sphere in the complement of $K$ disjoint from $S_1$, $S_2$ and $S_3$. By Proposition \ref{prop:nonein}, $F$ is not embedded in $B_1$ or $B_2$. Without loss of generality, we may assume that $F$ is on the same side of $S_3$ as $B_1$ but outside of $B_1$. In this case, it is easy to see that $F$ can be isotoped to lie entirely in $B_T$, contradicting Lemma \ref{prime}.

\end{proof}

\begin{prop}\label{prop:cessential}
The spheres $S_1$, $S_2$ and $S_3$ are c-essential.
\end{prop}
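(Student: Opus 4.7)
The claim is that $S_1$, $S_2$, and $S_3$ are all c-essential, meaning c-incompressible and not boundary parallel in $S^3 - \eta(K)$. Property \ref{S1S2cessential} already provides c-incompressibility of $S_1$ and $S_2$. For non-boundary-parallelism, I will use the bridge sphere bounds established earlier. A trivial $n$-strand tangle in a ball admits a bridge sphere with exactly $2n$ punctures. Hence, if $S_i$ ($i \in \{1,2\}$) were boundary parallel, then $(B_i, T_i)$ would be trivial and would admit a 4-punctured bridge sphere, contradicting Property \ref{pro:distance} which gives a lower bound of 10. Similarly, if $S_3$ were boundary parallel, then one of the two tangles bounded by $S_3$ would be trivial with a bridge sphere of 8 punctures, contradicting Property \ref{pro:12puncturedbridge} which provides a lower bound of 12.

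It remains to prove $S_3$ is c-incompressible. I will argue by contradiction: suppose $\Delta$ is a c-disk for $S_3$. By the symmetry of $K$ (Property \ref{pro:symmetry}), we may place $\Delta$ on the $B_1$-side, inside the ball $B_{1,3}$. Recall that $S_3 = D \cup E$, where $D$ is the 3-punctured disk properly embedded in $B_T$ separating $B_R$ from $B_Q$, and $E$ is a 1-punctured disk outside $B_T$ meeting $\epsilon$ once, with $\partial D = \partial E$. The plan is to isotope $\Delta$ to minimize $|\Delta \cap (S_1 \cup S_4)|$, then eliminate these intersections by standard innermost-disk arguments, using the c-incompressibility of $S_1$ (Property \ref{S1S2cessential}) for intersections with $S_1$, and Lemma \ref{S4insideincomp} together with the distance bounds $d(\partial D, \mathcal{V}_R), d(\partial D, \mathcal{V}_Q) \geq 25$ for intersections with $S_4$ that cut off sub-disks inside $B_T$; intersections cutting off sub-disks in the exterior of $B_T$ will be ruled out by the same argument as in the last paragraph of the proof of Property \ref{S1S2cessential}, which shows such a sub-disk would force $d(\partial D, \mathcal{V}_R) \leq 2$.

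After this reduction, $\Delta$ lies entirely in one of two regions: inside $B_R$, or in the exterior region $R_1 = B_{1,3} - (B_1 \cup B_R)$ (the case $\Delta \subset B_1$ is vacuous since $S_3 \cap B_1 = \emptyset$). If $\Delta \subset B_R$, then $\partial \Delta \subset S_3 \cap B_R = D$, so $\Delta$ is a c-disk for $D$ in $B_R - R$, giving $d(\partial D, \mathcal{V}_R) \leq 2$ by Lemmas \ref{compressibleinR} and \ref{ccompressibleinR}, a contradiction. The main obstacle is the remaining case $\Delta \subset R_1$ with $\partial \Delta \subset E$: since $E$ is a 1-punctured disk, any essential simple closed curve on $E - K$ must be parallel to $\partial E = \partial D$. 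Joining $\Delta$ to the appropriate sub-disk of $E$ produces a c-disk whose boundary lies on $S_4$ and is disjoint from $\partial D$. The final-paragraph argument of Property \ref{S1S2cessential} then forces $d(\partial D, \mathcal{V}_R) \leq 2$, the desired contradiction.
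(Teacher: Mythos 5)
Your proof takes a noticeably different route for the key claim (c-incompressibility of $S_3$) than the paper does, and in doing so misses a much simpler argument the paper has already set up. The paper's proof is short: isotope a c-disk $D^c$ for $S_3$ off $S_1\cup S_2$ (using Property~\ref{S1S2cessential}), cap it off with the smaller punctured disk of $S_3$ to get a twice-punctured sphere disjoint from $S_1,S_2,S_3$, observe that this sphere is essential (it is incompressible because a once-punctured sphere is impossible, and not trivial because $D^c$ is a genuine c-disk), and then invoke Proposition~\ref{prop:notwicepunctured}, which was proved precisely so that this conclusion would be immediate. You never invoke Proposition~\ref{prop:notwicepunctured} at all, and instead try to re-run the case analysis of Property~\ref{S1S2cessential} from scratch for $S_3$.

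A few specific points about the gaps this creates. First, the reduction to ``$\Delta$ disjoint from $S_1\cup S_4$'' is not a one-line innermost-disk argument: an innermost curve of $\Delta\cap S_4$ can bound a once-punctured subdisk of $\Delta$, in which case you need more than Lemma~\ref{S4insideincomp} (which only concerns compressibility of $\partial B_T$); you would need to invoke primeness of $B_T-T$, i.e., Lemma~\ref{prime}, together with the distance hypothesis -- but you don't cite it. Second, in the case $\Delta\subset R_1$ with $\partial\Delta\subset E$, your description ``joining $\Delta$ to the appropriate sub-disk of $E$ produces a c-disk whose boundary lies on $S_4$'' is incorrect as stated: $\partial\Delta$ is parallel to $\partial E$ in the annulus $E-\eta(K)$, so the only \emph{sub-disk} it bounds in $E$ is a once-punctured disk $E'$, and $\Delta\cup E'$ is a \emph{twice-punctured sphere}, not a c-disk on $S_4$. (If instead you mean to join $\Delta$ to the annulus between $\partial\Delta$ and $\partial E$, you get a disk with boundary $\partial D\subset S_4$ lying \emph{outside} $B_T$, to which the last paragraph of Property~\ref{S1S2cessential} -- which relies on intersecting a twice-punctured sphere with $B_T$ to produce a c-disk \emph{inside} $B_T$ -- does not directly apply.) You also overlook that in this case a parity argument forces $\Delta$ to be a cut-disk, not a compressing disk. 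Once the correct bookkeeping is done, the natural object to produce is exactly a twice-punctured sphere disjoint from $S_1,S_2,S_3$, i.e., you end up re-deriving Proposition~\ref{prop:notwicepunctured} instead of quoting it. Also, you write $B_{1,3}$ for the ball on the $B_1$-side of $S_3$, but in the paper $B_{1,3}$ denotes a ball bounded by $S_1\sharp_\alpha S_3$, which is a different thing.

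Finally, the ``not boundary parallel'' part you add at the outset is not really what is at issue. For a meridional $4$-punctured sphere $S_i$, $S_i-\eta(K)$ is a four-holed sphere and cannot be parallel to any subsurface of the boundary torus, and more to the point, a trivial $2$-string tangle on either side of $S_i$ would already make $S_i$ \emph{compressible}, which Property~\ref{S1S2cessential} rules out. So the boundary-parallel case is subsumed by c-incompressibility, and the paper implicitly treats it that way. As a side remark, for $S_1$ and $S_2$ you only check that the $B_i$-side tangle is nontrivial; the outside tangle would need to be checked too if the argument were actually necessary, and the natural tool for that, Property~\ref{pro:14puncturedbridge}, lies downstream of the present proposition and cannot be used here.
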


\begin{proof}

The spheres $S_1$ and $S_2$ are c-essential by Property \ref{S1S2cessential}. Suppose $S_3$ has a c-disk $D^c$. We may assume that this c-disk is disjoint from $S_1 \cup S_2$. Let $\Delta$ be one of the two punctured disks $\bdd D^c$ bounds on $S_3$. Then $F=D^c \cup \Delta$ is a sphere that is disjoint from all $S_i$, $i=1,2,3$. Note that we may choose $\Delta$ so it has at most 2-punctures. As $D^c$ has at most one puncture and every sphere has an even number of punctures, $F$ is a twice punctured sphere. As $K$ is not a split link, $F$ is an essential twice punctured sphere disjoint from $S_1$, $S_2$ and $S_3$ thus contradicting Proposition \ref{prop:notwicepunctured}.

\end{proof}

\begin{cor} \label{cor:prime} $K$ is a prime knot.

\end{cor}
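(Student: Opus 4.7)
The plan is to argue by contradiction. Suppose $K$ is not prime; then there exists an essential meridional two-sphere $F$ in the complement of $K$, that is, a decomposing sphere meeting $K$ transversely in exactly two points. I will show that the existence of such $F$ is incompatible with Proposition \ref{pro:esssurfaces} and Proposition \ref{prop:notwicepunctured}.

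First I apply Proposition \ref{pro:esssurfaces} to $F$. Since $F$ has only two punctures, conclusion (3) (at least fourteen punctures) is excluded. Conclusion (2), that $F$ is isotopic to $S_3$, is also excluded: by the construction, $S_3$ meets $K$ in at least four points, namely the three punctures of the disk $D \subset B_T$ together with the single intersection with $\epsilon$ outside $B_T$. Thus conclusion (1) must hold, and moreover the proof of Proposition \ref{pro:esssurfaces} establishes that if $F$ had nonempty intersection with $S_1$ or $S_2$, then $F$ would have at least fourteen punctures. Hence, after isotopy, $F$ is disjoint from $S_1$, $S_2$, and $B_T$.

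The remaining, and main, step is to show $F$ can be made disjoint from $S_3$. I would minimize $|F \cap S_3|$ by isotopy. If $F \cap S_3 \neq \emptyset$, the intersection curves partition $F$ into disk regions, and since $F$ has only two punctures a pigeonhole argument provides an innermost disk $D_0 \subset F$ with $|D_0 \cap K| \leq 1$ and interior disjoint from $S_3$. If $D_0$ is disjoint from $K$, then by incompressibility of $S_3$ (Proposition \ref{prop:cessential}) the curve $\partial D_0$ bounds a disk in $S_3 - K$; the resulting embedded sphere bounds a ball in $S^3 - \eta(K)$, which yields an isotopy reducing $|F \cap S_3|$ and contradicts minimality. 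If $|D_0 \cap K| = 1$, then $D_0$ is either a genuine cut-disk for $S_3$, contradicting its c-essentiality, or the annulus $D_0 - \eta(K)$ is parallel to a subannulus of $S_3 - \eta(K)$, which again yields an isotopy reducing $|F \cap S_3|$. Either way we contradict minimality or c-essentiality, so $F \cap S_3 = \emptyset$.

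Combining the two steps, $F$ is an essential two-punctured sphere disjoint from $S_1$, $S_2$, and $S_3$, directly contradicting Proposition \ref{prop:notwicepunctured}. Therefore $K$ is prime. The hard part will be the cut-disk subcase of the third step: one must invoke the precise definition of a cut-disk to distinguish the case where the putative cut-disk is parallel to a subsurface of $S_3$, allowing a further reduction of $|F \cap S_3|$, from the case where it genuinely violates c-essentiality of $S_3$.
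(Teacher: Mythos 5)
The structure of your argument matches the paper's: push the decomposing sphere off $S_1$, $S_2$, $S_3$, then invoke Proposition~\ref{prop:notwicepunctured}. The paper does this in one line by applying the standard innermost-disk/cut-and-paste argument directly to all three spheres, using only that they are c-essential (Proposition~\ref{prop:cessential}); this is exactly the argument you give in detail for $S_3$, and it works for $S_1$ and $S_2$ by the same reasoning.

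Your detour through Proposition~\ref{pro:esssurfaces} for the $S_1$, $S_2$, $B_T$ step introduces a genuine gap. That proposition's hypotheses require $F_K$ to be c-\emph{incompressible}, but you only assume $F$ is essential (incompressible and non-boundary-parallel). A decomposing sphere for a composite knot need not be c-incompressible: if $K$ had three or more prime summands, a decomposing sphere separating one summand from the rest could admit a cut-disk coming from a further prime decomposition on the other side. You would need to first justify choosing a c-incompressible $F$ (for instance, by cut-compressing a decomposing sphere until it becomes c-incompressible, and checking that the process terminates with an essential sphere), or, more simply, bypass Proposition~\ref{pro:esssurfaces} entirely and run your $S_3$ argument for all three spheres, since that argument needs only c-essentiality of the $S_i$ and makes no c-incompressibility demand on $F$. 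As written, the application of Proposition~\ref{pro:esssurfaces} is not justified, even though the surrounding strategy is sound.
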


\begin{proof}
Suppose $F$ is a decomposing sphere for $K$. As $S_1$, $S_2$ and $S_3$ are c-essential by Proposition \ref{prop:cessential}, we can find such a sphere that is disjoint from $S_1 \cup S_2 \cup S_3$ contradicting Proposition \ref{prop:notwicepunctured}
\end{proof}

\begin{cor}\label{cor:fourpuncturedessential}
Every essential 4-punctured sphere is c-essential.
\end{cor}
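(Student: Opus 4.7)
The plan is to argue by contradiction and use a simple parity argument. Suppose $F$ is an essential $4$-punctured sphere in the complement of $K$ that admits a cut-disk $D^c$. Since $F$ is already essential, it is in particular incompressible, so the only way $F$ could fail to be c-essential is if it admits such a cut-disk. The goal is to rule this out.

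First, I would use that $K$ is prime (Corollary \ref{cor:prime}) together with the definition of cut-disk recalled at the start of Section \ref{Vertical cut-disks}: for a prime knot, the curve $\bdd D^c$ is not parallel to any boundary component of $F-\eta(K)$. Since $F$ is a sphere, every simple closed curve on $F-\eta(K)$ is separating, and an essential separating curve on a $4$-punctured sphere must split the four punctures as $2+2$. Hence $\bdd D^c$ divides $F$ into two $2$-punctured disks $F_1$ and $F_2$.

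Next I would perform the cut-compression along $D^c$ explicitly in $S^3$. Take two parallel copies of $D^c$, each meeting $K$ in one point, and attach them to $F_1$ and $F_2$ respectively, after removing a small annular neighborhood of $\bdd D^c$ in $F$. The result consists of two spheres $F_1'$ and $F_2'$ embedded in $S^3$ and transverse to $K$, and each of them meets $K$ in exactly $2+1=3$ points. But any $2$-sphere in $S^3$ transverse to a knot $K$ must meet $K$ in an even number of points, since the sphere separates $S^3$ into two balls and $K$ is a closed curve. This yields the desired contradiction.

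The argument is short and the only subtlety is the verification that $\bdd D^c$ is essential on the $4$-punctured sphere, which relies solely on primeness of $K$ (already in hand from Corollary \ref{cor:prime}). No appeal to the more delicate machinery of the preceding sections is needed for this corollary.
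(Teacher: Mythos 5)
The paper leaves this corollary without an explicit proof, so I will evaluate your argument on its own terms. Your overall approach — parity of knot-sphere intersections combined with primeness of $K$ (Corollary \ref{cor:prime}) — is correct and elementary, and it is almost certainly the argument the authors had in mind. However, there is one small imprecision worth flagging. You assert that $\partial D^c$ is \emph{essential} in $F - \eta(K)$ and attribute this ``solely to primeness.'' Primeness only rules out that $\partial D^c$ is boundary-parallel (the $1+3$ split). You must also rule out the case that $\partial D^c$ bounds an unpunctured disk in $F$ (the $0+4$ split); this is easy, since then $D^c$ together with that disk would be a once-punctured $2$-sphere in $S^3$, impossible by the same mod~$2$ intersection argument you invoke at the end, but it is a separate observation that primeness does not supply. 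Once that case is closed, your route (rule out $1+3$ by primeness, rule out $0+4$ and $2+2$ by parity) is logically sound.

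It may be worth noting that the two ingredients can be applied in the opposite order with a slight gain in economy: since $D^c$ meets $K$ once, attaching $D^c$ to either side of $F$ along $\partial D^c$ gives a $2$-sphere, so parity immediately forces $\partial D^c$ to split the four punctures as $1+3$; one is then directly in the boundary-parallel case, where primeness yields the contradiction with the definition of a cut-disk (the annulus $D^c - \eta(K)$ would be parallel into $F - \eta(K)$ through the trivial ball bounded by the twice-punctured sphere). Your argument instead first eliminates $1+3$ and then discovers that the ``remaining'' $2+2$ case is itself impossible by parity; both deductions are valid, but the former avoids the essentiality discussion altogether and is a touch cleaner. Either way, the corollary is proved and no machinery beyond Corollary \ref{cor:prime} and elementary parity is needed, as you say.
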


\begin{pro}\label{pro:14puncturedbridge} Any bridge surface for the tangles $\mathcal{T}'=(B',K-T_1)$ and $\mathcal{T}''=(B'',K-T_2)$ has at least 14 punctures.
\end{pro}

\begin{proof}

By Property \ref{pro:symmetry}, it suffices to prove the result for $\mathcal{T}'$. Let $\Sigma$ be a bridge surface for $\mathcal{T}'$. By Theorem \ref{thm:boundbridge}, either $d(\Sigma_{2}) \leq 2-\chi(\Sigma)$, or $\Sigma \cap B_2$ is a sphere parallel to $\Sigma_{2}$ with tubes attached. In the first case, $\Sigma$ has at least 26 punctures, so suppose we are in the second case. If no tubes are attached, i.e., if $\Sigma$ is parallel to $\Sigma_2$, then $S_2$ is an essential sphere that is disjoint from the bridge surface of the tangle $T'$ which is not possible since $S_2$ is not isotopic to $S_3$. If more than one tube is attached, then $w(\Sigma)\geq 14$. Similarly, if a tube that corresponds to a compressing disk decomposes $\Sigma$ into a component parallel to $\Sigma_2$ and a sphere with more than two punctures or if a tube that corresponds to a cut-disk decomposes $\Sigma$ into a component parallel to $\Sigma_2$ and a sphere with more than four punctures, then $w(\Sigma)\geq 14$. Therefore, we may assume that $\Sigma$ is parallel to $\Sigma_2$ with exactly one tube attached. If this tube corresponds to a compressing disk, then it connects $\Sigma_2$ to a twice punctured sphere and if it corresponds to a cut-disk, then it connects $\Sigma_2$ to a four times punctured sphere.

\medskip

\textbf{Claim:} We may assume the tube corresponds to a compressing disk.

{\em Proof of claim:} Suppose the tube corresponds to a cut-disk $D^c$. Then $\bdd D^c$ bounds a three punctured disk $\Delta$ in $\Sigma$. Consider the arc $\kappa$ that has both of its endpoints in $\Delta$. A bridge disk for $\kappa$ can be isotoped to be disjoint from $D^c$ using an innermost disk/outermost arc argument. Let $E$ be the compressing disk for $\Sigma$ obtained by taking the boundary of a regular neighborhood of the bridge disk for $\kappa$. Note that $E$ is disjoint from $D^c$. Additionally, $D^c$, $E$ and a once punctured annulus $A$ in $\Sigma$ cobound a twice punctured sphere. By Corollary \ref{cor:prime}, $K$ and, thus, $\mathcal{T}'$ is prime. As $\mathcal{T}'$ is prime, it follows that $E\cup A$ is isotopic to $D^c$. Therefore, the surface $F$ obtained by cut-compressing $\Sigma$ along $D^c$ that does not contain $\Delta$ is isotopic to the surface $F'$ obtained from $\Sigma$ by compressing it along $E$ that contains $A$. As $F$ is parallel to $\Sigma_2$, so is $F'$. Therefore, we may replace the tube corresponding to $D^c$ with a tube corresponding to $E$.
\qed

\medskip

By the claim, we may assume that $\Sigma$ can be recovered by tubing $\Sigma_2$ to a twice punctured sphere $Q$ along a tube that is disjoint from the knot. There are 4 strands of $K$ that have one endpoint in $S_1$ and the other in $\Sigma_2$. By Property \ref{S1S2cessential}, $S_1$ and $S_2$ are c-essential, so $Q$ can be isotoped to be disjoint from both $S_1$ and $S_2$. Hence, the sphere $Q$ intersects at most one of these strands, so there are at least three strands that have one endpoint in $S_1$ and one point in $\Sigma$. At least two of these strands, $\alpha$ and $\beta$, intersect $B_T$. As $\Sigma$ is a bridge surface, $\alpha$ and $\beta$ are both vertical and, therefore, parallel to each other. Let $R$ be the rectangle they cobound. Connecting $\alpha\cap B_T$ and $\beta\cap B_T$ along the two arcs $R \cap S_4$ results in the unknot. However, this is the knot $K_{\alpha, \beta}$ described in Case 1 of the proof of Property \ref{pro:6punctured}. By the argument there, the bridge number of $K_{\alpha, \beta}$ is at least three, leading to a contradiction.
\end{proof}

\begin{prop}\label{prop:noc-essential}
Any c-essential meridional sphere with fewer than 14 punctures has exactly 4 punctures and is parallel to one of $S_1$, $S_2$ or $S_3$.
\end{prop}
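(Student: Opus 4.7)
Let $F$ be a c-essential meridional sphere with fewer than 14 punctures. The plan is to invoke Property \ref{pro:esssurfaces} to reduce to the case where $F$ lies outside the central 3-strand tangle ball $B_T$, then use the high distance of the bridge surfaces $\Sigma_1$ and $\Sigma_2$ together with Theorem \ref{thm:boundess} to push $F$ outside $B_1$ and $B_2$ as well, apply Proposition \ref{prop:noneout} to identify $F$ up to parallelism, and finally rule out the $S_4$ case via a cut-disk.

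Since $F$ is connected, meridional, c-incompressible, and (being c-essential) non-boundary-parallel, Property \ref{pro:esssurfaces} applies. Its conclusion (3) is excluded by the puncture hypothesis, and conclusion (2) gives $F$ isotopic to $S_3$, which has four punctures, so in that case we are done. Assume then that conclusion (1) holds: after isotopy $F \cap B_T = \emptyset$. Among further isotopies keeping $F$ disjoint from $B_T$, choose one minimizing $|F \cap (S_1 \cup S_2)|$; I claim this intersection is empty. Suppose instead that $F \cap S_1 \neq \emptyset$ (the argument for $S_2$ is symmetric), and apply Theorem \ref{thm:boundess} with $N = S^3$, $M = B_1$, $T = T_1$, and $\Sigma = \Sigma_1$. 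The first conclusion fails because $d(\Sigma_1, T_1) \geq 25$ by Property \ref{pro:distance} while $2 - \chi(F) \leq 13$, and the second fails because $\chi(\Sigma_1) = -8 < -3$. Hence every component of $F \cap (B_1 - \eta(T_1))$ is boundary parallel in $B_1 - \eta(T_1)$. Picking an innermost-on-$F$ such component, the product region supplied by that parallelism yields an isotopy of $F$, supported in $B_1$, that removes at least one curve from $F \cap S_1$, contradicting minimality.

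Thus $F$ is disjoint from $B_1$, $B_2$, and $B_T$, and therefore from all four tangle boxes of the schematic (the two three-strand boxes being the sub-balls $B_R$ and $B_Q$ into which $B_T$ is decomposed). Proposition \ref{prop:noneout} then forces $F$ to be parallel to one of $S_1, S_2, S_3$ or $S_4$. To rule out parallelism to $S_4$, note that $S_4$ is isotopic in $S^3 - K$ to the boundary of a regular neighborhood of $S_1 \cup S_2 \cup \epsilon$; the tube around $\epsilon$ portion of $S_4$ contains a meridian disk $E$ for $\epsilon$ that lies outside $B_T$ and meets $K$ exactly once. The curve $\partial E$ splits the six punctures of $S_4$ into two triples, hence is essential on $S_4 - \eta(K)$ and not parallel to any of its boundary circles, so $E$ is a genuine cut-disk for $S_4$. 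Therefore $S_4$ is c-compressible, and any sphere parallel to it would inherit this c-disk, contradicting the c-essentiality of $F$. Consequently $F$ is parallel to $S_1$, $S_2$, or $S_3$, each of which has exactly four punctures.

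The step requiring the most care is the intersection-reducing isotopy in the second paragraph: a boundary-parallel component of $F \cap (B_1 - \eta(T_1))$ is parallel to a planar subsurface of $\partial(B_1 - \eta(T_1))$ that in general includes pieces of $\partial \eta(T_1)$ and not only of $S_1 - \eta(T_1)$, so the isotopy pushing the component across $S_1$ must be arranged in a collar that accommodates both boundary pieces simultaneously while preserving $\partial F \subset \partial \eta(K)$; once set up properly, it genuinely eliminates a curve of $F \cap S_1$ without creating new ones.
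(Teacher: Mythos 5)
Your proof takes essentially the same route as the paper's, using the same two pillars—the high-distance bound via Theorem \ref{thm:boundess} to push $F$ off $B_1$ and $B_2$, and Property \ref{pro:esssurfaces} to handle $B_T$, followed by Proposition \ref{prop:noneout}—but you run the reductions in the opposite order (clear out $B_T$ first, then $B_1$ and $B_2$, while the paper does $B_1,B_2$ first). You are in fact more explicit than the paper in ruling out $S_4$; the paper leaves that step implicit.

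There is, however, one real gap to flag. After concluding that the minimum of $|F\cap(S_1\cup S_2)|$ is zero, you assert ``Thus $F$ is disjoint from $B_1$, $B_2$, and $B_T$.'' But $F\cap S_1=\emptyset$ is also consistent with $F$ being \emph{contained} in the interior of $B_1$ (and likewise for $B_2$), a case your Theorem~\ref{thm:boundess} argument—framed as removing a curve of $F\cap S_1$—does not touch. This case must be excluded separately, either by citing Proposition~\ref{prop:nonein} or by observing that the third alternative of Theorem~\ref{thm:boundess} would make a closed $F\subset B_1$ boundary parallel, contradicting essentiality. A second, smaller point: the boundary-parallel reduction in $B_1-\eta(T_1)$ can in principle produce a component parallel to an annulus running along a strand of $T_1$ rather than across $S_1$; the paper handles this by noting it yields a cut-disk for $F$ (impossible since $F$ is c-incompressible), whereas your final paragraph asserts the isotopy ``genuinely eliminates a curve of $F\cap S_1$'' in every case, which is not quite the right dichotomy. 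Both points are local and repairable, and the overall structure of the argument is sound.
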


\begin{proof}
Let $F$ be a c-essential sphere with fewer than 14 punctures. Isotope it to intersect $B_1 \cup B_2 \cup B_T$ minimally and suppose first that $F \cap B_1\neq \emptyset$. Note that no component of $F - S_1$ can be a c-disk as $S_1$ is c-incompressible. It follows that, if $F'$ is any component of $F \cap B_1$, $\chi(F') \geq \chi(F)$. Furthermore, $F'$ cannot be parallel to $\bdd (B_1-\eta(K))$ as in this case either $|F \cap B_1|$ can be reduced or $F$ has a cut-disk (this situation occurs if $F\cap B_1$ has a component that is an annulus parallel to a segment of $B_1 \cap K$). Recall that $\Sss_1$ has 10 punctures and there are no essential surfaces in the complement of the tangle that are disjoint from the bridge sphere of the tangle. By Theorem \ref{thm:boundess}, it follows that $d(\Sigma_1, K\cap B_1) \leq 2-\chi(F')$. As $d(\Sigma_1, K \cap B_1)\geq 25$, this implies that $\chi(F') \leq -23$ and therefore $\chi(F) \leq -23$. This contradicts the hypothesis that $F$ has at most 14 punctures and therefore it follows that $F \cap B_1=\emptyset$. Similarly, $F \cap B_2$ must also be empty.

By Property \ref{pro:esssurfaces}, it follows that if $F \cap B_T \neq \emptyset$, then $F$ is $S_3$ or has at least 14 punctures.
\end{proof}

\begin{prop} \label{prop:sixpunctured} The only incompressible 6-punctured spheres are the ones obtained by tubing two essential 4-punctured spheres that are not mutually parallel along a strand of $K$. In particular, if $P$ is an incompressible 6-punctured sphere, then its cut-disk, $B_1$, and $B_2$ are on the same side of $P$.\end{prop}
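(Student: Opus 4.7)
The strategy is to cut-compress $P$ using a cut-disk whose existence is forced by Proposition \ref{prop:noc-essential}, classify the two resulting pieces via the same proposition, and verify the structural claim by case analysis. First, since by Proposition \ref{prop:noc-essential} any c-essential meridional sphere with fewer than $14$ punctures has exactly $4$ punctures, the $6$-punctured incompressible sphere $P$ is not c-essential, so it admits a cut-disk $D^c$. Cut-compressing $P$ along $D^c$ yields two spheres $F_1$ and $F_2$. A parity count (every sphere in $S^3-K$ meets $K$ in an even number of points), combined with the requirement that $\partial D^c$ be essential on $P - K$, forces both $F_i$ to be $4$-punctured. A routine innermost-disk / outermost-arc argument using irreducibility of $S^3-\eta(K)$ and primality of $K$ (Corollary \ref{cor:prime}) shows that any compressing disk for $F_i$ can be made disjoint from $D^c$ and hence yields a compressing disk for $P$, so $F_i$ is incompressible; the same parity obstruction rules out any cut-disk on a $4$-punctured sphere, so $F_i$ is also c-incompressible.

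Next I would show each $F_i$ is essential. Suppose for contradiction $F_1$ bounds a ball $B^*$ with $B^* \cap K$ a trivial $2$-strand tangle. If $F_2 \subseteq B^*$, then $F_2$ is a connected incompressible surface inside the rational tangle $B^*$ that is disjoint from $\partial B^* = F_1$, directly violating Lemma \ref{essinrational}. So $F_2 \not\subseteq B^*$, which also forces the reconstructing arc $\alpha$ joining $F_1$ to $F_2$ to lie outside $B^*$ (it cannot enter $B^*$ and leave without recrossing $F_1$). Now take the compressing disk $E \subset B^*$ of $F_1$ separating the two trivial arcs; its boundary $\partial E \subset F_1$ splits $F_1$'s four punctures into pairs. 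Since the tubing disk $D_1 \subset F_1$ reconstructing $P = F_1 \#_\alpha F_2$ is a small disk around a single puncture of $F_1$, I can isotope $\partial E$ to lie in $F_1 - D_1 \subset P$. On $P$ the curve $\partial E$ still separates the six punctures into groups of sizes $2$ and $4$ (hence is essential on $P-K$), while $E$ is disjoint from $P$ except along $\partial E$. Thus $E$ compresses $P$, contradicting incompressibility. Hence each $F_i$ is essential, and by Corollary \ref{cor:fourpuncturedessential} each is c-essential, so by Proposition \ref{prop:noc-essential} each $F_i$ is parallel to one of $S_1, S_2, S_3$.

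If $F_1$ and $F_2$ were mutually parallel they would cobound a product region $R\cong S^2\times I$ meeting $K$ in four mutually parallel vertical arcs; tubing along one of them makes $P$ bound a ball containing the remaining three arcs as a trivial $3$-strand tangle, and a disk separating one of the three from the other two would compress $P$, a contradiction. Hence $F_1$ and $F_2$ are non-parallel, completing the ``tubing of two essential $4$-punctured spheres that are not mutually parallel'' description.

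For the final side assertion, observe that the disjoint spheres $F_1 \cup F_2$ decompose $S^3$ into three regions $R_0, R_1, R_2$, where $R_1$ is the middle region between $F_1$ and $F_2$ containing the tube $\eta(\alpha)$. The sphere $P = (F_1 - D_1) \cup A_\alpha \cup (F_2 - D_2)$ therefore separates $S^3$ into the middle side $R_1 - \eta(\alpha)$ and the outer side $R_0 \cup \eta(\alpha) \cup R_2$. Up to isotopy $D^c$ is a cross-section of $\eta(\alpha)$, placing it in the outer side. A short case check on which $S_j$ each $F_i$ is parallel to (using that $F_i$ is not contained in $B_1 \cup B_2$ in any configuration) shows in every case that $B_1 \subset R_0$ and $B_2 \subset R_2$ after possible relabeling, so $B_1$, $B_2$, and $D^c$ all lie on the same side $R_0 \cup \eta(\alpha) \cup R_2$ of $P$. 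The main obstacle will be the essentiality step: carefully producing a compressing disk for $P$ from a compressing disk for $F_1$ inside $B^*$—isotoping $\partial E$ off the tubing disk $D_1$ and verifying that it remains essential on $P - K$—requires close attention to the puncture positions and the reconstructing arc $\alpha$.
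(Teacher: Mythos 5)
Your proposal is correct and takes essentially the same approach as the paper: cut-compress $P$ into two incompressible $4$-punctured spheres, classify them as parallel to $S_1$, $S_2$, or $S_3$ via Proposition \ref{prop:noc-essential} and Corollary \ref{cor:fourpuncturedessential}, and rule out mutual parallelism to get the side claim. The separate ``essentiality'' step is redundant, though: once $F_i$ is incompressible it cannot bound a trivial $2$-strand tangle (the separating disk would be a compressing disk for $F_i$), and a meridional $4$-punctured sphere can never be boundary parallel to $\partial\eta(K)$, so incompressibility already yields essentiality.
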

\begin{proof}

Suppose $G$ is an incompressible 6-punctured sphere. By Proposition \ref{prop:noc-essential}, it follows that $G$ is cut-compressible. As $K$ is prime, cut-compressing a 6-punctured sphere results in a pair of 4-punctured spheres. If the original sphere was incompressible, so are the two 4-punctured ones.

By Corollary \ref{cor:fourpuncturedessential}, all essential 4-punctured spheres are c-essential and, by Proposition \ref{prop:noc-essential}, all c-essential 4-punctured spheres in the complement of $K$ are parallel to one of $S_1$, $S_2$ or $S_3$. If two parallel copies of some 4-punctured sphere are tubed together along a single strand of the knot, the resulting 6-punctured sphere is always compressible. Therefore, any incompressible 6-punctured sphere is the result of tubing together two of $S_1$, $S_2$ or $S_3$.
\end{proof}

\section{Bridge number} \label{sec:bridgenumber}

In this section, we will show that thin and bridge position for $K$ do not coincide.

\begin{lemma}\label{cdiskbridgenotthin}Suppose $\Sigma$ is a minimal bridge sphere for $K$, and $C_1$ and $C_2$ are c-disks for $\Sigma$ above and below it respectively so that $\partial(C_1)\cap \partial(C_2) = \emptyset$. Then bridge position for $K$ is not thin position.
\end{lemma}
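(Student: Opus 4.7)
The plan is to argue by contradiction: assume bridge position coincides with thin position for $K$, so $w(K)=w(\Sigma,K)$, and use $C_1$ and $C_2$ to construct a position of $K$ with strictly smaller width. The construction is a version of Scharlemann--Thompson untelescoping of a weakly reducible bridge splitting, made possible by the disjointness of $\partial C_1$ and $\partial C_2$. The first step is to verticalize $C_1$ and $C_2$ simultaneously. For compressing disks this is Wu's theorem (cited just before the definition of \emph{taut}), and for cut-disks it is an adaptation of the arguments of Section~\ref{Vertical cut-disks}: those arguments isotope $K$ inside the slab between two reference surfaces without altering width or critical points outside the slab, and they go through with $P=\Sigma$ and $P'$ taken to be a level sphere above (respectively below) all of $K$, which is the ``highest thin sphere'' special case already allowed in Lemma~\ref{inesssaddle}. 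Because $\partial C_1 \cap \partial C_2 = \emptyset$, verticalizing one disk does not spoil the other; after this step, each $C_i$ is a vertical cone bounding a cone-ball $B_i$ on its side of $\Sigma$ and containing some strands of $K$, and the subdisks $D_i \subset \Sigma$ with $\partial D_i = \partial C_i$ are either disjoint or nested.

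Next I would apply Theorem~\ref{thm:disjoint} on each side of $\Sigma$ to separate the braid boxes of the $B_i$-strands from those of the other strands above or below $\Sigma$ into disjoint height intervals. Using the disjointness $\partial C_1 \cap \partial C_2 = \emptyset$ I would then push the strands inside $B_1$ downward through $\Sigma$ into a vertical column below $D_1$, converting each maximum into a minimum, and symmetrically push the strands inside $B_2$ upward through $\Sigma$ into a column above $D_2$. In the resulting position of $K$, between the relocated and the original critical points one finds a level sphere whose nearest critical point below is a maximum and whose nearest critical point above is a minimum --- a genuine thin level sphere. Using the width formula of \cite[Lemma~6.2]{SchSch}, I would then verify that this new position has width strictly less than the bridge-position width $w(\Sigma,K)$, contradicting the hypothesis that bridge position is thin.

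The main obstacle is the rigorous execution of the column-push isotopy in the second paragraph. Three difficulties stand out. First, when one of $C_1,C_2$ is a cut-disk, the strand of $K$ piercing it has to be threaded carefully both through the verticalization and through the subsequent push, so that no extraneous critical points are created. Second, the naive puncture counts on the three spheres produced by c-compressing $\Sigma$ along $C_1$ and $C_2$ do not automatically yield a local minimum of the width function at the middle sphere; when they fail to, minimality of $\Sigma$ as a bridge sphere must be invoked to rule out cancellable max/min pairs across $\Sigma$ which would otherwise obstruct the strict reduction in width. Third, the horizontal isotopy furnished by Theorem~\ref{thm:disjoint} must be arranged to be compatible with the global structure of $K$ and with the verticalization already achieved, so the theorem has to be applied to carefully chosen tangles on each side of $\Sigma$.
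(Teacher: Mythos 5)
Your route is genuinely different from the paper's, and considerably heavier. The paper never verticalizes $C_1$ or $C_2$ and never invokes Theorem~\ref{thm:disjoint} or the width formula. Instead it uses the c-disks only indirectly: since $\partial C_1$ is essential, the disk $\Delta_1 \subset \Sigma$ bounded by $\partial C_1$ has at least two punctures, so some bridge $\kappa_1$ above $\Sigma$ has both endpoints in $\Delta_1$; standard innermost-disk/outermost-arc arguments (using that $C_1$ has at most one puncture) then produce a bridge disk $E_1$ for $\kappa_1$ entirely inside the cone ball, so $E_1 \cap \Sigma \subset \Delta_1$. Symmetrically one gets $E_2$ below with $E_2 \cap \Sigma \subset \Delta_2$. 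Since $\Delta_1 \cap \Delta_2 = \emptyset$, the shadows of $E_1$ and $E_2$ are disjoint, and the classical thin-position move of pushing the maximum of $\kappa_1$ below the minimum of $\kappa_2$ strictly decreases width. This is the whole proof; it is short, local, and requires no Morse-theoretic normalization of the c-disks.

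Your untelescoping approach, in addition to being more machinery than the lemma requires, has several unresolved gaps that the paper's argument sidesteps. First, the verticalization results of Section~\ref{Vertical cut-disks} (Lemmas~\ref{inesssaddle}--\ref{cinesssaddle} and Theorem~\ref{thm:diskisvertical}) are proved for a \emph{thin} sphere $P$ sandwiched between adjacent thin spheres (or the top thin sphere), and the width-preservation in those proofs hinges on the fact that the slab between $P$ and $P'$ has all maxima above all minima. A bridge sphere $\Sigma$ is a thick sphere; simply declaring $P=\Sigma$ and $P'$ a level sphere above $K$ does not meet the stated hypotheses, and one would have to re-derive those isotopies in the bridge-sphere setting (it is plausible but not automatic, especially for the $B_\sigma$-containing-$\pm\infty$ cases). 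Second, Theorem~\ref{thm:disjoint} is stated for a level sphere of a prime tangle in $S^2 \times I$ and is used in the paper to separate braid boxes on either side of a \emph{c-disk for a thin sphere}; applying it to reorganize the bridges relative to the cone balls $B_1, B_2$ requires identifying the correct subtangles and checking primeness, which you have not done. Third --- and you flag this yourself --- the column-push need not strictly decrease width on its own; you propose to fall back on minimality of $\Sigma$, but that is precisely the content that needs a proof, and as written it is circular (you would be using minimality of the bridge sphere to conclude something about width, in a lemma whose purpose is to compare bridge position against thin position). The paper avoids all of this because the direct max-below-min swap unconditionally lowers width.

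The one idea I would especially encourage you to internalize from the paper's proof is that the role of $\partial C_1 \cap \partial C_2 = \emptyset$ is simply to guarantee disjoint regions $\Delta_1, \Delta_2 \subset \Sigma$ in which to park the shadows of two bridge disks on opposite sides --- a far lighter use of the hypothesis than full verticalization and untelescoping.
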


\begin{proof}
Let $\Delta_1$ and $\Delta_2$ be the disjoint disks $\bdd C_1$ and $\bdd C_2$ bound in $\Sigma$. The disk $\Delta_1$ must have at least two punctures as $\bdd C_1$ is essential in $\Sigma$. In particular, there is a strand $\kappa_1$ above $\Sigma$ that has both of its endpoint in $\Delta_1$ and is disjoint from $C_1$. As $\Sigma$ is a bridge sphere, $\kappa_1$ has a bridge disk $E_1$. Using the fact that there are no spheres that intersect the knot in exactly one point and an innermost disk argument, we can choose $E_1$ so that it intersects $C_1$ only in arcs. Using an outermost arc argument and the fact that we can always pick an outermost arc so that the disk it bounds in $C_1$ that is disjoint from $E_1$ does not contain the puncture, we can choose $E_1$ to be completely disjoint from $C_1$. In particular, $E_1 \cap \Sigma \subset \Delta_1$. Similarly, there is a bridge disk $E_2$ for some strand $\kappa_2$ below $\Sigma$ so that $E_2 \cap \Sigma \subset \Delta_2$. This pair of disks allows us to push the maximum of $\kappa_1$ below the minimum of $\kappa_2$, thus decreasing the width of $K$. Therefore, bridge position of $K$ is not thin position.
\end{proof}

We will rely heavily on the terminology introduced in Section \ref{Vertical cut-disks} and illustrated in Figure \ref{fig:labels1.eps}. In addition, we need the following definition first introduced in \cite{Bl}.

\begin{defin}\label{worm-like}
Let $S$ be a 2-sphere embedded in $S^3$ so that $S$ meets $K$ transversely in exactly 4 points. $S$ is {\em worm-like} if, for every saddle $\sigma=s_{1}^{\sigma} \vee s_{2}^{\sigma}$, each of  $s_{1}^{\sigma}$ and $s_{2}^{\sigma}$ cuts $S$ into two twice punctured disks and every saddle in $S$ is nested with respect to the same side of $S$.
\end{defin}

\begin{theorem}\cite[Theorem A]{Bl}\label{sphereworm-like} If $S$ is a c-incompressible 4-punctured sphere and bridge position for $K$ is thin position, then there is an isotopy of $S$ and $K$ resulting in $h|_{K}$ having $\beta(K)$ maxima and $S$ being worm-like.
\end{theorem}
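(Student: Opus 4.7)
The plan is to work with the singular foliation $\digamma_{S}$ that $h$ induces on $S$, and to straighten $S$ by isotopies (of $S$ and, when needed, of $K$) that preserve the property that $K$ is in bridge position with exactly $\beta(K)$ maxima. Call such an embedding of $S$ \emph{taut} if the number of saddles of $\digamma_{S}$ is minimal among embeddings isotopic to $S$ subject to this constraint; a taut representative clearly exists. Because bridge position for $K$ coincides with thin position, there are no thin spheres, so any isotopy that removes a saddle from $\digamma_{S}$ without increasing the number of maxima of $h|_{K}$ automatically preserves (and does not create) a thinner position. Tautness is therefore a genuine restriction on $\digamma_{S}$.

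The first technical step is to rule out all saddle configurations other than the $2$-$2$ split. For a saddle $\sigma$ in a taut $\digamma_{S}$, the loops $s_{1}^{\sigma}$ and $s_{2}^{\sigma}$ separate the $4$-punctured sphere $S$ into two subdisks whose puncture counts sum to $4$, so I must exclude the cases in which one subdisk has $0$ or $1$ punctures. Suppose first that some $s_{i}^{\sigma}$ bounds an unpunctured subdisk $E \subset S$. An outermost saddle $\tau$ lying inside $E$ is then an inessential saddle in the sense of Section \ref{Vertical cut-disks}, and the ``Pop out'' argument of Lemma \ref{inesssaddle}, preceded if necessary by the bowl-like modification used there to move $\pm\infty$ out of $B_{\tau}$, removes $\tau$ without introducing new maxima for $h|_{K}$, contradicting tautness. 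Suppose instead that some $s_{i}^{\sigma}$ bounds a once-punctured subdisk $E \subset S$. An outermost saddle $\tau$ inside $E$ is then c-inessential; the Claim inside Lemma \ref{cinesssaddle} lets me isotope $K$ so that $h|_{K}$ acquires a matching local extremum at the puncture of $D_{\tau}$, after which the isotopy of \cite[Lemma 3]{Bl} eliminates $\tau$, again contradicting tautness. The c-incompressibility of $S$ is precisely what underwrites the pushing manoeuvres inside the Claim, since it forbids the obstructing spheres that the arc of $K$ would otherwise have to cross.

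It remains to force every saddle to be nested with respect to the same side of $S$. Declare two saddles $\sigma$ and $\tau$ adjacent when they cobound an annular region of $\digamma_{S}$ disjoint from all other saddles and from $K$. If $\sigma$ and $\tau$ are adjacent but nested on opposite sides of $S$, then (using a $4$-punctured-sphere analogue of Corollary \ref{non-standard} to supply standard monotone caps $E_{\sigma}$ and $E_{\tau}$) the $3$-ball cobounded by these caps and the monotone annulus between them admits the isotopy of \cite[Lemma 3]{Schu2}, which eliminates one of the saddles while fixing everything outside, contradicting tautness. Since $\digamma_{S}$ is connected via chains of adjacent saddles, the uniform nesting propagates to all of $\digamma_{S}$, and the resulting $S$ is worm-like.

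I expect the principal obstacle to be the boundary cases in which an outermost subdisk $D_{\tau}$ interacts with the poles $\pm\infty$ or with nearby critical points of $K$: the naive ``Pop out'' isotopy is then unavailable, and one must first perform bowl-like modifications of $S$ and of the relevant level spheres to push $\pm\infty$ out of the offending sub-ball, exactly as in the proof of Lemma \ref{inesssaddle}. Composing these preparatory isotopies with the c-inessential saddle reductions (which themselves already move $K$ via the Claim of Lemma \ref{cinesssaddle}) while keeping $K$ in bridge position throughout is the delicate bookkeeping step; no conceptually new idea beyond the machinery of Section \ref{Vertical cut-disks} appears to be required.
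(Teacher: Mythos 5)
This statement is not proved in the present paper: it is quoted verbatim, with citation, as Theorem~A of \cite{Bl}, and the authors invoke it as a black box.  There is therefore no ``paper's own proof'' against which to check your attempt; the argument you are trying to reconstruct lives in \cite{Bl}, from which the machinery of Section~\ref{Vertical cut-disks} of the present paper was itself adapted.

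With that caveat, your sketch is essentially the right one and runs along the same lines as the original.  Minimizing the number of saddles of the induced foliation $\digamma_S$ subject to keeping $K$ in bridge position, then killing inessential and c-inessential outermost saddles by the Schultens pop-out moves and the local re-routing of $K$ (\cite[Lemma~3]{Schu2}, \cite[Lemma~3]{Bl}), and finally forcing uniform nesting of the linear chain of adjacent saddles, is precisely the strategy of \cite{Bl}.  Two things to keep straight, though.  First, Lemmas~\ref{inesssaddle}, \ref{cinesssaddle}, \ref{nested}, and Corollary~\ref{non-standard} as stated in this paper are about a once-punctured c-disk $C$ for a thin sphere $P$ sitting between two adjacent thin spheres; they cannot be applied verbatim to a separating $4$-punctured sphere $S$ in a knot that has \emph{no} thin spheres, so you would have to reprove $4$-punctured analogues (which is what \cite{Bl} does).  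You gesture at this (``a $4$-punctured-sphere analogue of Corollary~\ref{non-standard}''), but leaning on Section~\ref{Vertical cut-disks} as you do is closer to circular than to a shortcut, since those lemmas descend from \cite{Bl} in the first place.  Second, the role you ascribe to c-incompressibility in the Claim of Lemma~\ref{cinesssaddle} isn't quite how the paper uses it there --- the paper's Claim appeals to Lemma~\ref{nested} (tautness), not directly to incompressibility, to rule out $C$ meeting $E^*$ --- so be careful where in the argument the hypothesis that $S$ is c-incompressible is actually consumed.  Neither point is a fatal gap, but both need to be handled explicitly for a complete proof.
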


\begin{theorem}\label{spherenice} Suppose $S$ is a c-incompressible 4-punctured sphere in $S^3$ bounding 3-balls $B_1$ and $B_2$ on opposite sides. Additionally, suppose bridge position for $K$ is thin position. Then, up to relabeling $B_1$ and $B_2$,  there is a minimal bridge sphere, $\Sigma$, such $B_1 \cap \Sigma$ is a collection of punctured disks and $S-\Sigma$ is a collection of annuli and 2-punctured disks.
\end{theorem}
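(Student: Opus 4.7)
The plan is to apply Theorem~\ref{sphereworm-like} to set up a good configuration, then choose a level bridge sphere at a generic height in the bridge region, and finally modify it by local isotopies to achieve the desired form.

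First, since $S$ is c-incompressible and bridge position for $K$ equals thin position, Theorem~\ref{sphereworm-like} lets us isotope $S$ and $K$ so that $h|_K$ has $b(K)$ maxima (so $K$ is in minimal bridge position) and $S$ is worm-like. I would label so that $B_1$ is the side with respect to which every saddle of the height foliation on $S$ is nested. Because $K$ is in minimal bridge position, there is a nondegenerate bridge region $I = [m,M]$, where $m$ is the height of the highest minimum and $M$ the height of the lowest maximum of $K$, such that every level sphere $h^{-1}(t)$ with $t \in I$ is a minimal bridge sphere.

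I would next exploit the two clauses of the worm-like hypothesis: every saddle arc $s_i^\sigma$ is $2$-$2$ separating on $S$, and all saddles are nested on the $B_1$ side. Tracking the non-singular level circles of $h|_S$ as $t$ varies, each circle is born at a minimum bounding a $0$-punctured disk on $S$, acquires or sheds punctures as $t$ crosses puncture heights, and either merges at a saddle or dies at a maximum. Whenever a circle reaches a saddle, the worm-like hypothesis forces it to be $2$-$2$ separating, and the uniform nesting forces all such essential circles at any generic height to form a pairwise nested family on $S$ (possibly together with some $0$-punctured inessential circles). I would rule out peripheral ($1$-punctured) level circles at a generic $t_0$ by following any such circle through the foliation until it meets a saddle: its arc would be $1$-$3$ separating there, contradicting the worm-like condition. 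One then chooses $t_0 \in I$ generic so that $S \cap h^{-1}(t_0)$ consists only of $2$-$2$ separating circles together with inessential ones, and so that two punctures of $S$ lie strictly above $t_0$ on $S$ and two strictly below.

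Set $\Sigma_0 = h^{-1}(t_0)$. For each inessential curve $c$ of $S \cap \Sigma_0$, let $D \subset S$ be the $0$-punctured disk it bounds; since $D \cap K = \emptyset$, one may isotope $\Sigma_0$ locally by pushing the corresponding cap across $D$, removing $c$ from $S \cap \Sigma_0$ while preserving $\Sigma_0$ as a minimal bridge sphere. Iterating this move produces a minimal bridge sphere $\Sigma$ for which $S \cap \Sigma$ is a nested family of $2$-$2$ separating circles on $S$, so $S - \Sigma$ is the disjoint union of two $2$-punctured disks (the top and bottom caps of $S$) and some $0$-punctured annuli, as required. Because the saddles are uniformly nested on the $B_1$ side, the concentric structure on $S$ is inherited by $\Sigma$ on the $B_1$ side, giving that every component of $B_1 \cap \Sigma$ is a disk, possibly containing $K$-punctures where strands of $K \cap B_1$ cross $\Sigma$.

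The principal obstacle I anticipate is the structural claim in the middle paragraph---proving that the worm-like condition propagates the $2$-$2$ saddle property to all generic non-singular level circles (modulo inessential ones) and that a suitable $t_0 \in I$ realizing the two-and-two puncture split actually exists in the bridge region. The most delicate point is the no-peripheral-circles claim; if one cannot entirely eliminate peripheral circles by height choice alone, a fallback would be to absorb each such circle by an isotopy of $\Sigma_0$ that drags a nearby puncture across $\Sigma_0$, but that requires a separate check that the resulting sphere is still a bridge sphere of the same minimal width.
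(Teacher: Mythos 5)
Your proposal starts the same way as the paper (invoke Theorem~\ref{sphereworm-like} to make $S$ worm-like with all saddles nested on one side), but the path you take from there misses the substance of the argument and contains an error in its guiding picture.

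The most concrete problem is your insistence on choosing $t_0$ ``so that two punctures of $S$ lie strictly above $t_0$ on $S$ and two strictly below.'' This configuration need not be achievable and is also not necessary for the conclusion. After the worm-like isotopy, the two outermost monotone disks $E_1, E_2$ of $S$ can both have unique \emph{maxima} (so $S$ is shaped like a ``U'' opening upward), in which case all four punctures of $S$ lie near the top and any reasonable level sphere has all four punctures of $S$ on one side; the bottom of the $U$ contributes a $0$-punctured annulus to $S-\Sigma$, which is still allowed by the conclusion. The paper's proof explicitly handles these three combinatorial shapes (both caps up, both caps down, one up/one down) and in the two $U$-shaped cases the resulting level sphere does \emph{not} separate the punctures $2$--$2$. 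Once you assume the $2$--$2$ split, you have quietly restricted to only one of the three configurations.

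The second, deeper gap is that you never justify the existence of any admissible level $t_0$ in the bridge region $[m,M]$ at all. The worm-like condition constrains only the saddle arcs of $\digamma_S$, not arbitrary level circles, so nothing immediately guarantees that a generic level in $[m,M]$ cuts $S$ in nested $2$--$2$ (or inessential) circles, nor that the relevant part of $S$ actually lies in the bridge region. This is precisely where the paper does most of its work: it isotopes each ball $A_\sigma$ into an arbitrarily thin slab around its saddle level, uses the bridge-equals-thin hypothesis to rule out a minimum-type slab above a maximum-type slab and to force local max/min behavior at the punctures of the outermost disks (Claim~1 in the paper, an isotopy of $K$ rel $S$), and then, after eliminating a short list of bad configurations, exhibits explicit height bounds between which a level sphere works. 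Without some version of that normalization, you cannot pick $t_0$ and you cannot control the level circles of $S$.

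Finally, your treatment of peripheral ($1$--$3$ separating) level circles is not sound. Following such a circle up to the first saddle it meets does not produce a $1$--$3$ separating saddle arc, because the circle can cross punctures of $K\cap S$ on the way and arrive at the saddle as a $2$--$2$ or $0$--$4$ curve. You flag this yourself as the most delicate point and offer a fallback (dragging a puncture across $\Sigma_0$), but dragging a puncture across the bridge sphere changes its number of intersections with $K$ and potentially its bridge-sphere property, so that fallback needs its own argument and is not obviously compatible with keeping $\Sigma$ a \emph{minimal} bridge sphere. In the paper this issue never arises: after the slab isotopy and case elimination, the level circles between the two designated heights are automatically of the right type, and their count/nesting is controlled by Remark~\ref{saddlenumber}.

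In short, the idea ``use worm-like $+$ generic level sphere'' is a reasonable opening move, but the existence of the good level and the structure of the level circles are exactly what has to be \emph{proved}, and the $2$--$2$ puncture split you build your choice of $t_0$ around is false in two of the three relevant configurations.
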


\begin{proof}
By Theorem \ref{sphereworm-like}, we can assume that $S$ is worm-like. In particular, we can assume that all saddles in $S$ are nested with respect to $B_1$. Let $E_1$ and $E_2$ be the unique outermost disks of $S$. By definition of worm-like, $E_1$ and $E_2$ are 2-punctured disks. Since all saddles in $S$ are nested with respect to $B_1$, the interior of $A_{\sigma}$ is disjoint from $S$ for every saddle $\sigma$. In particular, $A_{\sigma}$ is disjoint from $A_{\tau}$ whenever $\sigma \neq \tau$. If $E_{\sigma}$ has a unique maximum, we can horizontally shrink and vertically lower $A_{\sigma}$ so that, after this isotopy, $A_{\sigma}$ is contained in an arbitrarily small neighborhood of the level sphere containing $\sigma$. If $E_{\sigma}$ has a unique minimum, we can horizontally shrink and vertically raise $A_{\sigma}$ so that after this isotopy $A_{\sigma}$ is contained in an arbitrarily small neighborhood of the level sphere containing $\sigma$. By Morse theory, we can assume that all saddles occur at distinct heights. Since $A_{\sigma}$ is disjoint from $A_{\tau}$ whenever $\sigma \neq \tau$ and all saddles occur at distinct heights, we can isotope $S$ and $K$ so that $h(A_{\sigma}) \cap h(A_{\tau}) =\emptyset$ whenever $\sigma \neq \tau$. This isotopy does not change the number of critical points of $h_K$, fixes the saddles of $S$ and leaves invariant the number of saddles of $S$.

Suppose that, after isotopying all the $A_{\sigma}$'s to occur at distinct heights, there exists an $A_{\sigma}$ with a unique minimum above an $A_{\tau}$ with a unique maximum. Each of $D^{\sigma}_1$, $D^{\sigma}_2$, $D^{\tau}_1$, $D^{\tau}_2$ meets $K$ as otherwise $S$ is compressible. Since each of $E_{\sigma}$ and $E_{\tau}$ are disjoint from $K$, $A_{\sigma}$ contains a minimum of $K$ and $A_{\tau}$ contains a maximum of $K$. Since $A_{\sigma}$ is contained completely above $A_{\tau}$, there is a minimum of $K$ above a maximum of $K$, a contradiction to the assumption that bridge position is thin position. Hence, we can assume that all $A_{\sigma}$'s with unique maxima lie above all $A_{\sigma}$'s with unique minima.

Fix $\sigma$ and $\tau$ as the two unique outermost saddles of $S$. Suppose $E_1=D_{\sigma}$ and suppose $E_1$ has a unique maximum. Let $\{x_1,x_2\} = K \cap E_1$ such that $h(x_2)<h(x_1)$.

The following claim is very similar to the claim in the proof of Lemma \ref{cinesssaddle}.
\medskip

\textbf{Claim 1:} After an isotopy that fixes $S$ and preserves that number of maxima of $h_K$, we can assume that $h_{K\cap B_{\sigma}}$ has a local maximum at $x_1$.

{\em Proof:} Suppose $h_{K\cap B_{\sigma}}$ has a local minimum at $x_1$. Let $y$ be the maximum of $K$ that is nearest $x_1$ and inside $B_{\sigma}$. Such a $y$ must exist since $K$ does not meet $E_1$ above $x_1$. Let $\alpha$ be the monotone subarc of $K$ with boundary $x_1 \cup y$. The arc $\alpha$ is completely contained in $B_{\sigma}$. Let $\beta$ be a monotone arc in $E_1$ with endpoints $x_1$ and $z$ such that $h(z)=h(y)$. Let $\delta$ be a level arc disjoint from $K$ and contained in $B_{\sigma}$ connecting $x$ to $y$. Let $E$ be a vertical disk with boundary $\alpha \cup \beta \cup \delta$ that is embedded in $B_{\sigma}$. We can assume the interior of $E$ meets $K$ transversely in a collection of points $k_1, ..., k_n$ where $h(k_1)>h(k_2)> ... >h(k_n)$. Let $\eta_i$ be the arc corresponding to a small neighborhood of $k_i$ in $K$ for each $i$.

Replace $\eta_1$ with a monotone arc which starts at an end point of $\eta_1$, runs
parallel to $E$ until it nearly reaches $E_1$, travels along $E_1$ until it returns
to the other side of $E$, travels parallel to $E$ (now on the opposite side)
and connects to the other end point of $\eta_1$. The result is isotopic to $K$, does not change the number of maxima of $h|_K$ and reduces $n$. By
induction on $n$, we may assume that $K \cap E = \emptyset$. Isotope $\alpha$ along $E$
until it lies just outside of $E_1$ except where it intersects $E_1$ exactly at
the point $y$. After a small tilt of $K$, we see that $x_1$ is now a local maximum of $h_{K\cap B_{\sigma}}$.$\square$

\medskip

By a symmetric argument, we conclude that if $E_1$ has a unique minimum and $\{x_1,x_2\} = K \cap E_1$ such that $h(x_2)<h(x_1)$, then, after an isotopy that fixes $S$ and preserves the number of maxima of $h_K$, we can assume that $h_{K\cap B_{\sigma}}$ has a local minimum at $x_2$.

Suppose $E_1$ has a unique maximum, $\{x_1,x_2\} = K \cap E_1$ such that $h(x_2)<h(x_1)$ and there exists an $A_{\varsigma}$ such that $A_{\varsigma}$ has a unique minimum and $h(\varsigma)>h(x_2)$. If $x_2$ is a maximum of $h_{B_{\sigma}\cap K}$ then $\sigma$ is a removable saddle. By \cite[Lemma 3.5]{Bl}, we can eliminate $\sigma$ while preserving the number of maxima of $h_K$. Hence, we can assume $x_2$ is a minimum of $h_{B_{\sigma}\cap K}$. Since $x_2$ is a minimum of $h_{B_{\sigma}\cap K}$, then either there is a maximum of $K$ inside $B_{\sigma}$ or $x_1$ is connected to $x_2$ via a monotone subarc of $K$. In the later case, examine a level disk in $B_{\sigma}$ with boundary in $E_1$ such that this disk is just below $x_2$. If $K$ meets this disk, then there is a maximum of $K$ in $B_{\sigma}$. If $K$ does not meet this disk then $S$ is compressible, a contradiction. Hence, we can assume that there is a maximum of $K$ in $B_{\sigma}$. As previously noted, $A_{\varsigma}$ contains a minimum of $K$.

Since $x_1$ is a local maximum of $h_{B_{\sigma}\cap K}$, we can horizontally shrink and vertically lower the portion of $B_{\sigma}$ above $x_2$ to within an arbitrarily small neighborhood of the level sphere containing $x_2$. Similarly, we can horizontally shrink and vertically raise $A_{\varsigma}$ to within an arbitrarily small neighborhood of the level sphere containing $\varsigma$. These isotopies do not change the number of critical points of $h_K$, however they do raise a minimum above a maximum, a contradiction to the assumption that bridge position coincides with thin position.

By a similar argument, we can eliminate the following possibilities.

\begin{enumerate}
\item $E_1$ has a unique minimum, $\{x_1,x_2\} = K \cap E_1$ such that $h(x_2)<h(x_1)$ and there exists an $A_{\varsigma}$ such that $A_{\varsigma}$ has a unique maximum and $h(\varsigma)<h(x_1)$.

\item $E_2$ has a unique maximum, $\{x_1,x_2\} = K \cap E_2$ such that $h(x_2)<h(x_1)$ and there exists an $A_{\varsigma}$ such that $A_{\varsigma}$ has a unique minimum and $h(\varsigma)>h(x_2)$.

\item $E_2$ has a unique minimum, $\{x_1,x_2\} = K \cap E_2$ such that $h(x_2)<h(x_1)$ and there exists an $A_{\varsigma}$ such that $A_{\varsigma}$ has a unique maximum and $h(\varsigma)<h(x_1)$.

\item $E_1$ has a unique minimum where $\{x_1,x_2\} = K \cap E_1$ such that $h(x_2)<h(x_1)$ and $E_2$ has a unique maximum where $\{y_1,y_2\} = K \cap E_2$ such that $h(y_2)<h(y_1)$ and $h(x_1)>h(y_2)$.

\end{enumerate}

Suppose $E_1$ has a unique minimum where $\{x_1,x_2\} = K \cap E_1$ such that $h(x_2)<h(x_1)$ and $E_2$ has a unique maximum where $\{y_1,y_2\} = K \cap E_2$ such that $h(y_2)<h(y_1)$. Let $\{\kappa_1,...,\kappa_k\}$ be the set of all saddles such that $A_{\kappa_i}$ contains a unique maximum and $\{\varsigma_1,..., \varsigma_s\}$ be the set of all saddle such that $A_{\varsigma_i}$ contains a unique minimum. By the above eliminations, $min(h(y_2),h(\kappa_1),...,h(\kappa_k))>max(h(x_1),h(\varsigma_1),..., h(\varsigma_s))$ and any level sphere with height strictly between these two values is a bridge sphere satisfying the conclusions of the theorem.

Suppose that $E_1$ has a unique maximum where $\{x_1,x_2\} = K \cap E_1$ such that $h(x_2)<h(x_1)$ and $E_2$ has a unique maximum where $\{y_1,y_2\} = K \cap E_2$ such that $h(y_2)<h(y_1)$. Let $\{\kappa_1,...,\kappa_k\}$ be the set of all saddles such that $A_{\kappa_i}$ contains a unique maximum and $\{\varsigma_1,..., \varsigma_s\}$ be the set of all saddle such that $A_{\varsigma_i}$ contains a unique minimum. By the above eliminations, $min(h(y_2), h(x_2),h(\kappa_1),...,h(\kappa_k))>max(h(\varsigma_1),..., h(\varsigma_s))$ and any level sphere with height strictly between these two values is a bridge sphere satisfying the conclusions of the theorem. The case when both $E_1$ and $E_2$ have unique minima follows similarly.
\end{proof}

\begin{rmk}\label{saddlenumber}In addition, we have shown in the above proof that the number of components in $B_1 \cap \Sigma$ is one more than the number of saddles of $S$ when $S$ is taut.\end{rmk}

\begin{theorem}\label{thm:bridge}
The bridge position and the thin position for $K$ are distinct.
\end{theorem}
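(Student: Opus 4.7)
Suppose for contradiction that bridge position and thin position for $K$ coincide, and let $\Sigma$ be a minimum bridge sphere for $K$. Since there are no thin spheres, the width formula of Scharlemann--Schultens gives $w(K)=2b(K)^2$, where $b(K)$ denotes the bridge number, and Property \ref{pro:width} forces $b(K)\le 8$. The plan is to produce c-disks for $\Sigma$ on opposite sides with disjoint boundaries and invoke Lemma \ref{cdiskbridgenotthin} for the contradiction.

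Apply Theorem \ref{thm:boundbridge} with $N=S^3$, $M=B_1$, $T=T_1$, with the ten-puncture bridge sphere $\Sigma_1$ of $(M,T)$ coming from the construction (so $d(\Sigma_1,T_1)\ge 25$), and with $\Sigma$ playing the role of the bridge sphere of $(N,K)$. Since $\chi(\Sigma_1)=-8<-3$ the third alternative is ruled out, and the second would give $25\le d(\Sigma_1,T_1)\le 2-\chi(\Sigma)=2b(K)$, hence $b(K)\ge 13$, contradicting $b(K)\le 8$. Thus the first alternative holds and a sequence of compressions and cut-compressions of $\Sigma\cap B_1$ inside $B_1$ reduces it to a surface parallel to $\Sigma_1$. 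This sequence must be nonempty, since otherwise $\Sigma\cap B_1$ itself would be a closed sphere parallel to $\Sigma_1$, forcing $\Sigma\subset B_1$ and $\Sigma\cap B_2=\emptyset$; reapplying the theorem with $(M,T)=(B_2,T_2)$ and $\Sigma_2$ would then leave only the second alternative, again yielding $b(K)\ge 13$. Hence at least one c-disk $C_1$ for $\Sigma$ is contained in $B_1$, and the symmetric argument with $B_2$ in place of $B_1$ produces a c-disk $C_2$ for $\Sigma$ contained in $B_2$. Because $B_1\cap B_2=\emptyset$, the boundaries $\partial C_1$ and $\partial C_2$ are disjoint in $\Sigma$.

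It remains to arrange that $C_1$ and $C_2$ lie on opposite sides of $\Sigma$. Property \ref{pro:symmetry} furnishes an involution $\iota$ of $S^3$ that fixes $S_3$ setwise, preserves $K$, and swaps $B_1$ with $B_2$. Choosing the minimum bridge sphere $\Sigma$ to be $\iota$-equivariant, the involution $\iota$ exchanges the two sides of $\Sigma$, so $\iota(C_1)$ is a c-disk for $\Sigma$ in $B_2$ lying on the opposite side of $\Sigma$ from $C_1$, and we take $C_2=\iota(C_1)$. Lemma \ref{cdiskbridgenotthin} then applies and yields the desired contradiction, establishing that bridge and thin positions for $K$ must differ.

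The principal obstacle is the last step, ensuring that $C_1$ and $C_2$ can be chosen on opposite sides of $\Sigma$; it is handled by exploiting the $\mathbb{Z}/2$-symmetry of $K$ with respect to $S_3$ established in Property \ref{pro:symmetry}, which allows the involution to translate a c-disk found in $B_1$ into a c-disk in $B_2$ on the opposite side of the bridge sphere.
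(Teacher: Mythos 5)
Your strategy --- produce c-disks on opposite sides of a minimal bridge sphere $\Sigma$ with disjoint boundaries and invoke Lemma~\ref{cdiskbridgenotthin} --- is the same lemma the paper ultimately uses, but the way you try to manufacture those c-disks contains a genuine gap that your own last paragraph flags: the ``opposite sides'' requirement is not actually handled.

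The central problem is the claim that one can ``choose the minimum bridge sphere $\Sigma$ to be $\iota$-equivariant.'' Nothing in the paper, and nothing you cite, supports this. Property~\ref{pro:symmetry} only asserts that $K$ is symmetric with respect to $S_3$; it does not furnish an equivariant thin/bridge position. Equivariantly positioning a Heegaard or bridge surface with respect to a finite group action is a deep and delicate matter, and in general a minimal-complexity surface cannot simply be assumed invariant. Moreover, even if $\Sigma$ were $\iota$-invariant, $\iota$ need not exchange $H^+$ and $H^-$: an involution preserving a separating sphere can preserve each side. So there is no reason $\iota(C_1)$ lands on the opposite side of $\Sigma$ from $C_1$. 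This is precisely the step where the argument breaks down, and it is not a repairable technicality --- without control of the sides, Lemma~\ref{cdiskbridgenotthin} does not apply.

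A secondary issue: the first alternative of Theorem~\ref{thm:boundbridge} supplies a compressing or cut-disk for the \emph{surface} $\Sigma\cap B_1$ inside $B_1$, i.e.\ a disk whose boundary is essential in $(\Sigma\cap B_1)-K$. You then treat it as a c-disk for $\Sigma$ itself, but a curve that is essential in $\Sigma\cap B_1-K$ can still bound an unpunctured (or once-punctured) disk in $\Sigma-K$ that runs outside $B_1$. Ruling this out requires an argument that you have not given. Also, after the first compression the surface is no longer a subsurface of $\Sigma$, so only the initial disk in the sequence can possibly serve as a c-disk for $\Sigma$, and your phrasing glosses over this.

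The paper takes a quite different route: rather than applying Theorem~\ref{thm:boundbridge} to $(S^3,K)$ against $(B_1,T_1)$, it invokes Theorem~\ref{spherenice} (built on Blair's worm-like result, Theorem~\ref{sphereworm-like}) to normalize the intersection $S_1\cap\Sigma$ into disks on one side and annuli/twice-punctured disks on the other. It then analyzes the number $n$ of intersection curves. When $n\ge 2$ it locates c-disks on opposite sides with disjoint boundaries by a boundary-compression argument on $S_1\cap H^{\pm}$ (this is where Lemma~\ref{cdiskbridgenotthin} is actually used), and when $n=1$ it isotopes $S_1$ to be level at the cost of at most one extra maximum and counts critical points directly, using Property~\ref{pro:14puncturedbridge} and Property~\ref{pro:distance}. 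The paper's normalization of $S_1\cap\Sigma$ is precisely the machinery that replaces your unjustified equivariance step.
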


\begin{proof}
Assume, towards a contradiction, that bridge position and thin position for $K$ coincide. Recall that $w(K) \leq 134$, so it is enough to show that the bridge number of $K$ is at least 9, or equivalently, that $K$ intersects any bridge sphere in at least 18 points. Let $\Sigma$ be a minimal bridge sphere for $K$ and consider how $S_1$ intersects $\Sigma$. By Theorem \ref{spherenice}, we may assume that one of the 3-balls that $S_1$ bounds intersects $\Sigma$ in a collection of disks and $S_1-\Sigma$ is a collection of annuli and 2-punctured disks. Suppose the number of intersection curves $|S_1 \cap \Sigma|$ is minimal subject to these constraints. Let $c_1,...,c_n$ be the curves of $S_1 \cap \Sigma$ and let $D_1,...,D_n$ be the disjoint disks $c_1,..,c_n$ bound in $\Sigma$. As $S_1$ is c-incompressible each $D_i$ has at least 2 punctures.

\medskip

\noindent{\bf Case 1:} $n \geq 2$.

\medskip

\textbf{Claim 1:} Either $\Sigma - \cup\{D_i\}$ is c-compressible both above and below or bridge position for $K$ is not thin position.

{\em Proof:} The bridge sphere $\Sigma$ separates $S^3$ into two 3-balls $H_1$ and $H_2$. The planar surface $S_1 \cap H_1$ consists of a, possibly empty, collection of c-incompressible annuli, $A_1$, ...,$A_n$ and a, possibly empty, collection of at most two c-incompressible 2-punctured disks $E_1$ and $E_2$. We will use the convention that $\partial E_1=\partial D_1$ and $\partial E_2=\partial D_n$. Since $n \geq 2$, $S_1 \cap H_i$ consists of at least one c-incompressible annulus or $S_1 \cap H_i = E_1 \cup E_2$.  Since every properly-embedded meridional surface in $H_1-K$ with non-empty boundary is boundary compressible in $H_1-K$, we can choose $F$ to be a boundary compressing disk for $S_1 \cap H_1$ in $H_1$. In particular, we can assume that $F \cap S_1$ is a single essential arc, $\alpha$, in $S_1 \cap H_1$. Let $\partial F=\alpha \cup \beta$ where $\beta$ is contained in $\Sigma$. In particular, $\beta$ is disjoint from $S_1$ except in its boundary. If $\alpha$ is contained in some $A_i$ and $\beta$ is contained in $\Sigma - \cup\{D_i\}$, then boundary compressing $A_i$ along $F$ produces a compressing disk for $\Sigma - \cup\{D_i\}$ above $\Sigma$. If $\alpha$ is contained in some $A_i$ and $\beta$ is contained in some $D_i$, then $\alpha$ is not essential in $A_i$, a contradiction. If $\alpha$ is contained in some $E_i$ and $\beta$ is contained in $\Sigma - \cup\{D_i\}$, then boundary compressing $E_i$ along $F$ produces a cut-disk for $\Sigma - \cup\{D_i\}$ above $\Sigma$. If $\alpha$ is contained in some $E_i$ and $\beta$ is contained in $\Sigma - \cup\{D_i\}$, then boundary compressing $E_i$ along $F$ produces a cut-disk for $D_1$ or $D_n$ above $\Sigma$. We can conclude that one of $\Sigma - \cup\{D_i\}$, $D_1$ or $D_n$ is c-compressible above. Similarly, we conclude one of $\Sigma - \cup\{D_i\}$, $D_1$ or $D_n$ is c-compressible below. In particular, if $D_1$ is c-compressible above we can always find a c-disk below $\Sigma$ that is disjoint from $D_1$ since $n\geq 2$. By examining all remaining possibilities for c-disks above and below $\Sigma$ and noting that $D_1$ and $D_n$ are distinct, we concluded that either $\Sigma - \cup\{D_i\}$ is c-compressible both above and below or $\Sigma$ has c-disks that satisfy the hypotheses of Lemma \ref{cdiskbridgenotthin}. Hence, either $\Sigma - \cup\{D_i\}$ is c-compressible both above and below or bridge position of $K$ is not thin position. $\square$

\medskip

Suppose some $D_j$ is c-compressible. As $S_1$ is c-incompressible, we may assume that the c-disk is disjoint from it. By taking an innermost curve of intersection of this c-disk with all other disks $D_i$, we can find a disk $D_k$ that is c-compressible in the complement of the others. Let $\Delta$ be this c-disk. Without loss of generality, assume $\Delta$ is above $\Sigma$. By Claim 1, either $\Sigma - \cup\{D_i\}$ is c-compressible both above and below or bridge position of $K$ is not thin position. However, we have assumed that bridge position coincides with thin position, so we may assume $\Sigma - \cup\{D_i\}$ is c-compressible both above and below. Therefore, $\Delta$ and a c-disk for $\Sigma - \cup\{D_i\}$ below $\Sigma$ give a pair of c-disks for $\Sigma$ on opposite sides with disjoint boundaries. By Lemma \ref{cdiskbridgenotthin}, bridge position and thin position for $K$ are distinct, a contradiction.

By the previous argument, we may assume all $D_i$ are c-incompressible. If $B_1$ is contained on the same side of $S_1$ as the $D_i$, then each $D_i$ must have at least 20 punctures, by Theorem \ref{thm:boundess}. It follows that $\Sigma$ has at least 40 punctures so the bridge number of $K$ is at least 20.

It remains to consider the case when all $D_i$ are c-incompressible and are contained in $S^3-B_1$. As $S_2$ is essential, it must intersect $\Sigma$ and, therefore, it must intersect some $D_i$, say $D_1$. Since $S_2$ is c-incompressible, then, after isotoping $|S_2\cap D_1|$ to be minimal, an innermost curve of $S_2\cap D_1$ in $D_1$ must bound a subdisk $\Delta_1$ of $D_1$ containing at least 2 punctures. This subdisk must be c-incompressible as a c-disk for it would be a c-disk for $D_1$. If $\Delta_1$ is contained in $B_2$, it must have at least 20 punctures, by Theorem \ref{thm:boundess}, so $\Sigma$ has at least 22 punctures and the bridge number of $K$ is at least 11. We conclude that either bridge and thin position for $K$ do not coincide or all innermost curves of $D_1 \cap S_2$ bound disks in $D_1$ that have at least 2 punctures and are outside of $B_2$. If there are at least 8 such innermost curves, then $D_1$ has at least 16 punctures. As each of $D_2,...,D_n$ has at least 2 punctures and $n \geq 2$, it follows that $\Sigma$ has at least 18 punctures as desired. If there are fewer than 8 innermost curves, then a second innermost curve cobounds with some of the innermost curves a planar surface $F \subset B_2\cap D_1$ with at most 8 boundary components. A c-disk for $F$ would also be a c-disk for $D_1$ so $F$ is c-incompressible. Let $b$ be the number of boundary components of $F$ and $p$ be the number of points of intersection between $F$ and $K$. By Theorem \ref{thm:boundess}, it follows that $2-b-p = \chi(F_K)\leq -23$. However, $D_1$ meets $K$ in at least $2(b-1)$ points outside of $F$. Therefore, $D_1$ has at least $25-b+2(b-1)=23+b$ punctures. Since $b\geq 2$ and $n\geq 2$, we conclude that $\Sigma$ has at least 27 punctures in total and the bridge number of $K$ is at least 14.

\medskip

\noindent{\bf Case 2:} $n = 1$.

\textbf{Claim 2:} If $n=1$, there is an isotopy taking $S_1$ to a level sphere and adding at most one additional maximum to $h_K$.

{\em Proof:} Since $n=1$, $S_1$ is a standard round 2-sphere with no saddles, by Remark \ref{saddlenumber}. Label each point of $\{x_1,x_2,x_3,x_4\}=K\cap S_1$ with an $m$ if it is a local minimum of $h_{K\cap B_1}$ and label it with an $M$ if it is a local maximum of $h_{K\cap B_1}$. If all points of $K\cap S_1$ receive a common label, then $S_1$ is isotopic to a level sphere via an isotopy that preserves the number of maxima of $h_K$, as in Figure \ref{fig:S1level.eps}.

\begin{figure}[h]
\centering \scalebox{.5}{\includegraphics{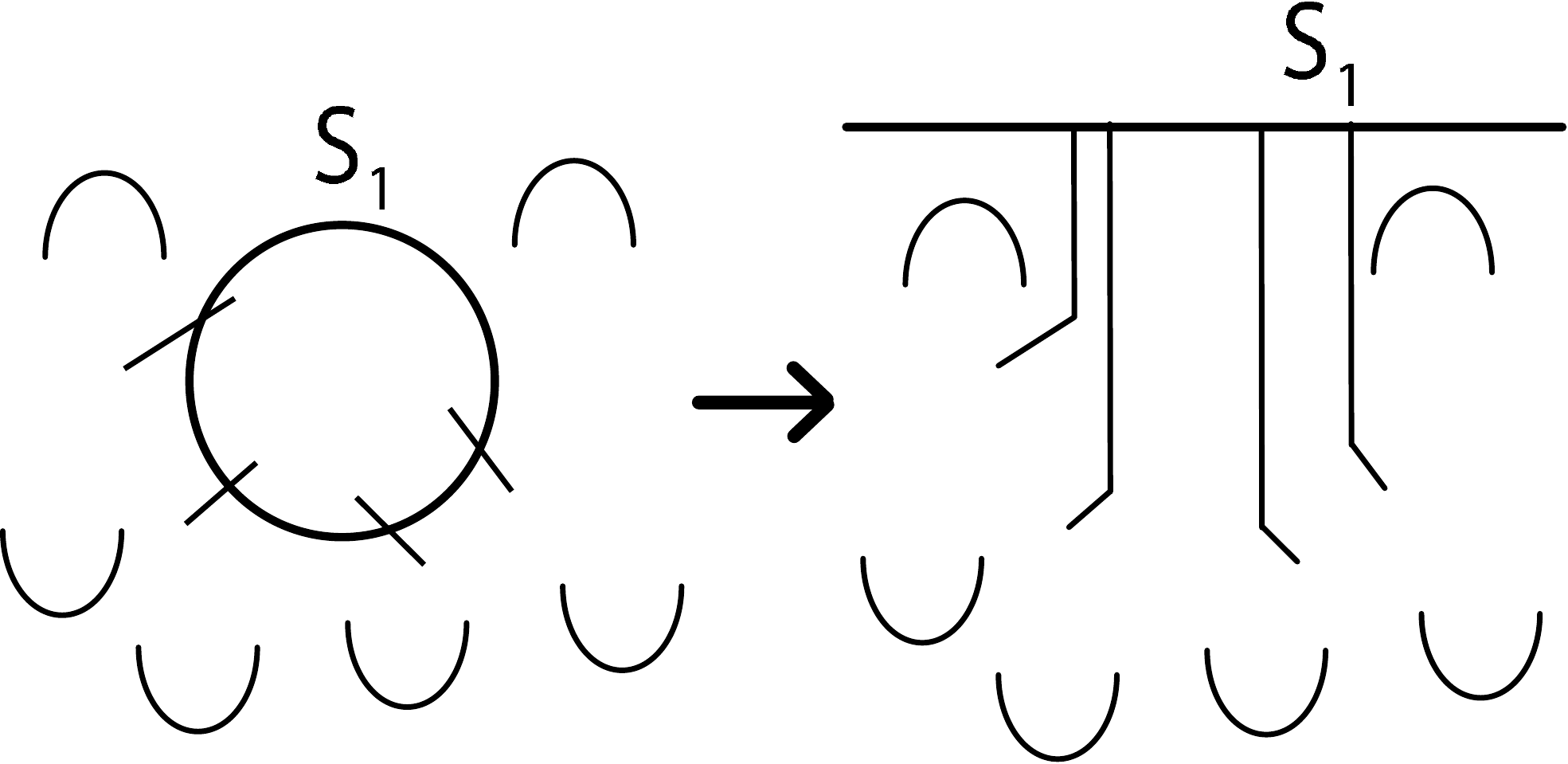}}
\caption{}\label{fig:S1level.eps}
\end{figure}

Order the points in $K \cap S_1$ in terms of increasing height so that $h(x_1)<h(x_2)<h(x_3)<h(x_4)$.

By Claim 1 of Theorem \ref{spherenice}, there is an isotopy of $K$ fixing $S_1$ and the number of maxima of $h_K$ so that after this isotopy $x_4$ receives a label of $M$. By a symmetric argument, we can assume that $x_1$ receives the label $m$.

Suppose $x_1$ and at least one additional point, say $x_2$, are labeled $m$. Poke a neighborhood of $x_4$ in $S_1$ along $K$ toward and just past the nearest maximum of $K$ causing the highest point of $K \cap S_1$ to now be labeled $m$. See Figure \ref{fig:pokeS1.eps}. After this isotopy, $S_1$ contains a single inessential saddle which can be removed as in the proof of Lemma \ref{inesssaddle}. Hence, we have isotoped $S_1$ to be a standard round 2-sphere with each of $x_1$, $x_2$ and $x_4$ receiving the label $m$ without introducing any new maxima to $h_K$. If $x_3$ is labeled $m$, then $S_1$ is isotopic to a level sphere via an isotopy that preserves the number of maxima of $h_K$, as in Figure \ref{fig:S1level.eps}. If $x_3$ is labeled $M$, then we can preform an isotopy of $K$ supported in a neighborhood of $x_3$ that introduces exactly 1 additional maximum to $h_K$ and results in $x_3$ receiving a label of $m$. Now that all points in $K \cap S_1$ receive the same label, $S_1$ is isotopic to a level sphere via an isotopy that preserves the number of maxima of $h_K$, as in Figure \ref{fig:S1level.eps}. By a symmetric argument, if $x_4$ and at least one additional point receive a label of $M$, then there is an isotopy taking $S_1$ to a level sphere and adding at most one additional maximum to $h_K$. $\square$

\begin{figure}[h]
\centering \scalebox{.5}{\includegraphics{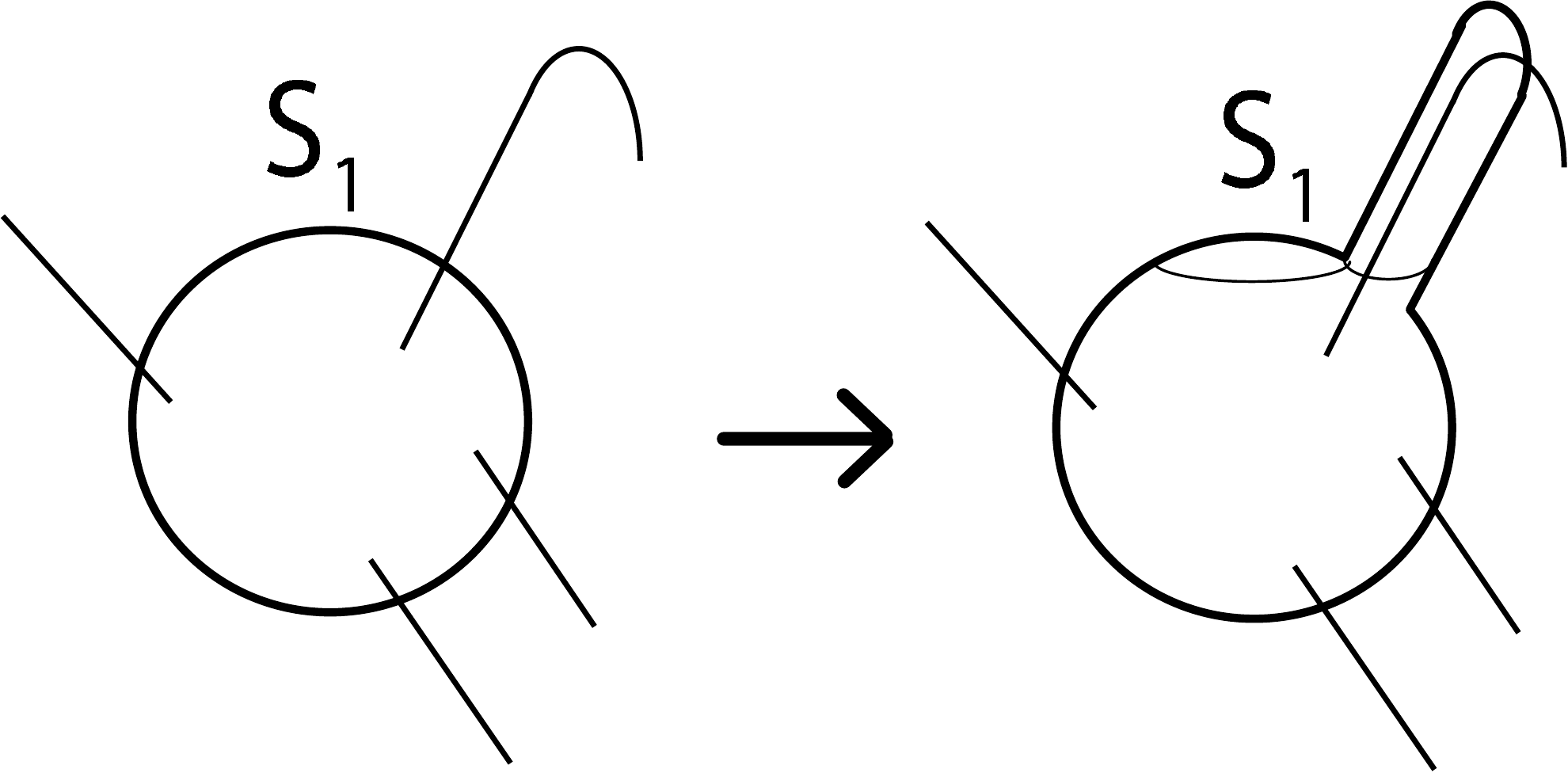}}
\caption{}\label{fig:pokeS1.eps}
\end{figure}

Use Claim 2 to isotope $S_1$ to be level at the cost of introducing at most one additional maximum to $h_K$. Suppose $B_1$ is contained below $S_1$ and the complement of $B_1$ is contained above $S_1$. By Property \ref{pro:14puncturedbridge}, the tangle below $S_1$ has at least 7 minima and therefore at least 5 maxima. By Proposition \ref{pro:distance}, the tangle above $S_1$ has at least 5 maxima and, therefore, in this position $K$ has at least 10 maxima. As at most one maximum was added, $K$ has bridge number at least 9.

\end{proof}

\section{Some Useful Lemmas}\label{sec:lemmas}

Recall that $K'$ denotes an embedding of $K$ that minimizes width. In this section, we establish additional restrictions on thin and thick spheres for $K'$.

\begin{lemma} \label{518}
If $G$ is an essential, 6-punctured, thin sphere for $K'$, then there are thick spheres of width 10 below and above $G$. These spheres are not necessarily adjacent to $G$.

\end{lemma}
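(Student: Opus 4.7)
The plan is as follows. By Proposition~\ref{prop:sixpunctured}, since $G$ is an essential $6$-punctured meridional sphere, it arises by tubing two non-parallel essential $4$-punctured spheres along a strand of $K$, and moreover its cut-disk, $B_1$, and $B_2$ all lie on the same side of $G$. Without loss of generality this common side is below $G$; write $B^+$ for the ball above $G$. By Property~\ref{pro:6punctured}, any bridge sphere of $(B^+, K'\cap B^+)$ has at least $10$ punctures. I will establish the existence of a thick sphere of width at least $10$ above $G$; the argument below $G$ is symmetric. Enumerate the thin spheres above $G$ in increasing height as $G = G_0 < G_1 < \cdots < G_n$, with intervening and topmost thick spheres $\Sigma_1, \ldots, \Sigma_{n+1}$, and argue by contradiction, assuming $w(\Sigma_i) \leq 8$ for every $i$. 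Each thin width then satisfies $w(G_i) \leq 6$, and the key observation is that above $G_n$ (with $G_n = G$ when $n = 0$) all critical points of $K'$ in $B^+$ are minima followed by maxima, so $\Sigma_{n+1}$ is itself a bridge sphere for the tangle in the ball above $G_n$.

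By Theorems~\ref{thm:thinincomp} and~\ref{thm:sixincomp}, combined with primeness of $K$ (Corollary~\ref{cor:prime}) and Proposition~\ref{prop:notwicepunctured} (which precludes essential twice-punctured spheres), the minimum width among thin spheres of $K'$ is at least $4$. I would now split into cases based on $w(G_n) \in \{4, 6\}$. If $w(G_n) = 6$, then either $G_n$ itself is of minimum width or the minimum is $4$ and $w(G_n) = w_{\min} + 2$; Theorems~\ref{thm:thinincomp} and~\ref{thm:sixincomp} respectively yield that $G_n$ is incompressible. By Proposition~\ref{prop:sixpunctured}, the cut-disk of $G_n$ together with $B_1$ and $B_2$ lie on a single side of $G_n$, and since $B_1, B_2$ lie below $G$ and hence below $G_n$, this side is below $G_n$. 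Property~\ref{pro:6punctured} then forces $\Sigma_{n+1}$ to have at least $10$ punctures, contradicting the assumption $w(\Sigma_{n+1}) \leq 8$.

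In the case $w(G_n) = 4$, I would first verify that $G_n$ is c-essential: a c-disk above $G_n$ is impossible, since by Theorem~\ref{thm:not compressible on one side} it would force the existence of a thin sphere above $G_n$, and a c-disk below $G_n$ (which, by Theorem~\ref{thm:diskisvertical}, may be taken vertical and supported between $G_n$ and $G_{n-1}$) would produce a thin sphere of width strictly less than $4$, contradicting the minimum-width bound. By Proposition~\ref{prop:noc-essential}, $G_n$ is parallel to one of $S_1$, $S_2$, or $S_3$. If $G_n \sim S_3$, Property~\ref{pro:12puncturedbridge} forces $\Sigma_{n+1}$ to have at least $12$ punctures. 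If $G_n \sim S_1$ (the case $G_n \sim S_2$ is symmetric), Theorem~\ref{thm:boundbridge} applied with $\Sigma_{n+1}$ as $\Sigma'$ and the high-distance bridge sphere of $(B'', T'')$ furnished by Property~\ref{pro:12puncturedbridge} together with the construction of $\mathcal{T}$ in Section~\ref{sec:construction} as $\Sigma$ again forces $\Sigma_{n+1}$ to have at least $12$ punctures. Each subcase contradicts $w(\Sigma_{n+1}) \leq 8$.

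The main obstacle is the final step in the $G_n \sim S_1$ or $G_n \sim S_2$ subcase, where the distance hypothesis underlying Theorem~\ref{thm:boundbridge} must be extracted from the explicit construction of $\mathcal{T}$ and one must verify that the parallelism alternative in Theorem~\ref{thm:boundbridge} does not short-circuit the puncture bound. Otherwise the case analysis is clean once the minimum-width observation is in hand.
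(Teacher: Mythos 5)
Your overall architecture matches the paper's: look at the highest thin sphere $G_n$ above $G$ (and symmetrically the lowest one below), split on $w(G_n)\in\{4,6\}$, and in each case recognize the tangle above $G_n$ as one whose bridge spheres are known to have $\geq 10$ punctures. However, two steps in your execution have genuine gaps.

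First, the verification that $G_n$ is c-essential in the $w(G_n)=4$ case does not work as written. You wish to rule out a cut-disk $D^c$ for $G_n$ lying below $G_n$ by invoking Theorem~\ref{thm:diskisvertical} and then Theorem~\ref{thm:not compressible on one side}. But Theorem~\ref{thm:diskisvertical} requires the cut-disk to already be disjoint from the adjacent thin sphere $G_{n-1}$, and Corollary~\ref{cor:thinabove} explicitly permits the alternative $D^c\cap G_{n-1}\neq\emptyset$ (precisely because $w(G_{n-1})\geq 4 = w(G_n)$, the ``width drops'' alternative is unavailable). So you cannot assume $D^c$ is supported between $G_{n-1}$ and $G_n$. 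The clean route, and the one implicit in the paper, is structural rather than positional: $G_n$ has width $4$ which is the minimum possible thin width, so $G_n$ is incompressible by Theorem~\ref{thm:thinincomp}; being a non-boundary-parallel incompressible $4$-punctured sphere it is essential, hence c-essential by Corollary~\ref{cor:fourpuncturedessential} (a consequence of primeness), and then Proposition~\ref{prop:noc-essential} gives $G_n\in\{S_1,S_2,S_3\}$.

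Second, the declared ``without loss of generality'' that $B_1$, $B_2$, and the cut-disk of $G$ lie below $G$ breaks the symmetry you then appeal to. After that choice, the ball below $G$ contains $B_1\cup B_2$ while the ball above does not, so the ``below $G$'' argument is not a mirror image of the one you give: when you reach the $w(G_m')=6$ subcase for the lowest thin sphere $G_m'$ below $G$, you need $B_1,B_2$ to lie on the \emph{opposite} side of $G_m'$ from the region you are controlling, and this does not follow from the WLOG. The paper sidesteps this entirely by never fixing a side: for the extreme thin sphere $P$ it derives the position of $B_1,B_2$ relative to $P$ from Corollary~\ref{cor:thinabove} (no thin spheres beyond $P$ force $P$ to be c-incompressible on the outer side) together with Proposition~\ref{prop:sixpunctured}. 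You should replace the WLOG with that derivation so that the above/below arguments are genuinely interchangeable. Finally, in the $G_n\sim S_1$ subcase the cited Property~\ref{pro:12puncturedbridge} concerns the tangles cut off by $S_3$, which is the wrong tangle; the bound you want comes from Property~\ref{pro:distance} (if $B_1$ lies above $G_n$) or Property~\ref{pro:14puncturedbridge} (if $B_1$ lies below), and invoking Theorem~\ref{thm:boundbridge} from scratch is unnecessary.
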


\begin{proof}

We will show that there is a thick sphere of width 10 above $G$. If there is a thin sphere of width 8 or more above $G$, the result is clear so suppose all thin spheres above $G$, if there are any, have width 4 or 6. Let $P$ be the highest thin sphere above $G$, possibly $P=G$. If $w(P)=4$, then $P$ must be one of $S_1$, $S_2$ or $S_3$.  The result then follows by Property \ref{pro:distance} or Property \ref{pro:12puncturedbridge}. If $w(P)=6$, by Corollary \ref{cor:thinabove}, $P$ is c-incompressible above and, therefore, by Proposition \ref{prop:sixpunctured}, $P$ is cut-compressible below. It follows that both $B_1$ and $B_2$ are below $P$. In this case, the thick sphere above $P$ has width at least 10 by Property \ref{pro:6punctured}.

\end{proof}
\begin{lemma} \label{lem:useful}

Suppose $P$ and $P'$ are two adjacent thin spheres for $K'$ so that $4 \leq w(P), w(P')\leq 10$. Suppose $D^c$ is a c-disk for $P$ lying between them so that $\bdd D^c$ bounds a three or four punctured disk $\Delta$ in $P$. Then the sphere $F=D^c \cup \Delta$ is essential. Furthermore, if $\Sigma$ is the thick sphere between $P$ and $P'$ and $F$ does not separate $P$ and $P'$ then:

\begin{enumerate}
\item if $w(P)\geq 6$ and $w(P')\geq 6$, then $w(\Sigma)\geq 14$,
\item if $w(P)\geq 6$ and $w(P')=4$, then $w(\Sigma)\geq 12$,
\item if $w(P)=8$ and $w(P')=4$ and $D^c$ is a compressing disk, then $w(\Sigma)\geq 14$.
\end{enumerate}
\end{lemma}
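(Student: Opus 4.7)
First observe that $F$ must be a $4$-punctured sphere: since $F \cap K$ has even cardinality, the pair $(|K \cap \Delta|, |K \cap D^c|)$ is forced to be either $(3,1)$ in the cut-disk case or $(4,0)$ in the compressing case, giving $|F \cap K| = 4$ in both situations.

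To show $F$ is essential, I would argue that if $F$ were compressible or boundary parallel, then primeness of $K$ (Corollary \ref{cor:prime}) would force one of the two balls cut off by $F$ to bound a trivial $2$-string tangle. An ambient isotopy supported on the trivial side could then be used to reduce the number of critical points of $h|_K$ on that side and strictly reduce $w(K')$, contradicting the thin position of $K'$. Hence $F$ is essential, and by Corollary \ref{cor:fourpuncturedessential} together with Proposition \ref{prop:noc-essential}, $F$ is parallel to one of $S_1$, $S_2$, or $S_3$. Since $K \cap B$ consists of exactly two arcs (four endpoints on $F$), the ball $B$ must be identified with the ``small'' side, so $(B, K \cap B)$ is isotopic to $(B_1, T_1)$, $(B_2, T_2)$, or a side of $S_3$; in particular, any bridge sphere for $(B, K \cap B)$ has at least $10$ punctures by Property \ref{pro:distance} or \ref{pro:12puncturedbridge}.

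Now assume $F$ does not separate $P$ from $P'$. Make $D^c$ vertical via Theorem \ref{thm:diskisvertical}. By Corollary \ref{cor:altthin}, every critical point of $h|_K$ between $P$ and $P'$ lies on the same side of $D^c$. If that side were the exterior of $B$, every arc of $K \cap B$ would have to be monotone; but this is impossible because the endpoint counts $|K \cap \Delta|$ and $|K \cap D^c|$ differ (they are $3$ and $1$, or $4$ and $0$), admitting no monotone pairing of endpoints. Thus all critical points of $h|_K$ between $P$ and $P'$ lie in $B$, and outside $B$ in the thick region the arcs of $K$ are monotone.

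Apply Lemma \ref{lem:newbridge} to produce a bridge sphere $\Sigma_B = (\Sigma \cap B) \cup C$ for $(B, K \cap B)$, where $C \subset D^c$ meets $K$ in at most one point; hence $|\Sigma \cap B \cap K| \geq w(\Sigma_B) - 1 \geq 9$. Outside $B$ the thick region contains $w(P')$ monotone arcs. A count of endpoints shows that at least $w(P') - 1$ of these arcs cross $\Sigma$ in the cut-disk case, and all $w(P')$ of them cross $\Sigma$ in the compressing case (where additionally $C$ has no puncture, so $|\Sigma \cap B \cap K| = w(\Sigma_B) \geq 10$). Combining, $w(\Sigma) \geq 8 + w(P')$ in the cut-disk case and $w(\Sigma) \geq 10 + w(P')$ in the compressing case. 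Plugging in the hypotheses: Case (1) gives $w(\Sigma) \geq 14$, Case (2) gives $w(\Sigma) \geq 12$, and Case (3) gives $w(\Sigma) \geq 14$. The main obstacle is establishing that $F$ is essential---specifically, turning the topological triviality of an inessential $F$ into a concrete width-reducing isotopy of $K$ in both the compressible and boundary-parallel subcases.
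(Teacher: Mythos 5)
The overall scaffolding is the same as the paper's: show $F$ is essential, hence (via Corollary \ref{cor:fourpuncturedessential} and Proposition \ref{prop:noc-essential}) parallel to one of $S_1,S_2,S_3$; make $D^c$ vertical via Theorem \ref{thm:diskisvertical}; apply Lemma \ref{lem:newbridge} so that $\Sigma\cap B$ plus a sub-disk of $D^c$ is a bridge sphere for $(B,K'\cap B)$ with at least $10$ punctures; then count the remaining intersections of $\Sigma$ with $K'$ outside $B$. You also correctly note that only $9$ (resp.\ $10$) of those bridge-sphere punctures are guaranteed to land on $\Sigma\cap B$ in the cut (resp.\ compressing) case.

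Where your argument diverges is the counting outside $B$. The paper does this directly: it lists the strands of $K'$ between $P$ and $P'$ disjoint from $B$, observes that a strand with one endpoint on each thin sphere meets $\Sigma$ at least once and a strand with both endpoints on the same thin sphere meets $\Sigma$ at least twice, and then checks the three cases. You instead invoke Corollary \ref{cor:altthin} to place all critical points of $h|_{K'}$ between $P$ and $P'$ inside $B$ (ruling out the other side by the endpoint mismatch on $\partial B$), which makes the arcs outside $B$ monotone and lets you count crossings of $\Sigma$ cleanly. Both routes give the stated bounds; yours is a bit slicker but imports an extra corollary and its consequences (in particular it silently forces $w(P')=w(P)-2$ in the cut case and $w(P')=w(P)-4$ in the compressing case, which quietly makes some of the stated parameter ranges vacuous). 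The paper's raw count avoids any reliance on that.

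The genuine gap, which you yourself flag, is the essentiality of $F$. The statement that primeness of $K$ ``would force one of the two balls cut off by $F$ to bound a trivial $2$-string tangle'' is not what primeness gives: primeness only controls $2$-punctured spheres, and a compressible $4$-punctured sphere can still bound a nontrivial $2$-string tangle (the two unknotted arcs can be linked). The paper's argument is more local: take a compressing disk $E$ for $F$, arrange $\partial E\subset\Delta$ by an outermost-arc argument, note $\partial E$ bounds a $2$-punctured disk $\delta\subset\Delta$, so $\delta\cup E$ is a $2$-punctured sphere; primeness lets the \emph{single} enclosed arc of $K'$ be isotoped into $\delta\subset P$, and pushing it slightly past $P$ is the concrete width-reducing isotopy. (Also, for a meridional $4$-punctured sphere, boundary-parallelism is impossible for Euler-characteristic reasons, so incompressibility alone suffices; you don't need a separate ``boundary-parallel subcase.'')
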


\begin{proof}

Suppose $P$ and $P'$ are two adjacent thin spheres for $K'$ and $D^c$ is a c-disk for $P$ lying between them so that $\bdd D^c$ bounds a three or four punctured disk $\Delta$ in $P$. Consider the four punctured sphere $F=\Delta \cup D^c$ and suppose $F$ has a compressing disk $E$. By using an outermost arc argument we may assume that $\bdd E \subset \Delta$. As $F$ has only 4 punctures, $\bdd E$ bounds a twice punctured disk $\delta \subset \Delta$. Then $\delta \cup E$ is a twice punctured sphere. As $K$ is prime, the strand of the knot with both endpoints in $\delta$ can be isotoped to lie in $\delta$. It follows that this strand can be isotoped to lie just past the thin sphere $P$. This isotopy either eliminates a maximum and an adjacent minimum or slides a maximum below a minimum thus decreasing $w(K')$. As $K'$ is in thin position, this is a contradiction.

By Theorem \ref{thm:diskisvertical}, we may assume that $D^c$ is vertical. Let $B$ be the ball bounded by $F$ that is disjoint from $P'$ and let $E=B\cap \Sigma$. By Lemma \ref{lem:newbridge} $E$ together with the possibly once punctured disk $
\bdd E$ bounds in $D^c$ is a bridge sphere for the tangle $K'\cap B$.
As $F$ is an incompressible 4 punctured sphere, it must be parallel to one of $S_1, S_2$ or $S_3$. By one of Property \ref{pro:distance} or Property \ref{pro:6punctured}, the width of any bridge sphere for  $K' \cap B$ is at least 10 and therefore $E$ has at least 9 punctures. In addition, let $\tau_1,...\tau_n$ be the strands of $K$ between $P$ and $P'$ that are disjoint from $B$. Then $|\Sigma \cap \tau_i|\geq 1$ if $\tau_i$ has one of its endpoints in $P$ and one in $P'$ and $|\Sigma \cap \tau_i|\geq 2$ if $\tau_i$ has both of its endpoints on the same sphere. It is easy to check that the conclusions of the lemma are satisfied in all three cases, see Figure \ref{fig:useful}.

\begin{figure}[h]
\centering \scalebox{.4}{\includegraphics{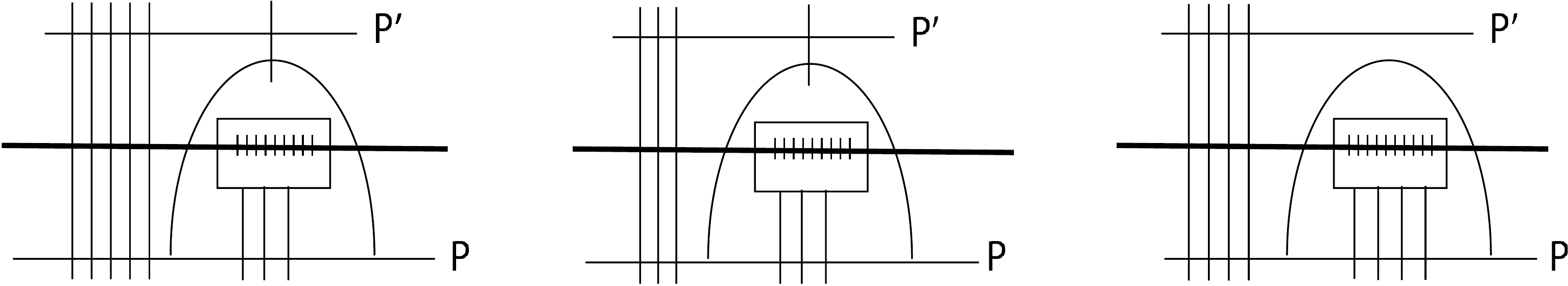}}
\caption{}\label{fig:useful}
\end{figure}
 \end{proof}

 \begin{rmk}
 Note that if $P$ is a six-punctured sphere with a cut-disk disjoint from all other thin spheres or if $P$ is an eight-punctured sphere with a compressing disk disjoint from all other thin spheres, then we can always choose $\Delta$ so that the sphere $F$ which is the union of the c-disk and $\Delta$ is 4-punctured and does not separate $P$ and its adjacent thin sphere.
 \end{rmk}

\begin{lemma}\label{lem:final}

Suppose $P$ and $P'$ are two adjacent thin spheres for $K'$ with $P'$ above $P$ and $P$ is incompressible but has a cut-disk $D^c$ above it and disjoint from $P'$. Suppose furthermore that $\bdd D^c$ bounds a 5 punctured disk in $P$ and the 6 punctured sphere $G$ which is the union of $D^c$ and this disk does not separate $P$ and $P'$. Then the thick sphere $\Sigma$ between $P$ and $P'$ has width at least $w(P)+4$.

In the special case where $G=S_1\sharp_{\alpha}S_2$ and $\alpha$ is a strand that intersects $B_T$ and $B_1$ is not contained between $P$ and $P'$, then $\Sigma$ has at least $w(P)+8$ punctures.

\end{lemma}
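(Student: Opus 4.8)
The plan is to follow the proof of Lemma \ref{lem:useful} closely, with the six-punctured sphere $G$ in the role played there by the four-punctured sphere $F$, using Proposition \ref{prop:sixpunctured} to identify $G$ and Property \ref{pro:6punctured} to bound the resulting bridge sphere. First I would show $G$ is incompressible. By Theorem \ref{thm:diskisvertical} we may assume, after an isotopy of $K'$ supported between $P$ and $P'$ that does not change $w(K')$, that $D^c$ is vertical; write $\Delta\subset P$ for the five-punctured disk bounded by $\partial D^c$, so $G=D^c\cup\Delta$. If $E$ were a compressing disk for $G$, an outermost-arc argument (using that $D^c$ meets $K$ once) lets us take $E\cap D^c=\emptyset$, so $\partial E\subset\Delta$ (it cannot be $\partial D^c$, which is inessential on $G_K$), and innermost-disk isotopies, using that $P$ is incompressible to discard circles of $E\cap P$ essential on $P_K$, let us further arrange $E\cap P=\partial E$, so $E$ lies on one side of $P$. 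Now $\partial E$ bounds a subdisk $\delta$ of $\Delta$ with $|\delta\cap K|=k$; since $E\cup\delta$ is an embedded $2$-sphere $k$ is even, and since $\partial E$ is essential on $G_K$ (the side carrying the puncture of $D^c$ has $6-k\ge2$ punctures) we get $k\in\{2,4\}$. But $w(P)\ge6$ because $\Delta$ already carries five punctures, so $w(P)-k\ge2$, $\partial E$ is essential on $P_K$, and $E$ is then a compressing disk for the incompressible sphere $P$ — a contradiction. Hence $G$ is incompressible.

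Next, by Proposition \ref{prop:sixpunctured}, $G$ is isotopic to $S_i\sharp_\alpha S_j$ for distinct $i,j\in\{1,2,3\}$ (necessarily distinct since $S_1,S_2,S_3$ are pairwise non-isotopic by Property \ref{pro:different}). Let $B$ be the ball bounded by $G$ disjoint from $P'$. Since $D^c$ lies between $P$ and $P'$ and $\Delta\subset P$, $B$ is the ``dome'' region between $\Delta$ and $D^c$ and so lies entirely between $P$ and $P'$; and since by hypothesis $G$ does not separate $P$ and $P'$, the $w(P)-5$ punctures of $P-\Delta$ lie outside $B$, on the same side of $G$ as $P'$. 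Using that $D^c$ is vertical, I apply Lemma \ref{lem:newbridge}: after lowering the minima of $K'$ between $P$ and $P'$ below $K'\cap D^c$, there is a thick sphere $\Sigma$ between $P$ and $P'$ (of the width of every thick level there) with $\beta=\Sigma\cap D^c$ a single curve and with $\Sigma_G:=(B\cap\Sigma)\cup d$ a bridge sphere for $(B,K'\cap B)$, where $d$ is the at-most-once-punctured disk that $\beta$ bounds in $D^c$; in particular $|B\cap\Sigma\cap K|\ge|\Sigma_G\cap K|-1$.

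Now I would bound $|\Sigma_G\cap K|$ and then count the remaining intersections. If $B$ is the ball $B_{i,j}$ disjoint from $B_1$ and $B_2$, Property \ref{pro:6punctured} gives $|\Sigma_G\cap K|\ge10$; otherwise $B$ is the complementary ball, hence contains $B_1$ or $B_2$, and since $\mathcal{T}_1,\mathcal{T}_2$ have distance at least $25$, Theorem \ref{thm:boundbridge} (with $\Sigma_1$ or $\Sigma_2$ as the small bridge surface) again yields $|\Sigma_G\cap K|\ge10$; either way $|B\cap\Sigma\cap K|\ge9$. For the rest, each of the $w(P)-5$ punctures of $P-\Delta$ is the foot of an arc of $K$ entering the region between $P$ and $P'$ outside $B$; such an arc meets $\partial B=G$ only possibly at $K'\cap D^c$, so it crosses $\Sigma$ at least once in $\Sigma-B$ (it starts below $\Sigma$ at $P$ and must pass above $\Sigma$ into $P'$ or over a maximum lying above $\Sigma$), and distinct such arcs give distinct crossings. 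Hence $w(\Sigma)=|B\cap\Sigma\cap K|+|(\Sigma-B)\cap K|\ge9+(w(P)-5)=w(P)+4$. In the special case $G=S_1\sharp_\alpha S_2$ with $\alpha$ meeting $B_T$ and $B_1$ not between $P$ and $P'$, the only balls bounded by $S_1\sharp_\alpha S_2$ are $B_{1,2}$ and the ball containing both $B_1$ and $B_2$, so the hypothesis forces $B=B_{1,2}$; then the fourteen-puncture half of Property \ref{pro:6punctured} gives $|\Sigma_G\cap K|\ge14$, so $|B\cap\Sigma\cap K|\ge13$ and $w(\Sigma)\ge13+(w(P)-5)=w(P)+8$.

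The step I expect to be most delicate is the identification of which of the two balls bounded by $G$ is $B$, together with the case split this forces on the bound for $|\Sigma_G\cap K|$ (and, in the special case, recognizing that the extra hypothesis is exactly what pins this down); the strand count in $\Sigma-B$ — in particular ensuring the single arc that may pass through $K'\cap D^c$ still contributes a genuine extra intersection outside $B$, and that ``$G$ does not separate $P$ and $P'$'' is what places the $w(P)-5$ punctures on the correct side — also needs care, though it is routine once the picture (cf. Figure \ref{fig:useful}) is set up.
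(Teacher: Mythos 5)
Your proposal is correct and follows essentially the same route as the paper: make $D^c$ vertical, use Lemma \ref{lem:newbridge} to produce a bridge sphere for the ball bounded by $G$, deduce incompressibility of $G$ from that of $P$ so Proposition \ref{prop:sixpunctured} applies, split on whether $B_1,B_2$ lie inside that ball to get the 10- (resp.\ 14-) puncture bound via Property \ref{pro:6punctured} or Theorem \ref{thm:boundbridge}, and then add the $w(P)-5$ crossings coming from the strands outside the ball. Your write-up is in fact more careful than the paper's at the two points you flag (identifying which ball is which, and the strand count outside $B$).
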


\begin{proof}

Let $B_G$ be the ball bounded by $G$ and disjoint from $P$ and let $T=K' \cap B_G$.
By Theorem \ref{thm:diskisvertical}, we may assume that $G \cap \Sigma=\beta$ where $\beta$ is a single essential simple closed curve in $\Sigma$. By Lemma \ref{lem:newbridge}, the sphere $R$ obtained by cut-compressing $\Sigma$ along the cut-disk that $\beta$ bounds in $D^c$ is a bridge sphere for $T$. A compressing disk for $G$ would result in a compressing disk for $P$, so we conclude that $G$ is incompressible and, therefore, it is one of the spheres in Proposition \ref{prop:sixpunctured}. In particular either $B_1$ and $B_2$ are contained in $B_G$ or they are both disjoint from it.

There are two cases to consider. If $B_1$ and $B_2$ are disjoint from $B_G$, then by Property \ref{pro:6punctured} it follows that $R$ has at least 10 punctures. If $B_1$ and $B_2$ are contained in $B_G$, we can apply Theorem \ref{thm:boundbridge} with $N=B_G$ with the bridge sphere $R$ and $M=B_1$ with the bridge sphere $\Sigma_1$. It again follows that $R$ has at least 10 punctures.

As each strand that has an endpoint in $P'$ must intersect $\Sss$ at least once, it follows that $|\Sss\cap K|\geq|R\cap K| +|P \cap K|-5\geq  w(P)+4$ as desired.

If $G=S_1\sharp_{\alpha} S_2$ and $B_1$ is not contained between $P$ and $P'$, then, by Property \ref{pro:6punctured}, $|R\cap K|\geq 13$, so $|\Sss\cap K|\geq|R\cap K| +|P \cap K|-5\geq  w(P)+8$.

\end{proof}

\section{$w(K\#trefoil)=w(K)$}\label{sec:134}

In this section, we show that the width of $K$ is $134$. This completes the proof that $w(K \# trefoil)=w(K)<w(K)+w(trefoil)-2$, thus disproving the knot width additivity conjecture. Let $K'$ be a knot isotopic to $K$ that is in thin position. The argument is separated into two parts depending on whether the minimal width thin sphere for $K'$ is cut-compressible or not.

\begin{theorem}\label{thm:thinnestccompressible}
If the thinnest thin sphere for $K'$ is cut-compressible, then $w(K') \geq 138$.

\end{theorem}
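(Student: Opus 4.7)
The plan is to contradict $w(K)\leq 134$ from Property \ref{pro:width} by forcing $w(K')\geq 138$. Let $P$ be the minimum-width thin sphere of $K'$. By Theorem \ref{thm:thinincomp} $P$ is incompressible, so the hypothesis supplies a cut-disk $D^c$ (say above $P$), which by Theorem \ref{thm:diskisvertical} I may take to be vertical. Using Corollary \ref{cor:thinabove} together with the minimality of $w(P)$, I may further arrange $D^c$ to be disjoint from every other thin sphere: any intersection with a higher thin sphere would, via an innermost-disk argument, give a c-disk for that higher sphere on one side, and Corollary \ref{cor:thinabove} would then furnish a thin sphere of width strictly below $w(P)$. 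Since $P$ is essential and every essential $4$-punctured sphere is c-essential by Corollary \ref{cor:fourpuncturedessential}, we must have $w(P)\geq 6$.

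The heart of the proof is the case $w(P)=6$. Cut-compressing $P$ along $D^c$ yields two essential $4$-punctured spheres, each parallel to one of $S_1,S_2,S_3$ by Proposition \ref{prop:noc-essential}; hence $P$ is a tubing $S_i\sharp_\alpha S_j$ in the sense of Proposition \ref{prop:sixpunctured}, and both $B_1,B_2$ lie on the cut-disk side of $P$. Letting $\Delta$ be the $3$-punctured disk that $\partial D^c$ bounds in $P$, so that $F=D^c\cup\Delta$ is a $4$-punctured sphere that does not separate $P$ from the adjacent thin sphere, Lemma \ref{lem:useful}(1) yields $w(\Sigma^+)\geq 14$ for the thick sphere $\Sigma^+$ just above $P$, while Lemma \ref{518} gives a thick sphere of width $\geq 10$ below $P$. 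These bounds alone give $\tfrac{1}{2}(196+100-36)=130$, so the final step is to extract additional width from the fact that both $B_1$ and $B_2$ lie above $P$: the strands of $K'$ entering $B_1\cup B_2$ force either another intermediate thin sphere of width $\geq 6$ (together with a further width-$\geq 10$ thick sphere) or invoke the sharper special case of Lemma \ref{lem:final} (giving $w(\Sigma^+)\geq w(P)+8=14$ when the tube strand $\alpha$ passes through $B_T$ and $B_1$ is not sandwiched), and either route pushes $w(K')$ past $138$ via the width formula of Section 2. The residual cases $w(P)\in\{8,10,12\}$ follow the same template with larger base contributions from Lemmas \ref{lem:useful} and \ref{lem:final}, and $w(P)\geq 14$ is immediate since the two thick spheres adjacent to $P$ already have width $\geq 16$, giving $w(K')\geq 158$.

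The main obstacle will be the enumeration in the $w(P)=6$ case. There one must simultaneously track which pair $(S_i,S_j)$ is tubed, whether the tube strand $\alpha$ is the exterior strand $\epsilon$ or passes through $B_T$, and the vertical ordering of $P$, $B_1$, $B_2$ and the surrounding thin and thick spheres, since both the correct lemma to apply and the correct thick sphere to target for a width bound depend on this configuration. The technical content of the proof will be a careful sub-case check guaranteeing that, regardless of which sub-case occurs, enough extra terms can be added to $\sum b_j^2$ to tip the width sum across the threshold $268=2\cdot 134$.
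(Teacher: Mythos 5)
Your plan has a structural error at the very first substantive step, and this error propagates through the rest of the sketch.

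You assert that the cut-disk $D^c$ for the thinnest thin sphere $P$ can be ``further arranged to be disjoint from every other thin sphere'' by combining an innermost-disk argument with Corollary \ref{cor:thinabove}. That does not work. An innermost curve of $D^c\cap(\bigcup\text{thin spheres})$ hands you a c-disk for a possibly \emph{different} thin sphere $P_2$, and $P_2$ may have width strictly larger than $w(P)$. Corollary \ref{cor:thinabove} then furnishes a thin sphere of width strictly below $w(P_2)$, but that is not strictly below $w(P)$, so there is no contradiction and no way to work your way back to a disjoint cut-disk for $P$ itself. The paper avoids this trap: in Case B it takes $P'$ to be a \emph{minimum-width thin sphere among those possessing a cut-disk disjoint from all other thin spheres}, which need not be the overall thinnest sphere, and then uses the crucial inequality $w(P'')<w(P')$ supplied by Corollary \ref{cor:thinabove} for the neighbor $P''$ of $P'$. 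Your sketch has no analogue of this, so the case analysis that follows rests on a false premise.

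Worse, the case you call the heart of the proof, $w(P)=6$ with a disjoint cut-disk, is actually vacuous. If $P$ is the thinnest thin sphere, $w(P)=6$, and $D^c$ is disjoint from all other thin spheres, then Corollary \ref{cor:thinabove} forces a thin sphere of width $\leq 4$ above $P$, contradicting minimality of $w(P)$. This is exactly the paper's Case B, Case 1. Instead of noticing this, you try to apply Lemma \ref{lem:useful}(1), but that lemma requires the adjacent thin sphere to have width $\geq 6$, which fails here for the same reason; you compute a bound of $130$, concede it is short of $138$, and then wave toward ``extracting additional width,'' which is not an argument. You also never address the possibility that some thin sphere for $K'$ other than $P$ is \emph{compressible}: the paper separates this into Case A because Theorem \ref{thm:sixincomp} then forces that sphere to have width $\geq 10$ and a different line of reasoning applies. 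Finally, ``the residual cases $w(P)\in\{8,10,12\}$ follow the same template'' is not accurate: the paper's treatment of Case 2 and the three subcases of Case 3 hinges on knowing the widths of both $P'$ and $P''$ and on distinguishing how $\partial D^c$ separates $P'$, and Case 3c is dismissed by a contradiction rather than a width estimate. The high-level strategy (bound the thick-sphere widths via Lemmas \ref{518}, \ref{lem:useful} and \ref{lem:final} and push $\sum b_j^2-\sum a_i^2$ past $268$) is the right one, but the case structure and the choice of which thin sphere to work with need to be rebuilt from scratch.
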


\begin{proof}

As all essential 4-punctured spheres in the complement of $K$ are c-incompressible, it follows that the thin sphere of lowest width has at least 6 punctures. In particular, if $P$ is a thin sphere for $K'$, $w(P)\geq 6$.

\medskip

\noindent \textbf{Case A:} Suppose first that $K'$ has a compressible thin sphere.

\medskip

\textbf{Claim:} There exist adjacent thin spheres $P$ and $P'$ so that $P$ has a compressing disk on the same side as $P'$ but disjoint from it.

{\em Proof:} Let $P''$ be a compressible thin sphere for $K'$ with compressing disk $D'$. Consider the intersection of $D'$ with the collection of all thin spheres for $K'$. Assume that $D'$ has been isotoped so this intersection is minimal. In particular, every curve of intersection is essential in the corresponding thin sphere. Let $\sigma$ be an innermost curve of intersection in $D'$. Let $P$ be the thin sphere containing $\sigma$. Then $P$ has a compressing disk $D \subset D'$ disjoint from all other thin spheres. By Corollary \ref{cor:thinabove}, there are other thin spheres on the same side of $P$ as $D$. In particular, we can choose $P'$ to be such a sphere so that $P$ and $P'$ are adjacent thin spheres.$\square$

\medskip

Let $P$ and $P'$ be the two spheres guaranteed by the claim. Assume $P$ is below $P'$. By Theorem \ref{thm:sixincomp}, it follows that $w(P)\geq 10$. If $w(P)\geq12$, then $w(K')\geq 138$, so suppose $w(P)=10$. By Corollary \ref{cor:altthin}, compressing $P$ along $D$ yields a copy of $P'$. As $w(P')\geq 6$ and $K$ is prime, it follows that $w(P')= 6$ and $\bdd D$ bounds a 4-punctured disk $\Delta \subset P$ so that $D \cup \Delta$ is a sphere that does not separate $P$ and $P'$. By Lemma \ref{lem:useful} part (1), the thick sphere between $P$ and $P'$ has width at least 14. By Lemma \ref{518}, there is a thick sphere of width at least 10 above $P'$ and one of width at least 12 below $P$. It follows that $w(K')\geq 152$.

\medskip

\noindent \textbf{Case B:} All thin spheres for $K'$ are incompressible.

 Let $P$ be the thinnest thin sphere for $K'$ and suppose it is cut-compressible above. By Corollary \ref{cor:thinabove}, $K'$ must have at least one other thin sphere, $P'$, above $P$. If $w(P')\geq 12$, then $w(K')\geq 138$, so suppose every thin sphere for $K'$ has width at most 10.

Let $D^c$ be the cut-disk for $P$. By taking an innermost curve of intersection of $D^c$ with the union of all other thin spheres for $K'$, we can find a thin sphere that has a cut-disk which is disjoint from all other thin spheres. Let $P'$ be the thin sphere of lowest width amongst all thin spheres for $K'$ that has this property and let $D'^c$ be its cut-disk. We will assume $D'^c$ is above $P'$. Let $P''$ be the thin sphere adjacent to $P'$ above it. By Corollary \ref{cor:thinabove}, this sphere exists and $w(P'')<w(P')$.

\medskip

\noindent \textbf{Case 1: $w(P')=6$.}  In this case $w(P'')\leq 4$, by Corollary \ref{cor:thinabove}. Hence, this case does not satisfy the hypotheses of the theorem at hand.

\medskip

\noindent \textbf{Case 2: $w(P')=8$.} By Corollary \ref{cor:altthin}, cut-compressing $P'$ along $D^c$ yields a copy of $P''$. As $w(P'')$ in this case must be 6, it follows that $\bdd D^c$ bounds a three punctured disk in $P'$ so that the union of $D^c$ and this disk is a sphere that does not separate $P'$ and $P''$. By Lemma \ref{lem:useful} part (1), the thick sphere between $P'$ and $P''$ has width at least 14. By Lemma \ref{518}, there is a thick sphere of width at least 10 above $P''$ and so $w(K')\geq 148$.

\medskip

\noindent \textbf{Case 3: $w(P')=10$.} There are three subcases to consider:

{\em Case 3a:} Suppose that $\bdd D^c$ bounds a three punctured disk in $P'$ so that the union of $D^c$ and this disk is a sphere that does not separate $P'$ and $P''$. By Lemma \ref{lem:useful} part (1), the thick sphere between $P'$ and $P''$ has width at least 14. If $w(P'')=6$, then, by Lemma \ref{518}, there is a thick sphere of width at least 10 below it and so $w(K')\geq 152$. If $w(P'')= 8$, then $w(K')\geq 138$.

{\em Case 3b:} Suppose that $\bdd D^c$ bounds two five-punctured disks in $P'$. As compressing $P'$ along $D^c$ yields a copy of $P''$, $w(P'')=6$. By Lemma \ref{lem:final}, the thick sphere between $P'$ and $P''$ has width at least 14. By Lemma \ref{518}, the thick sphere above $P''$ has width at least 10. It follows that $w(K')\geq 138$.

{\em Case 3c:} Suppose that $\bdd D^c$ bounds a seven-punctured disk in $P'$ so that the union of $D^c$ and this disk is a sphere that does not separate $P'$ and $P''$. By Theorem \ref{thm:diskisvertical}, we may assume that $D^c$ is vertical. By Corollary \ref{cor:altthin}, cut-compressing $P'$ along $D^c$ gives a copy of $P''$. Hence, $w(P'')=4$, contradicting the assumption.

\end{proof}

\begin{theorem}\label{thm:incomp}
If the thinnest thin sphere for $K$ is cut-incompressible, then $w(K') \geq 134$. Moreover $w(K')=134$ only if $K'$ is as in Figure \ref{fig:counterexample}.
\end{theorem}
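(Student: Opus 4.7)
The plan is to enumerate all possible configurations of thin spheres for $K'$ and bound $w(K')$ using the Scharlemann--Schultens width formula
\[
w(K')=\tfrac{1}{2}\!\left(\sum_j w(\Sigma_j)^2-\sum_i w(P_i)^2\right).
\]
By Theorem \ref{thm:thinincomp} and the cut-incompressibility hypothesis, the thinnest thin sphere $P_0$ is c-essential. Proposition \ref{prop:noc-essential} then forces $P_0$ to be a $4$-punctured sphere parallel to one of $S_1$, $S_2$, or $S_3$. Any other thin sphere has width at least $4$; those of width exactly $4$ are also c-essential and hence parallel to a distinct element of $\{S_1,S_2,S_3\}$ (two parallel copies of the same $S_i$ would enclose a region with no critical points), while any thin sphere of width $\geq 6$ is controlled by Lemmas \ref{518}, \ref{lem:useful}, and \ref{lem:final}.

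First I would handle the single-thin-sphere cases. If $P_0\parallel S_3$, each thick sphere is a bridge surface for one of the two tangles bounded by $S_3$, so by Property \ref{pro:12puncturedbridge} its width is at least $12$, giving $w(K')\geq(2\cdot 12^2-4^2)/2=136$. If $P_0\parallel S_1$ (or symmetrically $S_2$), the thick sphere inside $B_1$ has width $\geq 10$ by Property \ref{pro:distance}, while the thick sphere outside is a bridge surface for $\mathcal{T}'$ and thus has width $\geq 14$ by Property \ref{pro:14puncturedbridge}, yielding $w(K')\geq(10^2+14^2-4^2)/2=140$.

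Next I would treat the multi-thin-sphere configurations. The central subcase is when the c-essential thin spheres are exactly $\{S_1,S_2\}$: the three thick spheres -- inside $B_1$, between $S_1$ and $S_2$, and inside $B_2$ -- have widths at least $10$ by Properties \ref{pro:distance} and \ref{pro:10puncturedbridge}, giving
\[
w(K')\geq\tfrac{1}{2}\bigl(3\cdot 10^2-2\cdot 4^2\bigr)=134,
\]
with equality if and only if each bound is sharp and no further thin spheres exist, which is precisely the configuration of Figure \ref{fig:counterexample}. For configurations containing an $S_3$-parallel thin sphere (i.e.\ $\{S_1,S_3\}$, $\{S_2,S_3\}$, or $\{S_1,S_2,S_3\}$), the thick sphere inside the ball bounded by $S_3$ on the side containing the other $B_i$ has width $\geq 12$ by Property \ref{pro:12puncturedbridge}. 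Any intermediate thick sphere in the annular region between two of the $S_i$ I would bound below by $8$ by applying Theorem \ref{thm:boundbridge}: amalgamating such a thick sphere with $\Sigma_1$ (or $\Sigma_2$) over $S_1$ (or $S_2$) produces a bridge surface for $T'$, whose bridge number is $\geq 6$ by Property \ref{pro:12puncturedbridge}, and the hypothesis $d(\Sigma_1,T_1)\geq 25$ precludes the low-puncture alternatives. A direct calculation then gives $w(K')\geq(10^2+8^2+12^2-2\cdot 16)/2=138$ in the $\{S_1,S_3\}$ case, and analogously for $\{S_2,S_3\}$ and $\{S_1,S_2,S_3\}$.

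The main obstacle I expect is establishing the $\geq 8$ bound for the middle thick sphere in the annular region between $S_1$ and $S_3$; I plan to carry this out by the amalgamation argument above, carefully invoking Theorem \ref{thm:boundbridge} with $N=B'$ and $M=B_1$ so that the distance hypothesis forces the necessary puncture count. Finally, to rule out configurations that include a c-compressible thin sphere (necessarily of width $\geq 6$), I would apply Lemma \ref{518} (an essential $6$-punctured thin sphere forces thick spheres of width $\geq 10$ on each side), Lemma \ref{lem:useful} (a cut-disk bounding a three- or four-punctured disk pushes an adjacent thick sphere to width $\geq 12$ or $14$), and Lemma \ref{lem:final} (a cut-disk bounding a five-punctured disk adds at least $4$ to the adjacent thick width). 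A short bookkeeping argument with the width formula then shows each such configuration satisfies $w(K')>134$. Hence the only way to achieve $w(K')=134$ is the configuration of Figure \ref{fig:counterexample}, completing the proof.
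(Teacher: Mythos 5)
Your overall plan (classify which of $S_1,S_2,S_3$ appear as $4$-punctured thin spheres and bound $w(K')$ via the Scharlemann--Schultens formula) matches the paper's strategy, and the single-thin-sphere computations (140 for $S_1$ or $S_2$; 136 for $S_3$) and the $\{S_1,S_2\}$ case giving 134 are correct. However, there is a genuine gap in your treatment of the configurations $\{S_1,S_3\}$ and $\{S_2,S_3\}$.

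The claimed bound $w(\Sigma_m)\geq 8$ for the intermediate thick sphere between $S_1$ and $S_3$ is not supported by the amalgamation argument. Amalgamating a bridge sphere of $p_m$ punctures for the tangle in the region between $S_1$ and $S_3$ with the $10$-punctured $\Sigma_1$ over the $4$-punctured sphere $S_1$ yields a bridge sphere for $(B',T')$ with $\chi(\Sigma') = \chi(\Sigma_1)+\chi(\Sigma_m)-\chi(S_1)$, i.e.\ a sphere with $10 + p_m - 4 = p_m+6$ punctures. Property \ref{pro:12puncturedbridge} only gives $p_m+6\geq 12$, hence $p_m\geq 6$, not $p_m\geq 8$; a $12$-punctured surface is exactly $\Sigma_1$ with one tube, which is precisely the borderline case the proof of Property \ref{pro:12puncturedbridge} permits, so the distance hypothesis does not preclude it. With $p_m=6$ the width formula only produces $w(K')\geq\bigl(10^2+6^2+12^2-2\cdot 4^2\bigr)/2=124<134$, so this branch is not closed. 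The paper sidesteps the issue entirely by using Property \ref{pro:symmetry}: since $K$ is symmetric with respect to $S_3$, whenever $S_3$ is a thin sphere one may replace the thinner half of the embedding with the mirror of the other half without increasing width, and hence assume the embedding is symmetric across $S_3$ --- in particular $S_1$ is a thin sphere if and only if $S_2$ is. This eliminates the asymmetric configurations $\{S_1,S_3\}$ and $\{S_2,S_3\}$ outright, and in the symmetric case $\{S_1,S_2,S_3\}$ the paper rules out the critical case $w(T_a)=w(T_b)=6$ by amalgamating the two middle thick spheres over $S_3$ to produce an $8$-punctured bridge sphere for the tangle between $S_1$ and $S_2$, contradicting Property \ref{pro:10puncturedbridge}. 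You should either adopt the symmetry reduction or supply a separate argument showing that a width-$6$ bridge sphere for the four-strand tangle between $S_1$ and $S_3$ is impossible; the amalgamation bound alone does not do it.

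A secondary remark: your parenthetical claim that two thin spheres parallel to the same $S_i$ would bound a region with no critical points needs justification --- two level spheres that are isotopic in the knot complement need not bound a vertical product of $K$ in the given embedding. The paper does not need this claim, since it argues from the thinnest thin sphere and bounds additional thin spheres by their width, and so do you in most of your subcases, but the assertion as stated is not obviously correct. Finally, the ``short bookkeeping argument'' you defer for c-compressible thin spheres is in fact the bulk of the paper's Cases 2 and 3; it works, but it is lengthy and relies on carefully tracking which of Lemma \ref{518}, Lemma \ref{lem:useful}, and Lemma \ref{lem:final} applies in each subcase, so it deserves more than a sentence.
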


\begin{proof}
Let $P$ be the thinnest thin sphere for $K'$ and note that, by Theorem \ref{thm:thinincomp}, $P$ is incompressible. As $P$ is cut-incompressible, by Proposition \ref{prop:noc-essential}, $P$ either has 4 punctures or at least 14 punctures. In the second case, $w(K') \geq 158$ so we may assume that $P$ is one of $S_1$, $S_2$ or $S_3$.

By symmetry, whenever $S_3$ is a thin sphere we may assume that the tangles above and below it have identical width functions. In particular, we may assume there are as many thin spheres above $S_3$ as there are below it and if $S_1$ is also a thin sphere then so is $S_2$.

\medskip

\noindent {\bf Case 1:} $K'$ has exactly one thin sphere, $P$.

Suppose $P$ is isotopic to $S_1$ and, without loss of generality, suppose that $B_1$ is below it. By Property \ref{pro:distance}, the thick sphere below $P$ has width at least 10 and, by Property \ref{pro:14puncturedbridge}, the thick sphere above it has width at least 14. It follows that $w(K')\geq 140$. Similarly, the same width bound follows if $P$ is isotopic to $S_2$.

Suppose then that $P$ is isotopic to $S_3$. By Property \ref{pro:12puncturedbridge}, the thick surfaces above and below $P$ have width at least 12. In this case, $w(K') \geq 136$ as desired.

\medskip

\noindent {\bf Case 2:} Neither $S_1$ nor $S_2$ is a thin sphere and there are at least 2 thin spheres.

In this case, we may assume $P=S_3$ and any other thin spheres have width at least 6 and, therefore, are c-compressible. By hypothesis, there is at least one other thin sphere, say above $P$, and so by symmetry there is also a thin sphere below $P$. Let $D'_a$ be c-disk for some thin sphere above $P$. As $P$ is cut-incompressible, $D'_a \cap P=\emptyset$. By taking an innermost curve of intersection of $D'_a$ with the collection of all other thin spheres for $K'$, we can find a cut or compressing disk $D_a$ for some thin sphere $P_a$ above $P$ that is disjoint from all other thin spheres for $K'$. Similarly, we can find a cut or compressing disk $D_b$ for some thin sphere $P_b$ below $P$ that is disjoint from all other thin spheres for $K'$. In addition, we can assume that either $D_a$ and $D_b$ are compressing disks or $P_a$ and $P_b$ are incompressible. By symmetry, we may assume $w(P_a)=w(P_b)$.

If $w(P_a)=w(P_b)\geq 10$, then $w(K)\geq180$. Hence, we can assume that $w(P_a)=w(P_b)\leq 8$. If $w(P_a)=w(P_b)=6$, then, by Lemma \ref{lem:useful} part (2), the thick spheres $T_a$ and $T_b$ that intersect $D_a$ and $D_b$ have widths at least 12. In this case, $w(K')\geq 164$. If $w(P_a)=w(P_b)=8$ and $D_a$ and $D_b$ are compressing disks, then, by Lemma \ref{lem:useful}, $w(T_a), w(T_b) \geq 12$ so $w(K')\geq 172$. Therefore, we may assume that  $w(P_a)=w(P_b)=8$, $P_a$ and $P_b$ are incompressible and $D_a$ and $D_b$ are cut-disks. In this case, either by Lemma \ref{lem:useful} or by Lemma \ref{lem:final}, $w(T_a), w(T_b)\geq 12$. We conclude that $w(K')\geq 172$.

\medskip

\noindent {\bf Case 3:} Exactly one of $S_1$ or $S_2$ is a thin sphere and there are at least two thin spheres.

Without loss of generality, we may assume that $P=S_1$ is a thin sphere and $B_1$ is below $P$. Note that in this case $S_3$ cannot be a thin sphere because $K$ is symmetric with respect to $S_3$ and this would imply that both $S_1$ and $S_2$ are thin spheres.

\medskip

\textbf{Claim 1:} There is a thick sphere of width at least 10 below $P$.

{\em Proof:} If there aren't any thin spheres below $P$, there is a thick sphere of width at least 10, by Property \ref{pro:distance}. If there is a thin sphere of width 8 or more, the result follows immediately. Suppose all thin spheres below $P$ have width at most 6. However, all such spheres are incompressible, by Theorem \ref{thm:sixincomp}, and there are no such spheres in $B_1$, by Proposition \ref{prop:nonein}. $\square$

\medskip

\textbf{Claim 2:} If there is a compressible thin sphere $P'$ above $P$ and $w(P')\leq 8$, then $w(K')\geq 154$.

{\em Proof:} Suppose that $K'$ has a thin sphere $P'$ that is compressible. By Theorem \ref{thm:sixincomp}, we may assume that $w(P')=8$. Let $D$ be its compressing disk. Then $D$ is disjoint from all thin spheres of width less than 8 as all such spheres are incompressible. Therefore, by taking the intersection of $D$ with all thin spheres for $K'$ and rechoosing $P'$ we may assume that $D$ is disjoint from all other thin spheres. If this new $P'$ has the property that $w(P')\geq 10$, then there are two distinct thin spheres above $P$ and $w(K')\geq 154$. Hence, we can assume that $w(P')=8$ and $D$ is a compressing disk for $P'$ contained between consecutive thin spheres $P'$ and $P''$. If $\partial D$ bounds a 2-punctured disk in $P'$, then this disk together with $D$ cobound a 3-ball containing an unknotted arc. Such a 3-ball gives rise to an isotopy that thins $K'$. Hence, we can assume that $\partial D$ bounds a 4-punctured disk to each side in $P'$. By Corollary \ref{cor:altthin}, compressing $P'$ along $D$ gives rise to a copy of $P''$. Since $\partial D$ bounds 4-punctured disks to each side in $P'$, then $P''$ is a 4-punctured sphere. By Lemma \ref{lem:useful} part (3), the thick sphere intersecting $D$ has width at least 14. Then $w(K')\geq 158$ as desired. $\square$

\medskip

{\bf Subcase 3A:} Suppose first that $K'$ has no thin spheres above $P$.  By Property \ref{pro:14puncturedbridge}, there is thick sphere of width at least 14 above $P$. By hypothesis, there is a thin sphere below $P$. By Theorem \ref{thm:boundess} this sphere cannot have width 4 or 6 as all such spheres are incompressible.  Therefore the sphere must have width at least 8 and so $w(K')\geq 158$.

{\bf Subcase 3B:} Suppose that $K'$ has exactly one other thin sphere $P'$ above $P$.

If $w(P')=4$, then $P'$ must be $S_3$. In this case, as $K'$ is symmetric with respect to $S_3$, it follows that $S_2$ is also a thin sphere, contradicting the hypothesis of this case. Therefore, $w(P')\geq 6$. By Claim 2, we may assume that $P'$ is incompressible. If $P'$ is c-incompressible, then $P'$ meets $K$ in at least 14 points, by Proposition \ref{prop:noc-essential}. Hence, we can assume that $P'$ is cut-compressible. The cut-disk for $P'$ is disjoint from $P$ and lies below $P'$, by Corollary \ref{cor:thinabove}.

If $w(P')=6$, then the thick sphere between $P$ and $P'$ has width at least 12, by Lemma \ref{lem:useful} part (2). By Lemma \ref{518}, it follows that the thick sphere above $P'$ has width at least 10 and, therefore, $w(K')\geq 146$. If $w(P') \geq 10$, then $w(K')\geq 136$ as desired.

Suppose then that $w(P')=8$, $P'$ is incompressible and has a cut-disk $D^c$. As $P'$ is the only thin sphere above $P$, by Corollary \ref{cor:thinabove}, $D^c$ must be below it and, by Theorem \ref{thm:diskisvertical}, we can assume that it is vertical. By Corollary \ref{cor:altthin}, cut-compressing $P'$ along $D^c$ results in a copy of $P$ and, therefore, $\bdd D^c$ bounds a 5-punctured disk $\Delta$ in $P'$ so that the 6-punctured sphere $G=D^c \cup \Delta$ does not separate $P$ and $P'$.

A compressing disk for $G$ would result in a compressing disk for $P'$ and $w(K')\geq 154$, by Claim 2. Hence, we may assume that $G$ is incompressible and, therefore, it is one of the spheres classified in Proposition \ref{prop:sixpunctured}. In particular, $G$ does not separate $B_1$ and $B_2$.
Let $B_G$ be the ball bounded by $G$ disjoint from $P$. As $B_1$ is disjoint from $B_G$, so is $B_2$. Therefore, there are three possibilities to consider: $B_2$ is below $P'$ but outside of $B_G$, $B_2$ is above $P'$ or $B_2$ intersects $P'$.

It is clear that $B_2$ cannot be contained below $P'$ and be disjoint from $B_G$ as that would imply that the essential surface $S_2$ is completely contained in the product region between $P$ and the 4-punctured sphere resulting from cut-compressing $P'$ along $D^c$.

If $B_2$ is completely contained above $P'$, then let $\Sigma$ be the thick surface for $K$ above $P'$. By Theorem \ref{thm:boundbridge}, it follows that either $\Sigma$ has at least 26 punctures or $\Sigma$ is isotopic to $\Sigma_2$ with possibly some tubes attached. If no tubes are attached, then $\Sigma$ is parallel to $\Sigma_2$ and $S_2$ is an essential sphere completely disjoint from the bridge sphere of the tangle lying above $P'$, which is not possible. Therefore, at least one tube is attached. We conclude that $\Sigma$ has at least 12 punctures. By Lemma \ref{lem:final}, the thick sphere below $P'$ also has width at least 12. Hence, $w(K)\geq 154$.

Suppose that $B_2$ intersects $P'$. We have already assumed that $P$ is isotopic to $S_1$ and we have established that cut-compressing $P'$ along $D^c$ produces $P$ and a incompressible 6-punctured sphere, $G$. Since tubing along a strand of $K$ is the inverse operation to cut-compressing, $P'$ is isotopic to $G\sharp_{\beta}S_1$. However, both $S_1$ and $G$ can be isotoped to be disjoint from $B_2$. Hence, $\beta$ intersects $B_2$, as otherwise $B_2$ could be isotoped to be disjoint from $P'$.

There are several cases to consider.

If $G=S_1\sharp_{\epsilon} S_3$ or $G=S_2\sharp_{\epsilon} S_3$ where $\epsilon$ is the strand not passing through $B_T$, then $G$ is compressible which is not possible.

Suppose $G=S_1\sharp_{\gamma} S_3$ where $\gamma$ is a strand passing through $B_T$, see the first schematic of Figure \ref{fig:A}. Then $P'=G\sharp_{\alpha} S_1$. The strand $\alpha$ may contain $\epsilon$ or it may not. In either case, the tangle $T'$ above $P'$ can be obtained from the tangle $\mathcal{T}$ contained on one side of $S_3$ by replacing one of the strands with three parallel strands as in Figure \ref{fig:A}.  By Property \ref {pro:12puncturedbridge}, every bridge surface for $\mathcal{T}$ has at least 12 punctures. Since each of the new strands must intersect the bridge surface in at least two points, the thick surface above $P'$ has at least 16 punctures. By Lemma \ref{lem:final}, the thick sphere directly below $P'$ has at least 12 punctures, so $w(K)\geq 210$.

\begin{figure}[h]
\centering \scalebox{.6}{\includegraphics{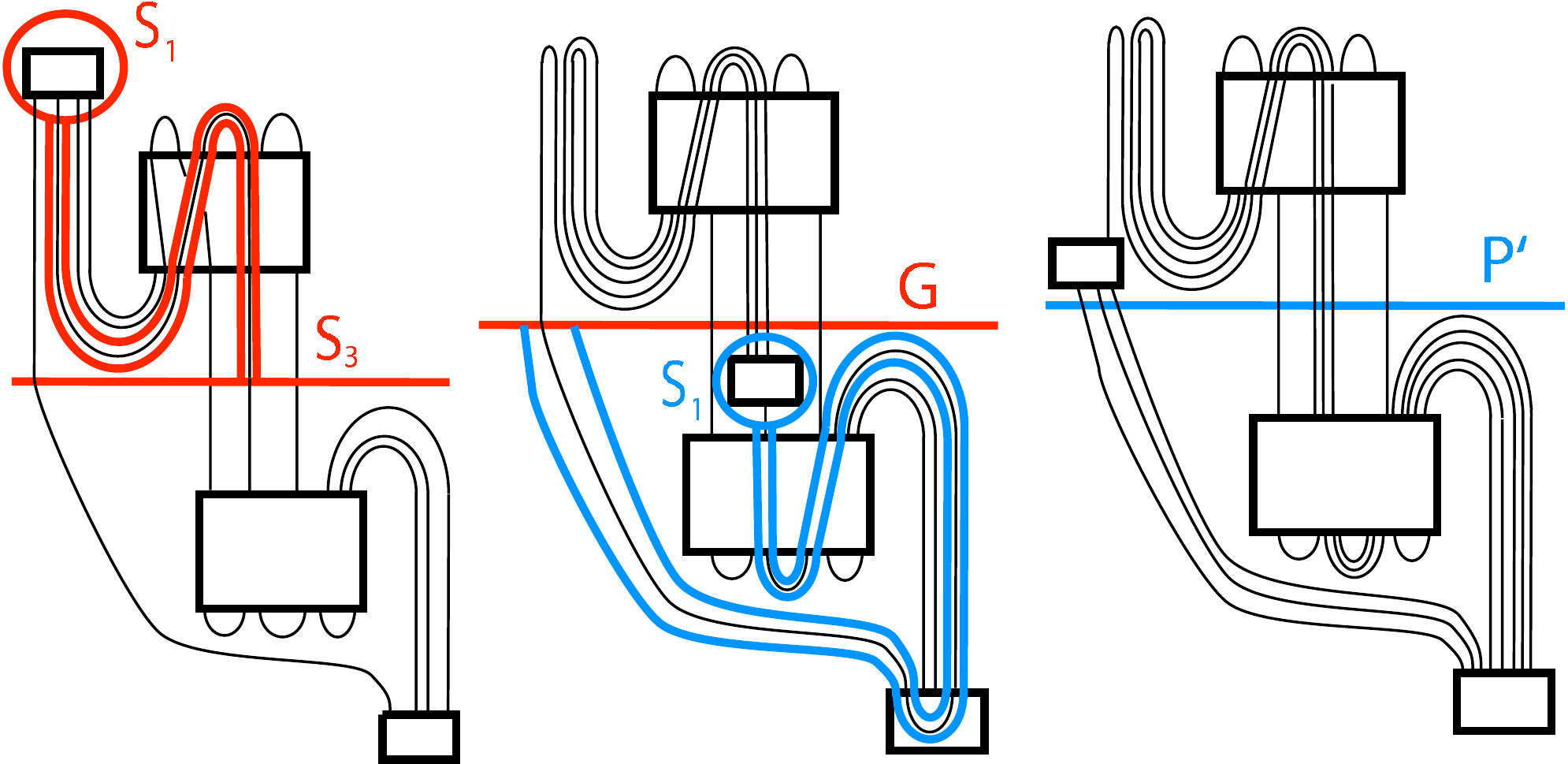}}
\caption{}\label{fig:A}
\end{figure}

Suppose $G=S_1\sharp_{\alpha} S_2$ where $\alpha$ is a strand passing through $B_T$. In this case, by Lemma \ref{lem:final}, the bridge sphere directly below $P'$ has width at least 16, so $w(K)\geq 188$.

Suppose $G=S_1\sharp_{\epsilon} S_2$ where $\epsilon$ is the strand not passing through $B_T$, i.e, $G=S_4$. Then the tangle $\mathcal{T}'$ above $P'$ can be obtained from the tangle $\mathcal{T}_2$ by replacing one of the strands with three parallel strands, see Figure \ref{fig:B}. As any bridge surface for $\mathcal{T}_2$ has at least $10$ punctures and each of the two additional strands has to intersect the thick surface for $\mathcal{T}'$ at least twice, it follows that the thick surface above $P'$ has at least 14 punctures. By Lemma \ref{lem:final}, the thick sphere directly below $P'$ also has at least 12 punctures, so $w(K)\geq 180$.

\begin{figure}[h]
\centering \scalebox{.6}{\includegraphics{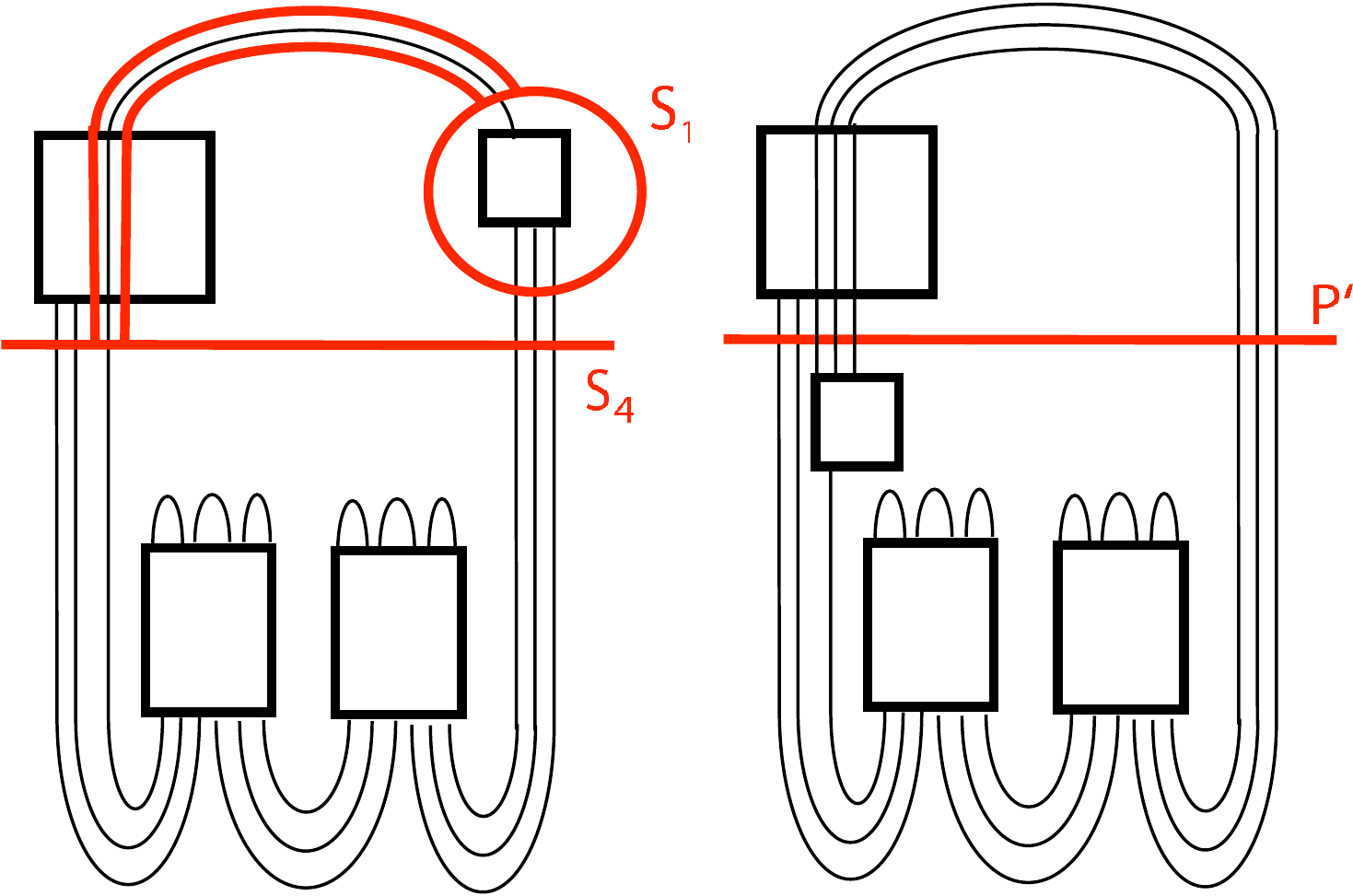}}
\caption{}\label{fig:B}
\end{figure}

Finally, suppose that $G=S_2\sharp_{\alpha} S_3$ where $\alpha$ is a strand passing through $B_T$, see Figure \ref{fig:C}.  Let $\mathcal{T}'$ be the 4 strand tangle above $P'$. By deleting two of the four strands of $\mathcal{T}'$, we can obtain the tangle $\mathcal{T}_2$. Therefore, by Property \ref{pro:distance} the thick surface above $P'$ intersects two of the strands in $\mathcal{T}'$ in at least 10 points and each of the other two strands in at least 2 points each. Hence, this thick sphere meets $K$ in at least 14 points. As before, the thick sphere below $P'$ has width at least 12, so $w(K)\geq 180$.

\begin{figure}[h]
\centering \scalebox{.6}{\includegraphics{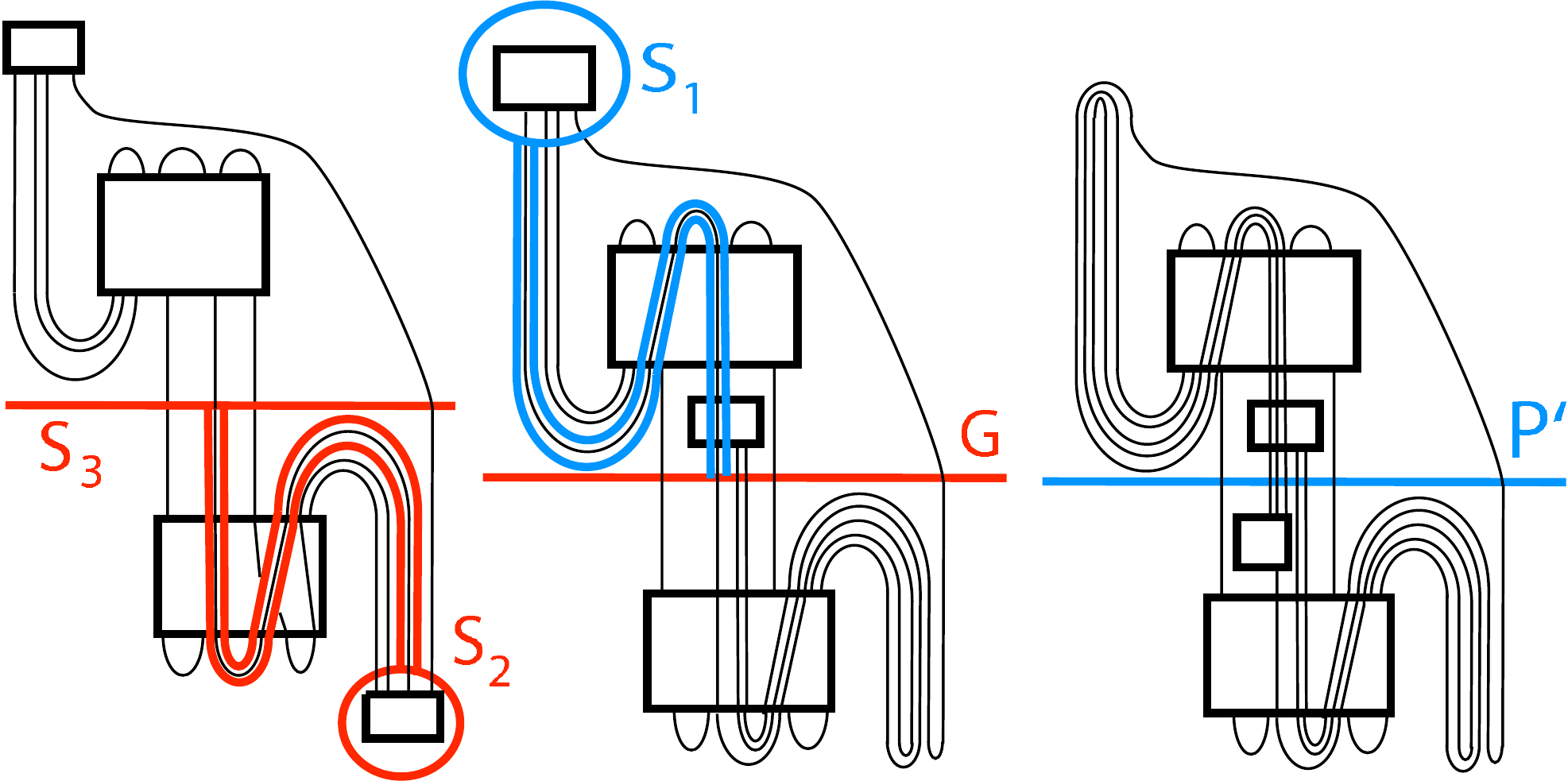}}
\caption{}\label{fig:C}
\end{figure}

\textbf{Subcase 3C:} Suppose that there are at least 2 thin spheres above $P$.

If at least one of these spheres has width at least 10, then $w(K')\geq 146$ so we may assume that all thin spheres have width at most 8.
By Claim 2, we may assume that all thin spheres for $K'$ are incompressible. By taking the intersection of a cut-disk with the union of all thin spheres, we can find a cut-disk $D^c$ for some thin sphere $P'$ that is disjoint from all other thin spheres. Let $P''$ be the thin sphere adjacent to $P'$ on the same side of $P'$ as $D^c$. This sphere exists, by Corollary \ref{cor:thinabove}. If $w(P')=6$, then, by Lemma \ref{lem:useful} part (2), the thick sphere between $P'$ and $P''$ has width at least 12. It follows that $w(K')\geq 142$. If $w(P')=8$, then, by Lemma \ref{lem:useful} or Lemma \ref{lem:final}, the thick sphere between $P'$ and $P''$ has width at least 12 and, so $w(K')\geq 142$.

\medskip

\noindent {\bf Case 4:} $S_1$ and $S_2$ are both thin spheres.

By Claim 1 of Case 3, it follows that there are thick spheres of width at least 10 below $S_1$ and above $S_2$. If there is a thin sphere between $S_1$ and $S_2$, it must either have width 4 or it must have width at least 8 as all 6-punctured thin spheres do not separate $B_1$ from $B_2$.  If there is a thin sphere of width 8, then $w(K)\geq 152$. Suppose there is a thin sphere of width 4 between $S_1$ and $S_2$. There can be only one such sphere and it is necessarily $S_3$. Let $T_a$ and $T_b$ be the thick spheres directly above and below $S_3$. By symmetry, $w(T_a)=w(T_b)$. If $w(T_a)\geq 8$, then $w(K)\geq 140$. If $w(T_a)=w(T_b)=6$, then $K$ has exactly one minimum and one maximum between $S_1$ and $S_3$ and similarly between $S_3$ and $S_2$. This implies that the tangle $T$ between $S_1$ and $S_3$ has a bridge sphere of width 8, contradicting Property \ref{pro:10puncturedbridge}.

Suppose that $S_1$ and $S_2$ are adjacent thin spheres. By Property \ref{pro:10puncturedbridge}, the width of the thick sphere between them is at least 10 and so $w(K')\geq 134$ as desired.

 \end{proof}

\begin{figure}[h]
\centering \scalebox{.4}{\includegraphics{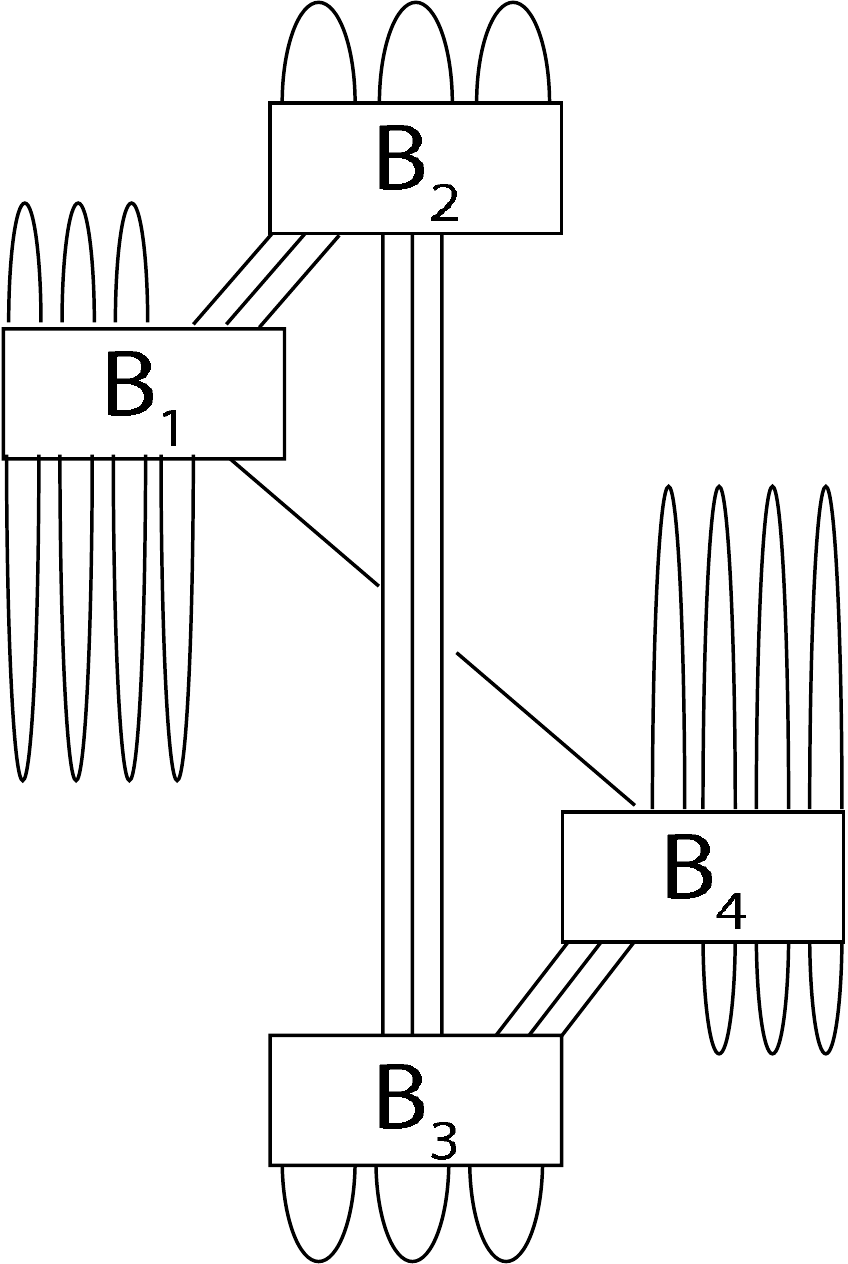}}
\caption{}\label{fig:lowbridge}
\end{figure}

{\em Proof of Theorem \ref{thm:counterwidth}:} Let $K'_{\alpha}$ be any two-bridge knot and let $K_{\alpha}$ be any of the knots constructed in Section \ref{sec:construction}. By Theorems \ref{thm:thinnestccompressible} and \ref{thm:incomp}, it follows that $w(K_\alpha)=134$. It is easy to see that the width of any two-bridge knot is eight. Figure \ref{fig:counter} demonstrates that $w(K_\alpha \# K'_{\alpha})\leq w(K_\alpha)=134$. By \cite{SchSch}, $w(K_\alpha \# K'_{\alpha})\geq w(K_\alpha)$ and, therefore, $w(K_\alpha \# K'_{\alpha})= w(K_\alpha)$.

\qed

{\em Proof of Theorem \ref{thm:counterbridge}:} By Theorems \ref{thm:thinnestccompressible} and \ref{thm:incomp}, it follows that $w(K_\alpha)=134$ and $K_\alpha$ has a unique thin position as in Figure \ref{fig:counterexample}. In this position, $K_{\alpha}$ has 11 maxima. However, Figure \ref{fig:lowbridge} demonstrates that $b(K'_{\alpha})\leq 10$.

\qed

 \end{document}